\theoremstyle{plain}
\makeatletter\@namedef{subjclassname@2010}{\textup{2010} Mathematics Subject Classification}
\newtheorem{Thm}{Theorem}[section]
\newtheorem{Lem}[Thm]{Lemma}
\newtheorem{Cor}[Thm]{Corollary}
\newtheorem{Pro}[Thm]{Proposition}
\theoremstyle{definition}
\newtheorem{Def}[Thm]{Definition}
\newtheorem{Exm}[Thm]{Example}
\newtheorem{Prb}[Thm]{Problem}
\theoremstyle{remark}
\newtheorem{Rem}[Thm]{Remark}
\numberwithin{equation}{section}
\newcommand{\ITE}[3]{\ifthenelse{#1}{#2}{#3}}\newcommand{\ITEE}[4][]{\ITE{\equal{#2}{#3}}{#4}{#1}}
\newenvironment{cor}[2][]{\ITEE[{\begin{Cor}[#1]}]{#1}{}{\begin{Cor}}\label{cor:#2}}{\end{Cor}}
\newenvironment{dfn}[2][]{\ITEE[{\begin{Def}[#1]}]{#1}{}{\begin{Def}}\label{def:#2}}{\end{Def}}
\newenvironment{exm}[2][]{\ITEE[{\begin{Exm}[#1]}]{#1}{}{\begin{Exm}}\label{exm:#2}}{\end{Exm}}
\newenvironment{lem}[2][]{\ITEE[{\begin{Lem}[#1]}]{#1}{}{\begin{Lem}}\label{lem:#2}}{\end{Lem}}
\newenvironment{prb}[2][]{\ITEE[{\begin{Prb}[#1]}]{#1}{}{\begin{Prb}}\label{prb:#2}}{\end{Prb}}
\newenvironment{pro}[2][]{\ITEE[{\begin{Pro}[#1]}]{#1}{}{\begin{Pro}}\label{pro:#2}}{\end{Pro}}
\newenvironment{rem}[2][]{\ITEE[{\begin{Rem}[#1]}]{#1}{}{\begin{Rem}}\label{rem:#2}}{\end{Rem}}
\newenvironment{thm}[2][]{\ITEE[{\begin{Thm}[#1]}]{#1}{}{\begin{Thm}}\label{thm:#2}}{\end{Thm}}
\newcommand{\COR}[2][!]{\ITEE{#1}{!}{Corollary~}\ITEE{#1}{s}{Corollaries~}\textup{\ref{cor:#2}}}
\newcommand{\DEF}[2][!]{\ITEE{#1}{!}{Definition~}\ITEE{#1}{s}{Definitions~}\textup{\ref{def:#2}}}
\newcommand{\EXM}[2][!]{\ITEE{#1}{!}{Example~}\ITEE{#1}{s}{Examples~}\textup{\ref{exm:#2}}}
\newcommand{\LEM}[2][!]{\ITEE{#1}{!}{Lemma~}\ITEE{#1}{s}{Lemmas~}\textup{\ref{lem:#2}}}
\newcommand{\PRO}[2][!]{\ITEE{#1}{!}{Proposition~}\ITEE{#1}{s}{Propositions~}\textup{\ref{pro:#2}}}
\newcommand{\REM}[2][!]{\ITEE{#1}{!}{Remark~}\ITEE{#1}{s}{Remarks~}\textup{\ref{rem:#2}}}
\newcommand{\THM}[2][!]{\ITEE{#1}{!}{Theorem~}\ITEE{#1}{s}{Theorems~}\textup{\ref{thm:#2}}}
\newcommand{\FFF}{\mathbb{F}}
\newcommand{\NNN}{\mathbb{N}}
\newcommand{\RRR}{\mathbb{R}}
\newcommand{\SSS}{\mathbb{S}}
\newcommand{\UUU}{\mathbb{U}}
\newcommand{\ZZZ}{\mathbb{Z}}
\newcommand{\FFf}{\CMcal{F}}
\newcommand{\LLl}{\CMcal{L}}
\newcommand{\PPp}{\CMcal{P}}
\newcommand{\CcC}{\EuScript{C}}
\newcommand{\DdD}{\EuScript{D}}
\newcommand{\Cc}{\mathfrak{C}}
\newcommand{\Dd}{\mathfrak{D}}
\newcommand{\Ee}{\mathfrak{E}}
\newcommand{\Ff}{\mathfrak{F}}
\newcommand{\Gg}{\mathfrak{G}}
\newcommand{\Hh}{\mathfrak{H}}
\newcommand{\Kk}{\mathfrak{K}}
\newcommand{\Pp}{\mathfrak{P}}
\newcommand{\Qq}{\mathfrak{Q}}
\newcommand{\Rr}{\mathfrak{R}}
\newcommand{\Ss}{\mathfrak{S}}
\newcommand{\Tt}{\mathfrak{T}}
\newcommand{\Ww}{\mathfrak{W}}
\newcommand{\Xx}{\mathfrak{X}}
\newcommand{\Zz}{\mathfrak{Z}}
\newcommand{\aA}{\mathfrak{a}}
\newcommand{\bB}{\mathfrak{b}}
\newcommand{\cC}{\mathfrak{c}}
\newcommand{\gG}{\mathfrak{g}}
\newcommand{\hH}{\mathfrak{h}}
\newcommand{\mM}{\mathfrak{m}}
\newcommand{\nN}{\mathfrak{n}}
\newcommand{\uU}{\mathfrak{u}}
\newcommand{\vV}{\mathfrak{v}}
\newcommand{\wW}{\mathfrak{w}}
\newcommand{\xX}{\mathfrak{x}}
\newcommand{\yY}{\mathfrak{y}}
\newcommand{\zZ}{\mathfrak{z}}
\newcommand{\ccC}{\mathscr{C}}
\newcommand{\eeE}{\mathscr{E}}
\newcommand{\ffF}{\mathscr{F}}
\newcommand{\hhH}{\mathscr{H}}
\newcommand{\kkK}{\mathscr{K}}
\newcommand{\mmM}{\mathscr{M}}
\newcommand{\ssS}{\mathscr{S}}
\newcommand{\xxX}{\mathscr{X}}
\newcommand{\dd}{\colon}
\newcommand{\df}{\stackrel{\textup{def}}{=}}
\newcommand{\epsi}{\varepsilon}
\newcommand{\grp}[1]{\langle#1\rangle}
\newcommand{\scalar}[2]{\left\langle#1,#2\right\rangle}
\newcommand{\scalarr}{\langle\cdot,\mathrm{-}\rangle}
\newcommand{\varempty}{\varnothing}
\newcommand{\OPN}[1]{\operatorname{#1}}
\newcommand{\card}{\operatorname{card}}
\newcommand{\Iso}{\operatorname{Iso}}
\newcommand{\UP}[1]{\textmd{\textup{#1}}}
\newcommand{\tfcae}{the following conditions are equivalent:}
\newcommand{\iaoi}{if and only if}
\newcommand{\ORD}{\preccurlyeq}
\newcommand{\MBOX}[1]{\mbox{\(#1\)}}
\newcommand{\SUPP}[1]{\MBOX{\OPN{supp}(#1)}}
\newcommand{\AUT}[2][]{\MBOX{\OPN{Aut}_{#1}(#2)}}
\newcommand{\COL}[1]{\MBOX{\OPN{Col}(#1)}}
\newcommand{\FIN}[2][]{\MBOX{\OPN{Fin}_{#1}(#2)}}
\newcommand{\PROd}[3]{\MBOX{{}^{#2}\!\prod_{#1}^{#3}}}
\newcommand{\PROD}[2][\ORD]{\MBOX{\PROd{#2}{#1}{*}}}
\newcommand{\DIF}[2]{\MBOX{\OPN{Diff}(#1,#2)}}
\newcommand{\LAB}[3][\ORD]{\MBOX{\ell_{#1}(#2,#3)}}
\newcommand{\AP}[1][]{\mbox{\textup{(#1AP)}}}
\newcommand{\MBP}{\mbox{\textup{(MBP)}}}
\newcommand{\OBP}{\mbox{\textup{(OBP)}}}
\newcommand{\SBP}{\mbox{\textup{(SBP)}}}
\newcommand{\ITP}{\mbox{\textup{(ITP)}}}
\newcommand{\OB}[1][\ccC]{\MBOX{\textup{\textsf{Ob}}(#1)}}
\newcommand{\ARR}[3][\ccC]{\MBOX{\textup{\textsf{Mor}}_{#1}(#2,#3)}}
\newcommand{\SYS}[3]{\MBOX{\langle#1,#2\rangle_{#3}}}
\newcommand{\CARD}{cardinal}
\newcommand{\VERT}[1][]{%
 \SetUpVertex[MinSize=1pt,LabelOut,Ldist=0pt,FillColor=white,Ldist=-1mm,%
 NoLabel#1]%
}
\newcommand{\VERTCAP}[3][]{%
 \VERT[,LineColor=white]\SOWE[Math,NoLabel=false,L={#3}#1](#2){Q}
}
\newcommand{\EDGE}[4][]{\Edge[#1label=\mbox{$#2$}](#3)(#4)}
\newcommand{\DIAG}[3][1.5]{%
 \begin{minipage}{#2\textwidth}\centering\begin{tikzpicture}[scale=#1]\VERT%
 \tikzstyle{LabelStyle}=[fill=white]\SetVertexMath{}#3%
 \end{tikzpicture}\end{minipage}%
}
\newcommand{\DIAGF}[3]{%
 \DIAG{0.3}{\Vertices{circle}{Z,W,Y}\VERT[,Lpos=30,Ldist=-1mm]\WE(Z){X}#1%
 \VERTCAP{#2}{#3}}%
}
\newcommand{\DIAGK}[2]{%
 \DIAGF{\EDGE{a}{Z}{W}\EDGE{a}{W}{X}\EDGE{a}{Y}{X}\EDGE{b}{W}{Y}\EDGE{b}{Z}{Y}%
 \EDGE{#1}{X}{Z}}{Z}{#2}%
}
\newcommand{\DIAGP}[9]{%
 \DIAG[1.8]{0.365}{\Vertices{circle}{X,Y,Z,W,V}\AddVertexColor{black}{Y,W}%
 \EDGE{a}{X}{Y}\EDGE{#1}{X}{Z}\EDGE{#2}{X}{W}\EDGE{#3}{X}{V}\EDGE{#4}{Y}{Z}%
 \EDGE{#5}{Y}{V}\EDGE{#6}{Z}{W}\EDGE{#7}{Z}{V}\EDGE{#8}{W}{V}%
 \VERTCAP[,Ldist=8mm]{X}{#9}}%
}
\newcommand{\DIAGQ}{%
 \DIAG[1.8]{0.365}{\Vertices{circle}{X,Y,Z,W,V}%
 \EDGE{a}{X}{Y}\EDGE{a}{Y}{Z}\EDGE{a}{Z}{W}\EDGE{a}{W}{V}\EDGE{a}{V}{X}%
 \EDGE{b}{X}{Z}\EDGE{b}{Y}{W}\EDGE{b}{Z}{V}\EDGE{b}{W}{X}\EDGE{b}{V}{Y}}%
}
\begin{document}

\title[Absolute homogeneity]{Extensive approach to absolute homogeneity}
\author[P.\ Niemiec]{Piotr Niemiec}
\address{Wydzia\l{} Matematyki i~Informatyki\\Uniwersytet Jagiello\'{n}ski\\%
 ul.\ \L{}o\-ja\-sie\-wi\-cza 6\\30-348 Krak\'{o}w\\Poland}
\email{piotr.niemiec@uj.edu.pl}
\thanks{Research supported by the National Center of Science [grant no
 2021/03/Y/ST1/00072].}
\begin{abstract}
The main aim of the paper is to study in greater detail absolutely homogeneous
structures (that is, objects with the property that each partial isomorphism
extends to a global automorphism), with special emphasis on metric spaces and
(possibly infinite, full) graphs with edge-coloring. Besides, a general
categorical approach to this concept is presented. The main achievement of
the paper is the discovery of one-to-one correspondence between absolutely
homogeneous objects and certain classes (that become sets when isomorphic
objects are identified) of ``finite'' objects that satisfy a few quite general
axioms (such as amalgamation and heredity). It is also introduced and discussed
in detail the concept of products for graphs with edge-coloring (that produces
an absolutely homogeneous graph provided all factors are so). Among the most
significant results of the paper, it is worth mentioning a full classification
(up to isometry) of all absolutely homogeneous ultrametric spaces as well as of
all absolutely homogeneous graphs with edge-coloring in which all triangles are
isosceles or in which all triangles are (precisely) tricolored.
\end{abstract}
\subjclass[2010]{Primary 22F30; Secondary 05C63, 05C15}
\keywords{Metric homogeneity; isometry; extending isometric map; free mobility;
 graph with edge-coloring; amalgamation; Erd\H{o}s-Rado theorem.}
\maketitle

\tableofcontents

\section{Introduction}
Homogeneity belongs to the most interesting notions in topology and occurs when
the action of a certain group of transformations of the space under
consideration is transitive. For example, a topological space is homogeneous if
the natural action of its homeomorphism group is transitive, whereas a metric
space is \emph{metrically} homogeneous if the action of the isometry group is
so. In recent years homogeneity and its higher level variants (such as
ultrahomogeneity) are widely studied by many mathematicians, including van Mill,
Arhangel'skii, Kechris, Solecki, Kubi\'{s}, Malicki, Doucha and many others.
Special attention is drawn to the concept of Fra\"{\i}ss\'{e} classes and limits
(\cite{fra}) where a higly homogeneous object appears naturally. Although
ultrahomogeneous spaces and other such objects or structures are widely studied,
not much is known about absolutely homogeneous spaces. As ultrahomogeneity says
about extending isomorphisms between finite subsets/subobjects to global
automorphisms, absolute homogeneity has no restrictions in this matter: it says
that each isomorphism between any subsets/subobjects extends to an automorphism
of the whole space/object under consideration. Until now there are known only
a few examples of infinite absolutely homogeneous metric space: Euclidean
spaces; Euclidean spheres; hyperbolic spaces; the rationals, the integers and
similar subspaces of the reals. Notice that among these examples only the first
three deserve not to be considered as trivial. It is highly probable that these
are the only known non-trivial examples of infinite absolutely homogeneous
metric spaces. (In particular, to the best of our knowledge, it is known no
example of such a non-separable space). In other branches of mathematics it is
also hard to find any example of such structures. So, a natural question arises
about the reason for the absence of other examples. This motivated us to
investigate absolute homogeneity on a broader scale. As an effect, we have
obtained results that we consider mysterious. Namely, when the issue is settled
in a category where ``finiteness'' is quite well understood (a formal definition
will be given is Section~\ref{sec:cat}), an absolutely homogeneous object is
completely determined by the class of its finite subobjects. To be more precise,
let us illustrate this phenomenon with an example of metric spaces. One of our
main results (\THM{embed}; consult also \THM[s]{embedd} and \ref{thm:AB-HO-cat})
includes the property that a totally arbitrary metric space isometrically embeds
into a fixed absolutely homogeneous metric space if (and only if) all its finite
subsets do so. In this way we may assign to each absolutely homogeneous metric
space a collection, called by us \emph{skeleton}, of all its finite subsets and
try to `translate' topological properties of the whole space in terms of its
skeleton. Since the above assignment is one-to-one, it is also natural to ask
about the range of this assignment, that is: which collections of finite metric
spaces come from an absolutely homogeneous metric space (i.e., coincide with
some skeleton)? In this way we arrive at the concept of
a \emph{skeletoid}---a collection of finite metric spaces that `resembles'
skeletons, but is defined by a list of purely intrinsic properties (such as
amalgamation). It turns out that one of the main tools in this approach is
a classical Erd\H{o}s-Rado theorem (on, roughly speaking, monochromatic
subgraphs in huge infinite graphs). Our main result on skeletoids
(\THM{skeleton}; consult also \THM[s]{dskeleton} and \ref{thm:skton-cat}) gives
an equivalent condition for a skeletoid to be a skeleton. In a special case,
when the skeloetoid is countable, this criterion may be highly simplified and
expressed in terms of countable chains of objects from the skeletoid.\par
To construct innumerable amounts of absolutely homogeneous structures (more
specifically, graphs with edge-coloring), we develop a method of producing new
such objects from pre-existing ones of the same kind. They are created as
certain Cartesian-like products (see \DEF{product} and \THM{prod-AB-HO}) where
well-orders appear naturally (although one starts from an arbitrary total
order). This tool enables us to give a full classification of all absolutely
homogeneous ultrametric spaces (\COR{ultrametric}) as well as such graphs with
edge-coloring in which all triangles are isosceles (\THM{classification}). In
each of these two cases we can speak about \emph{primary decomposition} as it
has the form of the product of decomposable factors and this form is unique (up
to isomorphism of respective factors). What is worth underlying here is that,
contrary to ordinary Cartesian products (or, e.g., prime decomposition),
the order of factors in the primary decomposition does matter.\par
Other results that we consider worth mentioning here are: a full classification
of absolutely homogeneous graphs with edge-coloring in which all triangles are
(precisely) tricolors (\THM{skton-grp}) and its counterpart for directed graphs
with edge-coloring (\THM{dskton-grp}). By the way, it turned out that each
group possesses four natural structures of absolutely homogeneous undirected
graphs with edge-coloring and when equipped with such a structure a group
becomes an undirected graph precisely when it is Boolean. So, directed graphs
are much more restrictive than undirected. And we may consider absolutely
homogeneous (un)directed graphs with edge-coloring as ``generalised (Boolean)
groups''. (However, groups are quite simple, or even primitive, as such
structures since their graph-automorphisms are uniquely determined by the value
at a single point.)\par
Finally, we have managed to formulate (and prove) a quite general theorem on
absolutely homogeneous objects and their finitary structures (that is,
collections of finite subobjects) for abstract categories that satisfy certain
quite natural axioms (resembling properties of the category of sets).\par
The paper is organised as follows. Below we fix the notation and terminology.
In Section~\ref{sec:metr} we briefly report what is known about absolutely
homogeneous metric spaces and discuss three classical classes of such spaces.
Section~\ref{sec:gec} is the longest part of the paper and deals with graphs
with edge-coloring. Here we define skeletons, skeletoids and amalgamation
property (\DEF[s]{skelet} and \ref{def:amalgam}) as well as prove two of
the main results of the paper (\THM[s]{embed} and \ref{thm:skeleton}; see also
\COR{countable}). Subsection~\ref{sec:metric} is devoted to skeletoids
consisting of metric spaces. Among other things, we show there that
an absolutely homogeneous metric space is either locally connected or locally
totally disconnected (\PRO{prop-AHM}) and in \THM{H-B-AB-HO} give a (convenient)
equivalent condition for a skeletoid to be a skeleton of an absolutely
homogeneous \emph{Heine-Borel} metric space (that is, in which all closed balls
are compact).\par
In Section~\ref{sec:products} we introduce a new concept of products of families
of graphs with edge-coloring (\DEF{product}) and prove \THM{prod-AB-HO} which
says that such a product of a (productable) collection of absolutely homogeneous
graphs is absolutely homogeoenous as well. We the aid of this tool we give
examples of absolutely homogeneous graphs with edge-coloring of arbitrarily
large size (\COR{2toY}). We fully discuss (in \PRO{prod-metric}) the case when
all factors are metric spaces and their product is also metric (which is not
automatic). Based on this result, we give examples of absolutely homogeneous
metric spaces of maximal possible size with a prescribed set of attainable
distances by a metric (\THM{exist-metric}). Examples of Heine-Borel such spaces
are given in \THM{exm-AB-HO}. Section~\ref{sec:classification} is devoted to
a full classification of absolutely homogeneous graphs with edge-coloring in
which all triangles are isosceles (\THM{classification}) as well as of all
absolutely homogeneous ultrametric spaces (\COR{ultrametric}). These are done
with the aid of products introduced in an earlier part of the paper.\par
In a short Section~\ref{sec:card} we present two characteristic cardinals of
skeletoids and discuss them on the example of three classical classes of
absolutely homogeneous metric spaces (Euclidean, hiperbolic, spheres).
\THM{ht-hyp-sph}, formulated in this part, is a counterpart of Menger's theorem
\cite{mg1,mg2} (on isometric embeddability into Euclidean spaces) for hyperbolic
spaces and Euclidean spheres. Section~\ref{sec:directed} deals with directed
graphs with edge-coloring. In that case a new phenomenon occurs (denoted by us
(OBP); see \DEF{OBP}). A property of skeletons related to this phenomenon is
formulated and proved in \THM{OBP}, which is another consequence of
the Erd\H{o}s-Rado theorem. Then we repeat main results of previous parts
(obtained for undirected graphs) and discuss absolutely homogenenous directed
graphs with edge-coloring which are \emph{uniquely homogeneous} (this means that
each partial morphism defined on a non-empty subgraph extends to a unique
automorphism). Contrary to undirected graphs (where all such graphs with
the aforementioned property are isomorphic to Boolean groups), directed graphs
with such a property can be non-isomorphic to groups, but even though they are
very `close' to them (consult \THM{uniq-homo}).\par
In Section~\ref{sec:ges} we propose a more intrinsic approach to both types of
graphs (directed and undirected) with edge-coloring and introduce structures
with edge similarity. We define there compact Hausdorff spaces with edge
similarity and establish a little bit surprising result (\THM{cdGES}) about them
which shows that in a certain sense such spaces resemble compact metric spaces.
In a short Subsection~\ref{sec:abstr} we show how studies of these compact
(and more general) structures can be interpreted to a purely group-theoretic
language. Section~\ref{sec:cat} is devoted to a categorical approach to
absolute homogeneity. We define there (in \DEF{axioms}) so-called \emph{\CARD}
categories and formulate and prove for such categories counterparts of theorems
on skeletons and skeletoids (see \THM[s]{AB-HO-cat} and
\ref{thm:skton-cat}).\par
In the last part, Section~\ref{sec:fin}, we list open problems related to
the topics discussed in the paper.\par
We believe our work will give a solid background for further studies on
absolutely homogeneous structures.

\subsection{Notation and terminology}
In this paper \(\NNN = \{0,1,2,\ldots\}\) (that is, \(0 \in \NNN\)).\par
From now on, whenever in this paper we speak about homogeneity (or some of its
variants) in metric spaces, we mean metric homogeneity---that is, a property
related to the natural action of the isometry group of the space.\par
Isometric maps need not be surjective and an \emph{isometry} is, by definition,
a bijective isometric map. The isometry group of a metric space \((X,d\)) is
denoted by \(\Iso(X,d)\) or, for brevity, by \(\Iso(X)\). By a \emph{partial}
isometry from \(X\) to \(Y\) we mean any isometric map from a subspace of \(X\)
into \(Y\). When \(Y = X\), we speak about partial isometries of \(X\).\par
When \(G\) is a group of transformations of a set \(X\), the \emph{isotropy}
group of \(G\) at a point \(a \in X\) is the subgroup \(\OPN{stab}_G(a) \df
\{g \in G\dd\ g(a) = a\}.\)\par
The cardinality of a set \(A\) is denoted by \(\card(A)\).

As the Ramsey-Erd\H{o}s-Rado type theorems (\cite{ram}, \cite{e-r}; consult also
Section~6 in Chapter~IX of \cite{k-m}) are our main tools, for the reader's
convenience below we formulate this variant (of a part) of them that will find
applications in the sequel.

\begin{thm}{r-e-r}
Let \(f\dd X \times X \to Y\) be a symmetric function (that is,
\(f(x,y) = f(y,x)\) for all \(x, y \in X\)) where \(X\) is an infinite set.
\begin{itemize}
\item \UP{(Ramsey)} If \(Y\) is finite, then there exists an infinite set
 \(A \subset X\) and a point \(c \in Y\) such that
 \begin{equation}\label{eqn:r-e-r}
 \forall a, b \in A,\ a \neq b\dd\qquad f(a,b) = c.
 \end{equation}
\item \UP{(Erd\H{o}s-Rado)} If \(Y\) is infinite and \(\card(X) >
 2^{\card(Y)}\), then there eixsts a set \(A \subset X\) and a point \(c \in Y\)
 such that \(\card(A) > \card(Y)\) and \eqref{eqn:r-e-r} holds.
\end{itemize}
\end{thm}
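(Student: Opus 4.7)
The plan is to prove the two parts separately, using a pigeonhole-plus-diagonalization scheme for the Ramsey case and a transfinite elaboration of the same idea for the Erd\H{o}s-Rado case. In both cases symmetry of \(f\) allows us to work with a well-order on the eventual monochromatic set and then read off the unordered conclusion.

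\textbf{Ramsey part.} I would build inductively an infinite sequence of distinct points \(x_0,x_1,\ldots\in X\), a decreasing chain \(X=X_0\supseteq X_1\supseteq\cdots\) of infinite subsets of \(X\), and colors \(c_0,c_1,\ldots\in Y\) such that \(x_n\in X_n\) and \(f(x_n,z)=c_n\) for every \(z\in X_{n+1}\). At the recursive step, given an infinite \(X_n\), pick any \(x_n\in X_n\) and partition \(X_n\setminus\{x_n\}\) into the fibers of \(z\mapsto f(x_n,z)\); since \(Y\) is finite, some fiber is still infinite---take it as \(X_{n+1}\) and let \(c_n\) be its color. Then \(f(x_m,x_n)=c_m\) whenever \(m<n\). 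A second pigeonhole applied to the sequence \((c_n)\) in the finite set \(Y\) yields a color \(c\) hit infinitely often, and the corresponding points \(x_n\) form an infinite \(f\)-monochromatic set.

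\textbf{Erd\H{o}s-Rado part.} The strategy is to produce an \emph{end-homogeneous} set: a well-ordered \((B,<)\subseteq X\) with \(\card(B)=\card(Y)^+\) and a function \(g\colon B\to Y\) satisfying \(f(b,b')=g(b)\) whenever \(b<b'\) in \(B\). Once such a \(B\) exists, a further pigeonhole applied to \(g\) (valid since \(\card(B)>\card(Y)\)) produces a color \(c\) and a fiber \(A\subseteq B\) with \(\card(A)=\card(B)>\card(Y)\) on which \(g\equiv c\); this \(A\) is monochromatic for \(f\). Writing \(\kappa=\card(Y)\), end-homogeneity is constructed by transfinite recursion on \(\alpha<\kappa^+\), in direct analogy with the Ramsey step: at stage \(\alpha\) one maintains a ``large'' set \(S_\alpha\subseteq X\) of admissible continuations of cardinality exceeding \(2^\kappa\), selects \(y_\alpha\in S_\alpha\), and lets \(g(\alpha)\in Y\) be a color for which the corresponding fiber \(S_{\alpha+1}\subseteq S_\alpha\setminus\{y_\alpha\}\) of \(z\mapsto f(y_\alpha,z)\) still exceeds \(2^\kappa\) in size; such a color exists because one partitions a set of size \(>2^\kappa\geq\kappa^+\) into at most \(\kappa\) fibers.

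\textbf{Main obstacle.} The delicate point is the limit stages of this recursion: a naive intersection of up to \(\kappa\) sets of size \(>2^\kappa\) inside a ground universe of size only \(>2^\kappa\) need not preserve that lower bound. This is precisely where the quantitative hypothesis \(\card(X)>2^{\card(Y)}\) must be used, and I expect it to enter through control on the number of possible ``types''. I would code each candidate continuation \(z\) by its type, namely the function \(\beta\mapsto f(y_\beta,z)\) on the ordinals \(\beta\) already processed; since \(\kappa^{|\alpha|}\leq\kappa^\kappa=2^\kappa\) for every \(\alpha<\kappa^+\), the \((2^\kappa)^+\) available candidates must agree in type on a class of cardinality \(>2^\kappa\), and such a class supplies the required \(S_\alpha\) at the limit stage together with values \(g(\beta)\) consistent with the already-built tail. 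Once the recursion is driven through length \(\kappa^+\) in this way, the two-step pigeonhole outlined above delivers the monochromatic set of cardinality exceeding \(\card(Y)\) demanded by the statement.
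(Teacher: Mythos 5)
The paper does not prove this theorem; it states it as a classical result with references, so there is no in-paper argument to compare against and I am assessing your proposal on its own terms. Your Ramsey half is the standard refinement-plus-double-pigeonhole argument and is correct.

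The Erd\H{o}s-Rado half has the right architecture (an end-homogeneous set of size \(\card(Y)^+\), then a pigeonhole on the end-colors), and you correctly identify the limit stages as the crux, but the repair you sketch does not close the gap. Write \(\kappa=\card(Y)\). At a limit stage \(\alpha\) your counting of types shows that \emph{some} type class \(T_\tau=\{z:\ f(y_\beta,z)=\tau(\beta)\text{ for all }\beta<\alpha\}\) has cardinality \(>2^\kappa\). But your construction has already committed, at each earlier successor stage, to a specific value \(g(\beta)\), and the points \(y_\gamma\) with \(\beta<\gamma<\alpha\) were drawn from sets forcing \(f(y_\beta,y_\gamma)=g(\beta)\). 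Nothing guarantees that the large class you find at the limit has \(\tau(\beta)=g(\beta)\); the one class you actually need, namely \(\bigcap_{\beta<\alpha}S_{\beta+1}\), may perfectly well be empty, since each \(S_{\beta+1}\) can have complement of full cardinality. If instead you switch to the new type \(\tau\) at stage \(\alpha\), then for any \(\beta\) with \(\tau(\beta)\neq g(\beta)\) the pairs \(\{y_\beta,y_\gamma\}\) receive color \(g(\beta)\) when \(\gamma<\alpha\) but color \(\tau(\beta)\) when \(\gamma\geq\alpha\), so end-homogeneity is destroyed and \(g(\beta)\) is not well defined. This is precisely why the greedy decreasing-chain argument, which works for Ramsey (no limit stages occur below \(\omega\)), does not transfer to length \(\kappa^+\).

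The standard way to exploit your (correct) computation \(\kappa^{|\alpha|}\leq\kappa^\kappa=2^\kappa\) is to fix all types against a single external reference point rather than renegotiating them at limits. Build \(D\subseteq X\) with \(\card(D)=2^\kappa\) that is closed under realizing types over its own subsets of size \(\leq\kappa\): at each of \(\kappa^+\) closure steps one must witness at most \((2^\kappa)^\kappa\cdot\kappa^\kappa=2^\kappa\) pairs (subset, type), so the closure keeps cardinality \(2^\kappa\), and regularity of \(\kappa^+\) ensures every small subset of the union appears at some stage. Then pick \(z^*\in X\setminus D\) (possible since \(\card(X)>2^\kappa\)) and recursively choose \(y_\alpha\in D\), distinct from the earlier \(y_\beta\), realizing the type of \(z^*\) over \(\{y_\beta:\ \beta<\alpha\}\). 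Setting \(g(y_\beta)\df f(y_\beta,z^*)\) gives \(f(y_\beta,y_\alpha)=g(y_\beta)\) for all \(\beta<\alpha\) with no limit-stage difficulty, after which your concluding pigeonhole on \(g\) finishes the proof exactly as you describe.
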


\section{Absolute homogeneity in metric spaces}\label{sec:metr}

\begin{dfn}{abshm}
A metric space \((X,d)\) is said to be:
\begin{itemize}
\item \emph{homogeneous} if \(\Iso(X,d)\) acts transitively on \(X\); that is,
 if for any two distinct points \(x\) and \(y\) of \(X\) there is an isometry
 \(u \in \Iso(X,d)\) such that \(u(x) = y\);
\item \emph{\(n\)-point homogeneous} (where \(n\) is a positive integer) if
 every isometric map \(u\dd A \to X\) of any at most \(n\)-point subset \(A\) of
 \(X\) into \(X\) extends to a global isometry \(v \in \Iso(X,d)\);
\item \emph{ultrahomogeneous} or \emph{finitely homogeneous} if it is
 \(n\)-point homogeneous for each \(n\);
\item \emph{\textbf{absolutely homogeneous}} if every isometry between two
 arbitrary subsets of \(X\) is extendable to a global isometry on \(X\).
\end{itemize}
\end{dfn}

It is worth noting that absolute homogeneity is sometimes called
\emph{free-mobility} (see, e.g., \cite{bi1,bi2}).\par
Observe that any group equipped with a left (or right) invariant metric is
homogeneous. This shows that even the realm of compact connected homogeneous
metric spaces contains quite pathological spaces (such as solenoids).
A situation becomes more transparent (for connected locally compact spaces) when 
dealing with higher levels of homogeneity. For example, Wang \cite{wan} gave
a full classification of two-point homogeneous compact connected metric spaces,
whereas Tits \cite{tit} did it for locally compact (connected metric) spaces.
It turned out that all such spaces are Riemannian manifolds. A bit later
Freudenthal \cite{fr1} (cf. also Subsection 2.21 in \cite{fr2}) implicitly
classified all 3-point locally compact connected metric spaces. Modern
discussion on this topic can be found in Section~7 of \cite{pn0}. Probably
the ``easiest'' example of a (compact connected) metric space that is two-point
but not 3-point homogeneous is the two-dimensional real projective space
\(P^2(\RRR)\) (equipped with some natural metric). (Actually, all real
projective spaces of dimension greater than 1 are 2-point but not 3-point
homogeneous. See \cite{pn0} for more details.) Below, in \THM{Freu} we list all
3-point homogeneous locally compact connected metric spaces (having more than
one point). It is extremely interesting that all they are absolutely
homogeneous. Before we introduce these spaces, we give three more (classical)
examples of highly homogeneous metric spaces:
\begin{itemize}
\item One of them is a famous Urysohn space. It is a unique (up to isometry)
 separable complete metric spaces that is both ultrahomogeneous and metrically
 universal for separable metric spaces (that is, it contains an isometric copy
 of each separable metric space). Huhunai\v{s}vili \cite{huh} proved that this
 space is compactly homogeneous (which means that isometries between compact
 subsets extend to global isometries). (According to Melleray's result
 \cite{me1}, compact homogeneity is the highest level of homogeneity the Urysohn
 space can have---in particular, as it is well-known, it is not absolutely
 homogeneous). More information about the Urysohn universal space can be found
 e.g. in \cite{me2}.
\item All homogeneous ultrametric spaces may serve as another example of spaces
 with high level of homogeneity: it is a kind of folklore that each such a space
 is automatically ultrahomogeneous. (Recall that a metric space \((X,d)\) is
 \emph{ultrametric} if \(d(x,z) \leq \max(d(x,y),d(y,z))\) for all \(x, y, z \in
 X\).) A trivial example of such spaces are discrete metric spaces (where all
 points are in distance 1)---all they are finitely homogeneous, but only finite
 are absolutely homogeneous.\par
 All absolutely homogeneous ultrametric spaces will be fully classified in
 \COR{ultrametric} in Section~\ref{sec:classification} below.
\item As it may be simply verified, the space of all rational numbers (with
 natural metric) is absolutely homogeneous. Undoubtedly, it is the simplest
 possible example of such a space that is, in addition, non-complete. It is also
 hereditary disconnected, which may be seen as a typical property of absolutely
 homogeneous metric spaces (see item (D) of \PRO{prop-AHM} below).
\end{itemize}

Now we pass to three classical classes of absolutely homogeneous metric spaces.
In all subsections below, \(n\) denotes a positive integer. Recall that a metric
\(d\) on a set \(X\) is \emph{geodesic} if any two points of the space \(X\) can
be joint by a line segment; that is, if for any two distinct points \(x\) and
\(y\) of \(X\) there is an isometric map \(\gamma\dd [0,r] \to X\) (where \(r =
d(x,y)\)) such that \(\gamma(0) = x\) and \(\gamma(r) = y\).

\subsection{Euclidean spaces (Euclidean geometry)}
Denote by \(d_e\) the Euclidean metric on \(\RRR^n\). That is: \(d_e(x,y) \df
\|x-y\|_2\) where \(\|x\|_2 = \sqrt{\scalar{x}{x}}\) and
\begin{equation}\label{eqn:inner}
\scalar{(x_1,\ldots,x_n)}{(y_1,\ldots,y_n)} = \sum_{k=1}^n x_k y_k.
\end{equation}
It is well-known (and easy to show) that \((\RRR^n,d_e)\) is absolutely
homogeneous. (The main point in the proof is that any partial isometry of this
space that fixes the origin preserves the standard inner product \(\scalarr\)
defined above.) Of course, \(d_e\) is geodesic.\par
Every isometry on \(\RRR^n\) (and thus also every partial isometry) is of
the form \(u(x) = a+V(x)\) where \(V \in O_n\) is the orthogonal matrix (that
is, a linear isometry). The isotropy group of \(\Iso(\RRR^n,d_e)\) at \(0\)
coincides with the orthogonal group \(O_n\) (whereas \(\Iso(\RRR^n,d_e)\) is
the semidirect product \(\RRR^n \rtimes O_n\)).

\subsection{Euclidean spheres (elliptic geometry)}
It follows from the material of the previous subsection that the Euclidean
sphere \(\SSS^{n-1} \df \{x \in \RRR^n\dd\ \|x\|_2 = 1\}\) equipped with \(d_e\)
is absolutely homogeneous as well. However, although \(\SSS^{n-1}\) is connected
for \(n > 1\), the metric \(d_e\) is not geodesic on the sphere. Therefore, in
differential geometry one introduces more appropriate metric, naturally
``induced'' by \(d_e\), which is geodesic (for \(n > 1\))---the so-called
\emph{great-circle distance}:
\begin{equation}\label{eqn:great-circle}
d_s(x,y) \df \frac{1}{\pi} \arccos(\scalar{x}{y}) = \frac{1}{\pi} \arccos
\frac{2-d_e(x,y)^2}{2}.
\end{equation}
(Actually, in the above we scaled the classical great-circle distance so that
the sphere has diameter 1.) Since \(d_s\) is obtained as the composition of
a one-to-one function (namely: \(\alpha(t) = \frac{1}{\pi} \arccos
\frac{2-t^2}{2}\)) with \(d_e\), it follows that also \((\SSS^n,d_s)\) is
absolutely homogeneous.\par
Every isometry on \((\SSS^{n-1},d_s)\) extends (uniquely) to a linear isometry
on \((\RRR^n,d_e)\) and thus \(\Iso(\SSS^{n-1},d_s)\) may \underline{naturally}
be identified with \(O_n\). When \(n > 1\), the isotropy group at
\(e \df (1,0,\ldots,0) \in \SSS^{n-1}\) consists exactly of all the matrices of
the form \(\begin{pmatrix} 1& 0\\0 & U\end{pmatrix}\) where \(U \in O_{n-1}\)
and thus \(\OPN{stab}(e)\) may be identified with \(O_{n-1}\).

\subsection{Hyperbolic spaces (hyperbolic geometry)}
Hyperbolic geometry arose as the example of a non-Euclidean geometry (where
the Euclid's fifth postulate is false). It can be realized in many ways, here
we present a little bit modified Minkowski model (we follow \cite{pn0} where
the reader may find more about this space, with proofs). It is the space
\(H^n(\RRR) \df \RRR^n\) equipped with the hyperbolic metric:
\begin{equation}\label{eqn:hyper}
d_h(x,y) \df \cosh^{-1}([x][y] - \scalar{x}{y}), \qquad \textup{where } [x]
\df \sqrt{1+\|x\|_2}
\end{equation}
(recall that \(\cosh^{-1}(t) = \log(t + \sqrt{t^2-1})\) for \(t \geq 1\)). It is
highly non-trivial that \(d_h\) is indeed a metric that is geodesic. Moreover,
\((H^n(\RRR),d_h)\) is absolutely homogeneous for all \(n\), and
\((H^1(\RRR),d_h)\) is isometric to \((\RRR,d_e)\).\par
Although the isotropy group of \(\Iso(H^n(\RRR),d_h)\) at the origin coincides
with \(O_n\), the whole isometry group is more complicated. Each isometry on
\(H^n(\RRR)\) is of the form:
\begin{equation}\label{eqn:trans-hyper}
\Phi(x) = U(x) + \Bigl([x]+\frac{\scalar{U(x)}{y}}{[y]+1}\Bigr)y
\end{equation}
where \(U \in O_n\) and \(y \in \RRR^n\) are arbitrary. (The pair \((U,y)\) is
uniquely determined by \(\Phi \in \Iso(H^n(\RRR))\).) More on hyperbolic
geometry the reader can find e.g. in the book \cite{ben}.

\subsection{All absolutely homogeneous locally compact connected metric spaces}
The following result, essentially due to Freudenthal \cite{fr1,fr2}, is the main
motivation for our investigations. A detailed sketch of the proof can be found
in \cite{pn0}.

\begin{thm}{Freu}
Every 3-point homogeneous locally compact connected metric space having more
than one point is isometric to a space of the form \((X,\omega \circ \rho)\)
where \((X,\rho)\) is one of \((\RRR^n,d_e)\ (n > 0)\), \((\SSS^n,d_s)\
(n > 0)\), \((H^n(\RRR),d_h)\ (n > 1)\), and \(\omega\) is a continuous
one-to-one map (defined on the range of the metric \(\rho\)) vanishing at 0 such
that \(\omega(x+y) \leq \omega(x)+\omega(y)\) for all (possible) \(x,y \geq
0\).\par
In particular, each such a space is absolutely homogeneous.
\end{thm}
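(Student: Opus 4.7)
The plan is to reduce to the classical Wang--Tits classification of 2-point homogeneous locally compact connected metric spaces, then isolate among the resulting models those that enjoy the stronger 3-point homogeneity. Since 3-point homogeneity implies 2-point homogeneity, the results of Wang \cite{wan} and Tits \cite{tit} (together with the homogeneity-theoretic sharpening of Freudenthal \cite{fr1,fr2}) identify \((X,d)\) up to a warping of the metric with one of the rank-one Riemannian symmetric spaces: a Euclidean space, a sphere, a hyperbolic space, or a projective space over \(\RRR\), over the complex numbers, over the quaternions, or the Cayley plane.

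The main hurdle will be eliminating the projective-space cases. I would do this by exhibiting, for every projective space of dimension at least \(2\), two ordered triples of points with pairwise equal distances but not related by any global isometry. The geometric obstruction is that the isotropy group at a fixed point, though acting transitively on the unit tangent sphere, fails to act transitively on ordered pairs of unit tangent vectors of a prescribed angle because of the additional Hopf-type fibration; this yields two non-congruent isosceles triangles with identical side lengths, contradicting 3-point homogeneity. Only the three families with spherically symmetric isotropy representation (Euclidean, sphere, real hyperbolic) survive.

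Once the three surviving base models are isolated, I would show that the metric \(d\) necessarily has the form \(d = \omega \circ \rho\) for some \(\rho \in \{d_e,d_s,d_h\}\). Indeed, by 2-point homogeneity \(d(x,y)\) depends only on the orbit of \((x,y)\) under \(\Iso(X,d)\); this group coincides with the Riemannian isometry group of the relevant model and acts transitively on pairs at each prescribed \(\rho\)-distance, so \(d(x,y) = \omega(\rho(x,y))\) for a unique function \(\omega\) on the range of \(\rho\). Continuity and injectivity of \(\omega\) follow from \(d\) being a metric that induces the same topology, and the subadditivity \(\omega(s+t)\leq\omega(s)+\omega(t)\) follows by applying the triangle inequality for \(d\) to three successive points on a \(\rho\)-geodesic.

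Absolute homogeneity of every space \((X,\omega\circ\rho)\) appearing in the conclusion is then automatic: since \(\omega\) is one-to-one on the range of \(\rho\), a map preserves \(\omega\circ\rho\) if and only if it preserves \(\rho\), so \(\Iso(X,\omega\circ\rho)=\Iso(X,\rho)\), and the latter acts absolutely homogeneously by the three preceding subsections. The serious technical work is concentrated in the second paragraph, where the distinction between the three ``spherical'' rank-one models and the projective ones has to be extracted from 3-point information alone, rather than from the full rank-one symmetric structure.
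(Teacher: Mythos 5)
The paper itself gives no proof of this theorem: it is attributed to Freudenthal, with a detailed sketch deferred to \cite{pn0}, and the surrounding discussion indicates exactly the route you take (Wang--Tits classification of two-point homogeneous locally compact connected spaces, elimination of the projective families by failure of 3-point homogeneity, then extraction of \(d=\omega\circ\rho\)). Your last two steps are fine: subadditivity from three collinear points on a \(\rho\)-geodesic, and absolute homogeneity of \((X,\omega\circ\rho)\) from injectivity of \(\omega\) plus absolute homogeneity of the three base models. But two steps in the middle are not justified as written.

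First, your mechanism for eliminating the projective spaces does not cover \(P^n(\RRR)\), which is precisely the case the paper singles out as the ``easiest'' two-point but not 3-point homogeneous space. The isotropy group of \(P^n(\RRR)\) at a point acts on the tangent space as the full orthogonal group \(O_n\), which \emph{is} transitive on ordered pairs of unit tangent vectors with a prescribed angle; there is no Hopf-type fibration to obstruct anything, so your stated criterion fails to exclude this family and the theorem's conclusion would then be false. The actual obstruction is different: representing three points by unit vectors \(x,y,z\in\RRR^{n+1}\), the distances determine only \(|\scalar{x}{y}|\), \(|\scalar{x}{z}|\), \(|\scalar{y}{z}|\), while the sign-flip invariant \(\scalar{x}{y}\scalar{x}{z}\scalar{y}{z}\) is a congruence invariant; taking all three inner products small and nonzero, both sign choices give positive definite Gram matrices, hence two non-congruent triangles with equal side lengths. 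Second, injectivity of \(\omega\) does \emph{not} follow from ``\(d\) being a metric inducing the same topology'': the metric \(\min(d_e,1)\) on \(\RRR^n\) induces the same topology and has non-injective \(\omega\) (and indeed is not 3-point homogeneous). The correct argument uses the identification \(\Iso(X,d)\subset\Iso(X,\rho)\) furnished by Wang--Tits: if \(\omega(s)=\omega(t)\), pick pairs at \(\rho\)-distance \(s\) and \(t\); they are \(d\)-equidistant, so two-point homogeneity of \((X,d)\) gives a \(d\)-isometry carrying one pair to the other, and since that isometry preserves \(\rho\) we conclude \(s=t\). Both defects are repairable, but as stated the projective-space elimination and the injectivity of \(\omega\) are genuine gaps.
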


All that we are going to present about absolutely homogeneous metric spaces
can actually be formulated (with no loss on transparency) for more general
structures which we now turn to.

\section{Graphs with edge-coloring}\label{sec:gec}

Recall that an (undirected) \emph{graph} is a pair \(G = (V,E)\) where \(V\) is
any (possible infinite) set (of \emph{vertices}) and \(E\) is a set of (exactly)
two-point subsets of \(V\) (called \emph{edges} of the graph \(G\)).
Equivalently, \(E\) can be considered as a symmetric binary relation on \(V\)
that is disjoint (as a subset of \(V \times V\)) with the diagonal of \(V\).
The graph \(G\) is \emph{full} if \(E\) contains all two-point subsets of \(V\)
(equivalently, if \(E = (V \times V) \setminus \Delta_V\) where \(\Delta_V \df
\{(v,v)\dd\ v \in V\}\)).\par
To speak about absolute homogeneity on graphs in a greater generality, we
introduce

\begin{dfn}{gec}
A \emph{graph with edge-coloring} (abbreviated: a \emph{g.e.c.}) is a pair
\(\gG = (G,\kappa)\) where \(G = (V,E)\) is a graph and \(\kappa\) is any
function defined on \(E\), with values in some (arbitrary) set \(Y\). The set of
all values of \(\kappa\) is called \emph{palette} of the g.e.c. \(\gG\), to be
denoted by \(\COL{\gG}\), and the values of \(\kappa\) are \emph{colors} of
\(\gG\).\par
Let \(\gG_j = (V_j,E_j,\kappa_j)\) for \(j=1,2\) be two g.e.c.'s.
A \emph{partial morphism} between \(\gG_1\) and \(\gG_2\) is any one-to-one
function \(\alpha\dd A \to V_2\) (where \(A \subset V_1\)) such that
\[\{\alpha(p),\alpha(q)\} \in E_2 \iff \{p,q\} \in E_1\] for any distinct \(p, q
\in A\) and moreover \(\kappa_2(\{\alpha(p),\alpha(q)\}) = \kappa_1(\{p,q\})\)
for \(\{p,q\} \in E\). If
\begin{equation}\label{eqn:aux1}
A = V_1,
\end{equation}
the above \(\alpha\) is called a \emph{morphism}. If, in addition to
\eqref{eqn:aux1}, \(\alpha(V_1) = V_2\), \(\alpha\) is an \emph{isomorphism},
and if \(\gG_2 = \gG_1\) (and still \(A = V_1\) and \(\alpha(V_1) = V_1\)), it
is called an \emph{automorphism}. As usual, we call two g.e.c.'s
\emph{isomorphic} if there exists an isomorphism between them. We will write
\(\gG \equiv \hH\) to express that \(\gG\) and \(\hH\) are isomorphic.
The automorphism group of a g.e.c. \(\gG\) will be denoted by \(\AUT{\gG}\).\par
Two g.e.c.'s \(\gG_1 = (G_1,\kappa_1)\) and \(\gG_2 = (G_2,\kappa_2)\) are said
to be \emph{structurally equivalent} if there exists a bijection \(\xi\dd
\COL{\gG_1} \to \COL{\gG_2}\) such that \((G_1,\xi \circ \kappa_1)\) and
\((G_2,\kappa_2)\) are isomorphic g.e.c.'s.
\end{dfn}

\begin{rem}{classic-edge-col}
In a classical graph theory, when speaking about \emph{edge coloring}, one
usually requires that all incident edges (that is, edges emanating from a common
vertex) have different colors (such coloring is sometimes called \emph{proper}).
We wish to underline---at the very beginning of our exposition---that our theory
of graphs with edge-coloring (presented mostly in this and the next two
sections) should not be considered as a part of graph theory, mainly because we
are mostly interested in infinite graphs as well as---as we will see in
a while---in full graphs and hardly ever edge-colorings will be proper. For
example, all graphs with proper edge-coloring that are absolutely homogeneous
can simply be classified (see \THM{skton-grp} below).
\end{rem}

To make study of g.e.c.'s simpler, we make the following basic observation:
introducing a new color, called \emph{null} (or \emph{blank}), that
\underline{never} appears as a ``real'' color of (any) g.e.c., each g.e.c.
\(\gG\) may be considered as a \textbf{full} graph with edge-coloring: it is
enough to paint all edges from outside \(\gG\) null to obtain a full g.e.c.
\(\tilde{\gG}\). Observe that an arbitrary function is a (resp. partial)
morphism between \(\gG_1\) and \(\gG_2\) iff it is a (partial) morphism between
\(\tilde{\gG}_1\) and \(\tilde{\gG}_2\) (however, analogous statement is false
for structural equivalence). That is why we restrict all our further
considerations only to full g.e.c.'s.\par
\textbf{From now on, all g.e.c.'s are full.} In particular, for any g.e.c.
\(\gG = (V,E,\kappa)\) the set \(E\) coincides with \((V \times V) \setminus
\Delta_V\) and thus we will skip it in the notation---we shall write briefly
\((V,\kappa)\) to denote a full g.e.c. To avoid confusions or misunderstandings,
we will use the abbreviation \emph{GEC} in place of ``a full g.e.c.'' and
capital letters \(\Gg, \Hh\) instead of their small versions \(\gG, \hH\)
etc.

\begin{exm}{graph}
Let \(G = (V,E)\) be a graph. Define \(\kappa\dd (V \times V) \setminus \Delta_V
\to \{0,1\}\) by the rule: \(\kappa(a,b) = 1\) iff \(\{a,b\} \in E\). In this
way we obtain a GEC \(\Gg = (V,\kappa)\) \emph{incuced} by \(G\). Make the same
construction for the complement \(H\) of \(G\) (that is, \(H\) has the same set
of vertices as \(G\) and a two-point subset of \(V\) is an edge of \(H\)
precisely when it is not an edge of \(G\)) to obtain a GEC \(\Hh\). Observe that
in general \(\Gg\) and \(\Hh\) are not isomorphic. However, they are always
structurally equivalent (it suffices to switch the colors \(0\leftrightarrow1\)
in \(\Hh\) to get \(\Gg\)).
\end{exm}

When dealing with partial morphisms of a single GEC, we can make the definition
of a GEC more convenient (and probably simpler). Again, adding to its palette
a ``pseudocolor'' (this time, say, \emph{transparent}), we can paint all its
vertices this pseudocolor and then, identifying the set \(V\) of all vertices
with the diagonal, we come to the following convenient definition of a GEC:

\begin{dfn}{GEC}
A \emph{GEC} \(\Gg\) is a set \(V\) equipped with a surjection \(\kappa\dd V
\times V \to Y\) (where \(Y\) is a totally arbitrary set) such that for all \(x,
y, z \in V\):
\begin{enumerate}[(GEC1)]
\item \(\kappa(x,y) = \kappa(z,z) \iff x = y\);
\item \(\kappa(y,x) = \kappa(x,y)\).
\end{enumerate}
The set \(\{\kappa(x,y)\dd\ x \neq y\}\) of colors of edges is called
the \emph{palette} of the GEC. The color of all vertices is called
the \emph{pseudocolor} of the GEC. The \emph{size} of \(\Gg\), denoted by
\(|\Gg|\), is the cardinality of the set \(V\) of vertices.\par
A \emph{sub-GEC} of \(\Gg = (V,\kappa)\) is a GEC of the form \((A,\lambda)\)
where \(A \subset V\) and \(\lambda = \kappa\restriction{A \times A}\). In
that case we shall write, for simplicity, \((A,\kappa)\) instead of
\((A,\lambda)\).\par
A GEC \(\Gg\) is said to be \emph{degenerate} if \(|\Gg| \leq 1\). Otherwise it
is called \emph{non-degenerate}. If \(|\Gg| = 0\), we call \(\Gg\) \emph{empty}.
\end{dfn}

Note that a function \(f\dd A \to W\) (where \(A \subset V\) and \(\Gg =
(V,\kappa)\) and \(\Hh = (W,\lambda)\) are two GEC's) is a partial morphism
(in the sense of \DEF{gec}) iff
\begin{equation}\label{eqn:morphism}
\kappa(a,b) = \lambda(f(a),f(b))
\end{equation}
for any two \underline{distinct} points \(a, b\) of \(A\). If, in addition,
\(\Hh = \Gg\), in the above condition we may omit the assumption that the points
\(a\) and \(b\) be distinct. However, in the context of GEC's defined in
\DEF{GEC}, it is more natural to require that \eqref{eqn:morphism} holds for all
(not only distinct) points \(a\) and \(b\). From now on, we follow this last
approach. Since in GEC's also vertices are painted, definition of structural
equivalence needs fixing: two GEC's \(\Gg_j = (V_j,\kappa_j)\) with pseudocolor
\(p_j\ (j=1,2)\) are structurally equivalent if there exists a bijection
\(\theta\dd \COL{\Gg_1} \cup \{p_1\} \to \COL{\Gg_2} \cup \{p_2\}\) such that
the GEC's \((V_1,\theta \circ \kappa_1)\) and \((V_2,\kappa_2)\) are isomorphic.
In this case we call \(\phi\) a \emph{structural isotopy}.\par
It is now easily seen that each metric space \((X,d)\) is a GEC (with \(\kappa =
d\) and the palette consisting of some positive reals). Partial morphisms,
morphisms and isomorphisms between metric spaces as GEC's coincide with,
respectively, partial isometries, isometric maps and isometries.\par
Now we are ready to define the main notion of this section.

\begin{dfn}{absGEC}
A GEC is said to be \emph{absolutely homogeneous} (abbreviated: \emph{AB-HO}) if
every its partial morphism extends to an automorphism.
\end{dfn}

Let us start from basic facts on AB-HO GEC's.

\begin{pro}{1}
\begin{enumerate}[\upshape(A)]
\item A GEC \(\Gg = (V,\kappa)\) is an AB-HO GEC iff the following condition
 holds:
 \begin{quote}
 Whenever \(\psi\dd A \to V\) is a partial morphism of \(\Gg\) with \(A \neq
 V\), there exists a vertex \(b \in V \setminus A\) and a partial morphism
 \(\phi\dd A \cup \{b\} \to V\) that extends \(\psi\).
 \end{quote}
\item A morphism of an AB-HO GEC is an automorphism.
\end{enumerate}
\end{pro}
\begin{proof}
Item (A) easily follows from Zorn's lemma and the argument presented below in
the proof of (B). To show (B), observe that if \(\phi\dd V \to V\) is a morphism
of a GEC \(\Gg\), then \(\phi^{-1}\dd \phi(V) \to V\) is a partial morphism and
it cannot be (essentially) extended to a partial morphism. Therefore, if \(\Gg\)
is AB-HO, then \(\phi(V) = V\) and we are done.
\end{proof}

\begin{exm}{homo}
Let \(\Gg = (V,\kappa)\) be an AB-HO GEC and \(u, v \in V\) be two arbitrary
vertices. Since all vertices are painted the same color, a unique function from
\(\{u\}\) onto \(\{v\}\) is a partial morphism and thus there exists
an automorphism \(\alpha \in \AUT{\Gg}\) such that \(\alpha(u) = v\). This shows
that AB-HO GEC's are homogeneous in a classical sense.
\end{exm}

We call a set \(A\) of vertices of a GEC \(\Gg = (V,\kappa)\)
\emph{monochromatic} if the edges between distinct points of \(A\) are painted
the same color; that is, if there is \(c\) such that \(\kappa(a,b) = c\) for all
distinct \(a, b \in A\). In the above situation we call \(A\) also
\(c\)-monochromatic.\par
The next result shows that the size of an AB-HO GEC is controlled by the size of
its palette. Part (b) of that result is an immediate consequence of \THM{r-e-r}.

\begin{thm}{card}
Let \(\Gg = (V,\kappa)\) be an AB-HO GEC with palette \(Y\).
\begin{enumerate}[\upshape(a)]
\item For any \(y \in Y\) there exists a natural number \(N = N(y)\) such that
 each \(y\)-monochromatic subset of \(V\) has cardinality not greater than
 \(N\).
\item If \(V\) is infinite, then \(\card(V) \leq 2^{\card(Y)}\). In particular,
 absolutely homogeneous metric spaces are of size at most \(2^{2^{\aleph_0}}\).
\item If \(Y\) is finite, so is \(V\).
\end{enumerate}
\end{thm}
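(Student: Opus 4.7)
The plan is to prove (a) first, and then derive (b) and (c) from it through the two halves of \THM{r-e-r} applied to the symmetric function $\kappa\dd V \times V \to Y \cup \{\text{pseudocolor}\}$. Granting (a), for (c) suppose $Y$ is finite but $V$ is infinite; the Ramsey half supplies an infinite subset $A \subset V$ on which $\kappa$ is constant on distinct pairs, and (GEC1) forces that constant value to lie in the palette $Y$, producing an infinite monochromatic set that contradicts (a). For (b), assuming $\card(V) > 2^{\card(Y)}$ (so $Y$ is infinite and hence $\card(Y \cup \{\text{pseudocolor}\}) = \card(Y)$), the Erd\H{o}s--Rado half yields a monochromatic $A \subset V$ with $\card(A) > \card(Y) \geq \aleph_0$, again contradicting (a). The metric-space bound is immediate from $\card(Y) \leq \card(\RRR) = 2^{\aleph_0}$.

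For (a) itself, suppose for contradiction that for some $y \in Y$ the $y$-monochromatic subsets of $V$ are of unbounded finite size. I first bootstrap this into an honest infinite $y$-monochromatic set $U = \{u_1, u_2, \ldots\} \subset V$ by induction on $n$: starting from a $y$-edge, given $\{u_1, \ldots, u_n\}$ $y$-monochromatic, I pick any $y$-monochromatic $B \subset V$ with $\card(B) \geq n+1$, select distinct $b_1, \ldots, b_{n+1} \in B$, observe that $b_i \mapsto u_i$ (for $i \leq n$) is a partial morphism (both sides $y$-monochromatic, injectivity clear), extend it to an automorphism $\phi$ of $\Gg$ via AB-HO, and set $u_{n+1} := \phi(b_{n+1})$.

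The central step is then showing that an infinite $y$-monochromatic $A \subset V$ is itself impossible. If $A = V$, then $\Gg$ has palette $\{y\}$, so every self-injection $V \to V$ is automatically a morphism; an infinite $V$ admits non-surjective such maps, contradicting item (B) of \PRO{1}. If $A \subsetneq V$, I pick a non-surjective injection $\sigma\dd A \to A$ (available because $A$ is infinite), say missing $a^* \in A$, view $\sigma$ as a partial morphism of $\Gg$ (its domain and range are both $y$-monochromatic), and extend to an automorphism $\phi$ of $\Gg$ by AB-HO. Then $w := \phi^{-1}(a^*)$ lies outside $A$ because $a^* \notin \sigma(A) = \phi(A)$, and for every $a \in A$ one computes
\[
\kappa(w, a) \;=\; \kappa(\phi(w), \phi(a)) \;=\; \kappa(a^*, \sigma(a)) \;=\; y,
\]
where the last equality holds because $a^*$ and $\sigma(a)$ are distinct elements of the $y$-monochromatic set $A$. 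Hence $A \cup \{w\}$ is a strictly larger $y$-monochromatic subset of $V$. Iterating this enlargement transfinitely---at successor stages adjoining a new vertex, at limit stages taking unions---through $\lambda^+$ stages (where $\lambda := \card(V)$) would produce $\lambda^+$ pairwise distinct vertices of $V$, which is absurd.

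The main obstacle is the second step of (a). The naive temptation is to construct by hand a non-surjective $\kappa$-preserving self-injection of $V$---for instance by pre-composing the $A$-shift with the identity on $V \setminus A$---but this fails in general because the coloring $\kappa$ may couple $A$ with $V \setminus A$ in completely arbitrary ways. The trick is to invert the logic: let AB-HO produce the extending automorphism $\phi$ for us and then read off, from $\phi^{-1}(a^*)$, a fresh vertex $w$ that is forced to be $y$-connected to \emph{all} of $A$---not merely to some countable sub-enumeration. The key point is that the omitted vertex $a^*$ lies in $A$ itself, so the single fact that $A$ is $y$-monochromatic collapses $\kappa(a^*, \sigma(a))$ to $y$ for every $a \in A$ at once, unlocking the transfinite iteration.
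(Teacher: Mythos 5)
Your proof is correct and follows essentially the same route as the paper: both arguments hinge on extending a non-surjective self-injection of a monochromatic set to a global automorphism in order to manufacture a strictly larger monochromatic set, and then deduce (b) and (c) from the Erd\H{o}s--Rado/Ramsey dichotomy. The only differences are cosmetic---you build the infinite monochromatic set by adjoining one point at a time and close the argument by transfinite iteration, where the paper uses nested preimages and a Zorn-maximal monochromatic set.
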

\begin{proof}
To prove (a), we argue by a contradiction. Assume there are finite
\(y\)-monochromatic sets \(A_1,A_2,\ldots\) such that \(\card(A_n) <
\card(A_{n+1})\) for all \(n\). Since \(\Gg\) is AB-HO, there are automorphisms
\(\phi_n\dd V \to V\) such that \(\phi_n(A_n) \subset A_{n+1}\). Then the sets
\(B_n \df (\phi_n \circ \ldots \phi_1)^{-1}(A_{n+1})\) are \(y\)-monochromatic
as well, and \(\card(B_n) = \card(A_{n+1})\). What is more, since
\(\phi_n^{-1}(A_{n+1}) \supset A_n\), we obtain \(B_n \supset \ldots \supset B_1
\supset A_1\). So, the set \(C \df \bigcup_{n=1}^{\infty} B_n\) is infinite and
\(y\)-monochromatic. By Zorn's lemma, there exists a maximal \(y\)-monochromatic
set \(D \subset V\) that contains \(C\). Now let \(\psi\dd D \to D\) be any
one-to-one function that is not surjective. Since it is a partial morphism, it
extends to an automorhism \(\mu \in \AUT{\Gg}\). But then \(\mu^{-1}(D)
\supsetneq D\) is \(y\)-monochromatic, which contradicts maximality of \(D\)
and finishes the proof of (a).\par
Now assume \(V\) is infinite. If \(V\) had size greater than \(2^{\card(Y)}\),
it would follow from \THM{r-e-r} that \(V\) contains an infinite monochromatic
set, which is impossible by (a). So, the whole conclusion of (b) holds. Since
part (c) follows from (b), the proof is finished.
\end{proof}

The above result implies that AB-HO GEC's are finite iff their palettes are
finite. In this paper we are mainly interested in infinite AB-HO GEC's. In
\COR{2toY} below we will show that for arbitrary infinite set \(Y\) there exists
an AB-HO GEC \((V,\kappa)\) whose palette coincides with \(Y\) and moreover
\(\card(V) = 2^{\card(Y)}\). These examples will include AB-HO metric spaces
(for arbitrary bounded infinite \(Y \subset (0,\infty)\)---consult
\THM{exist-metric} below). So, part (b) of the above result cannot be improved
in general.\par
The following result gives us a deeper insight into the structure of AB-HO
GEC's. It will serve as a fundamental tool in our whole theory.

\begin{thm}{embed}
Let \(\Hh = (W,\lambda)\) be an AB-HO GEC. For a GEC \(\Gg = (V,\kappa)\) \tfcae
\begin{enumerate}[\upshape(i)]
\item there exists a morphism from \(\Gg\) into \(\Hh\);
\item for every finite non-empty sub-GEC \(\Ff\) of \(\Gg\) there exists
 a morphism from \(\Ff\) into \(\Hh\).
\end{enumerate}
\end{thm}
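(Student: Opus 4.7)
The implication (i)$\Rightarrow$(ii) is immediate: the restriction of any morphism $\Gg\to\Hh$ to a finite non-empty sub-GEC is itself a morphism into $\Hh$. The substantive direction is (ii)$\Rightarrow$(i), which I would prove by transfinite induction on the cardinality of $V$. If $V$ is finite and non-empty, the conclusion is just (ii) applied to $\Gg$ itself; the case $V=\varempty$ is trivial.

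For the inductive step let $\kappa=|\Gg|$ be an infinite cardinal, and assume the conclusion holds for every GEC of size strictly less than $\kappa$ which satisfies~(ii). Fix a well-ordering $V=\{v_\alpha\dd \alpha<\kappa\}$ and set $V_\alpha\df\{v_\beta\dd \beta<\alpha\}$, so that $|V_\alpha|=|\alpha|<\kappa$ for every $\alpha<\kappa$. Since the hypothesis (ii) is inherited by each sub-GEC of $\Gg$, the induction hypothesis furnishes, for every $\alpha<\kappa$, a morphism $\psi_\alpha\dd V_\alpha\to W$. I would then construct by transfinite recursion a coherent chain of morphisms $\phi_\alpha\dd V_\alpha\to W$ (coherent meaning $\phi_\beta\subset\phi_\alpha$ whenever $\beta<\alpha$), after which $\phi\df\bigcup_{\alpha<\kappa}\phi_\alpha\dd V\to W$ is the desired morphism. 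Start from $\phi_0=\varempty$. At a limit ordinal $\alpha$ put $\phi_\alpha\df\bigcup_{\beta<\alpha}\phi_\beta$; this is automatically a morphism because its morphism property is tested on pairs of vertices, each of which already lies in some $V_\beta$. At a successor stage $\alpha+1$, both $\phi_\alpha$ and $\psi_{\alpha+1}\restriction V_\alpha$ are morphisms $V_\alpha\to W$, hence the composite
\[
\eta\df\phi_\alpha\circ(\psi_{\alpha+1}\restriction V_\alpha)^{-1}\dd\psi_{\alpha+1}(V_\alpha)\to\phi_\alpha(V_\alpha)
\]
is a partial morphism of $\Hh$ (between two sub-GECs of $\Hh$ which realise two isomorphic copies of $V_\alpha$). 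Because $\Hh$ is absolutely homogeneous, $\eta$ extends to an automorphism $\tau$ of $\Hh$, and then $\phi_{\alpha+1}\df\tau\circ\psi_{\alpha+1}$ is a morphism $V_{\alpha+1}\to W$ that, by construction, agrees with $\phi_\alpha$ on $V_\alpha$.

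The essential difficulty, and the only place where the absolute homogeneity of $\Hh$ enters, is this successor step: the morphism $\psi_{\alpha+1}$ provided by the inductive hypothesis generally does not restrict to $\phi_\alpha$ on $V_\alpha$, and there is no way to repair the discrepancy working inside $\Gg$ alone. What absolute homogeneity of $\Hh$ buys us is precisely a mechanism to transport one isomorphic copy of $V_\alpha$ inside $W$ onto the other via a global automorphism of $\Hh$, thereby gluing the locally available data from (ii) into a single morphism $\Gg\to\Hh$.
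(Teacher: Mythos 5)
Your proof is correct and follows essentially the same route as the paper's: an outer transfinite induction on the cardinality, an inner transfinite recursion along a well-order whose proper initial segments have strictly smaller size, and at each successor stage the use of absolute homogeneity of \(\Hh\) to transport the copy \(\psi_{\alpha+1}(V_\alpha)\) onto \(\phi_\alpha(V_\alpha)\) by a global automorphism so that the locally given morphisms can be glued coherently. The paper packages the same idea slightly differently (via an initial well-order and the assignment \(\theta(a)\df\xi_a(a)\)), but the mechanism is identical.
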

\begin{proof}
Of course, we only need to show that (i) follows from (ii). To this end, it is
sufficient to prove, by transfinite induction on \(\mM\), that for every
cardinal \(\mM\):
\begin{itemize}
\item[\((\OPN{Emb})_{\mM}\)] \textit{each sub-GEC \(\Gg'\) of \(\Gg\) with
 \(|\Gg'| \leq \mM\) admits a morphism into \(\Hh\).}
\end{itemize}
So, fix \(\mM\) and assume \((\OPN{Emb})_{\alpha}\) holds for all cardinals
\(\alpha < \mM\). By (ii), we may and do assume that \(\mM\) is infinite.
Take any sub-GEC \((A,\kappa)\) of \(\Gg\) of size not greater than \(\mM\).
If \(\card(A) < \mM\), it suffices to apply the transifinite induction
assumption. So, assume \(\card(A) = \mM\) and consider any initial well-order
\(\ORD\) on \(A\); that is, \(\ORD\) is a well-order on \(A\) such that for any
\(a \in A\) the set \(I_a \df \{x \in A\dd\ x \ORD a\}\) has size less than
\(\mM\) (here we use our assumption that \(\mM\) is infinite). In particular,
thanks to the transfinite induction assumption, for any \(a \in A\) there is
a morphism \(\phi_a\dd I_a \to W\) of \((I_a,\kappa)\) into \(\Hh\). Now, using
again trasfinite induction (on \((A,\ORD)\)), we will construct a family
\(\{\xi_a\dd\ a \in A\}\) of morphisms \(\xi_a\dd I_a \to W\) such that
\begin{equation}\label{eqn:aux2}
\forall c \in I_b\dd \quad \xi_b\restriction{I_c} = \xi_c \qquad (b \in A).
\end{equation}
(Note that having such a collection, we finish the proof by defining a morphism
\(\theta\dd A \to W\) by: \(\theta(a) \df \xi_a(a)\) for \(a \in A\).)\par
When \(a\) is the least element of \(A\) (with respect to \(\ORD\)), we define
\(\xi_a\) as \(\phi_a\). Now assume \(a \in A\) is an element different from
the least one and we have already defined morphisms \(\xi_b\) for all \(b \ORD
a\) distinct from \(a\), such that \eqref{eqn:aux2} holds for all such \(b\).
This enables us to define a morphism \(\mu\dd I_a \setminus \{a\} \to W\) by
the rule: \(\mu(b) \df \xi_b(b)\). Denote for a while \(Z \df \mu(I_a \setminus
\{a\})\) and observe that \(\nu \df \phi_a \circ \mu^{-1}\dd Z \to W\) is
a morphism. Since \(\Hh\) is AB-HO, there is an automorphism \(\eta \in
\AUT{\Hh}\) that extends \(\nu\). Then \(\mu = \eta^{-1} \circ
\phi_a\restriction{I_a \setminus \{a\}}\) and thus \(\xi_a \df \eta^{-1} \circ
\phi_a\dd I_a \to W\) is a morphism extending \(\mu\) (and, consequently, all
\(\xi_b\)).
\end{proof}

The above result motivates us to introduce the following

\begin{dfn}{skelet}
\emph{Skeleton of an AB-HO GEC \(\Gg = (V,\kappa)\)} is the collection
\(\FIN{\Gg}\) of all its finite non-empty sub-GEC's. That is:
\[\FIN{\Gg} \df \{(S,\kappa)\dd\ S \subset V,\ 0 < \card(S) < \aleph_0\}.\]
All classes of the form \(\FIN{\Gg}\) (where \(\Gg\) runs over all AB-HO GEC's)
are called briefly \emph{skeletons}.\par
A class \(\ffF\) of GEC's is said to be a \emph{finitary structure} if it
satisfies all the following properties:
\begin{itemize}
\item \(\ffF\) consists of finite (non-empty) GEC's;
\item \(\ffF\) is \emph{hereditary}; that is, it contains all non-empty
 sub-GEC's of all \(\Gg \in \ffF\);
\item \(\ffF\) is \emph{closed under isomorphic copies}; that is, if \(\Gg \in
 \ffF\) and \(\Gg \equiv \Hh\), then \(\Hh \in \ffF\) as well.
\end{itemize}
\emph{Palette} of \(\ffF\), to be denoted by \(\COL{\ffF}\), consists of all
colors which appear as the color of an edge of a GEC belonging to  \(\ffF\).
The class \(\ffF\) is called \emph{proper} if its palette is a set (not a class)
and all vertices of all GEC's from \(\ffF\) are painted the same pseudocolor.
\end{dfn}

As a rule, we will always consider skeletons as finitary structures.\par
According to \THM{embed}, skeletons of AB-HO GEC's are certain finitary
structures that completely determine the GEC's they come from. We can formally
express this determination as follows.

\begin{cor}{iso-AB-HO}
Two AB-HO GEC's \(\Gg = (V,\kappa)\) and \(\Hh = (W,\lambda)\) are isomorphic
iff \(\FIN{\Gg} = \FIN{\Hh}\).
\end{cor}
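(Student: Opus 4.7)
The forward implication is essentially bookkeeping: any isomorphism $\tau\dd \Gg \to \Hh$ restricts, on any finite $S \subset V$, to an isomorphism between $(S,\kappa)$ and $(\tau(S),\lambda)$, and conversely $\tau^{-1}$ transfers finite sub-GEC's of $\Hh$ to isomorphic finite sub-GEC's of $\Gg$; since $\FIN{\Gg}$ and $\FIN{\Hh}$ are meant as finitary structures (so closed under isomorphism), this shows $\FIN{\Gg} = \FIN{\Hh}$.

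For the non-trivial direction, assume $\FIN{\Gg} = \FIN{\Hh}$. The plan is to apply \THM{embed} in both directions to produce morphisms $\phi\dd \Gg \to \Hh$ and $\psi\dd \Hh \to \Gg$, and then to compose them and invoke \PRO{1}(B) to upgrade to an isomorphism. In detail: since every finite non-empty sub-GEC of $\Gg$ lies in $\FIN{\Gg} = \FIN{\Hh}$, each such sub-GEC admits (up to an isomorphic copy) an inclusion into $\Hh$, so condition (ii) of \THM{embed} holds; hence (i) gives a morphism $\phi\dd V \to W$. By a symmetric argument with the roles of $\Gg$ and $\Hh$ reversed one obtains a morphism $\psi\dd W \to V$.

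The composition $\psi \circ \phi\dd V \to V$ is a morphism of the AB-HO GEC $\Gg$ into itself, hence by \PRO{1}(B) it is an automorphism of $\Gg$; in particular it is onto $V$, which forces $\psi$ to be surjective. Analogously $\phi \circ \psi \in \AUT{\Hh}$ is onto $W$, forcing $\phi$ to be surjective. Thus $\phi$ is a surjective morphism, i.e. a bijective partial morphism defined on all of $V$, which is precisely an isomorphism $\Gg \to \Hh$.

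I do not expect any real obstacle: the one subtlety to be careful about is merely the interpretation of the equality $\FIN{\Gg} = \FIN{\Hh}$ (literal equality of subset collections vs.\ equality of classes closed under isomorphism). Once this is read in accordance with the convention stated right after \DEF{skelet}—namely, that skeletons are always viewed as finitary structures—the argument is just \THM{embed} applied twice followed by \PRO{1}(B) to eliminate the non-surjectivity of the morphisms produced.
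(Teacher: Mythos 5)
Your argument is correct and coincides with the paper's own proof: both directions use \THM{embed} to obtain morphisms $\phi\dd V\to W$ and $\psi\dd W\to V$, then observe that $\psi\circ\phi$ and $\phi\circ\psi$ are automorphisms by \PRO{1}, which forces surjectivity of both maps. Your added care about reading $\FIN{\Gg}=\FIN{\Hh}$ as an equality of classes closed under isomorphic copies is a reasonable clarification of the convention the paper states after \DEF{skelet}, but the substance is identical.
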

\begin{proof}
Assuming the skeletons coincide, it follows from \THM{embed} that there are
morphisms \(\phi\dd V \to W\) and \(\psi\dd W \to V\). Then \(\psi \circ \phi\)
and \(\phi \circ \psi\) are morphisms as well and thus, thanks to \PRO{1}, they
are automorphisms, which implies that both \(\phi\) and \(\psi\) are
isomorphisms.
\end{proof}

It is also an immediate consequence of \THM{embed} that an AB-HO GEC \(\Gg\) is
isomorphic to a sub-GEC of an AB-HO GEC \(\Hh\) iff \(\FIN{\Gg} \subset
\FIN{\Hh}\).\par
Our nearest aim is to give a characterisation of classes \(\ffF\) of finite
GEC's such that
\begin{equation}\label{eqn:fin}
\ffF = \FIN{\Gg}
\end{equation}
for some AB-HO GEC \(\Gg\). When \eqref{eqn:fin} holds, we say that \(\ffF\)
\emph{generates} \(\Gg\). (By \COR{iso-AB-HO} we know that \(\Gg\) is uniquely
determined, up to isomorphism, by the above equation.)\par
We begin with a basic observation that each skeleton is a proper class. To state
some other properties of skeletons, we introduce the following

\begin{dfn}{fin-gen}
Let \(\ffF\) be a finitary structure.
\begin{itemize}
\item By \(\grp{\ffF}\) we denote the class of all GEC's \(\Gg\) such that each
 non-empty finite sub-GEC of \(\Gg\) belongs to \(\ffF\). We call the class
 \(\grp{\ffF}\) \emph{generated} by \(\ffF\).
\item If \(\Gg \in \grp{\ffF}\), we say that \(\Gg\) is \emph{finitely
 represented} in \(\ffF\).
\item We denote by \(\grp{\ffF}_{\omega}\) the class of all \textbf{countable}
 members of \(\grp{\ffF}\).
\item More generally, for any infinite cardinal \(\mM\), \(\grp{\ffF}_{\mM}\)
 stands for the class of all members of \(\grp{\ffF}\) that are of size not
 greater than \(\mM\). (So, \(\grp{\ffF}_{\aleph_0} = \grp{\ffF}_{\omega}\).)
\item \(\ffF\) satisfies \emph{monochromatic boundedness principle}, \MBP\ for
 short, if for any color \(c \in \COL{\ffF}\) there exists a positive integer
 \(\mu = \mu(c)\) such that any \(c\)-monochromatic member of \(\ffF\) has size
 not greater than \(\mu\).
\end{itemize}
\end{dfn}

Note that classes of the form \(\grp{\ffF}\) generated by finitary structures
\(\ffF\) are always hereditary and closed under isomorphic copies. Classes
generated by skeletons may simply be described (provided we know the AB-HO GEC's
generated by them)---see item (A) in the following result.

\begin{pro}{fin-gen-AB-HO}
Let \(\ffF\) be the skeleton of an AB-HO GEC \(\Gg\).
\begin{enumerate}[\upshape(A)]
\item For a GEC \(\Hh\) \tfcae
 \begin{enumerate}[\upshape(i)]
 \item \(\Hh \in \grp{\ffF}\);
 \item \(\Hh\) is isomorphic to a sub-GEC of \(\Gg\).
 \end{enumerate}
\item If \(\Hh \in \grp{\ffF}\) and \(\Gg\) admits a morphism into \(\Hh\), then
 \(\Hh\) is isomorphic to \(\Gg\).
\item \(\Gg\) is a unique (up to isomorphism) member of \(\grp{\ffF}\) that
 contains isomorphic copies of all members of that class.
\item \(\ffF\) is proper and satisfies \MBP.
\end{enumerate}
\end{pro}
\begin{proof}
Item (A) follows from \THM{embed}, (C) is a consequence of the combination of
(A) and (B), the first part of (D) is trivial and the second is a repetition of
item (a) in \THM{card}. Therefore we only discuss how to show (B). The proof
goes more or less the same way as that presented in \COR{iso-AB-HO}: if
\(\phi\dd \Gg \to \Hh\) is a morphism, then take (thanks to (A)) any morphism
\(\psi\dd \Hh \to \Gg\) and observe that \(\psi \circ \phi\dd \Gg \to \Gg\) is
necessarily an automorphism (by \PRO{1}). So, \(\psi\) is surjective and we are
done. 
\end{proof}

As we will see in the proof of \THM{skeleton} below, the following result will
be a crucial tool.

\begin{lem}{bdd}
Let \(\ffF\) be a proper finitary structure. If \(\ffF\) satisfies \MBP, then
there is a cardinal number \(\mM\) such that \(|\Gg| \leq \mM\) for any GEC
\(\Gg\) finitely represented in \(\ffF\).
\end{lem}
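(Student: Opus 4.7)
The plan is to reduce the statement directly to \THM{r-e-r}: if $|\Gg|$ were too large for some $\Gg \in \grp{\ffF}$, that theorem would produce a large monochromatic subset of $\Gg$, each of whose finite subsets would lie in $\ffF$ by hereditariness and yield $c$-monochromatic GEC's of unbounded size, contradicting \MBP.

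First I would set $Y \df \COL{\ffF}$, which is a genuine set because $\ffF$ is proper, and let $p$ denote the common pseudocolor of the vertices of every $\Gg \in \ffF$, and hence of every $\Gg \in \grp{\ffF}$. Axiom (GEC1) forces $p \notin Y$, while (GEC2) makes the map $\kappa\dd V \times V \to Y \cup \{p\}$ attached to any $\Gg = (V,\kappa) \in \grp{\ffF}$ symmetric, so \THM{r-e-r} applies to it verbatim. I then treat the cases $Y$ infinite and $Y$ finite separately.

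If $Y$ is infinite, I set $\mM \df 2^{\card(Y)}$ and assume, toward a contradiction, that some $\Gg = (V,\kappa) \in \grp{\ffF}$ satisfies $\card(V) > \mM$; since $Y$ is infinite we have $\card(Y \cup \{p\}) = \card(Y)$, so the Erd\H{o}s-Rado clause of \THM{r-e-r} supplies $A \subset V$ with $\card(A) > \card(Y)$ and a value $c$ with $\kappa(a,b) = c$ for all distinct $a, b \in A$. Since $|A| \geq 2$, axiom (GEC1) excludes $c = p$, hence $c \in Y$; the set $A$ is then infinite and $c$-monochromatic, so by hereditariness every finite subset of $A$ is a $c$-monochromatic member of $\ffF$ of unbounded size, contradicting \MBP. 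If instead $Y$ is finite, the Ramsey clause of \THM{r-e-r} rules out $V$ being infinite in exactly the same way, and a uniform finite upper bound is provided by the classical finite Ramsey number $\mM \df R(\mu(c_1)+1,\dots,\mu(c_k)+1)$, where $\{c_1,\dots,c_k\}$ enumerates $Y$: any $\Gg = (V,\kappa) \in \grp{\ffF}$ with $\card(V) \geq \mM$ would contain a $c_i$-monochromatic subset of size $\mu(c_i)+1$, which by hereditariness would belong to $\ffF$ and violate \MBP.

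The only subtle point is to verify that the monochromatic color produced by \THM{r-e-r} is an actual edge-color from $Y$ rather than the pseudocolor $p$; this is immediate from (GEC1) as soon as the returned set contains two distinct points, which both clauses of the theorem ensure. Apart from that, the argument is essentially a direct application of Erd\H{o}s-Rado (infinite palette) and Ramsey (finite palette) combined with hereditariness of $\ffF$, so I do not anticipate a real obstacle beyond this case-splitting.
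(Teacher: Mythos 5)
Your proof is correct and follows essentially the same route as the paper, which likewise deduces the bound \(\mM = \max(\aleph_0,2^{\nN})\) (with \(\nN\) the cardinality of the palette) from \THM{r-e-r} together with \MBP, exactly as in the proof of item (b) of \THM{card}. Your extra refinement via finite Ramsey numbers in the finite-palette case is correct but not needed, since \(\mM = \aleph_0\) already bounds all the (necessarily finite) objects there.
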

\begin{proof}
Denote the cardinality of the palette of \(\ffF\) by \(\nN\). It follows from
\THM{r-e-r} that the assertion holds for \(\mM \df \max(\aleph_0,2^{\nN})\) (cf.
the proof of item (b) of \THM{card}).
\end{proof}

Now we pass to amalgamation---another important for us notion, quite well
studied in category theory. To define this notion for GEC's, we also need to
introduce \emph{incomplete} GEC's which we now turn to.

\begin{dfn}{inco}
An \emph{incomplete} GEC's is a quadruple \((V,a,b,\kappa)\) where \(V\) is
a set, \(a\) and \(b\) are two distinct points of \(V\) and \(\kappa\) is any
function defined on \((V \times V) \setminus \{(a,b),(b,a)\}\) (with values in
an arbitrary set) that satisfies (in its domain) conditions (GEC1)--(GEC2) from
\DEF{GEC}. In other words, an incomplete GEC is a ``GEC with a single unpainted
edge'' (in the above situation the unpainted edge coincides with
\(\{a,b\}\)).\par
Let \(\Gg' = (V,a,b,\kappa')\) be an incomplete GEC. By a \emph{completion} of
\(\Gg'\) we mean any GEC \(\Gg = (V,\kappa)\) such that \(\kappa\) extends
\(\kappa'\). In this situation we say that \(\Gg'\) is \emph{completed} to
\(\Gg\). In other words, to complete \(\Gg'\) we only need to paint the edge
\(\{a,b\}\) any color different from the pseudocolor of \(\Gg'\) (more formally,
we need to assign any, but common, value different from \(\kappa'(a,a)\) to both
\((a,b)\) and \((b,a)\)). This new color may not belong to the palette of
\(\Gg'\).
\end{dfn}

Note that if \(\Gg = (V,a,b,\kappa)\) is an incomplete GEC, then both
\begin{equation}\label{eqn:a-b}
\begin{cases}\Gg_a \df (V \setminus \{a\},\kappa)&\\
\Gg_b \df (V \setminus \{b\},\kappa)&\end{cases}
\end{equation}
are GEC's. This observation enables us to introduce the following two
definitions.

\begin{dfn}{symm}
An incomplete GEC \(\Gg = (V,a,b,\kappa)\) is said to be \emph{affiliated} with
a class \(\ffF\) of GEC's if both the GEC's \(\Gg_a\) and \(\Gg_b\) defined in
\eqref{eqn:a-b} belong to \(\ffF\). \(\Gg\) is said to be \emph{symmetric} if
the function \(s_{\Gg}\dd V \setminus \{a\} \to V \setminus \{b\}\) given by
\begin{equation}\label{eqn:aux3}
s_{\Gg}(x) \df \begin{cases}x & x \neq b\\a & x = b\end{cases}
\end{equation}
is an isomorphism between \(\Gg_a\) and \(\Gg_b\). If \(\Gg\) is not symmetric,
it is called \emph{non-symmetric}. Observe that \(\Gg = (V,a,b,\kappa)\) is
non-symmetric iff there exists a vertex \(c \in V \setminus \{a,b\}\) such that
\(\kappa(a,c) \neq \kappa(b,c)\).
\end{dfn}

\begin{dfn}{amalgam}
Let \(\ffF\) be a class of GEC's that is hereditary and closed under isomorphic
copies. We say \(\ffF\) has the \emph{amalgamation property} (briefly: has \AP)
if every non-symmetric incomplete GEC affiliated with \(\ffF\) may be completed
to a GEC that belongs to \(\ffF\).\par
If \(\ffF\) is a finitary structure, we say that:
\begin{itemize}
\item \(\ffF\) has the \emph{countable amalgamation property} (briefly: has
 \AP[\(\omega\)-]) if the class \(\grp{\ffF}_{\omega}\) has \AP;
\item \(\ffF\) has the \emph{\(\mM\)-amalgamation property} (briefly: has
 \AP[\(\mM\)-]), where \(\mM\) is an infinite cardinal, if the class
 \(\grp{\ffF}_{\mM}\) has \AP;
\item \(\ffF\) has the \emph{absolute amalgamation property} (briefly: has
 \AP[\(*\)-]) if the class \(\grp{\ffF}\) has \AP.
\end{itemize}
A class \(\ffF\) of GEC's is called a \textbf{\emph{skeletoid}} if all
the following conditions are fulfilled:
\begin{enumerate}[(SK1)]
\item \(\ffF\) is a finitary structure;
\item \(\ffF\) is proper;
\item \(\ffF\) satisfies \MBP;
\item \(\ffF\) has \AP.
\end{enumerate}
\emph{Finitary structure of a GEC \(\Gg\)} is, by definition, the class
\(\FIN{\Gg}\) of all finite (non-empty) sub-GEC's of \(\Gg\).
\end{dfn}

\begin{rem}{non-sym}
If we consider, in the condition defining \AP\ for \(\ffF\),
a \textbf{symmetric} incomplete GEC \(\Gg = (V,a,b,\kappa)\) affiliated with
\(\ffF\), it may happen (even for skeletons \(\ffF\)!) that there is no
completion of \(\Gg\) that belongs to \(\ffF\). However, its symmetry implies
that for each vertex \(v\) different from both \(a\) and \(b\) both
the edges---\(\{v,a\}\) and \(\{v,b\}\)---are painted the same color, and thus
we can glue (identify) the points \(a\) and \(b\) to obtain a correctly defined
GEC that coincides with both \(\Gg_a\) and \(\Gg_b\) (cf. \eqref{eqn:a-b}) and,
consequently, belongs to \(\ffF\). Such a construction may also be considered as
a (reduced, but still fully correct) amalgamation.
\end{rem}

We will discuss in more detail skeletoids and give examples illustrating
differences between variants of \AP\ introduced in the last definition after
the proof of the next theorem, which is one of the main results of the paper.
It gives a characterisation of skeletons:

\begin{thm}{skeleton}
A class is a skeleton iff it is a skeletoid with \AP[\(*\)-].
\end{thm}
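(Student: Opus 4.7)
The plan is to treat the two implications separately, with the forward direction following quickly from \THM{embed} and \PRO{fin-gen-AB-HO}, and the reverse direction requiring a Fra\"{\i}ss\'{e}-style construction via Zorn's lemma together with a key one-vertex extension lemma.

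\emph{Forward direction.} Suppose $\ffF=\FIN{\Gg}$ for an AB-HO GEC $\Gg$. Item (D) of \PRO{fin-gen-AB-HO} gives that $\ffF$ is proper and satisfies \MBP, and $\ffF$ is a finitary structure by definition, so the only skeletoid axiom left to verify is \AP[\(*\)-]. Given a non-symmetric incomplete GEC $\Kk=(V,a,b,\kappa)$ affiliated with $\grp{\ffF}$, item (A) of \PRO{fin-gen-AB-HO} produces morphisms $\phi_a\dd V\setminus\{a\}\to V(\Gg)$ and $\phi_b\dd V\setminus\{b\}\to V(\Gg)$ embedding $\Kk_a$ and $\Kk_b$. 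The map $\phi_a\circ\phi_b^{-1}$ on $\phi_b(V\setminus\{a,b\})$ is a partial morphism of $\Gg$, so absolute homogeneity extends it to an automorphism $\tilde\tau\in\AUT{\Gg}$; replacing $\phi_b$ by $\tilde\tau\circ\phi_b$ we may assume $\phi_a$ and $\phi_b$ agree on $V\setminus\{a,b\}$. Define $\Phi\dd V\to V(\Gg)$ by $\Phi(v)=\phi_a(v)$ for $v\neq a$ and $\Phi(a)=\phi_b(a)$; then $\Phi$ restricts to $\phi_a$ on $V\setminus\{a\}$ and to $\phi_b$ on $V\setminus\{b\}$, hence preserves every painted edge of $\Kk$. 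Non-symmetry supplies $c\in V\setminus\{a,b\}$ with $\kappa(a,c)\neq\kappa(b,c)$, whereupon these two color-preservations yield $\kappa_\Gg(\Phi(a),\Phi(c))\neq\kappa_\Gg(\Phi(b),\Phi(c))$ and thus $\Phi(a)\neq\Phi(b)$; painting $\{a,b\}$ with the color $\kappa_\Gg(\Phi(a),\Phi(b))$ completes $\Kk$ to a GEC that embeds into $\Gg$ via $\Phi$ and therefore lies in $\grp{\ffF}$ by item (A) of \PRO{fin-gen-AB-HO}.

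\emph{Reverse direction, setup.} Suppose $\ffF$ is a skeletoid with \AP[\(*\)-]. \LEM{bdd} supplies a cardinal $\mM$ bounding the size of every member of $\grp{\ffF}$; fix an ambient set $\Omega$ of cardinality $\mM^+$ and let $\Xx$ denote the collection of all $\Hh\in\grp{\ffF}$ whose vertex set is a subset of $\Omega$. Inclusion of GECs turns $\Xx$ into a poset in which chains have upper bounds (the pointwise union is a GEC because the common pseudocolor of $\ffF$ is respected, and lies in $\grp{\ffF}$ because each of its finite sub-GECs sits inside a single chain element), so Zorn's lemma delivers a maximal element $\Gg\in\Xx$. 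The heart of the argument is a \emph{one-vertex extension lemma}: for any $\Gg_0\in\grp{\ffF}$, any subset $B'\subseteq V(\Gg_0)$, and any assignment $\tau$ on $B'$ for which the abstract GEC on $B'\cup\{e\}$ obtained by setting the color of $\{e,v\}$ to $\tau(v)$ belongs to $\grp{\ffF}$, either (a) some $c\in V(\Gg_0)$ already realizes $\tau$ (i.e.\ $\kappa_{\Gg_0}(c,v)=\tau(v)$ for all $v\in B'$), or (b) $\Gg_0$ embeds into some $\Gg_0'\in\grp{\ffF}$ with $|V(\Gg_0')\setminus V(\Gg_0)|=1$ whose new vertex realizes $\tau$. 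To prove it, well-order $V(\Gg_0)\setminus B'=\{v_\gamma\dd\gamma<\delta\}$ and paint transfinitely the edges from a fresh vertex $p$ to the $v_\gamma$'s: at sub-stage $\gamma$ the restriction to $B'\cup\{v_\eta\dd\eta\leq\gamma\}\cup\{p\}$ is an incomplete GEC $\Kk^{(\gamma)}$ whose only unpainted edge is $\{p,v_\gamma\}$, and it is affiliated with $\grp{\ffF}$ because $\Kk^{(\gamma)}_p$ is a sub-GEC of $\Gg_0$ and $\Kk^{(\gamma)}_{v_\gamma}$ lies in $\grp{\ffF}$ by inductive construction; thus either \AP[\(*\)-] completes $\Kk^{(\gamma)}$ (non-symmetric case) or the isomorphism of \DEF{symm} combined with $\kappa(p,v)=\tau(v)$ on $B'$ forces $\kappa_{\Gg_0}(v_\gamma,v)=\tau(v)$ on $B'$, giving case (a) with $c=v_\gamma$ and terminating the recursion.

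\emph{Reverse direction, conclusion.} I claim $\Gg$ is AB-HO with $\FIN{\Gg}=\ffF$. By \PRO{1}(A), absolute homogeneity amounts to showing that every partial morphism $\psi\dd A\to V(\Gg)$ with $A\neq V(\Gg)$ extends by some $b\in V(\Gg)\setminus A$. Fix any such $b$ and transport its extension type over $A$ through $\psi$ to a type $\tau$ over $\psi(A)$; the associated abstract GEC on $\psi(A)\cup\{e\}$ is isomorphic (via $\psi^{-1}\cup\{(e,b)\}$) to the sub-GEC $A\cup\{b\}$ of $\Gg$, hence lies in $\grp{\ffF}$. Apply the one-vertex extension lemma with $\Gg_0=\Gg$ and $B'=\psi(A)$: case (b) would yield a $\Gg_0'\supsetneq\Gg$ in $\grp{\ffF}$---after relabelling its fresh vertex as an element of $\Omega\setminus V(\Gg)$, which is non-empty since $|V(\Gg)|\leq\mM<|\Omega|$---lying in $\Xx$ and contradicting maximality, so case (a) delivers $c\in V(\Gg)$ realizing $\tau$, and $\psi\cup\{(b,c)\}$ is the desired extension. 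The inclusion $\FIN{\Gg}\subseteq\ffF$ is immediate from $\Gg\in\grp{\ffF}$, and $\ffF\subseteq\FIN{\Gg}$ follows by induction on $|\Hh|$ for $\Hh\in\ffF$: any embedding of an $(|\Hh|-1)$-vertex sub-GEC of $\Hh$ into $\Gg$ extends via the one-vertex extension lemma (case (b) again excluded by maximality) to an embedding of all of $\Hh$. The main obstacle is the one-vertex extension lemma, and specifically the observation that symmetry of an affiliated incomplete GEC forces the candidate new vertex to collapse onto an existing vertex of $\Gg_0$ that already realizes the desired type---without this collapse, the single-edge amalgamation provided by \AP\ could not drive the transfinite recursion.
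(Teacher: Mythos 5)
Your forward direction is essentially the paper's argument: the paper shows \(\alpha(a)\neq\beta(b)\) by observing that equality would force the incomplete GEC to be symmetric, while you read the same inequality off directly from a witness \(c\) with \(\kappa(a,c)\neq\kappa(b,c)\); both are fine. The reverse direction is correct but organised differently. The paper first establishes the \emph{full} classical amalgamation property \((\star)\) for \(\grp{\ffF}\) (two arbitrary, possibly infinite, members agreeing on their common part amalgamate inside \(\grp{\ffF}\)) by a nested Zorn argument whose inner loop paints one edge at a time via \AP[\(*\)-], the non-symmetry of each intermediate incomplete GEC being guaranteed by the maximality of the partial morphism under extension (condition \((\diamondsuit)\) there); it then reformulates this categorically as \((\star')\), takes a maximal \(\Hh\in\grp{\ffF}\), proves that every morphism out of \(\Hh\) into \(\grp{\ffF}\) is an isomorphism, and deduces absolute homogeneity and \(\ffF=\FIN{\Hh}\). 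You prove only the special case of amalgamation in which one of the two objects is a one-vertex extension, and you replace the non-symmetry-by-maximality device with a dichotomy: when an intermediate incomplete GEC is symmetric, the candidate new vertex collapses onto an existing vertex that already realizes the prescribed type --- exactly the gluing phenomenon of \REM{non-sym} --- and that collapse is itself the desired conclusion. Combined with \PRO{1}(A), which reduces absolute homogeneity to one-point extensions of partial morphisms, and with induction on size for \(\ffF\subset\FIN{\Gg}\), this one-vertex lemma suffices, so your route is more economical; the paper's pays the extra cost of \((\star)\) and \((\star')\) but obtains a stronger, reusable amalgamation statement for \(\grp{\ffF}\) along the way. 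One small point worth making explicit in your write-up: in case (a) of your lemma the realizing vertex \(c\) automatically lies outside \(B'\), since a vertex \(c\in B'\) realizing \(\tau\) would have to satisfy \(\kappa(c,c)=\tau(c)\), equating the pseudocolor with a genuine color; this is what makes \(\psi\cup\{(b,c)\}\) injective, and it also disposes of the a priori awkward situation \(\psi(A)=V(\Gg)\).
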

\begin{proof}
First assume \(\ffF = \FIN{\Hh}\) for some AB-HO GEC \(\Hh = (W,\lambda)\).
To show that \(\ffF\) is a skeletoid with \AP[\(*\)-], we only need to check
that it has \AP[\(*\)-]. To this end, fix a non-symmetric incomplete GEC \(\Gg'
= (V,a,b,\kappa')\) such that both the GEC's \(\Gg_a\) and \(\Gg_b\) (given by
\eqref{eqn:a-b}) belong to \(\grp{\ffF}\). It follows from \PRO{fin-gen-AB-HO}
that there are morphisms \(\phi_a\) and \(\phi_b\) from, respectively, \(\Gg_a\)
and \(\Gg_b\) into \(\Hh\). Denote \(V' \df V \setminus \{a,b\}\), \(Z_a \df
\phi_a(V')\) and \(Z_b \df \phi_b(V')\) and note that \[\rho \df \phi_b \circ
(\phi_a^{-1}\restriction{Z_a})\dd Z_a \to Z_b\] is an isomorphism. So, there is
an automorphism \(\eta \in \AUT{\Hh}\) that extends \(\rho\). Then the morphisms
\(\beta \df \eta \circ \phi_a\) and \(\alpha \df \phi_b\) coincide on \(V'\)
(that is, on the common part of their domains). If \(\alpha(a) = \beta(b)\),
then \(\Gg\) would be symmetric (since then \(s_{\Gg} = \alpha \circ
\beta^{-1}\)). Hence \(\alpha(a) \neq \beta(b)\) and, consequently, \(k \df
\lambda(\alpha(a),\beta(b))\) differs from the pseudocolor. So, we may complete
\(\Gg\) (to a GEC \(\tilde{\Gg}\)) by painting its edge \(\{a,b\}\) color \(k\).
To ensure ourselves that \(\tilde{\Gg}\) belongs to \(\grp{\ffF}\), it is enough
to note that the function \(\gamma\dd V \to W\) defined by:
\[\gamma(x) \df \begin{cases}\alpha(x)=\beta(x) & x \in V'\\\alpha(a) & x = a\\
\beta(b) & x = b\end{cases}\] is a well defined morphism of \(\tilde{\Gg}\) into
\(\Hh\). So, the proof of this (simpler) implication is finished.\par
Now we pass to the second part. Fix a skeletoid \(\ffF\) with \AP[\(*\)-]. It
follows from \LEM{bdd} that \(|\Gg| < \mM\) for some (fixed) cardinal number
\(\mM\) and all GEC's \(\Gg\) from \(\grp{\ffF}\). For further purposes, we fix
a set \(Z\) of cardinality \(\mM\). We begin the proof of this part of
the theorem with the following property (which actually is a classical statement
of the amalgamation property):
\begin{itemize}
\item[\((\star)\)] \textit{If \(\Gg_1 = (V_1,\kappa_1)\) and \(\Gg_2 = (V_2,
 \kappa_2)\) are two GEC's from \(\grp{\ffF}\) and \(\kappa_1\) and \(\kappa_2\)
 coincide on \((V_1 \cap V_2) \times (V_1 \cap V_2)\), then there exists a GEC
 \(\Gg_0 = (W_0,\lambda_0) \in \grp{\ffF}\) and morphisms \(\phi_j\dd V_j \to
 W_0\) from \(\Gg_j\) into \(\Gg_0\) \((j=1,2)\) that coincide on \(V_1 \cap
 V_2\).}
\end{itemize}
To prove \((\star)\), we fix any one-to-one function \(\phi_2\dd V_2 \to Z\).
For simplicity, denote \(Q \df \phi_2(V_2)\), \(V_0 \df V_1 \cap V_2\) and
\(\alpha \df \phi_2\restriction{V_0}\dd V_0 \to Q\). It is clear that there
exists a function \(\rho_Q\) (on \(Q \times Q\)) such that \(\Qq = (Q,\rho_Q)\)
is a GEC from \(\grp{\ffF}\) and \(\phi_2\) is an isomorphism from \(\Gg_2\)
onto \(\Qq\). Since \(\kappa_1\) and \(\kappa_2\) coincide on \(V_0\),
\(\alpha\) is a partial morphism from \(\Gg_1\) into \(\Qq\). It follows from
Zorn's lemma that in the collection of all pairs \((\beta,\Rr)\) where:
\begin{itemize}
\item \(\Rr = (R,\rho_R)\) is a GEC from \(\grp{\ffF}\);
\item \(Q \subset R \subset Z\) and \(\rho_R\) extends \(\rho_Q\);
\item \(\beta\dd W \to R\) with \(V_0 \subset W \subset V_1\) is a partial
 morphism from \(\Gg_1\) into \(\Rr\) that extends \(\alpha\)
\end{itemize}
(with the partial order of inclusion and extension) there exists a maximal
element, say \((\gamma,\Ss)\) where \(\Ss = (S,\mu)\) and \(\gamma\dd W \to S\).
All we need to show is that \(W = V_1\). We argue by a contradiction. Assume
there is \(v \in V\) that does not belong to \(W\) (in particular, \(v \notin
V_2\)). Since \(\Ss \in \grp{\ffF}\), \(\card(S) < \mM = \card(Z)\) and hence
there exists a point \(z \in Z\) that is not in \(S\). Denote \(T = \gamma(W)\)
and consider a (unique) function \(\nu\) on \((T \cup \{z\}) \times (T \cup
\{z\})\) such that \(\Tt \df (T \cup \{z\},\nu)\) is a GEC and the function
\(\tilde{\gamma}\dd W \cup \{v\} \to T \cup \{z\}\) that extends \(\gamma\) and
satisfies \(\tilde{\gamma}(v) = z\) is an isomorphism of \((W \cup \{v\},
\kappa_1)\) onto \(\Tt\). Then \(\Tt \in \grp{\ffF}\). Since \(\gamma\) is
a morphism of \((W,\kappa_1)\) into \(\Ss\), we infer that \(\nu\) and \(\mu\)
coincide on \(T \times T\). It suffices to extend both these functions to
a single function \(\lambda\) on \((S \cup \{z\}) \times (S \cup \{z\})\) in
a way such that \(\tilde{\Ss} \df (S \cup \{z\},\lambda) \in \grp{\ffF}\)
(because then the pair \((\tilde{\gamma},\tilde{\Ss})\) witnesses that the pair
\((\gamma,S)\) is not maximal). To this end, we need to remember one additional
property:
\begin{itemize}
\item[\((\diamondsuit)\)] for any \(s \in S \setminus T\) there is \(t \in T\)
 such that \(\nu(z,t) \neq \mu(s,t)\).
\end{itemize}
Indeed, if \((\diamondsuit)\) were false for \(s \in S \setminus T\), we could
extend (maximal!) \(\gamma\) by sending \(v\) to \(s\) (in this way we would
obtain a morphism, because \(\tilde{\gamma}\) is a morphism into \((T \cup
\{z\},\nu)\) and the equation \(\nu(z,t) = \mu(s,t)\) would hold for all \(t \in
T\)). (It is worth recalling here that all vertices of all GEC's from
\(\grp{\ffF}\) are painted the same color and therefore the equation
\eqref{eqn:morphism} defining a morphism can be verified only for distinct
arguments.)\par
Having \((\diamondsuit)\), we will be able to apply \AP\ in a moment. Similarly
as in the previous step of the proof, it follows from Zorn's lemma that in
the collection of all functions \(\xi\) such that:
\begin{itemize}
\item \(\xi\) is defined on a set of the form \((C \times C)\) where \(T \cup
 \{z\} \subset C \subset S \cup \{z\}\);
\item \((C,\xi)\) (where \(C\) is as specified above) belongs to \(\grp{\ffF}\);
\item \(\xi\) extends \(\nu\) and coincides with \(\mu\) on \((C \cap S) \times
 (C \cap S)\)
\end{itemize}
there exists a maximal element, say \(\lambda\) defined on \(P \times P\). To
finish the proof of \((\star)\), we only need to show that \(P = S \cup \{z\}\).
Assume, again, that this is not the case and take any \(s \in S \setminus P\).
Then \(s \notin T\) and therefore there exists (by \((\diamondsuit)\)) \(t_0 \in
T\) such that
\begin{equation}\label{eqn:aux4}
\nu(z,t_0) \neq \mu(z,t_0).
\end{equation}
Denote by \(U\) the set \(P \cup \{t_0\}\) and by \(\theta\) a unique function
on \((U \times U) \setminus \{(z,t_0),(t_0,z)\}\) that extends \(\lambda\) and
coincides with \(\mu\) on \((U \cap S) \times (U \cap S)\). Observe that \((U,z,
t_0,\theta)\) is an incomplete GEC. It follows from \eqref{eqn:aux4} that it is
non-symmetric. Moreover, it satisfies all the assumptions postulated in
the condition that defines \AP\ (for \(\grp{\ffF}\)). So, it can be completed
to a member of \(\grp{\ffF}\). In other words, we are able to extend \(\lambda\)
and in this way we obtain a contradiction with its maximality, which finally
finishes the proof of \((\star)\).\par
Condition \((\star)\) is simply equivalent to its category-theoretic
counterpart:
\begin{itemize}
\item[\((\star')\)] \textit{If \(\Gg_0,\Gg_1,\Gg_2\) belong to \(\grp{\ffF}\)
 and \(\phi_j\) \UP(for \(j=1,2\)\UP) is a morphism of \(\Gg_0\) into \(\Gg_j\),
 then there exists a GEC \(\Ff \in \grp{\ffF}\) and two morphisms \(\psi_1\) and
 \(\psi_2\) from, respectively, \(\Gg_1\) and \(\Gg_2\) into \(\Ff\) such that
 \(\psi_1 \circ \phi_1 = \psi_2 \circ \phi_2\).}
\end{itemize}
We leave a formal proof of the above condition to the reader.\par
Now we are ready to give a proof of the theorem. For the last time (in this
proof) we make use of the Zorn's lemma to conclude that among all pairs
\((V,\kappa)\) such that:
\begin{itemize}
\item \(V \subset Z\) and \(\kappa\) is a function on \(V \times V\);
\item \((V,\kappa)\) is a GEC belonging to \(\grp{\ffF}\)
\end{itemize}
there exists a maximal element, say \(\Hh = (W,\lambda)\). We will show that it
is an AB-HO GEC such that \(\FIN{\Hh} = \ffF\). Since \(\Hh \in \grp{\ffF}\), we
have \(\FIN{\Hh} \subset \ffF\). First of all, note that \(Z \neq W\) (because
of the choice of \(\mM\)). This relation, combined with the maximality of
\(\Hh\), implies that:
\begin{itemize}
\item[\((*)\)] \textit{Any morphism of \(\Hh\) into a member of \(\grp{\ffF}\)
 is an isomorphism.}
\end{itemize}
Indeed, if \(\Gg = (V,\kappa)\) belongs to \(\grp{\ffF}\) and a morphism
\(\phi\dd W \to V\) from \(\Hh\) into \(\Gg\) were not an isomorphism, then we
could take points \(v \in V \setminus \phi(W)\) and \(z \in Z \setminus W\) and
transport the GEC structure of \((\phi(W) \cup \{v\},\kappa)\) to the set
\(W \cup \{z\}\) via a bijection that extends \(\phi\) and sends \(z\) to \(v\)
to obtain a GEC \((W \cup \{z\},\tilde{\kappa}) \in \grp{\ffF}\) which would
witness that \(\Hh\) was not maximal.\par
Now it follows from the combination of \((\star')\) and \((*)\) that
\begin{itemize}
\item[\((**)\)] \textit{For any \(\Gg \in \grp{\ffF}\), each partial morphism of
 \(\Gg\) into \(\Hh\) extends to a morphism of \(\Gg\) into \(\Hh\).}
\end{itemize}
Indeed, if \(\Gg = (V,\kappa)\) belongs to \(\grp{\ffF}\), \(V_0 \subset V\) and
\(\phi_1\dd V_0 \to W\) is a morphism of \(\Gg_0 \df (V_0,\kappa)\) into
\(\Hh\), then it follows from \((\star')\) (where, in addition to the present
settings, \(\Gg_1 = \Hh\), \(\Gg_2 = \Gg\) and \(\phi_2\) is the inclusion map)
that there is a GEC \(\Ff \in \grp{\ffF}\) and two morphisms \(\psi_1\) and
\(\psi_2\) from (respectively) \(\Hh\) and \(\Gg\) into \(\Ff\) such that
\(\psi_1 \circ \phi_1 = \psi_2\restriction{V_1}\). Then \(\psi_1\) is
an isomorphism (thanks to \((*)\)) and thus \(\psi_1^{-1} \circ \psi_2\) is
the extension of \(\phi_1\) we searched for.\par
It is now immediate that \(\Hh\) is AB-HO, since any its partial morphism
extends to a morphism (by \((**)\)), which actually is an automorphism (thanks
to \((*)\)). Finally, any member \(\Ff\) of \(\ffF\) admits a morphism into
\(\Hh\) (again by \((**)\)---it is sufficient to extend a partial morphism
defined on the empty sub-GEC, or on a one-point sub-GEC of \(\Ff\)) and hence
\(\ffF \subset \FIN{\Hh}\), which finishes the whole proof of the theorem.
\end{proof}

Looking at axioms (SK1)--(SK4) defining skeletoids, we see that \AP\ is the only
condition that can be difficult to check in practice---especially when working
with strictly concrete examples of classes of GEC's (however, it is hard to take
care of \MBP\ during the attempts to construct a new skeletoid and to force
\AP). For a better understanding of this property, we introduce the following
concept.\par
Let \(\ffF\) be a proper finitary structure. Consider a non-symmetric incomplete
GEC \(\Gg = (V,a,b,\kappa)\) affiliated with \(\ffF\) (in particular, \(V\) is
finite). Let \(\Delta_{\ffF}(\Gg)\) denote the \underline{set} of all possible
colors \(c\) such that \(\Gg\) may be completed to a GEC from \(\ffF\) by
painting the edge \(\{a,b\}\) color \(c\) (that is, by defining \(\kappa(a,b) =
\kappa(b,a) \df c\)). (This is a set because \(\ffF\) is proper and necessarily
\(\Delta_{\ffF}(\Gg) \subset \COL{\ffF}\).) Without any other assumptions,
the set \(\Delta_{\ffF}(\Gg)\) may be empty. In fact, non-emptiness of this set
for \underline{each} non-symmetric incomplete GEC affiliated with \(\ffF\) is
equivalent to \AP\ for that class. So, we obtain

\begin{pro}{AP}
A proper finitary structure \(\ffF\) has \AP\ iff
\begin{equation}\label{eqn:AP}
\Delta_{\ffF}(\Gg) \neq \varempty
\end{equation}
for all non-symmetric incomplete GEC's \(\Gg\) affiliated with \(\ffF\).
\end{pro}

Now let \(\ffF\) be a proper finitary structure that has \AP. Further, let \(\Gg
= (V,a,b,\kappa)\) be a non-symmetric incomplete GEC affiliated with
\(\grp{\ffF}\). We infer there exists a vertex \(c \in V\) different from \(a\)
and \(b\) such that \(\kappa(a,c) \neq \kappa(b,c)\). In particular, for any
finite set \(S \subset V\) such that \(a,b,c \in S\) the quadruple
\(\Gg\restriction{S} \df (S,a,b,\kappa)\) is a non-symmetric incomplete GEC
affiliated with \(\ffF\) (not only with \(\grp{\ffF}\)). So, it follows from
\AP\ that \(\Delta_{\ffF}(\Gg\restriction{S}) \neq \varempty\). Moreover,
\(\Gg\) may be completed to a member of \(\grp{\ffF}\) \iaoi
\begin{equation}\label{eqn:centr}
\bigcap_S \Delta_{\ffF}(\Gg\restriction{S}) \neq \varempty
\end{equation}
where \(S\) runs over all finite subsets of \(V\) that contain \(a,b,c\). Notice
also that the family \(\{\Delta_{\ffF}(\Gg\restriction{S})\dd\ a,b,c \in S
\subset V,\ S \UP{ finite}\}\) is centred, simply because
\[\bigcap_{k=1}^n \Delta_{\ffF}(\Gg\restriction{S_k}) \supset
\Delta_{\ffF}\Bigl(\Gg\restriction{\bigcup_{k=1}^n S_k}\Bigr).\]
So, condition \eqref{eqn:centr} has the spirit of compactness. And,
in particular, we have just proved the following result.

\begin{pro}{comp}
Let \(\ffF\) be a skeletoid whose palette is topologised so that it is
a Hausdorff space. If \(\Delta_{\ffF}(\Gg)\) is a compact set in
\(\COL{\ffF}\) for any non-symmetric incomplete GEC \(\Gg\) affiliated with
\(\ffF\), then \(\ffF\) is a skeleton.
\end{pro}

\begin{exm}{metricoid}
For simplicity, we say a skeletoid is \emph{metric} if it consists of metric
spaces. Let \(\Gg = (X,a,b,d)\) be a non-symmetric incomplete GEC affiliated
with a metric skeletoid \(\ffF\). Assume we have extended \(d\) to a metric
(still denoted by \(d\)) on the whole set \(X\). It follows from the triangle
inequality that \(d(a,b) \in [m(\Gg),M(\Gg)]\) where \(m(\Gg) \df
\max\{|d(a,c)-d(b,c)|\dd\ c \in X \setminus \{a,b\}\}\ (> 0)\) and \(M(\Gg) \df
\min\{d(a,c)+d(b,c)\dd\ c \in X \setminus \{a,b\}\}\ (< \infty)\). So,
\[\Delta_{\ffF}(\Gg) \subset [m(\Gg),M(\Gg)]\]
for any non-symmetric incomplete GEC \(\Gg\) affiliated with metric skeletoid
\(\ffF\). Therefore, it follows from \PRO{comp} that if \(\Delta_{\ffF}(\Gg)\)
is closed in \(\RRR\) for all such \(\Gg\), then \(\ffF\) is a skeleton.\par
Although the above example describes a convenient method of constructing
metric skeletons (and consequently absolutely homogeneous metric spaces), it is,
unfortunately, quite hard to find examples of skeletoids with closed sets
\(\Delta_{\ffF}(\Gg)\). More on that topic will be said in Subsection
\ref{sec:metric} below.
\end{exm}

The ideas presented in the paragraph preceding \PRO{comp} enable us to give
characterisations of all variants of amalgamation (introduced in \DEF{amalgam}).
They read as follows:

\begin{pro}{*AP}
Let \(\ffF\) be a skeletoid.
\begin{enumerate}[\upshape(A)]
\item \(\ffF\) has \AP[\(\omega\)-] iff the following condition holds:
 \begin{quote}
 Whenever \(\kappa\) is a function defined on \((\NNN \times \NNN) \setminus
 \{(0,1),(1,0)\}\) so that \(\kappa(0,2) \neq \kappa(1,2)\) and
 \((\{k,2,\ldots,n\},\kappa) \in \ffF\) for any \(0 \leq k \leq 1 < n\), then
 \(\bigcap_{n=2}^{\infty} \Delta_{\ffF}(\{0,1,\ldots,n\},0,1,\kappa) \neq
 \varempty\).
 \end{quote}
\item \(\ffF\) has \AP[\(\alpha\)-], where \(\alpha\) is an infinite cardinal,
 iff the following condition is fulfilled:
 \begin{quote}
 Whenever \(Z\) is an infinite set of cardinality not greater than \(\alpha\),
 \(a,b,c\) are three distinct points of \(Z\) and \(\kappa\) is a function
 defined on \((Z \times Z) \setminus \{(a,b),(b,a)\}\) so that \(\kappa(a,c)
 \neq \kappa(b,c)\) and \((\{z,c\} \cup S,\kappa) \in \ffF\) for any \(z \in
 \{a,b\}\) and each finite subset \(S\) of \(Z\), then
 \[\bigcap_S \Delta_{\ffF}(\{a,b,c\} \cup S,a,b,\kappa) \neq \varempty\]
 where \(S\) runs over all finite subsets of \(Z\).
 \end{quote}
\item If \(\card(\COL{\ffF}) = \mM \geq \aleph_0\), then \(\ffF\) is a skeleton
 iff \(\ffF\) has \AP[\(\mM\)-].
\end{enumerate}
\end{pro}
\begin{proof}
Items (A) and (B) are left to the reader. Here we will show only (C). Thanks to
\THM{skeleton}, we only need to check that \AP[\(*\)-] follows from
\AP[\(\mM\)-]. To this end, fix an arbitrary non-symmetric incomplete GEC \(\Gg
= (V,a,b,\kappa)\) affiliated with \(\grp{\ffF}\) and assume that \(\Gg\) cannot
be completed to a member of \(\grp{\ffF}\). Choose \(c \in V\) different from
\(a\) and \(b\) such that \(\kappa(a,c) \neq \kappa(b,c)\) and denote, for
simplicity, by \(\Sigma\) the family of all finite subsets of \(V\) that contain
all \(a\), \(b\) and \(c\). Our assumption about \(\Gg\) implies that
\[\bigcap_{\sigma\in\Sigma} \Delta_{\ffF}(\sigma,a,b,\kappa) = \varempty.\]
So, for each \(d \in Y \df \COL{\ffF}\) there is \(\sigma_d \in \Sigma\) such
that \(d \notin \Delta_{\ffF}(\sigma_d,a,b,\kappa)\). Now put \(W \df
\bigcup_{d \in Y} \sigma_d\) and observe that \(\card(W) \leq \mM\) and
\(\Delta_{\ffF}(W,a,b,\kappa) \subset \bigcap_{d \in Y}
\Delta_{\ffF}(\sigma_d,a,b,\kappa) = \varempty\), which shows that \(\ffF\) does
not have \AP[\(\mM\)-] and finishes the proof.
\end{proof}

We repeat a special case of the above proposition in a separate result as it is
worth special emphasis:

\begin{cor}{countable}
A skeletoid with countable palette is a skeleton iff it has \AP[\(\omega\)-].
\end{cor}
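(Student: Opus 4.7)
The plan is to derive this as the special case $\mM = \aleph_0$ of \PRO{*AP}(C). If $\COL{\ffF}$ is countably infinite, then $\card(\COL{\ffF}) = \aleph_0$, and that proposition, taken with $\mM = \aleph_0$, says $\ffF$ is a skeleton iff it has \AP[\(\aleph_0\)-]; but \AP[\(\aleph_0\)-] is literally \AP[\(\omega\)-] thanks to the identification $\grp{\ffF}_{\aleph_0} = \grp{\ffF}_{\omega}$ fixed in \DEF{fin-gen}. If instead $\COL{\ffF}$ is finite, then \MBP{} together with \THM{r-e-r} (the Ramsey clause) forces every GEC in $\grp{\ffF}$ to be finite, because an infinite one would contain an infinite monochromatic subset, violating \MBP. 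In this case $\grp{\ffF}_{\omega} = \grp{\ffF}$, so \AP[\(\omega\)-] coincides with \AP[\(*\)-], and \THM{skeleton} closes the case directly.

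If one prefers a self-contained argument in the countable-palette setting rather than a citation of \PRO{*AP}(C), the strategy is to reprove its core step. The easy direction---skeletons have \AP[\(\omega\)-]---follows from \THM{skeleton} (giving \AP[\(*\)-]) together with the observation that completing an incomplete GEC with countable halves does not enlarge its vertex set, so the completion already belongs to $\grp{\ffF}_{\omega}$. For the substantive direction, assume $\ffF$ has \AP[\(\omega\)-] and take a non-symmetric incomplete GEC $\Gg = (V,a,b,\kappa)$ affiliated with $\grp{\ffF}$; pick $c \in V$ with $\kappa(a,c) \neq \kappa(b,c)$. By the compactness-style reformulation recorded immediately before \PRO{comp}, non-completability of $\Gg$ inside $\grp{\ffF}$ is equivalent to
\[
\bigcap_{\sigma} \Delta_{\ffF}(\sigma,a,b,\kappa) = \varempty,
\]
the intersection running over finite subsets $\sigma \subset V$ with $\{a,b,c\} \subset \sigma$. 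Countability of $\COL{\ffF}$ then lets me, for each colour $d \in \COL{\ffF}$, select a finite witness $\sigma_d$ with $d \notin \Delta_{\ffF}(\sigma_d,a,b,\kappa)$; the countable union $W = \bigcup_d \sigma_d$ carries a non-symmetric incomplete GEC affiliated with $\grp{\ffF}_{\omega}$ whose $\Delta_{\ffF}$-set is empty, contradicting \AP[\(\omega\)-].

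There is no genuine obstacle here: all the substantive content has been absorbed into \THM{skeleton} and \PRO{*AP}, and the role of ``countable palette'' is precisely to match the size of $\COL{\ffF}$ with the threshold $\omega$ so that \AP[\(\omega\)-] is already strong enough to detect any obstruction to \AP[\(*\)-]. The corollary is thus an essentially free consequence of results already established.
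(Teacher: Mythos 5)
Your proof is correct and takes the same route as the paper: \COR{countable} is stated there as precisely the special case \(\mM = \aleph_0\) of part (C) of \PRO{*AP}, with \AP[\(\aleph_0\)-] being \AP[\(\omega\)-] by the identification \(\grp{\ffF}_{\aleph_0} = \grp{\ffF}_{\omega}\) from \DEF{fin-gen}. Your additional paragraph handling a finite palette (where \MBP\ plus the Ramsey clause of \THM{r-e-r} forces \(\grp{\ffF} = \grp{\ffF}_{\omega}\), so \AP[\(\omega\)-] coincides with \AP[\(*\)-] and \THM{skeleton} applies) covers the one case the paper leaves implicit, and your self-contained argument is just a re-run of the proof of \PRO{*AP}(C).
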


Observe that a skeletoid with countable palette is a countable collection (after
identifying isomorphic GEC's) with \AP\ and as such it falls within
the framework of Fra\"{\i}ss\'{e} theory \cite{fra}. In particular, even if it
is not a skeleton, it has the so-called \emph{Fra\"{\i}ss\'{e} limit} which in
this context is a countable ultrahomogeneous GEC \(\Gg\) such that
\(\FIN{\Gg}\) coincides with the given skeletoid (and these properties uniquely
determine \(\Gg\) up to isomorphism). It is an interesting question of what are
the ties between the above \(\Gg\) induced by a skeleton and the AB-HO GEC
generated by this skeleton. This issue will be studied by the author in
a subsequent paper. At this moment we only wish to underline that, in general,
these two GEC's do not coincide as there exist uncountable AB-HO GEC's with
countable palettes (see, e.g. \COR{2toY} below).

\begin{rem}{*AP}
Criteria for variants of \AP\ formulated in \PRO{*AP} may be reformulated in
a more intrinsic way (from the point of view of the skeletoid under
consideration), in category-theoretic terms as follows. Define a morphism
between two incomplete GEC's as a one-to-one function that preserves colors of
edges as well as preserves the unpainted edges. Then the \emph{completion} of
a direct system \SYS{\Gg_{\alpha}}{\pi_{\alpha,\beta}}{\alpha\leq\beta} of
non-symmetric incomplete GEC's consists of assigning a single color to all
upainted edges in all the GEC's that appear in that system. Now we have:
\begin{itemize}
\item A skeletoid \(\ffF\) has \AP[\(\omega\)-] iff any direct sequence
 \[\Gg_0 \stackrel{\pi_0}{\longrightarrow} \Gg_1
 \stackrel{\pi_1}{\longrightarrow} \Gg_2 \ldots\]
 of non-symmetric incomplete (necessarily finite) GEC's affiliated with \(\ffF\)
 can be completed to a system consisting of members of \(\ffF\).
\item A skeletoid \(\ffF\) has \AP[\(\mM\)-] (where \(\mM \geq \aleph_0\)) iff
 any direct system
 \[\SYS{\Gg_{\alpha}}{\pi_{\alpha,\beta}}{\alpha\leq\beta}\] consisting of at
 most \(\mM\) non-symmetric incomplete GEC's affiliated with \(\ffF\) can be
 completed to a direct system consisting of members of \(\ffF\).
\end{itemize}
In particular, a skeletoid \(\ffF\) has \AP[\(\omega\)-] iff for any sequence
\[\Gg_1 \subsetneq \Gg_2 \subsetneq \Gg_3 \ldots\] of non-symmetric incomplete
GEC's affiliated with \(\ffF\) (where \(\subsetneq\) in the above notation means
that there exists a morphism that is not an isomorphism) one has:
\[\bigcap_{n=1}^{\infty} \Delta_{\ffF}(\Gg_n) \neq \varempty.\]
Unfortunately, there is no similar way of `forgetting' the morphisms in
uncountable direct systems of incomplete GEC's---so, e.g., the above criterion
for \AP[\(\mM\)-] with \(\mM > \aleph_0\) cannot be simplified in a similar
fashion. This is one of main reasons why we put a special emphasis on
\AP[\(\omega\)-].\par
We leave the details to interested readers.
\end{rem}

\begin{exm}{non-*-AP}
Here we give a simple (in its definition) example of a class of skeletoids
\(\ffF_{\alpha}\) that are not skeletons but satisfy \AP[\(\mM\)-] for any
infinite cardinal \(\mM < \alpha\) where \(\alpha\) is the size of the palette
of \(\ffF_{\alpha}\) (cf. item (C) of \PRO{*AP}). To this end, let \(\alpha\) be
an arbitrary infinite cardinal number and fix a set \(Y_{\alpha}\) of size
\(\alpha\) (to be a palette). (We also fix some pseudocolor not within the set
\(Y_{\alpha}\).) Let \(\ffF_{\alpha}\) be the class of all finite (non-empty)
GEC's that satisfy all the following three conditions:
\begin{enumerate}[(ex1)]
\item all vertices are painted the pseudocolor;
\item all edges have colors from \(Y_{\alpha}\);
\item any two distinct edges having a common vertex have different color.
\end{enumerate}
Observe that this class satisfies (SK1)--(SK3) (in fact, it contains no
monochromatic sub-GEC of size 3). To verify \AP\ and its variants, first notice
that \(\grp{\ffF_{\alpha}}\) consists of all the GEC's that satisfy (ex1)--(ex3)
and then fix a non-symmetric incomplete GEC \(\Gg\) affiliated with
\(\grp{\ffF_{\alpha}}\) whose size is less than \(\alpha\). Then there is at
least one color \(c\) from \(Y_{\alpha}\) that has not been used to paint
the edges of \(\Gg\). If we paint the unpainted edge of \(\Gg\) color \(c\), we
obtain a GEC satisfying (ex1)--(ex3) and hence belonging to
\(\grp{\ffF_{\alpha}}\). This shows that \(\ffF_{\alpha}\) is a skeletoid that
has \AP[\(\mM\)-] for all \(\mM < \alpha\). We will now show that this class is
not a skeleton. To this end, fix three distinct points \(a, b, c\) from \(V \df
Y_{\alpha}\) and a bijection \(\mu\dd V \setminus \{a,b\} \to Y_{\alpha}\). We
will paint all the edges, apart from \(\{a,b\}\), of the full graph \(\Gamma\)
with vertices in \(V\) so that we will obtain a non-symmetric incomplete GEC
satisfying (ex1)--(ex3). We begin the construction with painting the edges
emanating from \(a\) (but different from \(\{a,b\}\)) by declaring that an edge
\(\{a,x\}\) (with \(x \notin \{a,b\}\)) has color \(\mu(x)\). In this way, from
vertex \(a\) emanate edges in all possible colors so that there will be no way
to complete our final incomplete GEC to a member of \(\grp{\ffF}\). Our next
step is to \underline{initially} well-order the set of all the edges of
\(\Gamma\) that do not emanate from \(a\) in a way such that the first edge
(with respect to this order) is \(\{b,c\}\). We paint this first edge any color
from \(Y_{\alpha}\) different from \(\mu(c)\). In this way, our final incomplete
GEC will be non-symmetric. The last step of the construction relies on
transfinite induction: assuming \(\{x,y\}\) with \(x \neq a \neq y\) is an edge
of \(\Gamma\) such that all preceding edges (not emanating from \(a\)) with
respect to our well-order have already been painted, we choose for \(\{x,y\}\)
any color that until this moment has not been used to paint edges not emanating
from \(a\) and differs from \(\mu(x)\) and \(\mu(y)\) (if only \(x\) or,
respectively, \(y\) is distinct from \(b\)). There exists such a color because
the well-order is initial (and there are only \(\alpha\) edges). This method
ensures that (ex3) holds at each step of construction. So, \(\ffF_{\alpha}\)
indeed is not a skeleton. In particular, \AP\ does not imply \AP[\(\omega\)-]
and properties \AP[\(\alpha\)-] (for various \(\alpha \geq \aleph_0\)) are
pairwise non-equivalent.
\end{exm}

The above example deals with skeletoids consisting of certain GEC's in which all
the edges emanating from a single vertex have different colors. Below we give
a full characterisation (as well as a full classification) of all AB-HO GEC's
with the same property. To this end, we recall that a group \((G,\cdot)\) is
\emph{Boolean} if \(a^2 = e\) (where \(e\) is the neutral element of \(G\)) for
all \(a \in G\). Each Boolean group is Abelian and actually a full
classification of such groups is well-known: since all they are vector spaces
over the field \(\FFF_2 \df \ZZZ / 2\ZZZ\), each Boolean group is isomorphic to
the direct product of a certain (finite or not) number of copies of this field.
Below, for a Boolean group \(G\) with neutral element \(e\), the palette of
\(G\) coincides with \(G_* \df G \setminus \{e\}\) and \(e\) is reserved to be
the pseudocolor (of vertices of \(G\)).

\begin{thm}{skton-grp}
\begin{enumerate}[\upshape(A)]
\item For any Boolean group \((G,+)\), the pair \((G,+)\) (where the group
 operation \(+\) is naturally treated as a function of \(G \times G\) into
 \(G\)) is an AB-HO GEC with the following property:
 \begin{equation}\label{eqn:diff}
 \textit{All the edges emanating from a single vertex have different colors.}
 \end{equation}
 Moreover, \(\AUT{G,+}\) consists precisely of all translations in this group.
 In particular, each its partial morphism defined on a non-empty sub-GEC of
 \((G,+)\) admits a unique extension to an automorphism of this GEC.
\item Two Boolean groups are structurally equivalent as GEC's iff they are
 isomorphic as groups. Moreover, structural isotopies between them are precisely
 group isomorphisms.
\item Every non-empty GEC with the property that each its partial morphism
 (defined on a non-empty sub-GEC) admits a unique extension to an automorphism
 is isomorphic to a Boolean group.
\item Every non-empty AB-HO GEC with property \eqref{eqn:diff} is isomorphic to
 a Boolean group.
\end{enumerate}
\end{thm}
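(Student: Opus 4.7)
The plan is to handle the four items in order, since (C) and (D) build on the setup of (A)--(B).

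For (A), I would verify that $(G,+)$ satisfies (GEC1) and (GEC2) directly from the Boolean identity $x+x=e$ (which forces commutativity, since $x^2=y^2=(xy)^2=e$ yields $xy=yx$). The key observation is that a one-to-one $\phi\colon A\to G$ is a partial morphism iff $\phi(a)+\phi(b)=a+b$ for all $a,b\in A$, equivalently $\phi(a)+a=\phi(b)+b$; hence $t\df\phi(a)+a$ is a single element independent of $a\in A$, and $\phi$ is the restriction of the translation $T_t(x)\df x+t$. Since every translation is an automorphism, this simultaneously delivers AB-HO, identifies $\AUT{G,+}$ with the group of translations, and gives uniqueness of extension (as $t$ is pinned down by the value at any single point of the nonempty domain). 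Property \eqref{eqn:diff} is immediate: $x+y_1=x+y_2$ forces $y_1=y_2$.

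For (B), one direction is a direct check: a group isomorphism $\phi\colon G_1\to G_2$ satisfies $\phi(a+_1 b)=\phi(a)+_2\phi(b)$ and so furnishes both the structural isotopy (on colors) and the underlying GEC isomorphism. Conversely, a structural isotopy $\theta$ witnessed by a GEC isomorphism $\phi\colon(G_1,\theta\circ+_1)\to(G_2,+_2)$ satisfies $\theta(a+_1 b)=\phi(a)+_2\phi(b)$; evaluating at $a=b=e_1$ gives $\theta(e_1)=e_2$, and evaluating at $a=e_1$ gives $\theta(b)=\phi(e_1)+_2\phi(b)$. Substituting back yields $\phi(a+_1 b)=\phi(a)+_2\phi(b)+_2\phi(e_1)$, from which the bijection $\psi(x)\df\phi(x)+_2\phi(e_1)$ is readily seen to be a group homomorphism with $\theta=\psi$.

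For (C), let $\Gg=(V,\kappa)$ satisfy the unique-extension hypothesis. First I would derive \eqref{eqn:diff}: if $\kappa(x,a)=\kappa(x,b)$ with $a\neq b$, then the partial morphism $x\mapsto x$, $a\mapsto b$ extends to an automorphism distinct from $\OPN{id}_V$, contradicting uniqueness of extension of the partial morphism $\OPN{id}_{\{x\}}$. Fix $e\in V$ and, for each $v\in V$, let $T_v$ be the unique automorphism with $T_v(e)=v$; in particular $T_e=\OPN{id}$. Uniqueness forces
\begin{equation*}
T_u\circ T_v=T_{T_u(v)}
\end{equation*}
(both sides send $e$ to $T_u(v)$). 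The identity $T_v(v)=e$ follows from \eqref{eqn:diff}: $T_v(v)$ is the unique $w$ with $\kappa(v,w)=\kappa(e,v)$, and $w=e$ satisfies this by symmetry of $\kappa$. Defining $u\odot v\df T_u(v)$ turns $(V,\odot)$ into a group with neutral $e$ (associativity and the identity axioms come from the composition rule, $v\odot v=e$ from $T_v(v)=e$), and Booleanness automatically forces abelianness. To cast this as a GEC isomorphism, set $G\df\COL{\Gg}\cup\{p_\Gg\}$ and $\Theta(v)\df\kappa(e,v)$: this is a bijection (injective by \eqref{eqn:diff}; surjective since for any color $c=\kappa(a,b)$, automorphism-invariance under $T_a$ together with $T_a^{-1}=T_a$ yields $c=\kappa(e,T_a(b))=\Theta(T_a(b))$). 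Transporting $\odot$ via $\Theta$ produces a Boolean group operation $+$ on $G$, and a direct computation confirms $\kappa(u,v)=\Theta(u)+\Theta(v)$, so $\Theta$ is the desired isomorphism $\Gg\to(G,+)$.

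For (D), I would show that AB-HO together with \eqref{eqn:diff} implies the unique-extension hypothesis of (C): if two automorphisms $\phi_1,\phi_2$ extend a common partial morphism on a nonempty set containing $a$, then for every $v\in V$ both vertices $\phi_i(v)$ satisfy $\kappa(\phi_1(a),\phi_i(v))=\kappa(a,v)$, and such a vertex is unique by \eqref{eqn:diff} applied at $\phi_1(a)=\phi_2(a)$; hence $\phi_1=\phi_2$, and (C) applies. The main obstacle lies in (C): carefully producing a Boolean group whose underlying set is precisely $\COL{\Gg}\cup\{p_\Gg\}$ so that the literal color-equality built into the definition of GEC isomorphism ($\kappa(u,v)=\Theta(u)+\Theta(v)$) is actually achieved, and not merely a structural equivalence.
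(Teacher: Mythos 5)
Your proposal is correct and follows essentially the same route as the paper: translations and the identity $\phi(x)=x+t$ for (A), the normalization $\psi(x)=\phi(x)+_2\phi(e_1)$ for (B), fixing a base vertex $e$ and using the unique automorphisms $T_v$ with $T_v(e)=v$ to build a Boolean group on $V$ and then transporting it to $\COL{\Gg}\cup\{p_\Gg\}$ via $v\mapsto\kappa(e,v)$ for (C), and the reduction of (D) to (C) via \eqref{eqn:diff}. The only cosmetic difference is that the paper outsources (B) to the directed-graph analogue (part (g) of \PRO{dgroup}) whereas you prove it in place, with the same computation.
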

\begin{proof}
(A): Since \(G\) is Boolean, it is clear that \((G,+)\) is a GEC in which each
translation is an automorphism. Now if \(u\dd A \to G\) is a morphism where
\(A\) is a non-empty subset of \(G\), then, after fixing \(a \in A\), we have
for all \(x \in A\), \(u(x)+u(a) = x+a\) and therefore \(u(x) = x+b\ (x \in A)\)
where \(b \df u(a)+a\). This shows that \(G\) is AB-HO and \(u\) has a unique
extension to an automorphism.\par
(B): To avoid repetitions, we postpone the proof of this part and refer
the reader to part (g) of \PRO{dgroup} in Section~\ref{sec:directed} where we
will show a counterpart of (B) for all groups.\par
Now fix a GEC \(\Gg = (V,\kappa)\) with the properties specified in (C) and fix
its vertex \(e\). Observe that then \(\Gg\) is automatically AB-HO. Put \(G \df
\AUT{\Gg}\) and consider a function
\[\Phi\dd G \ni \phi \mapsto \phi(e) \in V.\]
Since \(\Gg\) is homogeneous, \(\Phi\) is surjective. Moreover, the property
formulated in (C) implies that \(\Phi\) is also one-one-to. So, it is
a bijection. So, we can transport the group operation via \(\Phi\) from \(G\)
to \(V\) as follows:
\[v * w \df \Phi(\Phi^{-1}(v) \circ \Phi^{-1}(w)).\]
In this way \((V,*)\) is a group (with neutral element \(e\)) and \(\Phi\) is
a group isomorphism. We claim that \(\Gg\) has property \eqref{eqn:diff}.
Indeed, if \(x \neq z \neq y\) are vertices of \(\Gg\) and satisfy \(\kappa(x,z)
= \kappa(y,z)\), then (since \(\Gg\) is AB-HO) there exists \(\psi \in
\AUT{\Gg}\) such that \(\psi(x) = \psi(y)\) and \(\psi(z)\). In particular,
\(\psi\) extends the identity morphism on \(\{z\}\) and the property stated in
(C) implies that \(\psi\) is the identity map. So, \(x = y\) and condition
\eqref{eqn:diff} is fulfilled.\par
Now fix for a moment arbitrary \(a \in V\) different from \(e\) and look at
\(\phi \df \Phi^{-1}(a)\): \(\kappa(a,e) = \kappa(\phi(a),\phi(e)) =
\kappa(\phi(a),a)\). We infer from \eqref{eqn:diff} that \(\phi(a) = e\).
Consequently, \(\Phi(\phi \circ \phi) = e\) and hence \(\phi \circ \phi\) is
the identity map. This implies that both the groups \(\AUT{\Gg}\) and \((V,*)\)
are Boolean. Further, observe that, for any \(x,y,z \in V\):
\begin{equation}\label{eqn:xyz}
z = x * y \iff \kappa(z,e) = \kappa(x,y).
\end{equation}
To show the above equivalence, denote \(\phi_w \df \Phi^{-1}(w) \in \AUT{\Gg}\)
for \(w \in \{x,y,z\}\) and notice that:
\[z = x*y \iff y = x*z \iff \phi_y = \phi_x \circ \phi_z \iff y = \phi_x(z)\]
(the last equivalence follows from the fact that \(\Phi\) is one-to-one) and:
\[\kappa(e,z) = \kappa(x,y) \iff \kappa(\phi_x(e),\phi_x(z)) = \kappa(x,y) \iff
y = \phi_x(z)\]
where the last equivalence follows from \eqref{eqn:diff} and the equation
\(x = \phi_x(e)\). So, \eqref{eqn:xyz} is fulfilled. Now we pass to the last
step of the proof. Denote by \(E\) the pseudocolor of \(\Gg\) and put \(G \df
\COL{\Gg} \cup \{E\}\). We conclude from the homogeneity of \(\Gg\) and
\eqref{eqn:diff} that the function \(\Psi\dd V \ni x \mapsto \kappa(x,e) \in G\)
is a bijection. Transporting the group structure of \(V\) to \(G\) via \(\Psi\)
we make \((G,+)\) a Boolean group such that \(\Psi\) is a group isomorphism. In
particular, thanks to \eqref{eqn:xyz}, for any \(x, y \in V\):
\[\kappa(x,y) = \kappa(x*y,e) = \Psi(x*y) = \Psi(x)+\Psi(y),\]
which means that \(\Psi\) is also an isomorphism of GEC's and finishes the proof
of (C).\par
(D): It is sufficient to show that every AB-HO GEC satisfying \eqref{eqn:diff}
has the property specified in (C), which is left to the reader as a simple
exercise.
\end{proof}

The above result, among other things, shows that each Boolean group possesses
a \underline{natural} structure of an AB-HO GEC. In Section \ref{sec:directed}
we will see that, in a quite analogous manner, all groups have (also natural)
structures of the so-called AB-HO directed GEC's. And with respect to these
structures precisely Boolean groups become (undirected) GEC's.\par
Below we present a simple way of constructing new AB-HO GEC's from other AB-HO
GEC's. This result is inspired by a (quite simple) observation that spheres in
AB-HO metric spaces are also AB-HO.

\begin{pro}{sphere}
Let \(\Gg = (V,\kappa)\) be an AB-HO GEC and \(W \subset V\) be an arbitrary
non-empty set. Finally, let \(r\dd W \to \COL{\Gg}\) be any function and
\[S(W,r) \df \{x \in V|\quad \forall w \in W\dd\ \kappa(w,x) = r(w)\}.\]
Then \((S(W,r),\kappa)\) is a (possibly degenerate) AB-HO GEC.
\end{pro}

(Note that, in case of metric spaces the above set \(S(W,r)\) is just
the intersection of a non-empty collection of spheres.)

\begin{proof}[Proof of \PRO{sphere}]
Take any \(A \subset S \df S(W,r)\) and a morphism \(\phi\dd A \to S\). Then
it can be `canonically' extended to \(\psi\dd A \cup W \to V\) by declaring that
\(\psi(w) = w\) for any \(w \in W\). It follows from the definition of \(S\)
that \(\psi\) is a partial morphism of \(\Gg\) and as such extends to
an automorphism \(\Phi \in \AUT{\Gg}\). Since \(\Phi\) fixes all the points of
\(W\), we get \(\Phi(S) = S\) and therefore \(\Phi\restriction{S} \in
\AUT{S,\kappa}\) is the extension of \(\phi\) we searched for.
\end{proof}

\subsection{Almost absolute homogeneity and other high levels of homogeneity}
For an infinite cardinal number \(\alpha\), let us call a GEC \(\Gg =
(V,\kappa)\) \emph{\(\alpha\)-homogeneous} if all its partial morphism defined
on sub=GEC's of size less than \(\alpha\) extend to automorphisms of \(\Gg\).
\(\Gg\) is said to be \emph{almost AB-HO} if it is \(\mM\)-homogeneous with
\(\mM = \card(V)\). In particular, \(\Gg\) is \(\aleph_0\)-homogeneous iff it is
finitely homogeneous (cf. \DEF{abshm}).\par
Inspections of the proof of \THM{embed} as well as of the first part of
the proof of \THM{skeleton} show that the following two properties can be proved
only with small changes in arguments presented therein (below \(\alpha\) is
an infinite cardinal number):
\begin{itemize}
\item If a GEC \(\Gg\) is \(\alpha\)-homogeneous, then a GEC \(\Hh\) with
 \(|\Hh| \leq \alpha\) admits a morphism into \(\Gg\) iff  it is finitely
 represented in \(\FIN{\Gg}\).
\item In particular, if \(\Gg\) is almost AB-HO, then a GEC \(\Hh\) admits
 a morphism into \(\Gg\) iff it is finitely represented in \(\FIN{\Gg}\) and
 \(|\Hh| \leq |\Gg|\).
\item If \(G\) is \(\alpha\)-homogeneous, then the class \(\FIN{\Gg}\) has \AP\
 and \AP[\(\mM\)-] for any infinite \(\mM < \alpha\).
\end{itemize}
The last property naturally corresponds to \AP[\(*\)-]. However, there is a one
main difference between skeletons and finitary structures of, for example,
almost AB-HO GEC's: namely, in the former class \MBP\ is automatically satisfied
and can fail in the latter. Below we give four classical examples of finitary
structures each of which has \AP[\(*\)-], but fails to satisfy \MBP. We also
discuss aspects related to the existence of highly homogeneous GEC's whose
finitary structures coincide with the given.

\begin{exm}{metric-Hilbert}
\begin{enumerate}[\upshape(A)]
\item We begin with a finitary structure \(\mmM\) of all finite (non-empty)
 metric spaces. Of course, it does not satisfy \MBP. But, as is well-known (and
 actually may easily be proved), \(\mmM\) has \AP[\(*\)-], which follows from
 the property that \(m(\Gg) \leq M(\Gg)\) (cf. \EXM{metricoid}) for any
 non-symmetric incomplete GEC \(\Gg = (X,a,b,d)\) affiliated with \(\mmM\) where
 \(m(\Gg) \df \sup\{|d(x,a)-d(x,b)|\dd\ x \in X \setminus \{a,b\}\}\) and
 \(M(\Gg) \df \inf\{d(x,a)+d(x,b)\dd\ x \in X \setminus \{a,b\}\}\). As we
 mentioned in Introduction, the Urysohn universal metric space \(\UUU\) is
 an \(\aleph_0\)-homogeneous separable complete metric such that \(\FIN{\UUU} =
 \mmM\). Because of its separability, it is close to be almost AB-HO. Since it
 contains an isometric copy of the countably infinite discrete metric space, it
 cannot have the property of extending partial isometries defined on
 \underline{closed} countable subsets (to global). Up to now, \(\UUU\) is
 a unique known example in ZFC of a metric space containing isometric copies of
 all finite spaces and with the property that its cardinal level of homogeneity
 equals the topological weight of this space.
\item Now let \(\eeE \subset \mmM\) consist of all those (finite non-empty)
 spaces that are isometrically embeddable into a certain Euclidean space (that
 is, into \((\RRR^n,d_e)\) for some \(n > 0\) where \(n\) depends on the space
 under consideration). Again, this finitary structure does not satisfy \MBP.
 However, this class also has \AP[\(*\)-]. What is more, for any infinite
 cardinal \(\alpha\) satisfying
 \begin{equation}\label{eqn:alpha}
 \alpha = \alpha^{\aleph_0}
 \end{equation}
 (e.g., for \(\alpha = 2^{\mM}\) where \(\mM\) is an arbitrary infinite
 cardinal) there exists a metric space \(H_{\alpha}\) that is almost AB-HO and
 satisfies \(\FIN{H_{\alpha}} = \eeE\).\par
 To show the properties of \(\eeE\) postulated above, let us consider
 an arbitrary infinite-dimensional real Hilbert space \(H_{\alpha}\) of Hilbert
 space dimension \(\alpha\) (that is, \(\alpha\) coincides with the topological
 weight of \(H_{\alpha}\)). Note that \(\card(H_{\alpha}) = \alpha\) if
 \eqref{eqn:alpha} holds. It is well-known (and is not difficult to prove) that
 \(H_{\alpha}\) is \(\alpha\)-homogeneous. (In particular, it is almost AB-HO if
 \eqref{eqn:alpha} is fulfilled.) It follows that \(\FIN{H_{\alpha}}\) has
 \AP[\(\mM\)-] for all infinite \(\mM < \alpha\). But, of course,
 \(\FIN{H_{\alpha}} = \eeE\), which shows that \(\eeE\) has \AP[\(*\)-].\par
 It is worth mentioning here that an ``intrinsic'' characterisation of members
 of \(\eeE\) is known and due to Menger \cite{mg1,mg2}. We refer interested
 readers to a modern exposition \cite{b-b} of this topic, where a criterion is
 given in a readable form in terms of certain determinants related with
 the metric of a (finite) space.
\item Repeating the ideas of (B), now consider a finitary structure \(\ssS\)
 consisting of all (finite non-empty) spaces that are isometrically embeddable
 into a certain Euclidean sphere equipped with the great-circle distance as
 a metric (cf. \eqref{eqn:great-circle})---again, the dimension of that sphere
 depends on the metric space under consideration. As in (B), this class has
 \AP[\(*\)-], but does not satisfy \MBP. As we may guess, the first formula in
 \eqref{eqn:great-circle} may serve as the definition of a new (but compatible
 with the topology) metric on the unit sphere \(S_{\alpha}\) (centred at \(0\))
 in the Hilbert space \(H_{\alpha}\) introduced in the previous example. With
 this metric \(S_{\alpha}\) becomes geodesic, is \(\alpha\)-homogeneous and
 satisfies \(\FIN{S_{\alpha}} = \ssS\). Again, if \eqref{eqn:alpha} is
 fulfilled, this space is almost AB-HO.
\item Finally, consider a finitary structure \(\hhH\) of all (finite non-empty)
 metric spaces that admit an isometric embedding to a certain hyperbolic space
 \(H^n(\RRR)\) (where \(n\) runs over all positive integers). Again, this class
 has \AP[\(*\)-], but fails to satisfy \MBP. However, this time to prove
 directly the lack of \MBP\ is a much harder task than in our three previous
 examples (we will show it below as a consequence of \LEM{bdd}).\par
 Using \eqref{eqn:hyper}, we may introduce on Hilbert spaces \(H_{\alpha}\) new
 metrics (we will call them hyperbolic and denote by \(d_h\)). Substituting
 the identity map on \(H_{\alpha}\) for \(U\) in \eqref{eqn:trans-hyper}, we
 obtain a function on \(H_{\alpha}\) that is an isometry in the hyperbolic
 metric (which one shows similarly as in the finite-dimensional case). So, this
 space is homogeneous. What is more, each (linear) unitary operator on
 \(H_{\alpha}\) is an isometry with respect to \(d_h\). And, moreover, all
 partial isometries that fix the origin extend to linear partial isometries and,
 as such, extend to global isometries, provided the topological weight of their
 domains is less than \(\alpha\). So, hyperbolic Hilbert space \(H_{\alpha}\) is
 \(\alpha\)-homogeneous and, moreover, it is almost AB-HO provided
 \eqref{eqn:alpha} holds. As it is easily seen, \(\FIN{H_{\alpha},d_h} = \hhH\).
 Therefore, \(\eeE\) has \AP[\(*\)-] and fails to satisfy \MBP\ (thanks to
 \LEM{bdd}).
\end{enumerate}
The authors do not know whether the almost AB-HO spaces discussed in the three
example above can be characterised among finitely represented metric spaces in
their finitary structures in terms of homogeneity and related notions.
\end{exm}

\subsection{Metric skeletoids}\label{sec:metric}
In \EXM{metricoid} we have defined metric skeletoids. We call a metric skeletoid
\(\ffF\) \emph{edge-closed} if \(\Delta_{\ffF}(\Gg)\) is a closed subset of
\(\RRR\) for any non-symmetric incomplete GEC \(\Gg\) affiliated with \(\ffF\).
In the aforementioned example we have also given the proof of the following

\begin{pro}{edge-closed}
Each edge-closed metric skeletoid is a skeleton.
\end{pro}

For further purposes, let us call a metric skeletoid \(\ffF\)
\emph{layer-closed} if for any \(n > 1\) the set \(\DdD_{\ffF}(I_n)\) of all
metrics \(d\) on \(I_n = \{1,\ldots,n\}\) with \((I_n,d) \in \ffF\) is closed
(in the pointwise convergence topology) in the set \(\DdD(I_n)\) of all metrics
on \(I_n\).\par
Let \(\ffF\) be a metric skeleton. It generates a unique AB-HO GEC (which is
a metric space), say \(F\). Because of its uniqueness, we may assign any
topological property of \(F\) to the skeleton \(\ffF\), e.g.: \(\ffF\) is said
to be \emph{complete} (respectively: \emph{separable}; \emph{compact};
\emph{locally compact}; \emph{Heine-Borel}; \emph{connected}; \emph{arcwise
connected}; \emph{locally connected}; and so on) if so is \(F\). (Recall that
a metric space is \emph{Heine-Borel} if all its closed balls are compact.)

\begin{pro}{prop-AHM}
Let \((X,\rho)\) ba an AB-HO metric space and \(\xxX\) be its skeleton.
\begin{enumerate}[\upshape(A)]
\item Every layer-closed metric skeletoid is an edge-closed skeleton.
\item A Heine-Borel metric skeleton is layer-closed. More generally, if for some
 \(r > 0\) all closed balls in \(X\) of radius \(r\) are compact, then for any
 \(n > 0\) the set \(\DdD_{\xxX}^{(r)}(I_n)\) of all metrics from
 \(\DdD_{\xxX}(I_n)\) that are upper bounded by \(r\) is closed in the set
 \(\DdD(I_n)\) of all metrics on \(I_n\).
\item A layer-closed metric skeleton is complete. More generally, if
 \(\DdD_{\xxX}^{(r)}(I_n)\) is closed in \(\DdD(I_n)\) for any \(n > 0\) and
 some fixed \(r > 0\), then \((X,\rho\)) is a complete metric space.
\item \(X\) is locally connected or locally hereditary disconnected. If \(X\) is
 disconnected, all its connected components are bounded.
\item All connected components as well as arcwise-connected components of \(X\)
 are AB-HO.
\end{enumerate}
\end{pro}

Recall that a topological space is hereditary disconnected if all its connected
components are singletons. It is locally hereditary disconnected if it may be
covered by open sets each of which is hereditary disconnected. As we will see in
the proof, part (D) of the above result is actually a property of all two-point
homogeneous metric spaces \(X\).

\begin{proof}[Proof of \PRO{prop-AHM}]
(A): Let \(\ffF\) be a layer-closed metric skeletoid. According to
\PRO{edge-closed}, we only need to check that \(\ffF\) is edge-closed. To this
end, we consider a non-symmetric incomplete GEC \(\Gg = (I_k,1,2,d')\) (where
\(k > 2\)) affiliated with \(\ffF\) and observe that if the numbers \(t_n \in
\Delta_{\ffF}(\Gg)\) tend to \(t_{\infty} \in \RRR\), then the corresponding
metrics \(d_n \in \DdD_{\ffF}(I_k)\) (which are obtained from \(d'\) by
declaring that \(d' \subset d_n\) and \(d_n(1,2) = t_n\)) converge pointwise to
a respective metric \(d_{\infty}\) on \(I_k\) that corresponds (in a similar
way) to \(t_{\infty}\). So, it follows from the assumption in (A) that
\(d_{\infty} \in \DdD_{\ffF}(I_k)\) and hence \(t_{\infty} \in
\Delta_{\ffF}(\Gg)\).\par
(B): It is sufficient to show only the second statement of that part. So, assume
all closed balls in \(X\) of radius \(r\) are compact (this is equivalent to
saying that a single ball among them is compact---thanks to the homogeneity of
\(X\)) and consider a sequence of metrics \(d_n \in \DdD_{\xxX}^{(r)}(I_k)\)
(for some \(k > 1\)) that converge pointwise to a metric \(d\) on \(I_k\).
Fixing a point \(a \in X\), there are isometric maps \(\phi_n\dd (I_k,d_n) \to
(X,\rho)\) that send 1 onto \(a\). Then all the points \(\phi_n(j)\) where \(n >
0\) and \(j \in I_k\) lie in a closed ball around \(a\) of radius \(r\). So, it
follows from our assumption that, after passing to a subsequence, we may (and
do) assume that \(\phi_n(j) \to z_j \in X\) as \(n \to \infty\) for all \(j \in
I_k\). But then the assignment \(j \mapsto z_j\) defines an isometric map of
\((I_k,d)\) into \((X,\rho)\) and therefore \(d \in \DdD_{\xxX}(I_k)\).\par
To prove (C), it is enough to show the second statement. So, we fix \(r > 0\)
with the property specified therein and consider a one-to-one Cauchy sequence
\((z_n)_{n=1}^{\infty}\) in \(X\). We may and do assume that \(\rho(z_j,z_k)
\leq r\) for all \(j\) and \(k\) and that none of \(z_k\) is the limit of this
sequence. Denote by \(D = \{z_n\dd\ n > 0\} \cup \{g\}\) the completion of
\(\{z_n\dd\ n > 0\}\) (so, \(z_n \to g\) in \(D\) as \(n\to\infty\)). In order
to verify that this Cauchy sequence is convergent in \(X\), it is sufficient to
prove that \(D\) isometrically embeds into \(X\) (because the absolute
homogeneity of \(X\) guarantees that if one isometric copy of
\((z_n)_{n=1}^{\infty}\) converges in \(X\), then all do so). To this end, we
involve \THM{embed}. So, let \(S\) be any finite non-empty subset of \(D\). If
\(g \notin S\), then \(S \subset X\) and we have nothing to do. Hence we assume
\(g \in S\). We may also assume that \(S\) has more than one point. Write \(S
\setminus \{g\}\) as \(\{u_1,\ldots,u_q\}\) where all these points are
different. Let \(N > 0\) be such an index that \(z_n \notin S\) for all \(n >
N\). Fix \(n > N\) and consider a metric \(d_n\) on \(I_{q+1}\) that is obtained
from the metric of \(D\) by identifying the numbers \(1,\ldots,q\) with
the points \(u_1,\ldots,u_q\) (respectively) and \(q+1\) with \(z_n\). Since
\(D \setminus \{g\} \subset X\), we see that \(d_n \in
\DdD_{\xxX}^{(r)}(I_{q+1})\). Moreover, since \(\lim_{n\to\infty} z_n = g\) (in
\(D\)), these metrics converge pointwise to the metric \(d_{\infty}\) on
\(I_{q+1}\) that is obtained from the metric of \(D\) by identifying the numbers
\(1,\ldots,q\) with the points \(u_1,\ldots,u_q\) (respectively) and \(q+1\)
with \(g\). It follows from our assumption in (C) that \(d_{\infty} \in
\DdD_{\xxX}(I_{q+1})\) and hence \(S\), being isometric to
\((I_{q+1},d_{\infty})\), belongs to \(\xxX\). In other words, \(S\)
isometrically embeds in \(X\) and we are done.\par
(D): Assume \(X\) is not locally connected. This means that for some point \(a\)
and radius \(r > 0\) the connected component \(S\) (containing \(a\)) of
the open ball \(B(a,r)\) around \(a\) of radius \(r\) is not a neighbourhood of
\(a\). We claim that \(S = \{a\}\). To this aim, assume, on the contrary, that
there is \(b \in S\) different from \(a\). Denote \(s = \rho(a,b) > 0\). We will
show that the closed ball \(\bar{B}(a,s)\) around \(a\) of radius \(s\) is
contained in \(S\), which will contradict our assumption that \(S\) is not
a neighbourhood of \(a\).\par
It follows from the connectedness of \(S\) that the set \(J \df \{\rho(a,x)\dd\
x \in S\}\) is connected as well and thus \((0,s] \subset J\). Fix any point \(z
\in X\) whose distance from \(a\) lies in \(J\). Choose any point \(c \in S\)
such that \(\rho(a,c) = \rho(a,z)\). We infer from the two-point homogeneity of
\(X\) that there is an isometry \(u\) of \(X\) such that \(u(a) = a\) and \(u(c)
= z\). It follows from the former equality that \(u(B(a,r)) = B(a,r)\) and,
consequently, \(u(S) = S\). So, \(z \in S\), as we claimed.\par
Knowing that \(S = \{a\}\) it is now easy to check that all open balls of radius
\(r/2\) are hereditary disconnected: first of all, we conclude from
the homogeneity of \(X\) that the connected component (containing \(x\)) of
the open ball \(B(x,r)\) around (arbitrarily chosen) \(x \in X\) of radius \(r\)
coincides with \(\{x\}\). So, the same property holds for all neighbourhoods of
\(x\) contained in the aforementioned ball. Therefore, if \(x \in W \df
B(w,r/2)\), then \(W \subset B(x,r)\) and the connected component of \(W\)
that contains \(x\) is a singleton, which finishes the proof of the first part
of (D). The second follows by the same argument as presented above (if some
connected component of \(X\) is unbounded, it contains closed balls of arbitrary
large radii, centred at a fixed point). Part (E) is left to the reader.
\end{proof}

For a locally compact metric space \(X\), hereditary disconnectedness of \(X\)
is equivalent to the so-called strong zero-dimensionality of \(X\) (that is, to
the equality \(\dim X = 0\) where `\(\dim\)' is the covering dimension).
The latter property is local and hence (for metrizable spaces) \(X\) is locally
hereditary disconnected iff it itself is hereditary disconnected. All these
remarks combined with properties (D)--(E) of \PRO{prop-AHM}, and with \THM{Freu}
and \THM{card} yield

\begin{cor}{loc-comp}
A locally compact AB-HO metric space is homeomorphic to exactly one of
the following spaces:
\begin{enumerate}[\upshape(a)]
\item a discrete topological space \(D\) of arbitrary size not greater than
 \(2^{2^{\aleph_0}}\);
\item \(\RRR^n \times D\) where \(n > 0\) is finite and \(D \neq \varempty\) is
 as specified in \UP{(a)};
\item \(\SSS^n \times D\) where \(n > 0\) is finite and \(D \neq \varempty\) is
 as specified in \UP{(a)};
\item the Cantor set \(\CcC\);
\item \(\CcC \times D\) where \(\CcC\) is the Cantor set and \(D\) is infinite
 and as specified in \UP{(a)}.
\end{enumerate}
\end{cor}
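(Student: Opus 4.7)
The plan is to split into two cases according to part (D) of \PRO{prop-AHM}: either $X$ is locally connected, or $X$ is locally hereditarily disconnected.

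Suppose first that $X$ is locally connected. Then the connected component $C$ of any point is both open and closed, and by part (E) of \PRO{prop-AHM} $C$ is itself AB-HO, hence in particular 3-point homogeneous as well as locally compact and connected. If $|C|=1$ then by homogeneity $X$ consists of isolated points and is discrete, giving case (a). Otherwise $|C|>1$ and \THM{Freu} applies: $C$ is isometric to $(Y,\omega\circ\rho)$ with $(Y,\rho)$ one of $(\RRR^n,d_e)$, $(\SSS^n,d_s)$, or $(H^n(\RRR),d_h)$; since $\omega$ is a continuous bijection on the range of $\rho$, $C$ is homeomorphic to $Y$. Because $H^n(\RRR)$ is homeomorphic to $\RRR^n$, we obtain that $C$ is homeomorphic to $\RRR^n$ or $\SSS^n$. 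As $X$ is homogeneous (by \EXM{homo}), all components are pairwise homeomorphic; being clopen, they exhibit $X$ as the disjoint topological sum $\bigsqcup_{d\in D} C_d$, i.e.\ $X$ is homeomorphic to $C\times D$ with $D$ discrete. This yields cases (b) and (c), and the size bound on $D$ comes from \THM{card}(b).

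Now suppose $X$ is locally hereditarily disconnected. As noted just before the corollary, for locally compact metrizable spaces this is equivalent to $\dim X=0$. If some point of $X$ is isolated then by homogeneity every point is, so $X$ is discrete and we are in case (a). Otherwise $X$ is a perfect, locally compact, metrizable, zero-dimensional space, and a compact clopen neighborhood of any point is a non-empty compact metrizable zero-dimensional perfect space, hence homeomorphic to the Cantor set $\CcC$ by Brouwer's characterization. To conclude the corollary it then remains to represent $X$ as a topological sum of copies of $\CcC$. The plan is: use that locally compact metrizable spaces are paracompact to decompose $X$ into pairwise disjoint clopen $\sigma$-compact subspaces; note that each such piece is itself $\sigma$-compact, locally compact, metrizable, zero-dimensional, perfect and (as an orbit closure of the isometry action restricted to an invariant union of pieces is too restrictive, one uses that each piece inherits enough homogeneity from the surrounding $X$) is homeomorphic either to $\CcC$ (if compact) or to $\CcC\times\NNN$ (if not). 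In either situation the piece is a countable topological sum of Cantor sets, hence so is $X$. Indexing the summands by a discrete set $D$ we conclude that $X$ is homeomorphic to $\CcC\times D$; this gives (d) when $|D|=1$ and (e) when $D$ is infinite, while a finite $|D|\ge 2$ collapses back to $\CcC$ since a finite disjoint union of Cantor sets is Cantor. The bound on $|D|$ once more comes from \THM{card}(b).

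The main obstacle is precisely the topological step in the zero-dimensional perfect case: one must justify that a locally compact, perfect, metrizable, zero-dimensional, homogeneous space is a topological sum of Cantor sets, of countably many when a piece is non-compact. The key ingredients are Brouwer's characterization of $\CcC$, the paracompactness of locally compact metrizable spaces together with the resulting decomposition into clopen $\sigma$-compact pieces, and the standard classification of $\sigma$-compact, locally compact, perfect, metrizable, zero-dimensional spaces. Mutual exclusivity of the five listed types is then immediate from connectedness and (non)compactness invariants, which finishes the proof.
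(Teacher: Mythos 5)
Your proof is correct and follows essentially the same route as the paper: the dichotomy from part (D) of \PRO{prop-AHM}, \THM{Freu} together with part (E) for the locally connected case, \THM{card} for the size bound, and the classification of homogeneous zero-dimensional locally compact metrizable spaces (which the paper merely cites and leaves to the reader, and which you correctly flesh out via Brouwer's characterization and the clopen \(\sigma\)-compact decomposition). The only blemish is the garbled parenthetical about orbit closures in the zero-dimensional case, which is unnecessary since that step needs homogeneity only to pass from non-discrete to perfect.
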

\begin{proof}
All that has not been said yet is a classification of topologically homogeneous
hereditary disconnected non-discrete locally compact metrizable spaces which
says that each such a space is homeomorphic to a space of the form specified in
(d) or (e) (with no limitations on the size of the space \(D\)). The details are
left to the reader.
\end{proof}

In \REM{exm-loc-comp} (see the next section) we comment on the existence of
compatible metrics that make AB-HO the spaces listed in the above result.\par
Our next main aim is to characterise Heine-Borel skeletons. It will be
an immediate consequence of the following lemma and our previous results.

\begin{lem}{comp-ball}
Let \(\ffF\) be a layer-closed metric skeletoid that generates an AB-HO metric
space \(X\) and let \(r\) be a positive real number.
\begin{enumerate}[\upshape(A)]
\item If closed balls of radius \(r\) are compact in \(F\), then for any \(\epsi
 \in (0,r)\) there exists a positive integer \(\mu = \mu(\epsi,r)\) such that
 each member of \(\ffF\) whose all positive distances lie in \([\epsi,r]\) has
 size not greater than \(\mu\).
\item Conversely, if for any \(\epsi \in (0,r)\) there exists a positive integer
 \(\mu\) with the property specified in \UP{(A)}, then closed balls of radius
 \(r/2\) are compact in \(F\).
\end{enumerate}
\end{lem}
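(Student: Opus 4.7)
The plan is to handle the two directions separately by relating finite configurations in $\ffF$ to the geometry of balls in $X$ via absolute homogeneity. For (A) I exploit that a compact set contains only boundedly many $\epsilon$-separated points, and for (B) I combine total boundedness (which will again come from the finite bound $\mu$) with completeness of $X$, the latter being furnished by \PRO{prop-AHM}(C) thanks to layer-closedness.

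For (A), I would fix $p_0 \in X$ and let $K \df \bar{B}(p_0, r)$, which is compact by hypothesis. Since $K$ is totally bounded, there is a finite number $\mu = \mu(\epsilon, r)$ bounding from above the cardinality of every $\epsilon$-separated subset of $K$. Now take any $M = (A,d) \in \ffF$ whose positive distances all lie in $[\epsilon, r]$. By \PRO{prop-AHM}(A), layer-closedness makes $\ffF$ an (edge-closed) skeleton, so $\ffF = \FIN{X}$ and $M$ admits an isometric embedding $\iota \colon A \to X$. Pick any $a_0 \in A$; by absolute homogeneity there is an isometry $u$ of $X$ carrying $\iota(a_0)$ to $p_0$. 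Then $u(\iota(A)) \subseteq K$ is an $\epsilon$-separated subset of $K$, so $|A| \leq \mu$, as required.

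For (B), I would fix again $p_0 \in X$, set $B \df \bar{B}(p_0, r/2)$, and let $\epsilon \in (0, r)$. If $S \subseteq B$ is any $\epsilon$-separated set, then for distinct $x,y \in S$ the triangle inequality yields $\epsilon \leq d(x,y) \leq r$, so every finite subset $S_0 \subseteq S$ belongs to $\ffF = \FIN{X}$ with all positive distances in $[\epsilon, r]$; the hypothesis of (B) then forces $|S_0| \leq \mu(\epsilon, r)$ and hence $|S| \leq \mu(\epsilon, r)$. A maximal such $S$ is therefore finite and provides an $\epsilon$-net in $B$, so $B$ is totally bounded. Since $\ffF$ is layer-closed, \PRO{prop-AHM}(C) supplies completeness of $X$, so $B$ is closed in a complete space and hence complete; a complete totally bounded metric space is compact.

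The only step that goes beyond routine bookkeeping is the use of absolute homogeneity in (A) to relocate every finite $\ffF$-member into a single fixed compact ball; the remaining content is triangle inequality, total-boundedness, and completeness arguments. The main technical hypothesis to keep track of is layer-closedness, which in (A) is needed to guarantee $\ffF = \FIN{X}$ and in (B) supplies the completeness of $X$ required to upgrade total boundedness to compactness.
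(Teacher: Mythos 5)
Your proof is correct and is essentially the argument the paper has in mind: the paper's own proof merely notes that layer-closedness yields a complete (edge-closed) skeleton via \PRO{prop-AHM} and then says both parts "follow from a classical theorem characterising compactness in complete metric spaces by means of finite \(\epsi\)-nets", leaving the details to the reader. You have supplied exactly those details (relocating each member of \(\ffF\) into a fixed compact ball by homogeneity for (A), and bounding maximal \(\epsi\)-separated sets plus completeness for (B)), so there is nothing to add.
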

\begin{proof}
First of all, note that a layer-closed metric skeletoid is a complete skeleton,
which follows from parts (A) and (C) of \PRO{prop-AHM}. Now both the parts
follow from a classical theorem characterising compactness in complete metric
spaces by means of finite \(\epsi\)-nets (an \(\epsi\)-net is any subset \(A\)
of a metric space such that each point of this space is at distance less than
\(\epsi\) from at least one point of \(A\)). Again, we leave the details to
the reader.
\end{proof}

As a corollary, we obtain the announced result (we skip its proof).

\begin{thm}{H-B-AB-HO}
For a metric skeletoid \(\ffF\) \tfcae
\begin{enumerate}[\upshape(i)]
\item \(\ffF\) is the skeleton of a Heine-Borel AB-HO metric space;
\item \(\ffF\) is layer-closed and for any integer \(N > 1\) there exists
 a positive integer \(\mu_N\) such that each metric space belonging to \(\ffF\)
 whose all positive distances lie in \([1/N,N]\) has size not greater than
 \(\mu_N\).
\end{enumerate}
\(\ffF\) is compact iff \UP{(ii)} holds and all metric spaces from \(\ffF\) have
uniformly bounded diameters.
\end{thm}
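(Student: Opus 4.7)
The plan is to derive both directions from results already proved in this subsection, so the task is essentially one of assembly: \PRO{prop-AHM}(A)--(B) controls layer-closedness versus ball-compactness, while \LEM{comp-ball} converts ball-compactness into the quantitative bound $\mu_N$ and back. No fundamentally new combinatorics is required.

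For \UP{(i)}$\Rightarrow$\UP{(ii)}, suppose $\ffF$ is the skeleton of a Heine-Borel AB-HO metric space $(X,\rho)$. Since every closed ball in $X$ is compact, \PRO{prop-AHM}(B) applies with every $r>0$, so each $\DdD_{\ffF}^{(r)}(I_n)$ is closed in $\DdD(I_n)$. To upgrade this to layer-closedness of $\ffF$, take any pointwise-convergent sequence $d_k\to d$ in $\DdD(I_n)$ with $d_k\in\DdD_{\ffF}(I_n)$: since $I_n\times I_n$ is finite, the entries $d_k(i,j)$ are jointly bounded by some $R$, and the same then holds for $d$; hence $\{d_k\}\cup\{d\}\subset\DdD_{\ffF}^{(R)}(I_n)$, which is closed, forcing $d\in\DdD_{\ffF}(I_n)$. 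For the uniform bound, fix $N>1$ and apply \LEM{comp-ball}(A) with ball-radius $N$ and $\epsi=1/N$ to obtain the desired $\mu_N=\mu(1/N,N)$.

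For \UP{(ii)}$\Rightarrow$\UP{(i)}, the layer-closedness hypothesis combined with \PRO{prop-AHM}(A) already guarantees that $\ffF$ is a skeleton; let $(X,\rho)$ be the AB-HO metric space it generates. It remains to verify $X$ is Heine-Borel. Fix an arbitrary radius $\varrho>0$. For any $\epsi\in(0,\varrho)$, pick a positive integer $N>\max(\varrho,1/\epsi)$, so that $[\epsi,\varrho]\subset[1/N,N]$; the quantitative hypothesis then furnishes $\mu_N$ as a valid upper bound on the sizes of members of $\ffF$ whose positive distances lie in $[\epsi,\varrho]$. This is exactly the assumption of \LEM{comp-ball}(B) applied with $r=\varrho$, which yields compactness of the closed balls in $X$ of radius $\varrho/2$. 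Since $\varrho>0$ was arbitrary, $X$ is Heine-Borel.

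The compactness addendum is then immediate in both directions. If $X$ is compact, it is Heine-Borel and each member of $\ffF$ embeds isometrically into $X$, so diameters of members of $\ffF$ are uniformly bounded by $\OPN{diam}X$. Conversely, if \UP{(ii)} holds and all members of $\ffF$ have diameter at most $M$, then for any two points $x,y\in X$ the two-point sub-GEC $\{x,y\}$ lies in $\ffF$, forcing $\rho(x,y)\leq M$; the already-established Heine-Borel property combined with this uniform boundedness of $\rho$ gives compactness of $X$. I expect the only mildly delicate step to be the finiteness-of-$I_n$ trick that upgrades bounded layer-closedness (as supplied by \PRO{prop-AHM}(B)) to plain layer-closedness; the remaining steps are direct lookups in \LEM{comp-ball}.
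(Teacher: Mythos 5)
Your proposal is correct and is precisely the assembly the paper intends: it explicitly presents \THM{H-B-AB-HO} as a corollary of \LEM{comp-ball} together with \PRO{prop-AHM} and skips the proof, and your argument supplies exactly that derivation (including the routine boundedness trick for upgrading closedness of the sets \(\DdD_{\ffF}^{(r)}(I_n)\) to layer-closedness, which the paper's proof of \PRO{prop-AHM}(B) also implicitly uses). The only cosmetic slip is asserting \(d\in\DdD_{\ffF}^{(R)}(I_n)\) before invoking closedness, but the intended order of reasoning is clear and the step is valid.
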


\begin{rem}{H-B}
Since Heine-Borel metric spaces are locally compact and separable,
\COR{loc-comp} (combined with part (E) of \PRO{prop-AHM}) implies that each
AB-HO Heine-Borel metric space is homeomorphic to exactly one of the following
spaces:
\begin{enumerate}[\upshape(a)]
\item at most countable discrete topological space \(D\);
\item \(\RRR^n\) where \(n > 0\) is finite;
\item \(\SSS^n \times D\) where \(n > 0\) is finite and \(D \neq \varempty\) is
 as specified in \UP{(a)};
\item the Cantor set \(\CcC\);
\item \(\CcC \times D\) where \(\CcC\) is the Cantor set and \(D\) is
 an infinite countable discrete topological space.
\end{enumerate}
Our second remark on AB-HO Heine-Borel metric spaces is related to
\THM{H-B-AB-HO}. For a finite metric space \((X,d)\) having at least two points,
denote by \(R(X)\) the maximum of two real numbers: the diameter of \((X,d)\)
and the reciprocal of \[\min\{d(x,y)\dd\ x \neq y\}.\] Then the last mentioned
theorem says that a metric skeletoid \(\ffF\) is Heine-Borel iff it is
layer-closed and satisfies the following condition for any sequence of spaces
\((X_n,d_n) \in \ffF\):
\[\lim_{n\to\infty} \card(X_n) = \infty \implies
\lim_{n\to\infty} R(X_n) = \infty.\]
\end{rem}

\section{Products of GEC's}\label{sec:products}

So far we have seen very few examples of AB-HO GEC's. In this section we
introduce a method of constructing such new objects from pre-existing. To this
end, we introduce

\begin{dfn}{disjoint}
Let \(\FFf = \{\Gg_s\}_{s \in S}\) be a totally arbitrary collection of GEC's.
\(\FFf\) is said to be \emph{productable} if the following three conditions are
fulfilled:
\begin{itemize}
\item each of the GEC's \(\Gg_s\) is non-empty and the set \(S\) of indices is
 non-empty as well;
\item the pseudocolors of all the GEC's \(\Gg_s\) coincide;
\item the palettes of these GEC's are pairwise disjoint; that is,
 \[\forall s_1,s_2 \in S,\ s_1 \neq s_2\dd\quad \COL{\Gg_{s_1}} \cap
 \COL{\Gg_{s_2}} = \varempty.\]
\end{itemize}
\end{dfn}

For any family \(\{X_t\}_{t \in T}\) and two elements \(\uU = (u_t)_{t \in T}\)
and \(\vV = (v_t)_{t \in T}\) of the product \(\prod_{t \in T} X_t\), we will
denote by \(\DIF{\uU}{\vV}\) the set of all indices \(t \in T\) such that \(u_t
\neq v_t\). (So, \(\DIF{\uU}{\vV} = \varempty\) iff \(\uU = \vV\).) If \(\uU
\neq \vV\) and, in addition, the set \(\DIF{\uU}{\vV}\) is well-ordered with
respect to a certain total order \(\ORD\) on \(T\), we will use symbol
\(\LAB{\uU}{\vV}\) to denote the minimal (with respect to this order) element of
\(\DIF{\uU}{\vV}\). Notice that if \(\uU\), \(\vV\) and \(\wW\) are three
distinct elements of \(\prod_{t \in T} X_t\) and two out of the three sets
\(\DIF{\uU}{\vV}\), \(\DIF{\uU}{\wW}\), \(\DIF{\vV}{\wW}\) are well-ordered with
respect to a given total order on \(T\), then the third one is well-ordered as
well (because each of these sets is contained in the union of the other
two).\par
We are now ready to define the main notion of this section.

\begin{dfn}{product}
Let \(\FFf = \{\Gg_s\}_{s \in S}\) be a productable collection of GEC's with
\(\Gg_s = (V_s,\kappa_s)\) and the pseudocolor \(e\). For a fixed element \(\aA
= (a_s)_{s \in S}\) of the product \(\prod_{s \in S} V_s\) and a total order
\(\ORD\) on the set \(S\) of indices, the \emph{product of \(\ffF\) with respect
to the pair \((\aA,\ORD)\)} is a GEC, denoted by \(\PROd{s \in S}{\ORD}{\aA}
\Gg_s = \PROd{s \in S}{\ORD}{\aA} (V_s,\kappa_s)\), of the form \((W,\lambda)\)
where \[W \df \Bigl\{\xX = (x_s)_{s \in S} \in \prod_{s \in S} V_s\dd
\DIF{\xX}{\aA} \UP{ is well-ordered w.r.t.}\ORD\Bigr\}\]
and for two elements \(\xX = (x_s)_{s \in S}\) and \(\yY = (y_s)_{s \in S}\) of
\(W\):
\[\lambda(\xX,\yY) \df \begin{cases}e & \xX = \yY\\
\kappa_{\ell}(x_{\ell},y_{\ell}) & \xX \neq \yY\end{cases}\]
where \(\ell = \LAB{\xX}{\yY}\) for \(\xX \neq \yY\). Note that
\(\COL{W,\lambda} = \bigcup_{s \in S} \COL{\Gg_s}\). Observe also that if
\(\ORD\) is a well-order on \(T\), then the set \(W\) coincides with the full
set-theoretic product \(\prod_{s \in S} V_s\) (however, the edge-coloring of
\(W\) does depend on this order). Remember also that the formula for \(\lambda\)
does not depend on the choice of \(\aA\).\par
If, in the above settings, the set \(S = \{k_1,\ldots,k_r\}\) of indices (where
\(k_1 < \ldots < k_r\)) is finite, consists of non-negative integers and is
ordered by the natural order \(\leq\) on \(\ZZZ\), we shall write \(\Gg_{k_1}
\times^* \ldots \times^* \Gg_{k_r}\) instead of \(\PROd{s \in S}{\leq}{\aA}
\Gg_s\). (We leave the asterisk to underline that, in general, this product is
non-symmetric; that is, it may happen that \(\Gg_1 \times^* \Gg_2 \not\equiv
\Gg_2 \times^* \Gg_1\).) Analogously, if \(S\) consists of all positive integers
and the order on \(S\) coincides with the natural one, we will use a suggestive
notation \(\Gg_1 \times^* \Gg_2 \times ^* \ldots\)
\end{dfn}

\begin{rem}{UZU-rem-prod-ultram}
The above construction of products with rescaled discrete metric spaces (as
factors) and the reverse order to the natural one on the set of positive reals
(as the set of indices) is known in the geometry of ultrametric spaces and is
used to classify all the so-called \emph{spherically complete} homogeneous
ultrametric spaces. We have learnt about this constructrion from a presentation
found in the Internet after finishing our research on AB-HO GEC's.
\end{rem}

Since we are mainly interested in products of AB-HO GEC's (and we wish to
consider them up to isomorphism), let us make the following simple observation.

\begin{pro}{iso-products}
Let \(\{\Gg_s\}_{s \in S}\) be a productable collection of GEC's where \(\Gg_s =
(V_s,\kappa_s)\), let \(\ORD\) be a total order on \(S\), \(\aA =
(a_s)_{s \in S}\) be a fixed element of \(\prod_{s \in S} V_s\) and for each
\(s \in S\) let \(\phi_s\) be an arbitrary automorphism of \(\Gg_s\). Then, for
\(\bB = (\phi_s(a_s))_{s \in S}\) the GEC's \(\PROd{s \in S}{\ORD}{\aA} \Gg_s\)
and \(\PROd{s \in S}{\ORD}{\bB} \Gg_s\) are isomorphic.\par
In particular, if all the GEC's \(\Gg_s\) are homogeneous, all the GEC's
\(\PROd{s \in S}{\ORD}{\aA} \Gg_s\) where \(\aA\) runs over all elements of
\(\prod_{s \in S} V_s\) are pairwise isomorphic.
\end{pro}
\begin{proof}
It is enough to show only the first statement. To this end, observe that
the formula \(\Phi((\xX_s)_{s \in S}) \df (\phi_s(\xX_s))_{s \in S}\) correctly
defines an isomorphism from \(\PROd{s \in S}{\ORD}{\aA} \Gg_s\) onto
\(\PROd{s \in S}{\ORD}{\bB} \Gg_s\).
\end{proof}

The above result enables us to simplify our notation of products: if all
the GEC's \(\Gg_s\) are homogeneous, we will write \(\PROD{s \in S} \Gg_s\)
instead of \(\PROd{s \in S}{\ORD}{\aA}\) (as the structure of the latter GEC is
independent of the choice of \(\aA\)). We leave in the notation the asterisk to
underline that, in general, the set of vertices of the product of GEC's does not
coincide with the full set-theoretic product of the resepctive sets of
vertices.\par
Now we will state and prove the main result of the section:

\begin{thm}{prod-AB-HO}
For a productable collection \(\{\Gg_s\}_{s \in S}\) of AB-HO GEC's and
an arbitrary total order \(\preceq\) on \(S\), the GEC
\(\PROd{s \in S}{\preceq}{*} \Gg_s\) is AB-HO as well.
\end{thm}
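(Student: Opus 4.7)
The plan is to apply \PRO{1}(A): it suffices to show that any partial morphism $\phi\dd A \to W$ of $\Pp := \PROd{s \in S}{\preceq}{\aA}\Gg_s$ with $A \subsetneq W$ can be extended to include one more vertex. Fixing $\bB \in W \setminus A$, I will construct $\cC \in W$ making $\phi \cup \{(\bB, \cC)\}$ a partial morphism. Before doing so I will normalize. Pick $\xX_0 \in A$ (the case $A = \varempty$ being trivial) and set $\yY_0 := \phi(\xX_0)$. Using AB-HO of each factor, choose $\psi_s \in \AUT{\Gg_s}$ swapping $a_s$ and $(\yY_0)_s$ (identity when these coincide). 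By \PRO{iso-products} the coordinate-wise map $\Psi((u_s)_s) = (\psi_s(u_s))_s$ is an isomorphism from $\Pp$ onto the product with reference $\yY_0$; since $\yY_0 \in W$, both $W$ and $\lambda$ are independent of whether one uses $\aA$ or $\yY_0$ as reference, so $\Psi$ is in fact an automorphism of $\Pp$. Replacing $\phi$ by $\Psi \circ \phi$ and taking $\xX_0$ as the new reference, I may assume $\aA = \xX_0 \in A$ and $\phi(\aA) = \aA$; put $r := \LAB{\aA}{\bB}$.

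For each $\xX \in A$ set $s_\xX := \LAB{\xX}{\bB}$; this is well-defined because $\DIF{\xX}{\bB} \subseteq \DIF{\xX}{\aA} \cup \DIF{\aA}{\bB}$ is a union of two well-ordered sets. The requirement that $\phi \cup \{(\bB, \cC)\}$ be a partial morphism decomposes coordinate-wise on $\cC$: for each $t \in S$, (a) any $\xX \in A$ with $s_\xX \succ t$ forces $c_t = \phi(\xX)_t$; (b) any $\xX \in A$ with $s_\xX = t$ imposes $c_t \neq \phi(\xX)_t$ and $\kappa_t(\phi(\xX)_t, c_t) = \kappa_t(x_t, b_t)$; (c) elements with $s_\xX \prec t$ impose nothing on $c_t$. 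Key consistency checks use that $\phi$ preserves the labeling coordinate $\LAB{\cdot}{\cdot}$ (since the factor palettes are pairwise disjoint, each color determines the unique factor from which it originates); hence multiple (a)-constraints at the same $t$ automatically produce the same value, and an (a)-constraint combined with a (b)-constraint at the same $t$ is automatically compatible. I then define $c_t$ case by case: if some $\xX \in A$ has $s_\xX \succ t$, $c_t$ is the forced common value; otherwise, if some $\xX \in A$ has $s_\xX = t$, the (b)-conditions define a partial morphism of the AB-HO factor $\Gg_t$, which I extend to include the value at $b_t$ via \PRO{1}(A), setting $c_t$ to be that image; otherwise $c_t := a_t$.

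The main obstacle is verifying that $\cC := (c_t)_{t \in S}$ lies in $W$, i.e., that $\DIF{\cC}{\aA}$ is well-ordered under $\preceq$. For $t \prec r$, $\aA$ itself is of type (a) (as $s_\aA = r \succ t$), forcing $c_t = \phi(\aA)_t = a_t$; thus $\cC$ agrees with $\aA$ on $\{t \dd t \prec r\}$, and the (b)-constraint from $\aA$ at $r$ yields $c_r \neq a_r$, giving $r = \min \DIF{\cC}{\aA}$. The hard part is to control the tail $\DIF{\cC}{\aA} \cap \{t \dd t \succ r\}$. The driving structural observation is that every $\xX \in A$ with $s_\xX \succ r$ satisfies $\LAB{\xX}{\aA} = r$ (it agrees with $\aA$ on $\prec r$ because it agrees with $\bB$ there, and it disagrees with $\aA$ at $r$ because $x_r = b_r \neq a_r$), and by $\phi$-preservation $\LAB{\phi(\xX)}{\aA} = r$, so each $\DIF{\phi(\xX)}{\aA}$ is a well-ordered set with minimum $r$. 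The technical heart of the argument will be to exploit the freedom afforded by AB-HO of $\Gg_t$ in case (b), together with a transfinite induction along the well-ordered set $\DIF{\bB}{\aA}$ and the well-orderings of the relevant $\DIF{\phi(\xX)}{\aA}$, to show that this tail is contained in (and thereby inherits well-orderedness from) a suitable well-ordered subset of $S$.
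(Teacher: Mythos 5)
Your construction of \(\cC\) is the same as the paper's: the same three cases at each coordinate \(t\) (a value forced by some \(\xX\in A\) with \(\LAB{\xX}{\bB}\succ t\); an extension inside the AB-HO factor \(\Gg_t\) when \(t\) is attained as some \(\LAB{\xX}{\bB}\) but never exceeded; the default \(a_t\) otherwise), and the same consistency checks via preservation of \(\LAB{\cdot}{\cdot}\) coming from disjointness of the palettes. The verification that the extended map is a partial morphism is likewise the paper's final case analysis, and your normalization \(\phi(\aA)=\aA\) is legitimate (though unnecessary). One small imprecision: to define \(c_t\) in case (b) you need the value of an extension at the \emph{prescribed} point \(b_t\), which comes from extending the partial morphism of \(\Gg_t\) to a full automorphism and evaluating, not from \PRO{1}(A), which only supplies \emph{some} new point.

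The genuine gap is the step you explicitly defer: the proof that \(\DIF{\cC}{\aA}\) is well-ordered. The route you sketch --- transfinite induction along \(\DIF{\bB}{\aA}\) together with ``freedom'' in case (b) --- does not lead there. First, \(\DIF{\cC}{\aA}\) is not controlled by \(\DIF{\bB}{\aA}\): at an (a)-coordinate \(t\) one has \(c_t=\phi(\xX)_t\), which can differ from \(a_t\) at coordinates where \(b_t=a_t\), so inducting along \(\DIF{\bB}{\aA}\) examines the wrong set. Second, there is no freedom to exploit in case (b), and none is needed: there is \emph{at most one} (b)-coordinate (if \(t_1\prec t_2\) were both of type (b), the witness \(\xX\) with \(\LAB{\xX}{\bB}=t_2\) would make \(t_1\) of type (a)), and a single extra coordinate cannot spoil well-orderedness. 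The observation that actually closes the argument --- and the one the paper uses --- is this: if \(q\) is an (a)-coordinate with witness \(\xX\) (that is, \(q\prec\LAB{\xX}{\bB}\)), then every \(t\preceq q\) is an (a)-coordinate with the \emph{same} witness, so \(c_t=\phi(\xX)_t\) for all such \(t\); hence
\[\DIF{\cC}{\aA}\cap\{t \dd\ t\preceq q\}\subset\DIF{\phi(\xX)}{\aA},\]
which is well-ordered because \(\phi(\xX)\in W\). Therefore any non-empty subset of \(\DIF{\cC}{\aA}\) either contains an (a)-coordinate \(q\), in which case its part \(\preceq q\) sits inside a well-ordered set and supplies the minimum, or is contained in the at-most-one-point set of (b)-coordinates. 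You do mention ``the well-orderings of the relevant \(\DIF{\phi(\xX)}{\aA}\)'', so the right ingredient is in your hands, but the assembly --- initial segments of \(\DIF{\cC}{\aA}\) embed into single sets \(\DIF{\phi(\xX)}{\aA}\), plus finiteness of the (b)-part --- is exactly the missing content, and it is the heart of the proof rather than a routine afterthought.
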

\begin{proof}
We apply part (A) of \PRO{1}. Write \(\Gg_s = (V_s,\kappa_s)\ (s \in S)\) and
\(\Hh = (W,\lambda)\) where \(\Hh \df \PROd{s \in S}{\preceq}{\aA} \Gg_s\) (for
some fixed \(\aA \in \prod_{s \in S} V_s\)), and fix a partial morphism
\(\phi\dd X \to W\) of \(\Hh\) (with \(X \subsetneq W\)) as well as a vertex
\(\bB \in W \setminus X\). We only need to show that there exists a vertex \(\cC
\in W \setminus \phi(X)\) such that
\begin{equation}\label{eqn:aux5}
\lambda(\phi(\xX),\cC) = \lambda(\xX,\bB)
\end{equation}
holds for all \(\xX \in X\). For any \(t \in S\) we denote by \(\pi_t\dd W \to
V_t\) the natural projection onto the \(t\)-th coordinate; that is,
\(\pi_t((x_s)_{s \in S}) = x_t\). To simplify notation, we will write
\(\ell(\xX,\yY)\) in place of \(\LAB[\preceq]{\xX}{\yY}\) (for distinct \(\xX\)
and \(\yY\)).\par
Since \(\phi\) is a morphism, it follows from the property that palettes are
pairwise disjoint (cf. \DEF{disjoint}) that
\begin{equation}\label{eqn:lab}
\ell(\phi(\xX),\phi(\yY)) = \ell(\xX,\yY) \qquad
(\xX, \yY \in X,\ \xX \neq \yY).
\end{equation}
For any \(s \in S\), denote by \(Z_s\) the set of all \(\xX \in X\) such that
\(s \precneqq \ell(\xX,\bB)\), by \(I_0\) the set of all \(s \in S\) for which
\(Z_s \neq \varempty\), by \(I_1\) the set of all \(s \in S \setminus I_0\) such
that \(s = \ell(\xX,\bB)\) for some \(\xX \in X\) and finally let \(I_2 \df S
\setminus (I_0 \cup I_1)\). It follows from the very definitions of both
the sets \(I_0\) and \(I_1\) that \(I_1\) consists of maximal elements of
the set \(\{\ell(\xX,\bB)\dd\ \xX \in X\}\) and therefore \(\card(I_1) \leq 1\).
We shall now show that:
\begin{enumerate}[({a}ux1)]
\item for any three distinct points \(\uU, \vV, \wW \in W\) with \(\ell(\uU,\vV)
 \preceq \ell(\uU,\wW) \preceq \ell(\vV,\wW)\) one has \(\ell(\uU,\vV) =
 \ell(\uU,\wW)\);
\item \(\pi_s(\xX) = \pi_s(\yY)\) and \(\pi_s(\phi(\xX)) = \pi_s(\phi(\yY))\)
 for all \(s \in S\) and distinct \(\xX, \yY \in Z_s\);
\item if \(I_1 = \{t\}\) and \(\xX, \yY \in Y\) where \(Y\) consists of all
 \(\vV \in X\) that satisfy \(\ell(\vV,\bB) = t\), then:
 \begin{itemize}
 \item \(\pi_t(\xX) = \pi_t(\yY) \iff \pi_t(\phi(\xX)) = \pi_t(\phi(\yY))\); and
 \item if \(\pi_t(\xX) \neq \pi_t(\yY)\), then
  \(\kappa_t(\pi_t(\phi(\xX)),\pi_t(\phi(\yY))) =
  \kappa_t(\pi_t(\xX),\pi_t(\yY))\).
 \end{itemize}
\end{enumerate}
To see (aux1), observe that it follows from the definition of \(\ell\) that
for any index \(q \precneqq \ell(\uU,\vV)\) one has \(\pi_q(\uU) = \pi_q(\vV)\).
So, if \(q \df \ell(\uU,\vV)\) were different from \(\ell(\uU,\wW)\), then
\(\pi_q(\uU) = \pi_q(\wW)\) and, consequently, \(\pi_q(\vV) \neq \pi_q(\wW)\),
which would imply that \(\ell(\vV,\wW) \preceq q\), contradictory to our
supposition.\par
To prove (aux2), we may (and do) assume that \(\ell(\xX,\bB) \preceq
\ell(\yY,\bB)\). It follows from (aux1) that \(\ell(\xX,\bB) \preceq
\ell(\xX,\yY)\) and hence \(\pi_s(\xX) = \pi_s(\yY)\). So, we infer from
\eqref{eqn:lab} that also \(\ell(\xX,\bB) \preceq \ell(\phi(\xX),\pi(\yY))\).
Consequently, \(\pi_s(\phi(\xX)) = \pi_s(\phi(\yY))\) and we are done.\par
Finally, to verify (aux3), assume (with no loss on generality) that \(\xX \neq
\yY\) and conclude from (aux1) that \(t \preceq \ell(\xX,\yY)\). So,
\[\pi_t(\xX) = \pi_t(\yY) \iff t \neq \ell(\xX,\yY) =
\ell(\phi(\xX),\phi(\yY)) \iff \pi_t(\phi(\xX)) = \pi_t(\phi(\yY))\]
(by \eqref{eqn:lab}). So, if \(\pi_t(\xX) \neq \pi_t(\yY)\), then
\(\ell(\xX,\yY) = \ell(\phi(\xX),\phi(\yY)) = t\) and
\[\kappa_t(\pi_t(\xX),\pi_t(\yY)) = \lambda(\xX,\yY) =
\lambda(\phi(\xX),\phi(\yY)) = \kappa_t(\pi_t(\phi(\xX)),\pi_t(\phi(\yY))).\]
If \(I_1\) is non-empty (and \(I_1 = \{t\}\)), (aux3) enables us to define
a partial morphism \(\psi\dd \pi_t(Y) \to V_t\) of \(\Gg_t\) by the rule:
\[\psi(\pi_t(\vV)) = \pi_t(\phi(\vV)) \qquad (\vV \in Y).\]
In that case we take any automorphism \(\tau\) of \(\Gg_t\) that extends
the above \(\psi\).\par
Now for any \(s \in S\) we define \(c_s \in V_s\) as follows:
\begin{itemize}
\item if \(s \in I_0\), let \(c_s \df \pi_s(\phi(\xX))\) for arbitrarily chosen
 \(\xX \in Z_s\) ((aux2) shows that this definition does not depend on
 the choice of \(\xX\));
\item if \(s \in I_1\) (and thus \(s = t\)), then \(c_s \df \tau(\pi_s(\bB))\);
\item if \(s \in I_2\), then \(c_s \df \pi_s(\aA)\).
\end{itemize}
Notice that \(\cC \df (c_s)_{s \in S}\) is a well defined element of
\(\prod_{s \in S} V_s\). We claim that \(\cC \in W \setminus \phi(X)\) and that
the assignment \(\phi(\bB) \df \cC\) extends (previous) \(\phi\) to a partial
morphism of \(\Hh\). To this end, observe that \(\DIF{\cC}{\aA} \subset I_0 \cup
I_1\) and thus (since \(I_1\) is finite), it is sufficient to show that for any
\(q \in I_0\) the set \(J_q \df \{s \in \DIF{\cC}{\aA}\dd\ s \preceq q\}\) is
well-ordered by \(\preceq\). But for \(q \in I_0\) we can take any \(\xX \in
Z_q\). Then \(\xX \in Z_s\) for any \(s \in J_q\) and, consequently,
\(\pi_s(\cC) = \pi_s(\xX)\) for \(s \in J_q\). Hence \(J_q \subset
\DIF{\xX}{\aA}\) and therefore \(\cC \in W\). Now fix any \(\xX \in X\) and
denote \(s \df \ell(\xX,\bB)\). We have two possibilities:
\begin{itemize}
\item \(s \in I_0\): then there exists \(\zZ \in X\) such that \(s \precneqq
 \ell(\zZ,\bB)\) (in particular, \(\pi_s(\zZ) = \pi_s(\bB)\)); we conclude from
 (aux1) and \eqref{eqn:lab} that (\(\zZ \neq \xX\) and) \(s = \ell(\xX,\zZ) =
 \ell(\phi(\xX),\phi(\zZ))\); observe  that \(\pi_q(\cC) = \pi_q(\phi(\zZ))\)
 for any index \(q \precneqq \ell(\zZ,\bB)\) (as \(\zZ \in Z_q\) for all such
 \(q\)), which implies that \(\cC \neq \phi(\xX)\), \(\ell(\cC,\phi(\xX)) = s\)
 (again by (aux1)) and
 \begin{multline*}
 \lambda(\cC,\phi(\xX)) = \kappa_s(\pi_s(\cC),\pi_s(\phi(\xX))) =
 \kappa_s(\pi_s(\phi(\zZ)),\pi_s(\phi(\xX))) \\= \lambda(\phi(\zZ),\phi(\xX)) =
 \lambda(\zZ,\xX) = \kappa_s(\pi_s(\zZ),\pi_s(\xX)) =
 \kappa_s(\pi_s(\bB),\pi_s(\xX)) = \lambda(\bB,\xX);
 \end{multline*}
 or else
\item \(s \in I_1\): then \(s = t\), \(\xX \in Y\), \(\pi_t(\xX) \neq
 \pi_t(\bB)\) and \(\pi_t(\cC) = \tau(\pi_t(\bB)) \neq \tau(\pi_t(\xX)) =
 \psi(\pi_t(\xX)) = \pi_t(\phi(\xX))\) and hence \(\cC \neq \phi(\xX)\);
 moreover, \(\xX \in Z_q\) for any \(q \precneqq s\) and therefore \(\pi_q(\cC)
 = \pi_q(\phi(\xX))\) for all such \(q\), so \(\ell(\cC,\phi(\xX)) = t\) and
 \begin{multline*}
 \lambda(\cC,\phi(\xX)) = \kappa_t(\pi_t(\cC),\pi_t(\phi(\xX))) \\=
 \kappa_t(\tau(\pi_t(\bB)),\tau(\pi_t(\xX))) = \kappa_t(\pi_t(\bB),\pi_t(\xX)) =
 \lambda(\bB,\xX).
 \end{multline*}
\end{itemize}
Both the above cases prove that our extension is a partial morphism and we are
done.
\end{proof}

\begin{cor}{2toY}
For any infinite set \(Y\) there exists an AB-HO GEC \(\Hh = (V,\kappa)\) such
that \(\COL{\Hh} = Y\) and \(\card(V) = 2^{\card(Y)}\).
\end{cor}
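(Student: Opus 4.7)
The plan is to exhibit $\Hh$ as a product, in the sense of \DEF{product}, of a carefully chosen productable family of two-vertex AB-HO GECs, one per color. Fix a symbol $e \notin Y$ to serve as the common pseudocolor and, for each $y \in Y$, let $\Gg_y = (\{0_y,1_y\},\kappa_y)$ be the two-vertex GEC whose two vertices are painted $e$ and whose single edge is colored $y$. A direct inspection shows that each $\Gg_y$ is AB-HO: besides the identity, the swap of $0_y$ and $1_y$ is an automorphism, and between them they witness extensions of all seven partial morphisms of $\Gg_y$. Because the pseudocolors all equal $e$ while the palettes $\COL{\Gg_y} = \{y\}$ are pairwise disjoint, the family $\{\Gg_y\}_{y \in Y}$ is productable in the sense of \DEF{disjoint}.

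Invoking the axiom of choice, I fix a well-order $\ORD$ on $Y$ and any base point $\aA = (a_y)_{y \in Y} \in \prod_{y \in Y}\{0_y,1_y\}$, and set
\[\Hh \df \PROd{y \in Y}{\ORD}{\aA} \Gg_y = (V,\lambda).\]
Since every $\Gg_y$ is AB-HO, \THM{prod-AB-HO} immediately yields that $\Hh$ is AB-HO. Moreover, because $\ORD$ is a \emph{well}-order on $Y$, every subset of $Y$ is well-ordered, so for every $\xX \in \prod_{y \in Y}\{0_y,1_y\}$ the set $\DIF{\xX}{\aA}$ is automatically well-ordered with respect to $\ORD$; hence $V$ coincides with the full set-theoretic product $\prod_{y \in Y}\{0_y,1_y\}$, and in particular $\card(V) = 2^{\card(Y)}$.

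It remains to verify that $\COL{\Hh} = Y$. The inclusion $\COL{\Hh} \subset \bigcup_{y \in Y} \COL{\Gg_y} = Y$ is immediate from the definition of the product edge-coloring. For the reverse inclusion, given $y \in Y$, let $\bB \in V$ be the vertex that agrees with $\aA$ in every coordinate except $y$, where it takes the opposite value of $a_y$; then $\DIF{\aA}{\bB} = \{y\}$, so $\LAB{\aA}{\bB} = y$, and consequently $\lambda(\aA,\bB) = \kappa_y(a_y,b_y) = y$. Thus every $y \in Y$ is realized as a color of $\Hh$, finishing the construction. There is no real obstacle here: \THM{prod-AB-HO} carries the full weight, and the size $2^{\card(Y)}$ is in fact sharp by part (b) of \THM{card}.
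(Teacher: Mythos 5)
Your proof is correct and follows exactly the paper's own route: the paper likewise takes the productable family of two-vertex $c$-monochromatic GEC's indexed by $Y$ with a common pseudocolor, well-orders $Y$, applies \THM{prod-AB-HO}, and observes that the well-order makes the vertex set the full set-theoretic product of size $2^{\card(Y)}$. Your additional verifications (that each factor is AB-HO and that every color of $Y$ is attained) are details the paper leaves implicit, and they check out.
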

\begin{proof}
Fix a pseudocolor \(e \notin Y\) and a well-order \(\ORD\) on \(Y\). For any \(c
\in Y\) let \(\Gg_c\) be a \(c\)-monochromatic GEC with two vertices and
pseudocolor \(e\). Then \(\{\Gg_c\}_{c \in Y}\) is a productable collection and
\(\Hh = \PROD{c \in Y} \Gg_c\) is the GEC we searched for (the set of vertices
of this product coincides with the full set-theoretic product of the respective
sets of vertices, because \(Y\) is well-ordered).
\end{proof}

A situation becomes more complex in case of metric spaces, as the product of
AB-HO metric spaces (considered as GEC's) is, in general, not a metric
space---that is, the triangle inequality may fail to hold in products. We
discuss it in greater detail in next results. Observe that a collection
of metric spaces is productable (in the sense of \DEF{disjoint}) iff any two
members of this family with different indices admit different positive
distances (that is, if their sets of all attainable positive distances are
distjoint).\par
For any productable collection \(\{\Gg_s = (V_s,\kappa_s)\}_{s \in S}\) of GEC's
and a fixed element \(\aA = (a_s)_{s \in S}\) of \(\prod_{s \in S} V_s\),
denoting the product \(\PROd{s \in S}{\ORD}{\aA}\) by \(\Hh = (Z,\rho)\), we may
consider \emph{canonical} morphisms \(j_t\dd V_t \to Z\) (from \(\Gg_t\) into
\(\Hh\)) defined for all \(t \in S\) as follows: for \(v \in V_t\), \(j(v) =
(v_s)_{s \in S}\) satisfies \(v_s = a_s\) for \(s \neq t\) and \(v_t = v\). It
may easily checked that \(j_t\) is indeed a (well defined) morphism.\par
In the sequel we shall use the following intuitive result, whose proof is left
to the reader.

\begin{pro}{prod-prod}
Let \(\{\Gg_s = (V_s,\kappa_s)\}_{s \in S}\), \(a_s\) and \(\ORD\) be,
respectively, a productable collection of GEC's, any element from \(V_s\) and
a total order on \(S\). Assume \(S = \bigcup_{t \in T} S_t\) where:
\begin{itemize}
\item \((T,\preceq)\) is a totally ordered non-empty set; and
\item the sets \(S_t\ (t \in T)\) are pairwise disjoint and non-empty; and
\item for any two distinct elements \(t_1,t_2 \in T\) such that \(t_1 \preceq
 t_2\) all the elements of \(S_{t_1}\) precede all the elements of \(S_{t_2}\);
 that is:
 \[\forall s_1 \in S_{t_1},\ s_2 \in S_{t_2}\dd\ s_1 \ORD s_2.\]
\end{itemize}
Then for each \(t \in T\) the collection \(\{\Gg_s\}_{s \in S_t}\) is
productable and the assignment
\[\PROd{s \in S}{\ORD}{\aA} \ni (z_s)_{s \in S} \mapsto
((z_s)_{s \in S_t})_{t \in T} \in \PROd{t \in T}{\preceq}{\aA'}
(\PROd{s \in S_t}{\ORD}{\aA_t} \Gg_s)\]
correctly defines an isomorphism of GEC's (in particular, the collection of all
factors of the right-hand side product is productable) where \(\aA =
(a_s)_{s \in S}\), \(\aA_t = (a_s)_{s \in S_t}\) and \(\aA' =
(\aA_t)_{t \in T}\).
\end{pro}

To simplify further statements, for any productable collection
\(\FFf = \{\Gg_s\}_{s \in S}\) of GEC's we denote by \(\SUPP{\FFf}\) the set
of all indices \(s \in S\) for which \(\Gg_s\) is non-degenerate and call it
the \emph{support} of the collection \(\FFf\). Observe that:
\begin{itemize}
\item if \(\SUPP{\FFf} = \varempty\), the product (with respect to any order on
 the set of indices) is a degenerate GEC;
\item if \(\SUPP{\FFf} = \{t\}\), then the canonical morhism \(j_t\) is
 an isomorphism (again, for any total order on the set of indices).
\end{itemize}
That is why we assume in part (B) below that the support of the collection
contains more than one index.

\begin{pro}{prod-metric}
\begin{enumerate}[\upshape(A)]
\item Let \(\{\Gg_s\}_{s \in S}\) be a productable collection of AB-HO metric
 spaces and let \(\ORD\) be a total order on \(S\). Then the product
 \(\PROD{s \in S} \Gg_s\) is a metric space iff the following condition is
 fulfilled:
 \begin{quote}
 If \(s_1\) and \(s_2\) are distinct indices from \(S\) with \(s_1 \ORD s_2\)
 and \(a\) and \(b\) are two positive real numbers that are attainable as
 distance in, respectively, \(\Gg_{s_1}\) and \(\Gg_{s_2}\), then \(b \leq 2a\).
 \end{quote}
\item Let \(\FFf = \{\Gg_s = (X_s,d_s)\}_{s \in S}\) and \(\ORD\) be
 a productable collection of AB-HO metric spaces and a total order on \(S\) such
 that \(\card(\SUPP{\FFf}) > 1\) and the GEC \(\Hh = (Z,\rho) \df
 \PROd{s \in S}{\ORD}{\aA} \Gg_s\) is a metric space (where \(\aA =
 (a_s)_{s \in S}\) is an arbitrarily fixed element of \(\prod_{s \in S} X_s\)).
 Then:
 \begin{enumerate}[\upshape(a)]
 \item The topology of \(\Hh\) is finer than the topology inherited from
  the product topology of \(\prod_{s \in S} X_s\).
 \item For any \(s \in S\), the canonical isometric map \(j_s\dd X_s \to Z\) has
  closed image.
 \item For any \(s \in S\) that is not the greatest element of \(\SUPP{\FFf}\)
  the space \((X_s,d_s)\) is a complete metric space with discrete topology.
 \item For any \(s \in S\) that is not the least element of \(\SUPP{\FFf}\)
  the metric \(d_s\) is bounded.
 \item If \(t\) is the greatest element of \(\SUPP{\FFf}\) and \(\aA' =
  (a_s)_{s \in S \setminus \{t\}}\), then:
  \begin{itemize}
  \item the metric space \(\Hh' = (Z',\rho') \df
   \PROd{s \in S \setminus \{t\}}{\ORD}{\aA'} \Gg_s\) is complete and has
   discrete topology;
  \item \(\Hh\) is naturally isometric to \(\Hh' \times^* \Gg_t\);
  \item the topology of \(\Hh' \times^* \Gg_t\) coincides with the product
   topology of \(Z' \times X_t\).
  \end{itemize}
  In particular, \(\Hh\) is a complete metric space iff so is \(\Gg_t\).
 \item If \(\SUPP{\FFf}\) does not have the greatest element and every its
  countable subset is upper bounded in \(\SUPP{\FFf}\), then \(\Hh\) is
  a complete metric space and has discrete topology. The same conclusion holds
 if \(d_s(X_s \times X_s) \subset \{0\} \cup [\epsi,\infty)\) for some fixed
 \(\epsi > 0\) and all \(s \in S\).
 \item If \(\SUPP{\FFf}\) does not have the greatest element, then \(\Hh\) is
  topologically non-discrete iff there exists a strictly increasing (with
  respect to \(\ORD\)) sequence \((t_n)_{n=1}^{\infty} \subset \SUPP{\FFf}\)
  that is not upper bounded in \(\SUPP{\FFf}\) and moreover among positive
  reals attainable as a value of any of \(d_{t_n}\) there are numbers
  arbitrarily close to 0. In that case let:
  \begin{itemize}
  \item \(S_1 \df \{s \in S\dd\ s \ORD t_1\}\); and
  \item \(S_n \df \{s \in S\dd\ t_{n-1} \ORD s \ORD t_n\} \setminus
   \{t_{n-1}\}\) for \(n > 1\); and
  \item \(\aA_n \in \prod_{s \in S_n} X_s\) for \(n > 0\) coincide with \(\aA\)
   on coordinates from \(S_n\);
  \item \(\Ww_n = (W_n,\mu_n) \df \PROd{s \in S_n}{\ORD}{\aA_n}\) for \(n > 0\).
  \end{itemize}
  Then:
  \begin{itemize}
  \item each of the spaces \(\Ww_n\) is complete and has discrete topology;
  \item \(\Hh\) is naturally isometric to \(\Ww_1 \times^* \Ww_2 \times^*
   \ldots\);
  \item the topology of \(\Ww_1 \times^* \Ww_2 \times^* \ldots\) coincides with
   the product topology of \(\prod_{n=1}^{\infty} W_n\);
  \item \(\Hh\) is a complete metric space.
  \end{itemize}
 \end{enumerate}
 In particular, \(\Hh\) is a complete metric space iff so are all \(\Gg_s\).
\item For any finite productable collection \(\{\Gg_k = (X_k,d_k)\}_{k=1}^r\) of
 AB-HO metric spaces whose product is metric as well, the topology of \(\Gg_1
 \times^* \ldots \times^* \Gg_r\) coincides with the product topology of \(X_1
 \times \ldots \times X_r\).
\end{enumerate}
\end{pro}
\begin{proof}
To see (A), write \(\Gg_s = (X_s,d_s)\) and fix \(\zZ = (z_s)_{s \in S} \in
\prod_{s \in S} X_s\) so that \(\Hh \df \PROD{s \in S} \Gg_s =
\PROd{s \in S}{\ORD}{\zZ} \Gg_s\). Write \(\Hh = (Z,\rho)\), assume it is
a metric space and fix \(s_1,s_2\) and \(a,b\) as specified in (A). Take two
pairs \((u,v) \in X_{s_1}\) and \((p,q) \in X_{s_2}\) such that \(d_{s_1}(u,v) =
a\) and \(d_{s_2}(p,q) = b\). Now define three elements \(\wW = (w_s)_{s \in S},
\xX = (x_s)_{s \in S}, \yY = (y_s)_{s \in S}\) of \(Z\) as follows. For any \(s
\in S \setminus \{s_1,s_2\}\) we put \(w_s = x_s = y_s \df z_s\) and for \(s \in
\{s_1,s_2\}\):
\[w_s \df \begin{cases}u & s = s_1\\p & s = s_2\end{cases},\qquad
x_s \df \begin{cases}u & s = s_1\\q & s = s_2\end{cases},\qquad
y_s \df \begin{cases}v & s = s_1\\p & s = s_2\end{cases}.\]
Notice that then \(\rho(\wW,\xX) = d_{s_2}(p,q) = b\), \(\rho(\wW,\yY) =
d_{s_1}(u,v) = a\) and \(\rho(\xX,\yY) = d_{s_1}(u,v) = a\) and therefore
(thanks to the triangle inequality) \(b \leq 2a\).\par
Now assume that the condition formulated in (A) is fulfilled. We will check that
\(\rho\) satisfies the triangle inequality. To this end, fix three distinct
points \(\wW = (w_s)_{s \in S}\), \(\xX = (x_s)_{s \in S}\) and \(\yY =
(y_s)_{s \in S}\) of \(Z\). With no loss on generality, we may (and do) assume
that \(s \df \LAB{\wW}{\xX} \ORD \LAB{\wW}{\yY} \ORD t \df \LAB{\xX}{\yY}\). It
follows from (aux1) (see the proof of \THM{prod-AB-HO}) that \(\LAB{\wW}{\yY} =
s\). So, \(a \df \rho(\wW,\xX) = d_s(w_s,x_s)\), \(b \df \rho(\wW,\yY) =
d_s(w_s,y_s)\) and \(c \df \rho(\xX,\yY) = d_t(x_t,y_t)\). If \(t = s\),
the triangle inequality (for all arrangements of \(\wW,\xX,\yY\)) simply follows
since \(d_s\) is a metric. So, we now assume that \(t \neq s\). Then (since
\(\LAB{\xX}{\yY} = t\)) \(x_s = y_s\) and hence \(a = b\). We infer from
the condition formulated in (A) that \(c \leq 2a\), which is equivalent to
the triangle inequality for all arrangements of \(\wW,\xX,\yY\).\par
Wa pass to (B). For simplicity, we denote \(K \df \SUPP{\FFf}\). To get (a), it
suffices to show that the natural projections \(\pi_t\dd Z \to X_t\) (introduced
at the beginning of the proof of \THM{prod-AB-HO}) for all \(t \in S\) satisfy
Lipschitz condition with constant 2. But this easily follows from (A): if
\(\xX\) and \(\yY\) are two elements of \(Z\) such that \(\pi_t(\xX) \neq
\pi_t(\yY)\), then \(\LAB{\xX}{\yY} \ORD t\) and thus:
\begin{itemize}
\item either \(\LAB{\xX}{\yY} = t\) and then \(\rho(\xX,\yY) = d_t(\pi_t(\xX),
 \pi_t(\yY))\); or else
\item \(\LAB{\xX}{\yY} = s \neq t\) and then \(d_t(\pi_t(\xX),\pi_t(\yY))
 \stackrel{\UP{(A)}}{\leq} 2 d_s(\pi_s(\xX),\pi_s(\yY)) = 2\rho(\xX,\yY)\).
\end{itemize}
Further, it follows from the condition formulated in (A) that if for \(s \in S\)
there exists \(t \in K \setminus \{s\}\) such that \(s \ORD t\), then
\(d_s(X_s \times X_s) \subset \{0\} \cup [\epsi,\infty)\) for some \(\epsi > 0\)
(it is enough to define \(\epsi\) as the half of a positive distance attainable
in \(X_t\)). So, for such \(s\) the metric \(d_s\) is complete and the topology
of \(X_s\) is discrete. Since all indices \(s \notin K\) have the same two last
mentioned properties, we see that part (c) holds and (b) holds for all \(t \in
S\) that differ from the greatest element of \(K\) (which may or may not exist).
So, it remains to check that \(j_t\) has closed image if \(t\) is the greatest
element of \(K\). But if \(\xX = (x_s)_{s \in S} \in Z\) is not in this image,
then necessarily the least index \(s \in S\) such that \(x_s \neq a_s\) differs
from \(t\) and satisies \(s \ORD t\), and, consequently, \(\rho(j_t(v),\xX) =
d_s(a_s,x_s) > 0\) for any \(v \in X_t\), which finishes the proof of (b).\par
Part (d) follows from the property established in (b): if \(s \in K\) is not
the least element of this set, there is \(q \in K \setminus \{s\}\) such that
\(q \ORD s\) and then, by (A), \(d_s(X_s \times X_s) \subset [0,2p]\) where
\(p\) is any positive distance attainable in \(X_q\). As all the spaces \(X_s\)
with \(s \notin K\) are one-point, the proof of (d) is finished.\par
The first conclusion of (e) follows, again, from (A): since for all indices from
\(S\) that are greater than \(t\) the space \(X_s\) is one-point, the metric of
\(Z'\) satisfies the condition: \(\rho'(Z' \times Z') \subset \{0\} \cup [p/2,
\infty)\) where \(p\) is any positive real number attainable as a distance in
\(X_t\). The same arguments show the remaining parts of (e), cf. \PRO{prod-prod}
(we skip the details).\par
To prove the first statement of (f), it is enough to show that \(\inf(Y) > 0\)
where \(Y \df \rho(Z \times Z) \setminus \{0\}\) (so, \(Y\) is the palette of
\(\Hh\)). We argue by a contradiction. Assume there are real numbers \(c_1 > c_2
> \ldots\) from \(Y\) that converge to 0. It follows from the formula defining
\(\rho\) (see \DEF{product}) that then \(c_n\) is a distance attainable in
\(X_{s_n}\) for some \(s_n \in S\). In particular, \(s_n \in \SUPP{\FFf}\). Now
it follows from the assumptions of (f) that there is \(t \in \SUPP{\FFf}
\setminus \{s_n\dd\ n > 0\}\) such that \(s_n \ORD t\) for all \(n\). So, we
infer from (A) that \(t \leq 2s_n\), which is impossible. The second statement
of (f) is trivial, whereas the first statement of part (g) follows from (f) and
the above argument. To prove the remaining claims of (g), we conclude firstly
from (A) that the metric spaces \(\Ww_n\) are complete and have discrete
topologies, and secondly from \PRO{prod-prod} that \(\Hh\) is isometric to
their product. So, it remains to show that the metric of such a product is
complete and compatible with the product topology (because the last statement of
(B) is a summary of (e), (f) and (g)). To this end, first observe that the set
of vertices of the last mentioned product coincides with the full set-theoretic
product of all \(W_n\). Next, if the points \(x_n = (x_n^{(1)},x_n^{(2)},
x_n^{(3)},\ldots) \in \prod_{k=1}^{\infty} W_k\) form a Cauchy sequence, then
also \((x_n^{(k)})_{n=1}^{\infty}\) is a Cauchy sequence in \(W_k\) for all \(k
> 0\) (since the natural projections are Lipschitz) and thus it is convergent,
say, to \(g_k \in W_k\). Then \(g \df (g_k)_{k=1}^{\infty}\) belongs to
\(\prod_{n=1}^{\infty} W_n\) and the points \(x_n\) converge to \(g\) in
the product topology. So, it remains to show that both the topologies coincide.
Since all they are metrizable, we only need to check (thanks to (a)) that,
maintaining the above notation, the points \(x_n\) converge to \(g\) in
the topology of the GEC product. To this end, we denote by \(\mu\) the metric on
\(\prod_{n=1}^{\infty} W_n\) obtained from \DEF{product}, fix \(\epsi > 0\) and
assume (without loss on generality) that \(x_n \neq g\) for all \(n\).
The properties of the sequence \(t_1,t_2,\ldots\) mentioned in the statement of
(g) imply that there is \(n_0\) and two distinct points \(u, v \in W_{n_0}\)
such that \(\mu_{n_0}(u,v) < \epsi/2\). Since all the spaces \(W_n\) are
topologically discrete, the convergence of \(x_1,x_2,\ldots\) to \(g\) yields
that there exists \(N > n_0\) such that \(x_n^{(k)} = g_k\) for all \(n \geq N\)
and \(k=1,\ldots,n_0\). Then for any \(n \geq N\) the index \(\ell \df
\LAB[\leq]{x_n}{g}\) computed for the collection of all \(\Ww_n\) is greater
than \(n_0\) and hence, by (A), \(\mu_{\ell}(x_n^{(\ell)},g_{\ell}) \leq 2
\mu_{n_0}(u,v) < \epsi\). But \(\mu(x_n,g) = \mu_{\ell}(x_n^{(\ell)},g_{\ell})\)
and we are done.\par
Finally, item (C) is an immediate consequence of parts (c) and (e) of (B).
\end{proof}

\begin{cor}{prod-loc-comp}
Let \(\FFf = \{\Gg_s = (X_s,d_s)\}_{s \in S}\) and \(\ORD\) be a productable
collection of AB-HO metric spaces and a total order on \(S\) such that the GEC
\(\Hh \df \PROD{s \in S} \Gg_s\) is a metric space. Then \(\Hh\) is locally
compact iff one of the following conditions holds:
\begin{itemize}
\item \(\SUPP{\FFf} = \varempty\); or
\item \(\SUPP{\FFf}\) has the greatest element, say \(t\), and \(\Gg_t\) is
 locally compact; or
\item there exists \(\epsi > 0\) such that \(d_s(X_s \times X_s) \subset \{0\}
 \cup [\epsi,\infty)\) for all \(s \in S\); or
\item there exists \(t \in \SUPP{\FFf}\) such that the set \(I \df \{s \in
 \SUPP{\FFf}\dd\ t \ORD s\}\) is order-isomorphic to \(\NNN\) and the spaces
 \(X_s\) with \(s \in I\) are all finite and their diameters tend to 0 when
 \(s \in I\) increases.
\end{itemize}
Moreover, if \(\Hh\) is locally compact, so are all \(\Gg_s\).
\end{cor}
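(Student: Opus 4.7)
I treat the four cases in turn, in each reducing to \PRO{prod-metric} (especially (A) and parts (b), (e), (f), (g) of (B)) together with \PRO{prod-prod} and elementary facts about locally compact products. The case $\SUPP{\FFf} = \varempty$ is vacuous. If $\SUPP{\FFf}$ has a greatest element $t$, part (e) gives $\Hh \cong \Hh' \times^* \Gg_t$ with product topology and $\Hh'$ complete discrete; since discrete Hausdorff spaces are locally compact and products of two locally compact Hausdorff spaces are locally compact, $\Hh$ is locally compact whenever $\Gg_t$ is, and the converse follows from (b), which identifies $j_t(X_t)$ as a closed subspace of $\Hh$.

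When $\SUPP{\FFf}$ has no greatest element I split further. If $\Hh$ is topologically discrete then homogeneity of $\Hh$ (inherited from its AB-HO structure) yields a uniform $\epsi > 0$ with $\rho(x,y) \geq \epsi$ for all $x \neq y$; as the palette of $\Hh$ equals $\bigcup_s \COL{\Gg_s}$, this forces case 3, and the converse is just (f). Otherwise $\Hh$ is not discrete, and (g) produces a strictly increasing cofinal sequence $(t_n) \subset \SUPP{\FFf}$ with arbitrarily small distances attained in the $d_{t_n}$, along with an isometric isomorphism $\Hh \cong \Ww_1 \times^* \Ww_2 \times^* \cdots$ in product topology with each $\Ww_n$ complete discrete. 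Because compact subsets of a discrete space are finite, basic open sets of the form $\{g_1\} \times \cdots \times \{g_K\} \times \prod_{n > K} \Ww_n$ have compact closures iff $\Ww_n$ is finite for all but finitely many $n$; hence $\Hh$ is locally compact iff some tail $(\Ww_n)_{n \geq N}$ consists entirely of finite spaces. Using \PRO{prod-prod} to refine the partition past $t_{N-1}$, such finiteness forces every interval $(t_{n-1},t_n] \cap \SUPP{\FFf}$ to be finite, so the refined tail of $\SUPP{\FFf}$ has order type $\omega$ with all corresponding $X_s$ finite; its first element serves as the $t$ of case 4. The diameter condition is immediate from (A): a positive distance less than $\epsi$ in some $d_{t_n}$ bounds by $2\epsi$ the diameter of every $X_s$ with $t_n \ORD s$.

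For the converse direction of case 4 I decompose $\Hh$ via \PRO{prod-prod} along the partition $\{s : s \prec t\} \cup \{s : t \ORD s\}$; the first factor has positive distances bounded below by $\tfrac12 \operatorname{diam}(X_t) > 0$ by (A), so (f) makes it complete discrete, while the second factor, applying (g) to its support $I$, is isometric to a countable product of finite discrete spaces, compact by Tychonoff. A product of discrete and compact Hausdorff spaces is locally compact. The final assertion (each $\Gg_s$ locally compact whenever $\Hh$ is) is immediate from (b), since $j_s(X_s)$ is closed in $\Hh$. The principal technical obstacle is the refinement step in case 4---turning finiteness of almost all $\Ww_n$ into the precise order-type-$\omega$ structure of $\SUPP{\FFf}$ past $t_{N-1}$---which requires careful reindexing of the partitions invoked in \PRO{prod-prod}.
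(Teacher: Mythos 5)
Your proof is correct and follows essentially the same route as the paper's: after disposing of the degenerate, greatest-element, and uniformly discrete cases via parts (e)--(g) of \PRO{prod-metric}, the crux is the homeomorphism \(\Hh \cong \prod_n W_n\) with each \(W_n\) discrete, and the observation that such a product is locally compact iff all but finitely many \(W_n\) are finite. Your treatment of the peripheral cases (notably deriving the uniform lower bound on distances from homogeneity when \(\Hh\) is discrete, and the closed-image argument for the final assertion) is more explicit than the paper's, which simply defers them to the cited proposition, but the substance is identical.
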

\begin{proof}
The whole assertion follows from parts (e)--(g) of item (B) of the previous
result. Here we only focus on the hardest part (last item on the above list).
If \(\SUPP{\FFf}\) is non-empty, does not have the greatest element and
the metrics of all \(\Gg_s\) can attain arbitrarily small positive values, then
it follows from the proofs of parts (f) and (g) of the invoked proposition that
\(\SUPP{\FFf}\) contains an upper unbounded (in \(\SUPP{\ffF}\)) strictly
increasing sequence \(t_1 \ORD t_2 \ORD \ldots\) such that the diameters of
\(X_{s_n}\) tend to 0. Then, if \(S_n\) for \(n > 0\) are as specified in
the proof of (g) therein, we know that \(\Hh\) is homeomorphic to
\(\prod_{n=1}^{\infty} W_n\) where \(W_n\) is a topologically discrete space of
vertices of \(\PROD{s \in S_n}\). So, \(\Hh\) is locally compact iff all but
a finite number of the spaces \(W_n\) are finite. But \(W_n\) is finite iff
\(S_n\) is finite and all the spaces \(X_s\) with \(s \in S_n\) are finite. This
gives us the necessity of the last condition on the above list (in the discussed
situation). It is also sufficient, because of the above arguments (then all but
a finite number of \(W_n\) will be finite).
\end{proof}

\begin{thm}{exist-metric}
\begin{enumerate}[\upshape(A)]
\item For an arbitrary infinite set \(Y\) of positive reals let \(\mM(Y)\) be
 the l.u.b. (in the class of cardinals) of all cardinals of the form
 \(2^{\card(Y \cap (0,N))}\) where \(N = 1,2,\ldots\) Then:
 \begin{itemize}
 \item \(\card(X) \leq \mM(Y)\) for any AB-HO metric space \((X,d)\) with \(d(X
  \times X) \subset Y \cup \{0\}\); and
 \item there exists an AB-HO complete metric space \((X,d)\) such that \(d(X
  \times X) = Y \cup \{0\}\) and \(\card(X) = \mM(Y)\).
 \end{itemize}
\item If \(Y \subset (0,\infty)\) is an infinite bounded set, then there exists
 an AB-HO complete metric space \((X,d)\) such that \(d(X \times X) = Y \cup
 \{0\}\) and \(\card(X) = 2^{\card(Y)}\). More generally, the same conclusion
 holds if \(Y\) is infinite and there is \(c > 0\) such that \(\card(Y \cap
 (0,c)) = \card(Y)\) (e.g. if \(\card(Y) = 2^{\aleph_0}\)).
\item Let \(Y \subset (0,\infty)\) be an infinite set that is discrete in
 \(\RRR\) (that is, \(Y = \{a_n\dd\ n > 0\}\) where \(a_n > 0\) and
 \(\lim_{n\to\infty} a_n = \infty\)). Any AB-HO metric space \((X,d)\) with
 \(d(X \times X) \subset Y \cup \{0\}\) is Heine-Borel and at most countable.
\end{enumerate}
\end{thm}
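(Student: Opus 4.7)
The plan is to handle parts (A), (B), (C) in sequence, with the real work concentrated in (A).

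For the upper bound in (A), I would fix a base-point $p \in X$ and set $B_N \df \{x \in X\dd\ \rho(x,p) \le N\}$ for each positive integer $N$. By the triangle inequality, pairwise distances between distinct points of $B_N$ lie in $Y \cap (0, 2N]$. Part (a) of \THM{card} applied to the ambient $X$ uniformly bounds the size of every monochromatic subset of $B_N$ by a constant depending only on its color. Combined with \THM{r-e-r}: if $Y \cap (0, 2N]$ is finite, the Ramsey clause forces $B_N$ itself to be finite; if it is infinite, the Erd\H{o}s-Rado clause gives $\card(B_N) \le 2^{\card(Y \cap (0, 2N])}$. In either case $\card(B_N) \le \mM(Y)$, and since $X = \bigcup_N B_N$ and $\mM(Y) \ge \aleph_0$ (automatic when $Y$ is infinite), the estimate $\card(X) \le \mM(Y)$ follows.

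For the construction in (A), I would invoke the product machinery of Section~\ref{sec:products}. For each $y \in Y$ let $\Gg_y$ be the two-point metric space whose single positive distance equals $y$; the family $\{\Gg_y\}_{y \in Y}$ is then productable. The essential ingredient is the choice of order: partition $Y$ into dyadic generations $Y_n \df Y \cap [2^n, 2^{n+1})$ for $n \in \ZZZ$, fix an arbitrary well-order on each non-empty $Y_n$, and define a total order $\preceq$ on $Y$ by putting higher-index generations first, with elements of the same generation ordered by the chosen well-order. Then $y \preceq y' \Rightarrow y' \le 2y$: across generations $y \in Y_m$, $y' \in Y_n$ with $m > n$ yields $y \ge 2^m \ge 2^{n+1} > y'$; within one generation $y, y' \in [2^n, 2^{n+1})$ yields $y' < 2^{n+1} \le 2y$. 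Hence \PRO{prod-metric}(A) makes $\Hh \df \PROd{y \in Y}{\preceq}{\aA} \Gg_y$ a metric space, \THM{prod-AB-HO} makes it AB-HO, and its palette is manifestly the set $Y$ (every $y \in Y$ being realised between $\aA$ and the tuple differing from $\aA$ only in the coordinate $y$). For the cardinality, a subset of $Y$ is well-ordered by $\preceq$ exactly when the set of generation indices it meets is bounded above in $\ZZZ$; hence the underlying set of $\Hh$ equals $\bigcup_{N \in \ZZZ} \prod_{y \in Y \cap (0, 2^{N+1})} V_y$, of cardinality $\sup_N 2^{\card(Y \cap (0, 2^{N+1}))} = \mM(Y)$. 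Completeness of $\Hh$ follows case-by-case from parts (e)--(g) of \PRO{prod-metric}, each factor being finite (hence complete).

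Parts (B) and (C) then follow quickly from (A). For (B), the hypothesis $\card(Y \cap (0, c)) = \card(Y)$ for some $c > 0$ forces $2^{\card(Y \cap (0, N))} = 2^{\card(Y)}$ for all $N \ge c$, so $\mM(Y) = 2^{\card(Y)}$ and (A) supplies the required space. For (C), writing $Y = \{a_n\dd\ n > 0\}$ with $a_n \to \infty$, each $Y \cap (0, N)$ is finite, hence $\mM(Y) = \aleph_0$ and (A) forces $\card(X) \le \aleph_0$. The Heine-Borel conclusion repeats the Ramsey argument of the upper bound: every closed ball $\bar{B}(p, r)$ has pairwise distances in the finite set $Y \cap (0, 2r]$, so by Ramsey combined with \THM{card}(a) it must itself be finite, hence trivially compact.

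The most delicate step is the cardinality count in the construction of (A), where one must verify that well-ordering of $\DIF{\xX}{\aA}$ by $\preceq$ translates exactly into boundedness above in $\ZZZ$ of the set of generation indices met by $\DIF{\xX}{\aA}$; this is what prevents the cardinality from collapsing to a smaller $\card(Y)^{\aleph_0}$-type count and forces it to match $\sup_N 2^{\card(Y \cap (0, N))} = \mM(Y)$. The completeness verification is more routine but requires a short case split on whether the resulting order $\preceq$ admits a greatest element in the support of $\{\Gg_y\}_{y \in Y}$.
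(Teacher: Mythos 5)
Your proof is correct and follows essentially the same route as the paper: the same ball-bounding argument via the Erd\H{o}s-Rado theorem for the upper estimate, the same product of two-point spaces ordered by reversed dyadic generations (the paper's conditions (ord1)--(ord2)) for the construction, and the same reductions of (B) and (C) to (A). The only cosmetic difference is in the upper bound, where the paper decomposes a closed ball into AB-HO spheres (via \PRO{sphere}) and applies \THM{card} to each, while you apply Ramsey/Erd\H{o}s-Rado directly to the ball using the monochromatic bound of part (a) of \THM{card} for the ambient space; both are valid.
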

\begin{proof}
We begin with the first part of (A). So, let \((X,d)\) be a non-empty AB-HO
metric space with \(d(X \times X) \subset Y \cup \{0\}\). We fix an element \(b
\in X\) and a positive integer \(N\). It follows from \PRO{sphere} that
the spheres \(S(a,r) \df \{x \in X\dd\ d(a,x) = r\}\) are AB-HO as well. But
the metric \(d\) on \(S(a,r)\) for \(r \in (0,N]\) has all positive values in
\(Y_N \df Y \cap (0,2N]\). So, we infer from \THM{card} that either \(S(a,r)\)
is finite (if so is \(Y_N\)) or else \(\card(S(a,r)) \leq 2^{\card(Y_N)} \leq
\mM(Y)\). Since: \(\mM(Y)\) is infinite, \(\bar{B}(a,N) \df \{x \in X\dd\ d(a,x)
\leq N\}\) is contained in \(\{a\} \cup \bigcup_{r \in Y_N} S(a,r)\) and
\(\card(Y_N) \leq \mM(Y)\), we conclude that \(\card(\bar{B}(a,N)) \leq
\mM(Y)\). Consequently, \(\card(X) \leq \mM(Y)\) as well.\par
To show the second part of (A), we apply part (A) of the previus result. We fix
an infinite set \(Y\) and for any \(c \in Y\) we fix a two-point metric space
\(\Gg_c \df (\{0,1\},\rho_c)\) with a unique positive distance \(c\). For
simplicity, let \(\zZ \in \{0,1\}^Y\ (= \prod_{y \in Y} \{0,1\})\) have all
coordinates equal to 0. Note that the collection \(\{\Gg_y\}_{y \in Y}\) is
productable. We take any total order \(\ORD\) on \(Y\) with the following two
properties:
\begin{enumerate}[(ord1)]
\item if \(a\) and \(b\) are two numbers from \(Y\) such that \(a \ORD
 b\), then \(b \leq 2a\);
\item for any \(c > 0\) the set \(Y \cap (0,c)\) is well-ordered by
 \(\ORD\).
\end{enumerate}
To construct such \(\ORD\), it is enough:
\begin{itemize}
\item to declare that \(x \ORD y\) for all \(x\) and \(y\) from \(Y\) such that
 \(x \in [2^m,2^{m+1})\), \(y \in [2^k,2^{k+1})\) where \(k, m \in \ZZZ\)
 satisfy \(k < m\);
\item to well-order (by \(\ORD\)) each of the sets \(Y \cap [2^k,2^{k+1})\)
 where \(k\) runs over all the integers.
\end{itemize}
Having such an order, we apply part (A) of \PRO{prod-metric} (and
\THM{prod-AB-HO}) to conclude that \((X,d) \df \PROd{y \in Y}{\ORD}{\zZ} \Gg_y\)
is an AB-HO metric space (thanks to (ord1)). Moreover, it follows from part (B)
of the proposition just cited that the metric is complete and from (ord2) that
for any \(c > 0\), the set \(\{0,1\}^{Y \cap (0,c)} \times \{0\}^{Y \cap
[c,\infty)} \subset \{0,1\}^Y\) is contained in \(X\) and hence \(\card(X) \geq
2^{\card(Y \cap (0,c))}\), which finishes the proof of (A). Since (B) is
a special case of (A), it remains to prove (C). But (C) follows from the first
part of the proof of (A), where it was shown that in case of such \(Y\) all
closed balls are finite.
\end{proof}

The case of AB-HO ultrametric spaces will be fully discussed in the next
section. Now we pass to AB-HO Heine-Borel metric spaces. For the purposes of
the next result, for a finite metric space \(X\) having more than one point, we
call the least positive distance in \(X\) \emph{co-diameter}.

\begin{thm}{exm-AB-HO}
\begin{enumerate}[\upshape(A)]
\item Let \(\{\Gg_s = (X_s,d_s)\}_{s \in S}\) be a productable collection of
 AB-HO metric spaces and let \(\ORD\) be a total order on \(S\) such that
 \(\card(\SUPP{\FFf}) > 1\) and the product \(\Hh = (Z,\rho) \df \PROD{s \in S}
 \Gg_s\) is a metric space. Then \(\Hh\) is Heine-Borel iff one of the following
 conditions is fulfilled:
 \begin{itemize}
 \item \(\SUPP{\FFf}\) is finite, with the least and the greatest elements, say,
  \(p\) and \(q\) (respectively), the space \(\Gg_q\) is compact, \(\Gg_p\) is
  Heine-Borel and topologically discrete, and all other spaces \(X_s\) are
  finite; or
 \item \(\SUPP{\FFf}\) has the least element, say \(a\), is order-isomorphic to
  \(\NNN\), \(\Gg_a\) is Heine-Borel and topologically discrete, and all other
  spaces \(X_s\) are finite and their diameters tend to 0 as \(s \in
  \SUPP{\FFf}\) increases; or
 \item \(\SUPP{\FFf}\) has the greatest element, say \(b\), is order-isomorphic
  to the set \(\ZZZ_-\) of all negative integers, \(\Gg_b\) is compact and all
  other spaces \(X_s\) are finite and their co-diameters tend to \(\infty\) as
  \(s \in \SUPP{\FFf}\) decreases.
 \item \(\SUPP{\FFf}\) is order isomorhic to \(\ZZZ\), all spaces \(X_s\) are
  finite, their diameters tend to 0 as \(s \in \SUPP{\FFf}\) increases and
  their co-diameters tend to \(\infty\) as \(s \in \SUPP{\FFf}\) decreases.
 \end{itemize}
 In particular, the product \(\Gg_1 \times^* \ldots \times^* \Gg_r\) of a finite
 productable collection \(\{\Gg_1,\ldots,\Gg_r\}\) of AB-HO Heine-Borel metric
 spaces is Heine-Borel as well, provided it is a metric space.
\item Any space listed in items \UP{(a)--(e)} of \REM{H-B} admits a compatible
 metric that makes this space both AB-HO and Heine-Borel.
\end{enumerate}
\end{thm}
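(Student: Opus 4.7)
The proof of (A) has two directions. For necessity, since a Heine-Borel space is locally compact, \COR{prod-loc-comp} forces \(\SUPP{\FFf}\) into one of four alternatives; the first (empty support) is excluded by \(\card(\SUPP{\FFf})>1\). For each remaining alternative, I would invoke the structural decompositions of \PRO{prod-metric}(e)--(g), which realise \(\Hh\) as \(\Hh'\times^*\Gg_q\) (when \(\SUPP{\FFf}\) has a greatest element) or as a countable product \(\Ww_1\times^*\Ww_2\times^*\ldots\) (otherwise), and use that a closed ball of radius \(R\) in a product of the form \(\Hh'\times^*\Gg_q\) decomposes as the disjoint union of a closed ball in \(\Gg_q\) (at the centre's first coordinate) together with full copies of \(\Gg_q\) indexed by the remaining points of the closed ball of radius \(R\) in \(\Hh'\). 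Compactness of this set for arbitrary \(R\) therefore forces the top factor \(\Gg_q\) to be compact and \(\Hh'\) to be Heine-Borel discrete; iterating (or working with the finite heads \(\Ww_1\times^*\cdots\times^*\Ww_n\) in the countable case) and applying the sharp inequality \(b\le 2a\) of \PRO{prod-metric}(A) forces every intermediate factor to have its positive distances confined to a bounded interval bounded away from zero, whence (by the descending local-compactness assumption) those factors are finite. This produces exactly the four bullet conditions of (A). Sufficiency in each case is then a direct verification: a closed ball of radius \(R\) in \(\Hh\) meets only finitely many of the ``large-distance'' factors, and the remainder contributes a finite or compact product.

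For (B), I would exhibit each space listed in \REM{H-B} as a product of the form prescribed by (A). A countably infinite discrete space is realised as \((\ZZZ,|\cdot|)\), which is AB-HO and Heine-Borel, and a finite one by the equilateral metric; \(\RRR^n\) is handled by the Euclidean metric itself. The Cantor set \(\CcC\) is obtained as \(\PROD{n\geq 1}(\{0,1\},d_n)\), where \(d_n\) is the unique positive distance \(2^{-n}\) in a two-point space and the factors are indexed by \(\NNN\) in natural order; by the second bullet of (A) the resulting space is Heine-Borel, and by \PRO{prod-metric}(g) its topology is the product topology, hence the Cantor topology. For \(\SSS^n\times D\) with \(D\) countable infinite and \(\CcC\times D\) with \(D\) infinite countable, take respectively a two-factor product and a countable product with least factor a rescaled \((\ZZZ,|\cdot|)\) (modelling \(D\)) and greatest factor \(\SSS^n\) (with the great-circle metric) respectively the above Cantor-generating tail, after uniformly rescaling all the metrics so that (i)~palettes are pairwise disjoint and (ii)~condition~(A) of \PRO{prod-metric} holds across adjacent factors. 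Then part~(A) of the present theorem certifies that the product is Heine-Borel, while \PRO{prod-metric}(C) (or (g) in the countable case) ensures that the resulting topology coincides with the Cartesian one.

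The principal obstacle, appearing in the necessity direction of (A), is to rule out exotic support structures and to force the intermediate factors to be finite; the key ingredient is the symmetric estimate \(b\le 2a\) from \PRO{prod-metric}(A) together with the local-compactness classification of \COR{prod-loc-comp}. In part~(B) the remaining difficulty is purely combinatorial: simultaneously rescaling the metrics so as to keep palettes disjoint whilst preserving the triangle inequality between adjacent factors. This is mild and can always be arranged by shrinking each factor's metric into a sufficiently short interval in a suitable position on the positive real line.
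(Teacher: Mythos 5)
Your overall strategy for (A) is the paper's: exploit \PRO{prod-metric} (the estimate \(b\le 2a\) from part (A) and the decompositions in items (e)--(g) of part (B)), and your constructions for (B) --- the Cantor set as a product of two-point spaces with distances \(2^{-n}\), a countable discrete Heine--Borel space modelled on \(\ZZZ\), and rescaled two-factor or countable products with \(\SSS^n\) or the Cantor tail --- coincide with the paper's almost verbatim, so (B) and the sufficiency half of (A) are fine. The necessity half of (A), however, has two concrete gaps. First, \COR{prod-loc-comp} does not ``force \(\SUPP{\FFf}\) into one of the four alternatives'' of the theorem: its conditions say nothing about the order structure of the support below a greatest element (nor in the uniformly-separated case), so they do not exclude supports of type, say, \(\NNN+\NNN\) or an uncountable well-order. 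What is actually needed --- and what the paper proves as a separate claim \((\spadesuit)\) --- is that for any \(p,q\in\SUPP{\FFf}\) only \emph{finitely many} support indices lie strictly between them. This follows because the sub-product over those intermediate indices embeds isometrically into \(\Hh\) (by \PRO{prod-prod}) and, by the inequality of \PRO{prod-metric}(A), has all its positive distances confined to an interval \([R/2,2r]\); being an infinite set that is bounded and uniformly separated, it would violate compactness of a closed ball. Your sketch shows each intermediate \emph{factor} is finite but never bounds the \emph{number} of intermediate indices, and without that the order-type classification does not follow.

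Second, you never establish the divergence of the co-diameters as \(s\in\SUPP{\FFf}\) decreases, which is explicitly part of the third and fourth bullets and is not a consequence of local compactness or of the decompositions. The paper's argument is: if a strictly decreasing sequence \(s_1\succcurlyeq s_2\succcurlyeq\ldots\) in \(\SUPP{\FFf}\) had co-diameters bounded by \(M\), then by \PRO{prod-metric}(A) the diameters of all \(\Gg_{s_n}\) would be bounded by \(2M\); picking for each \(n\) a point deviating from the base point only at coordinate \(s_n\) produces an infinite subset of the closed ball of radius \(2M\) whose pairwise distances are bounded below by half of a fixed positive distance attainable in \(\Gg_{s_1}\) (again by \PRO{prod-metric}(A)), so that ball is not compact. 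Some such argument must be added to your necessity direction; as written, a reader following your outline would arrive at the four support order types (if the first gap is repaired) but could not conclude the co-diameter condition.
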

\begin{proof}
We begin with (A). First we assume \(\Hh\) is Heine-Borel. Then it follows from
part (B) of \PRO{prod-metric} that:
\begin{itemize}
\item each of the spaces \(\Gg_s\ (s \in S)\) is Heine-Borel as well (by item
 (b) of the result cited above); and hence:
\item if \(s \in S\) is neither the greatest nor the least element of
 \(\SUPP{\FFf}\), then \(\Gg_s\) is finite (by items (c)--(d) therein); and
\item if \(t\) is the greatest element of \(\SUPP{\FFf}\), then \(\Gg_t\) is
 compact (by item (d) therein); and
\item if \(t\) is the least element of \(\SUPP{\FFf}\), then \(\Gg_t\) is
 topologically discrete (by item (c) therein).
\end{itemize}
Recall also that a topologically discrete Heine-Borel space is at most countable
and the image of its metric is discrete in \(\RRR\). Now we will show that:
\begin{itemize}
\item[\((\spadesuit)\)] For any two elements \(p, q \in \SUPP{\FFf}\), the set
 \(K_{p,q} \df \{s \in \SUPP{\FFf} \setminus \{p,q\}\dd\ p \ORD s \ORD q\}\) is
 finite.
\end{itemize}
To show this, we fix two positive reals \(r\) and \(R\) that are attainable as
distances in, respectively, \(\Gg_p\) and \(\Gg_q\). Then it follows from part
(A) of \PRO{prod-metric} that \(d_s(X_s \times X_s) \subset \{0\} \cup
[R/2,2r]\) for all \(s \in K_{p,q}\) and, consequently, also the metric of
\(\Kk_{p,q} \df \PROD{s \in K_{p,q}} \Gg_s\) takes all its values in this set of
reals. But \(\Kk_{p,q}\) is isometrically embeddable into \(\Hh\) (by
\PRO{prod-prod}) and thus it is finite. But this is possible only if \(K_{p,q}\)
is finite, as all the spaces \(X_s\) for \(s \in K_{p,q}\) have more than one
point. So, \((\spadesuit)\) holds, as we claimed. Having this, it now easily
follows that \(\SUPP{\FFf}\) is either finite or order-isomorphic to one of
\(\NNN\), \(\ZZZ_-\) or \(\ZZZ\). It remains to justify behavior of
the diameters and the co-diameters formulated in part (A). The conclusion for
the diameters follows from item (g) of part (A) of \PRO{prod-metric} combined
with part (A) therein. In a similar manner we can prove that the co-diameters
tend to \(\infty\), provided \(\SUPP{\FFf}\) does not have the least element: in
that case if there existed a one-to-one sequence \(s_1 \succcurlyeq s_2
\succcurlyeq \ldots\) of indices from \(\SUPP{\FFf}\) such that the co-diameters
of \(\Gg_{s_n}\) were all upper bounded by a real number, say \(M\),
then---again by part (A) of \PRO{prod-metric}---the diameter of each of these
spaces (\(\Gg_{s_n}\)) would be bounded by \(2M\), and these two properties
would imply that the closed ball in \(\Hh\) of radius \(2M\) would be
non-compact. All these remarks prove the first implication in (A). To show
the reverse one, we use part (C) or item (g) of part (B) (both) of
\PRO{prod-metric} to conclude that for any \(t \in \SUPP{\FFf}\) that is not
the least element of this set the product \(\PROD{s \succcurlyeq t} \Gg_s\) is
compact. And if \(\SUPP{\FFf}\) does not have the least element, the condition
on the divergence of the co-diameters of \(\Gg_s\) (combined with finiteness of
suitable \(\Gg_s\)) easily guarantees that for any \(t \in \SUPP{\FFf}\) that is
not the greatest element of this set the product \(\PROD{s \ORD t}\) has all
closed balls finite (and hence is Heine-Borel). These two remarks reduce
the problem to the GEC product of two Heine-Borel spaces \(X \times^* Y\) with
topologically discrete \(X\) and compact \(Y\). A verification that such a space
(provided it is metric) is indeed Heine-Borel is left to the reader.\par
We pass to (B). We will use (A) to construct AB-HO Heine-Borel metric spaces
with a prescribed set of positive distances.\par
First consider a sequence of spaces \(\Gg_n = (\{0,1\},d_n)\) (with \(n > 0\))
where \(d_n\) has a unique positive value equal to \(2^{-n}\). This sequence
forms a productable family and \(\Cc \df \Gg_1 \times^* \Gg_2 \times^* \ldots\)
is a metric space (thanks to part (A) of \PRO{prod-metric}). It follows that
\(\Cc\) is a compact space homeomorphic to the Cantor set (cf. part (g) of item
(B) of the last cited result).\par
Now for \(n > 0\) let \(\Zz_{-n}\) denote a metric space \((\{0,1\},\rho_n)\)
where a unique positive value of \(\rho_n\) equals \(n\). It follows from (A)
that the GEC \(\Dd \df \PROd{k \in \ZZZ_-}{\leq}{*} \Zz_k\) is Heine-Borel (it
is metric again thanks to part (A) of \PRO{prod-metric}). This is a countable
topologically discrete AB-HO Heine-Borel space whose metric is integer-valued
(and all positive integers are attainable as distances). We could also take
the space of integers as \(\Dd\).\par
For any \(n \in \NNN\) we denote by \(\Ff_n\) the discrete metric space of size
\(n\). (Of course, it is AB-HO and Heine-Borel). Finally, it suffices to
consider the GEC products \(\Ff_n \times^* \Cc\), \(\Dd \times^* \Cc\), \(\Ff_n
\times^* \SSS^d\) and \(\Dd \times^* \SSS^d\) (where, in the last two cases,
the metric of \(\SSS^d\) is rescaled so that the diameter of the sphere is less
than 1) to obtain AB-HO Heine-Borel spaces homeomorphic to all remaining spaces
that are listed in \REM{H-B}.
\end{proof}

\begin{rem}{exm-loc-comp}
In a quite similar manner as presented in the last part of the above proof, we
may show that each locally compact space among those listed in items (b)--(e) of
\COR{loc-comp} admits a compatible metric that makes this space AB-HO, provided
so do discrete topological spaces from item (a) of that result. (Indeed, recall
that the space \((\RRR^n,\frac{d_e}{1+d_e})\) is AB-HO. So, the Euclidean
spaces---considered as topological spaces---may be realised with bounded metrics
as AB-HO.) However, we do not know whether any infinite discrete topological
space of size not greater than \(2^{2^{\aleph_0}}\) admits such a metric.
The results of this section give an affirmative answer for cardinals
\(\aleph_0\) and \(2^{2^{\aleph_0}}\). Here we present an example of
topologically discrete AB-HO metric spaces of arbitrary size not greater than
\(2^{\aleph_0}\). (So, under GCH our problem concerning the existence of
suitable metrics on sets of size less than \(2^{2^{\aleph_0}}\) has affirmative
answer.) To this end, we fix an infinite cardinal \(\alpha \leq 2^{\aleph_0}\)
and consider any set \(S \subset [1,2]\) whose size is precisely \(\alpha\).
Finally, we denote by \(\ORD\) the reverse order to any well-order on \(S\) and
define \(\Hh\) as the GEC product \(\PROD{s \in S} \Gg_s\) where \(\Gg_s\) is
a two-point metric space with a unique positive distance equal to \(s\). Then
\(\Hh\) is an AB-HO metric space (as its colors are contained in \([1,2]\)) and
\(\card(\Hh) = \alpha\), because each subset of \(S\) that is well-ordered by
\(\ORD\) is finite.\par
The above problem (that is, of searching for compatible metrics on topologically
discrete spaces that make these spaces AB-HO) has actually a set-theoretic
spirit, not metric. In fact, if an AB-HO GEC \(\Gg = (V,\kappa)\) has palette of
size not greater than \(2^{\aleph_0}\), then it is structurally equivalent to
an AB-HO metric space with discrete topology. Indeed, it is sufficient to take
any one-to-one function \(\theta\dd \COL{\Gg} \to [1,2]\) in order to define
a metric \(d\) on \(V\) as \(d \df \theta \circ \kappa\) (under the convention
that \(\theta\) sends the pseudocolor of \(\Gg\) onto 0) which makes \(V\)
a topologically discrete AB-HO metric space.\par
The construction presented in the example given above (of an AB-HO metric space
of size \(\alpha\)) generalises to arbitrary infinite cardinals (one only needs
to skip the restriction that \(S \subset [1,2]\)). More precisely, for any
infinite cardinal number \(\alpha\) there exists an AB-HO GEC whose palette and
set of all vertices have size equal to \(\alpha\). Recall that for infinite
palettes, the size \(\beta\) of the set of all vertices of an AB-HO GEC is
always bounded by inequalities \(\alpha \leq \beta \leq 2^{\alpha}\) where
\(\alpha\) is the size of the palette. According to the above remark and
\COR{2toY}, each of these two inequalities is best possible.
\end{rem}

Taking into account the above remarks, we pose the following

\begin{prb}{discrete}
Is it true in ZFC that for any two infinite cardinals \(\alpha\) and \(\beta\)
such that \(\alpha < \beta < 2^{\alpha}\) there exists an AB-HO GEC whose
palette and set of vertices have size, respectively, \(\alpha\) and \(\beta\)?
\end{prb}

\section{Classification of AB-HO GEC's with all triangles isosceles}
\label{sec:classification}

\begin{dfn}{isosceles}
We call a GEC a \emph{triangle} if its size equals 3. By a \emph{triangle in
a GEC \(\Gg\)} or a \emph{triangle of \(\Gg\)} we mean any sub-GEC of \(\Gg\)
which itself is a triangle. A triangle is said to be \emph{isosceles} if its
palette has at most two colors (that is, if at least two of the edges of that
triangle have the same color). We say thay \(\Gg\) has \emph{\UP{(IT)} property}
(for short: \(\Gg\) has \ITP) if all its triangles are isosceles.
\end{dfn}

In this section we will use the machinery of the two previous parts to give
a full classification of all AB-HO GEC's with \ITP. Roughly speaking, all they
are isomorphic to GEC products of ultrahomogeneous (ordinary, finite) graphs
(that satisfy a certain additional, but quite natural and easy to check,
condition). And the factors of the product as well as their order are uniquely
determined by the GEC under consideration. So, we may speak about
the \emph{primary} decomposition of such GEC's. In particular, we will obtain
a full classification (up to isometry) of all AB-HO ultrametric spaces.\par
\ITP\ is a very useful propery, as shown by

\begin{pro}{skelet-ITP}
A skeletoid \(\ffF\) in which all trangles are isosceles is a skeleton.
\end{pro}
\begin{proof}
Fix a finite non-symmetric incomplete GEC \(\Gg = (V,a,b,\kappa)\) affiliated
with \(\ffF\) and take a vertex \(c \in V \setminus \{a,b\}\) such that
\(\kappa(a,c) \neq \kappa(b,c)\). Since all triangles in \(\ffF\) are isosceles,
in any completion of \(\Gg\) to a member of \(\ffF\) the triangle with vertices
\(a,b,c\) must be such, which implies that \(\Delta_{\Gg}(\ffF) \subset
\{\kappa(a,c),\kappa(b,c)\}\). Now the conclusion follows from \PRO{comp}.
\end{proof}

To formulate the main result of the section, we need to introduce a few
additional notions which we now turn to.

\begin{dfn}{2-conn}
Let \(\Gg = (V,\kappa)\) be a GEC and \(c\) be a color from its palette.
A \emph{\(c\)-part} of \(\Gg\) is a graph \(\Gamma_{\Gg}(c)\) on the same set of
vertices as \(\Gg\) (that is, on \(V\)) such that two distinct vertices \(a\)
and \(b\) from \(V\) are adjacent in \(\Gamma_{\Gg}(c)\) iff \(\kappa(a,b) =
c\). \(\Gg\) is said to be:
\begin{itemize}
\item \emph{\(c\)-connected} if \(\Gamma_{\Gg}(c)\) is a connected graph;
\item \emph{\(*\)-connected} if it is \(p\)-connected for any color \(p\) from
 its palette;
\item \emph{bi-connected} if it is \(*\)-connected and its palette consists of
 exactly two colors.
\end{itemize}
Observe that each monochromatic GEC is \(*\)-connected.\par
Let \(G = (V,E)\) be an ordinary finite graph. It induces an edge-coloring
\(\eta_G\dd V \times V\to \{-1,0,1\}\) (with 0 as a pseudocolor) given by
the rules:
\begin{itemize}
\item \(\eta_G(v,v) = 0\) for all \(v \in V\);
\item \(\eta_G(v,w) = 1\) if \(\{v,w\} \in E\);
\item \(\eta_G(v,w) = -1\) otherwise.
\end{itemize}
We call \(G\) \emph{bi-connected} if \((V,\eta_G)\) is a bi-connected GEC. So,
\(G\) is bi-connected if \(\card(V) > 1\) and both \(G\) and the complement
graph of \(G\) are connected graphs. (Then, in fact, \(\card(V) > 3\).)
\end{dfn}

The functions \(\eta_G\) defined above will be used in our main result of this
section.\par
Recall that an ordinary finite graph is \emph{ultrahomogeneous} if any
isomorphism between its two subgraphs extends to an automorphism of that graph
(cf. \DEF{abshm}).

\begin{dfn}{primary}
A \emph{primary collection} is a (countable) set \(\PPp\) with all the following
properties:
\begin{enumerate}[(PC1)]
\item \(\PPp\) consists of finite ultrahomogeneous graphs each of which has at
 least two vertices;
\item each graph from \(\PPp\) is either full or bi-connected;
\item for any \(n > 1\), \(\PPp\) contains a unique full graph on \(n\)
 vertices;
\item for any finite bi-connected ultrahomogeneous graph \(\Gamma\), \(\PPp\)
 contains a unique graph that is isomorphic to \(\Gamma\) or to its complement
 graph.
\end{enumerate}
In other words, a primary collection selects a unique ``model'' for each
(finite) full non-degenerate graph and a single ``representative'' for each pair
of the form \(\{\Gamma,\Gamma^c\}\) where \(\Gamma\) is a (finite) bi-connected
ultrahomogeneous graph and \(\Gamma^c\) denotes its complement graph.\par
A primary collection \(\PPp\) naturally divides into two parts: \(\PPp_1\) and
\(\PPp_2\) consisting of all, respectively, full and bi-connected graphs from
\(\PPp\). The part \(\PPp_2\) divides into two important parts: \(\PPp_2^s\)
consists of all graphs from \(\PPp_2\) that are isomorphic to their complement
graphs, and \(\PPp_2^a\) consisting of all other graphs from \(\PPp_2\). 
\end{dfn}

Now we are able to formulate the main result of this section (below we use
the symbol \(\sqcup\) to denote the disjoint union).

\begin{thm}[Classification of AB-HO GEC's with \ITP]{classification}
Let \(\PPp\) be a primary collection.
\begin{enumerate}[\upshape(A)]
\item Let \((S,\ORD)\) be a totally ordered non-empty set and
 \[\tau\dd S \ni s \mapsto \Gamma_s = (V_s,E_s) \in \PPp\]
 be an arbitrary function. For \(j=1,2\) set \(S^{\tau}_j \df
 \tau^{-1}(\PPp_j)\). Additionally, set \(S^{\tau}_{2s} \df
 \tau^{-1}(\PPp_2^s)\) and \(S^{\tau}_{2a} \df \tau^{-1}(\PPp_2^a)\). Let
 \(\omega\) be any one-to-one function defined on the set \(S^{\tau}_1 \sqcup
 (S^{\tau}_2 \times \{-1,1\}) \sqcup \{0\}\) (with values in an arbitrary set).
 (In the above settings, we call \((S,\ORD,\tau,\omega)\) a \emph{modelling
 quadruple}.) Further, for any \(s \in S\) let \(\kappa_s\) be a function
 defined on \(V_s \times V_s\) as follows: \(\kappa_s(v,v) = \omega(0)\) for any
 \(v \in V_s\) and for distinct \(v\) and \(w\) from \(V_s\):
 \begin{itemize}
 \item \(\kappa_s(v,w) = \omega(s)\) if \(s \in S^{\tau}_1\); or
 \item \(\kappa_s(v,w) = \omega(s,\eta_{\Gamma_s}(v,w))\) if \(s \in
  S^{\tau}_2\).
 \end{itemize}
 Finally, set \(\Gg_s \df (V_s,\kappa_s)\) for \(s \in S\). Then:
 \begin{itemize}
 \item the collection \(\{\Gg_s\}_{s \in S}\) consists of GEC's and is
  productable; and
 \item the GEC \(\Hh = \Hh(S,\ORD,\tau,\omega) \df \PROD{s \in S} \Gg_s\) is
  non-degenerate, AB-HO and has \ITP.
 \end{itemize}
\item If \((S,\ORD,\tau,\omega)\) and \((Z,\preceq,\sigma,\upsilon)\) are two
 modelling quadruples, then the GEC's \(\Hh(S,\ORD,\tau,\omega)\) and
 \(\Hh(Z,\preceq,\sigma,\upsilon)\) are isomorphic iff \(\omega(0) =
 \upsilon(0)\) and there exists an order-isomorphism \(\xi\dd (S,\ORD) \to
 (Z,\preceq)\) such that:
 \begin{enumerate}[\upshape({i}so1)]
 \item \(\tau = \sigma \circ \xi\) (in particular, \(\xi(S^{\tau}_1) =
  Z^{\sigma}_1\), \(\xi(S^{\tau}_{2s}) = Z^{\sigma}_{2s}\) and
  \(\xi(S^{\tau}_{2a}) = Z^{\sigma}_{2a}\));
 \item \(\omega(s) = \upsilon(\xi(s))\) for all \(s \in S^{\tau}_1\);
 \item \(\omega(s,\epsi) = \upsilon(\xi(s),\epsi)\) for all \(s \in
  S^{\tau}_{2a}\) and \(\epsi \in \{-1,1\}\);
 \item \(\{\omega(s,-1),\omega(s,1)\} =
  \{\upsilon(\xi(s),-1),\upsilon(\xi(s),1)\}\) for all \(s \in S^{\tau}_{2s}\).
 \end{enumerate}
 The above GEC's are structurally equivalent iff there exists
 an order-isomor\-phism \(\tau\dd (S,\ORD) \to (Z,\prec)\) that satisfies
 condition \UP{(iso1)} above.
\item For any non-degenerate AB-HO GEC \(\Gg\) with \ITP\ there exists
 a modelling quadruple \((S,\ORD,\tau,\omega)\) such that \(\Gg\) is isomorphic
 to \(\Hh(S,\ORD,\tau,\omega)\).
\item For a non-degenerate AB-HO GEC \(\Gg\) with \ITP\ \tfcae
 \begin{enumerate}[\upshape(i)]
 \item \(\Gg\) is \emph{irreducible}; that is, \(\Gg\) is isomorphic to no GEC
  of the form \(\Gg_1 \times^* \Gg_2\) where \(\Gg_1\) and \(\Gg_2\) are two
  non-degenerate GEC's (and form a productable collection);
 \item there exists a graph \(G = (V,E) \in \PPp\) such that \(\Gg\) is
  structurally equivalent to \((V,\eta_G)\).
 \end{enumerate}
\end{enumerate}
\end{thm}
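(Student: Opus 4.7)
For Part (A), I would begin by verifying productability: all pseudocolors equal $\omega(0)$, and by injectivity of $\omega$ the palettes $\{\omega(s)\}$ (for $s \in S^\tau_1$) and $\{\omega(s,-1),\omega(s,1)\}$ (for $s \in S^\tau_2$) are pairwise disjoint. Each factor $\Gg_s$ is AB-HO: for $s \in S^\tau_1$ it is a monochromatic full graph (trivial), while for $s \in S^\tau_2$ the GEC $(V_s,\kappa_s)$ is structurally equivalent to $(V_s,\eta_{\Gamma_s})$, whose AB-HO follows from the finite ultrahomogeneity of $\Gamma_s$. Applying \THM{prod-AB-HO} then yields AB-HO of $\Hh$. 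For \ITP, I would invoke the label-ordering observation (aux1) from the proof of \THM{prod-AB-HO}: among any three distinct vertices the three labels $\LAB{\cdot}{\cdot}$ have a minimum attained at least twice, say at some $s \in S$. If all three equal $s$, the triangle sits in a single fibre of $\Gg_s$, whose palette has size at most two, hence is isosceles. Otherwise the third label is some $t \succ s$, and the two ``base'' edges project under $\pi_s$ onto the same $\Gg_s$-edge (because the endpoints of the ``top'' edge agree on all coordinates $\prec t$, in particular on $s$), so they carry the same color.

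For Part (C) I would reconstruct the factor structure intrinsically. On $\COL{\Hh}$ define the strict relation $c \prec c'$ iff $c \neq c'$ and $\lambda(u,w)=c$ for every $v \in V$ and every $u,w$ with $\lambda(v,u)=c$, $\lambda(v,w)=c'$ (such pairs exist by homogeneity). The verifications are: (i) trichotomy---\ITP\ forces $\lambda(u,w) \in \{c,c'\}$, so exactly one of $c \prec c'$, $c' \prec c$, or the non-comparability relation (call it $c \approx c'$) holds; (ii) transitivity of $\prec$, by interpolating a $c'$-neighbor and applying \ITP\ twice; (iii) transitivity of $\approx$, the subtler step---one rules out $c \prec c''$ (given $c \approx c'$ and $c' \approx c''$) by using AB-HO to align witness $c'$-neighbors and extracting a triangle whose third edge would have to equal both $c$ and something in $\{c',c''\}$, a contradiction. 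The $\approx$-classes $\{P_s\}_{s \in S}$ are totally ordered by $\prec$, and the induced map $\ell(v,w) \df$ class of $\lambda(v,w)$ satisfies the ultrametric analogue of (aux1): for any three distinct vertices the smallest two labels coincide. With this in hand, for each $s$ I would build an AB-HO sub-GEC $\Gg_s$ with palette $P_s$ (as a slice through a fixed basepoint $v_0$, AB-HO following from \PRO{sphere} or a direct argument), and then construct the isomorphism $\Hh \cong \PROD{s \in S} \Gg_s$ by sending $v$ to its tuple of projections onto the slices; the equality of edge colors follows by class-by-class matching using the ultrametric property, or alternatively via \COR{iso-AB-HO} after checking skeletons coincide.

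For Part (D), the implication (ii) $\Rightarrow$ (i) is immediate from Part (A) combined with the structure of $\PPp$: a single full or bi-connected graph produces a GEC with only one $\approx$-class, so no product decomposition is possible. For (i) $\Rightarrow$ (ii), by Part (C) an irreducible $\Gg$ has palette consisting of a single $\approx$-class; a one-color palette yields a monochromatic full graph, giving the unique member of $\PPp_1$ of the corresponding size. \emph{The main obstacle} is excluding irreducible $\Gg$ with three or more colors: I would take three distinct colors $a,b,c$ in the class and a vertex $v$ with one neighbor in each, apply \ITP\ to the four induced triangles, and---using AB-HO to vary the choice of neighbors---show that at least one pair of colors is forced into a $\prec$-relation, contradicting single-class irreducibility. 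Once only two colors remain, $\Gg$ is structurally equivalent to $(V,\eta_G)$ for a graph $G$; AB-HO of $\Gg$ translates to ultrahomogeneity of $G$, and the non-$\prec$-comparability of the two colors translates to bi-connectedness of $G$. The primary-collection convention of picking a representative from each pair $\{G,G^c\}$ matches the claimed correspondence.

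For Part (B), sufficiency is a routine transport of the factor structure along $\xi$ together with the palette bijections from (iso2)--(iso4). For necessity, any isomorphism $\Phi\dd \Hh(S,\ORD,\tau,\omega) \to \Hh(Z,\preceq,\sigma,\upsilon)$ preserves the pseudocolor (yielding $\omega(0)=\upsilon(0)$) and the intrinsic relations $\prec$ and $\approx$, hence induces an order-isomorphism $\xi\dd (S,\ORD) \to (Z,\preceq)$; restriction of $\Phi$ to each slice is an isomorphism $\Gg_s \to \Gg'_{\xi(s)}$ of irreducible factors. For $s \in S^\tau_1$ the singleton palettes match on the nose, giving (iso2); for $s \in S^\tau_{2a}$ the graph $\Gamma_s \not\cong \Gamma_s^c$ forces a unique palette bijection, yielding (iso3); for $s \in S^\tau_{2s}$ the self-complementary case admits either matching, yielding (iso4). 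Structural equivalence drops preservation of literal color names, leaving only (iso1).
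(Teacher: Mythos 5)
Your overall strategy is the paper's: define an order-type relation on colors via the two possible isosceles completions of a bichromatic corner, extract equivalence classes, realize the GEC as the product of class-factors indexed by the induced total order, and reduce (B) and (D) to uniqueness of this primary decomposition (\THM[s]{model} and \ref{thm:model-iso}). Parts (A) and (B) are sound and essentially match \LEM{0} and \THM{model-iso}. The genuine gap is the step you yourself flag as ``the main obstacle'': showing that a single \(\approx\)-class carries at most two colors. This is condition (\(\prec\)3) of \LEM{2} (\(\card(I_{\Gg}(a))\leq 1\)), and it is the combinatorial heart of the classification; moreover it is needed already for your part (C), not just (D), since without it a factor need not be structurally equivalent to any \((V,\eta_G)\) and no modelling quadruple exists. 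Your sketch --- a vertex \(v\) with one neighbour in each of three pairwise equivalent colors \(a,b,c\) and an appeal to \ITP\ on the four induced triangles --- cannot work as stated: setting \(\kappa(v,x)=a\), \(\kappa(v,y)=b\), \(\kappa(v,z)=c\), \(\kappa(x,y)=\kappa(x,z)=a\), \(\kappa(y,z)=b\) makes all four triangles isosceles, so no contradiction arises on four vertices, and ``varying the neighbours by AB-HO'' does not destroy such configurations. The paper's argument is genuinely heavier: amalgamation of the \(4\)-point incomplete GEC \(\Ss_1\) gives transitivity and forces \(\Kk(a,b,c)\) into the skeleton; the relation \(b\in I_{\Gg}(a)\) forces one of two specific \(4\)-point GEC's \(\Kk_1(a,b)\), \(\Kk_2(a,b)\) into the skeleton; and the contradiction is finally extracted from two \(5\)-vertex non-symmetric incomplete GEC's (\(\Ss_2\), \(\Ss_3\)) that are affiliated with the skeleton yet admit no isosceles completion, violating \AP. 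Without an argument at this level, parts (C) and (D) are not established.

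A secondary concern: your primary route for (C) (sending \(v\) to ``its tuple of projections onto the slices'') is not well defined as stated --- the sets supplied by \PRO{sphere} are unions of spheres through a basepoint and are strictly larger than the canonical copies of the factors, and no intrinsic coordinate projection is available before the decomposition is known. Your fallback (compare skeletons and invoke \COR{iso-AB-HO}) is the paper's actual route, but be aware that ``checking skeletons coincide'' is precisely the content of \LEM{4}, a non-trivial induction on the number of vertices characterizing membership in the skeleton by the \(\prec\)-data and the class-factors; the paper also constructs each factor \(\Pp_{\sigma}\) abstractly as the AB-HO GEC generated by the sub-skeleton of \(\sigma\)-colored members (\LEM{3}), sidestepping the slice-identification problem entirely.
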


We divide the proof of the above theorem into separate lemmas and eventually
give its final step. Along the way, we will describe skeletons of AB-HO GEC's
with \ITP\ (see \LEM{4} below).\par
We begin with a simple result from graph theory. Its conclusion is surely known
(but is probably formulated in other terms than GEC products), but we could not
find it in the literature. For the reader's convenience, be give its simple
proof.

\begin{lem}{1}
Let \(G = (V,E)\) be a finite ultrahomogeneous graph such that neither \(G\)
nor its complement graph is a full graph. Then \tfcae
\begin{enumerate}[\upshape(i)]
\item \(G\) is not bi-connected;
\item the GEC \((V,\eta_G)\) is isomorphic to \(\Gg_1 \times^* \Gg_2\) where
 \(\Gg_1\) and \(\Gg_2\) are two non-degenerate monochromatic GEC's (forming
 a productable collection);
\item the GEC \((V,\eta_G)\) is irreducible;
\item \((V,\eta_G)\) contains no triangles whose edges have colors \(-1,-1,1\)
 or else no triangles whose edges have colors \(-1,1,1\).
\end{enumerate}
\end{lem}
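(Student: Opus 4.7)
My plan is to establish the cycle $(\text{i}) \Rightarrow (\text{iv}) \Rightarrow (\text{ii}) \Rightarrow (\text{iii}) \Rightarrow (\text{i})$, reading (iii) in the only sense consistent with the other items (namely, as asserting that $(V,\eta_G)$ is \emph{not} irreducible). Throughout, note that $\COL{V,\eta_G}=\{-1,1\}$, since neither $G$ nor its complement is full.

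The structural fact on which everything hinges is the following: \emph{an ultrahomogeneous graph that is disconnected is a disjoint union of complete subgraphs all of the same size.} Indeed, were $\{u,v\}$ a non-edge inside one component, one could pick $w$ in a different component; then $u\mapsto u$, $v\mapsto w$ is a partial isomorphism (both pairs are non-edges), so by ultrahomogeneity it extends to an automorphism, contradicting that $v$ and $w$ lie in different components while $u$ is fixed. Equal component sizes come from vertex-transitivity. Ultrahomogeneity is inherited by the complement, so the same statement applies to $\bar G$.

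For $(\text{i})\Rightarrow(\text{iv})$, after possibly swapping the roles of $-1$ and $1$, $G$ itself is disconnected, hence by the structural fact is $k$ disjoint copies of $K_n$ with $k,n\ge 2$. A direct enumeration of triangles (three vertices in one component, two and one, or three in pairwise distinct components) produces only types $(1,1,1)$, $(1,-1,-1)$ and $(-1,-1,-1)$, so triangles of colors $(-1,1,1)$ are missing. For $(\text{iv})\Rightarrow(\text{ii})$, after possibly swapping colors, assume no triangle has colors $(-1,-1,1)$; this says exactly that ``has color $-1$'' (extended by the diagonal) is a transitive binary relation on $V$, hence an equivalence. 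Its classes $C_1,\ldots,C_k$ are pairwise $(-1)$-monochromatic while inter-class edges all have color $1$. The structural fact (applied to $\bar G$) forces all classes to have a common size $n$, and non-fullness gives $k,n\ge 2$. Taking $\Gg_1$ to be the $1$-monochromatic GEC on $\{1,\ldots,k\}$ and $\Gg_2$ the $(-1)$-monochromatic GEC on $\{1,\ldots,n\}$, the map $V\to V_1\times V_2$ recording (class, position in class) is, by a direct unfolding of \DEF{product}, an isomorphism onto $\Gg_1\times^*\Gg_2$.

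The remaining two implications are short. $(\text{ii})\Rightarrow(\text{iii})$ is just the definition of irreducibility, once one observes that both displayed factors are non-degenerate. For $(\text{iii})\Rightarrow(\text{i})$, if $(V,\eta_G)\equiv\Gg_1\times^*\Gg_2$ with both factors non-degenerate, the disjointness of their palettes (\DEF{disjoint}) together with $\COL{V,\eta_G}=\{-1,1\}$ forces each $\Gg_j$ to be monochromatic in a different one of the two colors; then the fibers of the projection to $V_1$ are $k=|\Gg_1|\ge 2$ blocks between which every edge carries the color of $\Gg_1$, so the part of $(V,\eta_G)$ colored by the color of $\Gg_2$ is disconnected, i.e., $G$ is not bi-connected. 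The main obstacle is the use of ultrahomogeneity to pin down uniform sizes, both in the structural fact and in its application in $(\text{iv})\Rightarrow(\text{ii})$; everything else is bookkeeping with \DEF{product}.
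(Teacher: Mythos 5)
Your proof is correct. You were right to read (iii) as asserting that \((V,\eta_G)\) is \emph{not} irreducible: the statement as printed has a sign slip, and the paper's own proof confirms your reading by showing (ii) and (iii) equivalent via exactly the palette observation you use in your last step (a two-element palette forces both factors of any non-degenerate decomposition to be monochromatic with distinct colors). Your handling of (i), (ii), (iii) coincides with the paper's: the central fact that a disconnected two-point homogeneous graph is a disjoint union of equal-sized cliques is also the paper's engine for \((\mathrm{i})\Rightarrow(\mathrm{ii})\), and your \((\mathrm{iii})\Rightarrow(\mathrm{i})\) is the ``easy'' implication the paper skips. The genuine divergence is in how (iv) is tied in. The paper proves \(\lnot(\mathrm{i})\Rightarrow\lnot(\mathrm{iv})\): assuming both the \(1\)-part and the \((-1)\)-part connected, it takes a maximal \(c\)-monochromatic sub-GEC and uses connectedness together with maximality to exhibit a \((c,c,-c)\) triangle for each \(c\). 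You instead prove \((\mathrm{iv})\Rightarrow(\mathrm{ii})\) directly: the absence of \((-1,-1,1)\) triangles makes ``has color \(-1\)'' (plus the diagonal) a transitive, hence equivalence, relation, and its classes immediately yield the product decomposition. Your route is somewhat more economical in that homogeneity is invoked only to equalize class sizes rather than to build the decomposition, whereas the paper's maximal-subgraph argument packages the combinatorial meaning of bi-connectedness in a self-contained way; both arguments close the cycle of implications completely.
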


Recall that irreducible GEC's have been defined in item (i) of part (D) of
\THM{classification}. As we will see in the proof, instead of ultrahomogeneity
of the graph under consideration it is sufficient to assume only its two-point
homogeneity.

\begin{proof}[Proof of \LEM{1}]
Set \(\Gg = (V,\eta_G)\) and notice that the palette of \(\Gg\) has exactly two
colors (which follows from the preliminary assumptions on \(G\)). We infer from
this property that (ii) and (iii) are equivalent (since members of a productable
collection have disjoint palettes and the palette of a non-degenerate GEC is
non-empty). It may also be easily verified that the GEC of the form \(\Gg_1
\times^* \Gg_2\) as specified in (ii) is not bi-connected and satisfies
appropriate condition (iv) (we skip the proofs) and therefore we only focus on
showing that (ii) follows from (i) as well as from (iv). To this end, assume (i)
holds and denote by \(c \in \{-1,1\}\) a color of \(\Gg\) such that its
\(c\)-part is disconnected. Let \(\Lambda\) be any graph-component of
\(\Gamma_{\Gg}(c)\). We claim it is a full graph on \(m > 1\) vertices. Indeed,
\(m > 1\) because \(c\) is a color of \(\Gg\) and all the graph-components of
\(\Gamma_{\Gg}(c)\) are isomorphic (since \(\Gg\) is homogeneous). And if
certain two distinct vertices \(u\) and \(v\) of \(\Lambda\) were not adjacent,
we could choose any vertex \(w\) of \(\Gg\) that lies on some other
graph-component of \(\Gamma_{\Gg}(c)\) and try to extend a partial morphism of
\(\Gg\) that fixes \(u\) and sends \(v\) onto \(w\), which would be impossible,
because \(v\) lies in the same graph-component of the \(c\)-part of \(\Gg\) as
\(u\) and \(w\) does not so. So, all graph-components of \(\Gg\) are full graphs
on \(m\) vertices. Denoting by \(n\) the number of all graph-components of
\(\Gamma_{\Gg}(c)\), it is now easy to see that \(n > 1\) and that \(\Gg \equiv
\Gg_1 \times^* \Gg_2\) where \(\Gg_1\) and \(\Gg_2\) are monochromatic graphs on
\(m\) and \(n\) vertices (respectively).\par
It remains to show that (iv) is followed by (ii). Since (ii) is equivalent to
(i), it is enough to show that if \(\Gg\) is bi-connected, then it contains
triangles with both configurations of colors specified in (iv), which we will
now do. Fix any color \(c \in \{-1,1\}\) and consider a \(c\)-monochromatic
sub-GEC \(\Gg'\) of \(\Gg\) of maximal size. Then \(\Gg' \neq \Gg\) (as \(\Gg\)
is not monochromatic) and it follows from the graph-connectedness of
the \(c\)-part of \(\Gg\) that there is a vertex \(w\) not within the set of
vertices of \(\Gg'\) that is joint with some vertex \(v\) of \(\Gg'\) by an edge
in color \(c\). On the other hand, it follows from the maximality of \(\Gg'\)
that \(\Gg\) has a vertex, say \(u\), that is joint with \(w\) by an edge in
a color different from \(c\). In this way we have obtained a triangle (whose
vertices are \(u,v,w\)) in \(\Gg\) whose edges have colors \(c,c,-c\), as we
wished.
\end{proof}

\begin{lem}{0}
The product \(\Hh = (W,\mu) = \PROD{s \in S} \Gg_s\) of a productable collection
\(\{\Gg_s = (V_s,\kappa_s)\}_{s \in S}\) of GEC's with \ITP\ has \ITP\ as well.
Morevoer, for any two (distinct or not) colors \(a\) and \(b\) from the palette
of \(\Hh\) \tfcae
\begin{enumerate}[\upshape(i)]
\item \(\Hh\) contains a triangle whose edges have colors \(a,a,b\);
\item either:
 \begin{itemize}
 \item for some \(s \in S\), \(\Gg_s\) contains a triangle whose edges have
  colors \(a,a,b\); or
 \item there are two distinct indices \(s, t \in S\) such that \(s \ORD t\) and
  \(a \in \COL{\Gg_s}\) and \(b \in \COL{\Gg_t}\).
 \end{itemize}
\end{enumerate}
\end{lem}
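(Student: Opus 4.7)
The plan is to reduce every question about triangles in $\Hh$ to a case analysis driven by the label function $\ell = \LAB$, exactly as in the proof of \THM{prod-AB-HO}. The key input is property (aux1) established there: for any three distinct vertices $\xX,\yY,\zZ \in W$ the three labels $\ell(\xX,\yY)$, $\ell(\xX,\zZ)$, $\ell(\yY,\zZ)$ have the property that two of them coincide and equal the minimum of the three. Hence there are only two possibilities: either all three labels equal some $s \in S$, or exactly two of them equal some $s \in S$ and the third equals some $t \succ s$ in $S$ (with respect to $\ORD$, using $\succ$ for its strict reverse).

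First I would verify that $\Hh$ has \ITP. In the first case the three vertices agree on all coordinates strictly preceding $s$ and have pairwise distinct $s$-th coordinates, so the three edge-colors are $\kappa_s(x_s,y_s)$, $\kappa_s(x_s,z_s)$, $\kappa_s(y_s,z_s)$, forming a genuine triangle inside $\Gg_s$, which is isosceles by hypothesis. In the second case, arranging the vertices so that $\ell(\xX,\yY)=\ell(\xX,\zZ)=s$ and $\ell(\yY,\zZ)=t$, the condition $t \succ s$ forces $y_s = z_s$, so $\kappa_s(x_s,y_s)=\kappa_s(x_s,z_s)$; the two $s$-labelled edges automatically share the same color, and again the triangle is isosceles.

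For the equivalence (i)$\iff$(ii) I would follow the same case split, now tracking which colors appear. Given a triangle in $\Hh$ with colors $a,a,b$, the first case lands the multiset $\{a,a,b\}$ as edge-colors of a triangle of $\Gg_s$, yielding the first sub-alternative of (ii). In the second case, by disjointness of palettes (\DEF{disjoint}) the repeated color of the triangle must coincide with the common $\Gg_s$-color and the single color with the $\Gg_t$-color; matching multisets forces $a \in \COL{\Gg_s}$, $b \in \COL{\Gg_t}$ and $s \prec t$, which is the second sub-alternative. For (ii)$\Rightarrow$(i) the first sub-alternative is immediate via the canonical morphism $j_s\dd V_s \to W$ (introduced just before \PRO{prod-prod}), which transports any triangle of $\Gg_s$ to one of $\Hh$ with the same colors. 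For the second sub-alternative, given $s \prec t$, $u,v \in V_s$ with $\kappa_s(u,v)=a$ and $p,q \in V_t$ with $\kappa_t(p,q)=b$, I would construct the three vertices $\xX,\yY,\zZ \in W$ that agree with a fixed reference $\aA = (a_r)_{r \in S}$ outside $\{s,t\}$ and take the values $(u,p)$, $(v,p)$, $(v,q)$ at coordinates $(s,t)$ respectively; each differs from $\aA$ on the finite (hence well-ordered) set $\{s,t\}$, so it really lies in $W$, and a direct computation of $\ell$ yields labels $s,s,t$ and colors $a,a,b$.

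The main technical point—though not a genuine obstacle—is the observation that in the mixed case with labels $s,s,t$ and $s \prec t$, the two $s$-labelled edges are forced to share a common color. This single fact simultaneously drives the hereditariness of \ITP\ under the product construction and both directions of the moreover statement; everything else is a direct book-keeping of which colors arise in which case.
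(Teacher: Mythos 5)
Your proposal is correct and follows essentially the same route as the paper: the reduction via property (aux1) to the two cases (all three labels equal, or two equal and one strictly larger), with the observation that in the mixed case the two edges labelled by the smaller index are forced to share a color. The only difference is that you spell out the implication (ii)\(\Rightarrow\)(i) explicitly (via the canonical morphism \(j_s\) and the explicit three-vertex construction), which the paper leaves to the reader.
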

\begin{proof}
Fix three distinct vertices \(\uU = (u_s)_{s \in S}\), \(\vV = (v_s)_{s \in S}\)
and \(\wW = (w_s)_{s \in S}\) from \(W\). With no loss on generality, we may
(and do) assume that \(s \df \LAB{\uU}{\vV} \ORD \LAB{\uU}{\wW} \ORD t \df
\LAB{\vV}{\wW}\). It then follows from property (aux1) established in the proof
of \THM{prod-AB-HO} that \(\LAB{\uU}{\wW} = s\). So, if \(t = s\), then
\(\mu(\uU,\vV) = \kappa_s(u_s,v_s)\), \(\mu(\uU,\wW) = \kappa_s(u_s,w_s)\) and
\(\mu(\vV,\wW) = \kappa_s(v_s,w_s)\) and \ITP\ of \(\Gg_s\) yields that
the triangle on the vertices \(\uU\), \(\vV\) and \(\wW\) is isosceles. And
otherwise (that is, if \(t \neq s\)) we get that \(v_s = w_s\) and therefore
\(\mu(\uU,\vV) = \kappa_s(u_s,v_s) = \kappa_s(u_s,w_s) = \mu(\uU,\wW)\), which
finishes the proof of \ITP\ of \(\Hh\). Notice that the above argument shows
also that (ii) is implied by (i). The reverse implication is left to the reader
(recall that the palettes of the GEC's \(\Gg_s\) are pairwise disjoint).
\end{proof}

From now on to the end of the proof of \THM{classification}, \(\Gg =
(V,\kappa)\) is a fixed non-degenerate AB-HO GEC with \ITP, and \(\ffF\) and
\(Y\) stand for, respectively, its skeleton and palette. In all our examples of
(finite) GEC's as well as incomplete GEC's (used in the proofs that will follow)
all vertices are painted the pseudocolor of \(\Gg\) and, of course,
the edge-coloring is a symmetric function on pairs of vertices. Therefore,
defining edge-colorings, we will skip all formulas that follow directly from
these two rules.

\begin{dfn}{ord}
For two colors \(a, b \in Y\) of \(\Gg\), we will write \(a \prec_{\Gg} b\) or
simply \(a \prec b\) if \(a \neq b\) and there is a triangle in \(\Gg\) whose
two edges have color \(a\) and the third one is painted \(b\). More precisely,
if \(a \neq b\), then \(a \prec b\) if there are distinct vertices \(x, y, z \in
V\) such that \(\kappa(x,z) = \kappa(y,z) = a\) and \(\kappa(x,y) = b\) (see
Figure~\ref{fig:tri}a).\par
For any color \(a \in Y\), we set
\[I_{\Gg}(a) \df \{b \in Y\dd\ b \prec_{\Gg} a \prec_{\Gg} b\}.\]
\end{dfn}

\begin{lem}{2}
For any \(a, b, c \in Y\):
\begin{enumerate}[\upshape(\(\prec\)1)]
\item \(a = b\ \lor\ a \prec b\ \lor\ b \prec a\);
\item \(a \prec b\ \land\ b \prec c\ \land\ c \neq a \implies a \prec c\);
\item \(\card(I_{\Gg}(a)) \leq 1\).
\end{enumerate}
\end{lem}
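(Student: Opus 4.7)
For $(\prec1)$, assume $a \neq b$. The plan is to pick any edge $\{x, y\}$ of colour $a$, which exists because $a \in Y$, and then invoke the vertex transitivity of $\Gg$ (a consequence of AB-HO, noted in \EXM{homo}) to produce a vertex $z$ with $\kappa(x, z) = b$. Since $b$ is neither $a$ nor the pseudocolour, $z$ is distinct from both $x$ and $y$, so $\{x, y, z\}$ is an honest triangle with edges coloured $a$, $b$, and $\kappa(y, z)$. By \ITP\ this triangle is isosceles, which forces $\kappa(y, z) \in \{a, b\}$; the two cases yield $a \prec b$ and $b \prec a$ respectively.

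For $(\prec2)$, assume $a \prec b$, $b \prec c$, and $c \neq a$; note also $a \neq b$ and $b \neq c$ by definition of $\prec$. I would fix witnesses $(p, q, r)$ with $\kappa(p, r) = \kappa(q, r) = a$ and $\kappa(p, q) = b$, and $(x, y, z)$ with $\kappa(x, z) = \kappa(y, z) = b$ and $\kappa(x, y) = c$. Since $\kappa(p, q) = b = \kappa(x, z)$, the assignment $p \mapsto x$, $q \mapsto z$ is a partial morphism, and by AB-HO it extends to an automorphism $\phi$; writing $r' = \phi(r)$ gives $\kappa(x, r') = \kappa(z, r') = a$. The triangles $(x, y, r')$ and $(y, z, r')$, being isosceles by \ITP, force $\kappa(y, r') \in \{a, c\}$ and $\kappa(y, r') \in \{a, b\}$ respectively; because $b \neq c$ the intersection is $\{a\}$, so $\kappa(y, r') = a$ and the triangle $(x, y, r')$ with edges $c, a, a$ witnesses $a \prec c$.

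For $(\prec3)$ I argue by contradiction: suppose $b_1, b_2 \in I_{\Gg}(a)$ are distinct, so $a, b_1, b_2$ are three pairwise distinct colours with $a \prec b_i$ and $b_i \prec a$ for $i = 1, 2$. Applying $(\prec2)$ to $b_2 \prec a$ and $a \prec b_1$ (with $b_1 \neq b_2$) yields $b_2 \prec b_1$, and symmetrically $b_1 \prec b_2$, so all six patterns $(x, x, y)$ with $x \neq y$ in $\{a, b_1, b_2\}$ occur as triangles of $\Gg$. The plan is then to assemble simultaneous witnesses for several of these patterns inside a single finite sub-GEC of $\Gg$, by repeatedly grafting new vertices via AB-HO: start from a $(b_1, b_1, b_2)$-triangle $(p, q, r)$, attach $s$ via $a \prec b_2$ on the edge $\{p, q\}$ (giving $\kappa(p, s) = \kappa(q, s) = a$, and one checks as in $(\prec2)$ that $\kappa(r, s) \in \{a, b_1\}$), and continue by attaching further vertices through the remaining $\prec$-relations applied to the freshly created $a$-, $b_1$-, or $b_2$-edges. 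At each step \ITP\ restricts the colour of each new edge to a two-element subset of $\{a, b_1, b_2\}$, and the crux is to locate a triangle formed by the newest vertex and two earlier ones in which two of these constraints intersect in a single colour, forcing the third edge into the one remaining colour of $\{a, b_1, b_2\}$ and producing a three-coloured triangle, contradicting \ITP. The principal obstacle is pinning down the minimal such configuration; the derived symmetry between $b_1 \prec b_2$ and $b_2 \prec b_1$ is what furnishes enough freedom in the attaching procedure to guarantee that such a conflict is eventually unavoidable.
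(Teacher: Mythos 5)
Your arguments for $(\prec1)$ and $(\prec2)$ are correct, and they take a genuinely different route from the paper: you work entirely inside the AB-HO GEC $\Gg$, transporting witnessing triangles by automorphisms and letting \ITP\ decide the remaining edge, whereas the paper works in the skeleton $\ffF$ and completes non-symmetric incomplete GECs via \AP. Your version of $(\prec2)$ is clean and arguably more transparent than the paper's: the vertex $r'$ is indeed distinct from $x,y,z$, the two triangles $(x,y,r')$ and $(y,z,r')$ force $\kappa(y,r')\in\{a,c\}\cap\{a,b\}=\{a\}$, and the resulting triangle $(x,y,r')$ has two $a$-edges meeting at $r'$ and one $c$-edge, which is exactly the pattern defining $a\prec c$.

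For $(\prec3)$, however, what you have written is a plan rather than a proof, and you say so yourself (``the principal obstacle is pinning down the minimal such configuration''). Your preliminary observation---that distinct $b_1,b_2\in I_{\Gg}(a)$ together with $(\prec2)$ yield all six isosceles patterns on $\{a,b_1,b_2\}$---is correct but does not by itself produce a contradiction: an AB-HO GEC realising all six patterns is not obviously impossible, and ruling it out requires exhibiting a concrete finite configuration whose edges cannot be consistently coloured. That is precisely where the paper does its real work: it first shows that $b\in I_{\Gg}(a)$ forces one of the two four-vertex GECs $\Kk_1(a,b)$, $\Kk_2(a,b)$ of Figure~\ref{fig:Kabc} into $\ffF$, then that $\Kk_2(a,b)\in\ffF$ already implies $I_{\Gg}(a)=\{b\}$ (via the five-vertex incomplete GEC $\Ss_2$), and finally that having both $\Kk_1(a,b_1)$ and $\Kk_1(a,b_2)$ in $\ffF$ makes the five-vertex incomplete GEC $\Ss_3$ affiliated with $\ffF$ yet admitting no \ITP-compatible completion. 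Your grafting procedure branches at every step (e.g.\ $\kappa(r,s)\in\{a,b_1\}$ with neither case immediately absurd), and you give no argument that every branch terminates in a forced conflict; until you produce and verify such configurations---or an equivalent of the $\Kk_1/\Kk_2$ dichotomy---part $(\prec3)$ remains unproved.
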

\begin{proof}
All the GEC's considered in this proof (apart from \(\Ss_0\) which is very
simple) are illustrated in Figure~\ref{fig:Kabc}. In case of incomplete GEC's,
the vertices of the unpainted edge are painted black (on the illustration).\par
Part (\(\prec\)1) simply follows from \AP\ and \ITP\ of \(\ffF\): if \(a \neq
b\), then an incomplete GEC \(\Ss_0 \df (\{0,1,2\},1,2,\mu)\) with \(\mu(0,1) =
a\) and \(\mu(0,2) = b\) is non-symmetric and affiliated with \(\ffF\) and
therefore it can be completed to a GEC from \(\ffF\) but \ITP\ forces that we
can perform it only with the usage of \(a\) or \(b\) as a color for
the unpainted edge. So, \(a \prec b\) or \(b \prec a\).\par
To see (\(\prec\)2), we consider an incomplete GEC on four vertices (see
Figure~\ref{fig:Kabc}a): \[\Ss_1 \df (\{0,1,2,3\},1,3,\mu)\] where \(\mu(0,1) =
a\), \(\mu(0,2) = \mu(1,2) = b\) and \(\mu(0,3) = \mu(2,3) = c\). It may easily
be verified that this is a non-symmetric GEC affiliated with \(\ffF\) and
that \(c\) is the only possible color that can be used to paint the unpainted
edge (again, remember that \(\ffF\) has \AP\ and \ITP). Then the triangle on
the vertices \(0,1,3\) of the completed GEC witnesses that \(a \prec c\).\par
\begin{figure}
\DIAG{0.25}{\Vertices[NoLabel=false]{circle}{1,2,0}\WE[NoLabel=false](1){3}%
\AddVertexColor{black}{1,3}\EDGE{a}{0}{1}\EDGE{b}{0}{2}\EDGE{b}{1}{2}%
\EDGE{c}{0}{3}\EDGE{c}{2}{3}\VERTCAP{1}{\UP{a. }\Ss_1}}
\DIAGP{b}{c}{a}{b}{b}{c}{a}{a}{\UP{b. }\Ss_2}
\DIAGP{a}{c}{a}{b}{b}{a}{a}{c}{\UP{c. }\Ss_3}
\DIAGF{\EDGE{a}{X}{Y}\EDGE{b}{X}{Z}\EDGE{c}{X}{W}\EDGE{b}{Y}{Z}\EDGE{c}{Y}{W}%
\EDGE{c}{Z}{W}}{Z}{\UP{d. }\Kk(a,b,c)} \DIAGK{\pmb{a}}{\UP{e. }\Kk_1(a,b)}
\DIAGK{\pmb{b}}{\UP{f. }\Kk_2(a,b)}
\caption{GEC's from the proof of \LEM{2}: \(\Ss_1\), \(\Ss_2\), \(\Ss_3\) (all
three incomplete), \(\Kk(a,b,c)\), \(\Kk_1(a,b)\) and \(\Kk_2(a,b)\).}
\label{fig:Kabc}
\end{figure}
We turn to (\(\prec\)3). It follows from the previous paragraph that:
\begin{equation}\label{eqn:abc}
a \prec b\ \land\ b \prec c\ \land\ a \neq c \implies \Kk(a,b,c) \in \ffF
\end{equation}
where \(\Kk(a,b,c)\) is a GEC shown in Figure~\ref{fig:Kabc}d. Further, it
follows from both \AP\ and \ITP\ that:
\begin{equation}\label{eqn:ab}
b \in I_{\Gg}(a) \implies \Kk_1(a,b) \in \ffF\ \lor\ \Kk_2(a,b) \in \ffF
\end{equation}
where \(\Kk_1(a,b)\) and \(\Kk_2(a,b)\) are GEC's shown in
Figures~\ref{fig:Kabc}e and \ref{fig:Kabc}f. Indeed, \eqref{eqn:ab} follows,
since a non-symmetric incomplete GEC obtained by erasing from any of these two
GEC's the edge whose color is printed in bold (on the illustration) is
affiliated with \(\ffF\) (and \(\ffF\) has \ITP).\par
Now we claim that:
\begin{equation}\label{eqn:k2}
b \neq a\ \land\ \Kk_2(a,b) \in \ffF \implies I_{\Gg}(a) = \{b\}.
\end{equation}
Indeed, if \eqref{eqn:k2} holds, then \(b \in I_{\Gg}\) (since \(\ffF\) is
hereditary). Assume, on the contrary, that there is \(c \in I_{\Gg}(a)\) that
differs from \(b\). Then \(c \neq a\) and \(b \prec c \prec a\), thanks to
(\(\prec\)2). So, we infer from \eqref{eqn:abc} that \(\Kk(b,c,a) \in \ffF\).
But then a non-symmetric incomplete GEC \(\Ss_2\) shown in
Figure~\ref{fig:Kabc}b is affiliated with \(\ffF\) (after erasing the upper or
lower vertex filled in black we obtain a GEC isomorphic to, respectively,
\(\Kk(b,c,a)\) or \(\Kk_2(a,b)\)), but cannot be completed to a GEC that has
\ITP. This proves \eqref{eqn:k2}.\par
We are now ready to give a proof of (\(\prec\)3). Assume, on the contrary, that
there are two distinct colors \(b\) and \(c\) in \(I_{\Gg}(a)\). Then both they
are different from \(a\) and we infer from \eqref{eqn:k2} that neither
\(\Kk_2(a,b)\) nor \(\Kk_2(a,c)\) belongs to \(\ffF\). So, it follows from
\eqref{eqn:ab} that both \(\Kk_1(a,b)\) and \(\Kk_1(a,c)\) are in \(\ffF\). But
then a non-symmetric incomplete GEC \(\Ss_3\) shown in Figure~\ref{fig:Kabc}c is
affiliated with \(\ffF\) (without the upper or lower vertex painted in black we
obtain a GEC isomorphic to, respectively, \(\Kk_1(a,c)\) or \(\Kk_1(a,b)\)) and
cannot be completed to a GEC with \ITP, which finishes the proof.
\end{proof}

\begin{lem}{3}
Let \(a\) be a color of \(\Gg\) and \(\sigma = \sigma(a) \df I_{\Gg}(a) \cup
\{a\}\). There exists a unique (up to isomorphism) finite AB-HO GEC
\(\Pp_{\sigma}\) with at least two vertices and \ITP\ such that
\(\COL{\Pp_{\sigma}} = \sigma\) and the following condition is fulfilled:
\begin{itemize}
\item[\((\clubsuit)\)] For any finite (non-empty) GEC \(\Ff\), \(\Ff \in
 \FIN{\Pp_{\sigma}}\) iff \(\Ff \in \ffF\) and \(\COL{\Ff} \subset \sigma\).
\end{itemize}
Moreover, if \(\card(\sigma) = 1\), it is monochromatic, and otherwise it is
bi-connected.
\end{lem}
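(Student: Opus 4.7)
The plan is to consider the class
\[
\ffF_\sigma \df \{\Ff \in \ffF\dd\ \COL{\Ff} \subset \sigma\},
\]
prove it is a skeleton, and take $\Pp_\sigma$ to be the AB-HO GEC it generates. The first step is to verify that $\ffF_\sigma$ is a proper finitary structure satisfying \MBP: heredity, closure under isomorphism, and properness are inherited verbatim from $\ffF$, while \MBP\ follows because it holds for $\ffF$ and $\sigma$ is finite ($\card(\sigma) \leq 2$ by (\(\prec\)3) of \LEM{2}). The only nontrivial axiom is \AP. When $\sigma = \{a\}$ there exist no non-symmetric incomplete GECs affiliated with $\ffF_\sigma$, so \AP\ is vacuous. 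When $\sigma = \{a,b\}$, given a non-symmetric incomplete GEC $\Hh = (V,u,v,\mu)$ affiliated with $\ffF_\sigma$, pick $c \in V\setminus\{u,v\}$ with $\mu(u,c) \neq \mu(v,c)$; both these colors lie in $\sigma$ and are distinct, so $\{\mu(u,c),\mu(v,c)\} = \{a,b\}$. Using \AP\ for $\ffF$, complete $\Hh$ inside $\ffF$ by painting $\{u,v\}$ some color $k$; by \ITP\ the triangle $\{u,v,c\}$ forces $k \in \{a,b\} = \sigma$, so the completion lies in $\ffF_\sigma$.

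Thus $\ffF_\sigma$ is a skeletoid with \ITP, and \PRO{skelet-ITP} promotes it to a skeleton. Invoking \THM{skeleton} and \COR{iso-AB-HO} I obtain a unique (up to isomorphism) AB-HO GEC $\Pp_\sigma$ with $\FIN{\Pp_\sigma} = \ffF_\sigma$, which is precisely condition $(\clubsuit)$. Each element of $\sigma$ appears as the color of some two-vertex sub-GEC of $\Gg$, so $\COL{\Pp_\sigma} = \sigma$; \THM{card}(c) then yields finiteness of $\Pp_\sigma$, and $|\Pp_\sigma| \geq 2$ because $a$ actually occurs in $\Gg$. The \ITP\ of $\Pp_\sigma$ is inherited from $\ffF$ via $(\clubsuit)$.

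The remaining structural claim is the dichotomy between monochromatic and bi-connected. For $|\sigma|=1$ monochromaticity is immediate. Assume $\sigma = \{a,b\}$. Neither the $a$-part nor the $b$-part of $\Pp_\sigma$ can be full (otherwise the other color would not appear in $\Pp_\sigma$), so \LEM{1}, applied to the (ultrahomogeneous) graph whose edges are the $a$-colored edges of $\Pp_\sigma$, tells us that disconnectedness of the $a$-part would force $\Pp_\sigma \equiv \Gg_1 \times^* \Gg_2$ with $\Gg_1$ a full $a$-monochromatic graph and $\Gg_2$ a full $b$-monochromatic graph. A direct inspection (or \LEM{0}) shows that such a product contains triangles only of color patterns $(a,a,a)$, $(a,b,b)$ and $(b,b,b)$, excluding pattern $(a,a,b)$. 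But $a \prec b$ in $\Gg$ provides precisely a triangle of type $(a,a,b)$ with colors in $\sigma$, hence lying in $\ffF_\sigma = \FIN{\Pp_\sigma}$, a contradiction. The symmetric argument using $b \prec a$ rules out disconnectedness of the $b$-part, so $\Pp_\sigma$ is bi-connected.

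The main obstacle I anticipate is this last step: the skeletoid-to-skeleton bookkeeping is straightforward from \THM{skeleton} and \PRO{skelet-ITP}, whereas the structural dichotomy rests on combining \LEM{1} with the explicit existence of both triangle types $(a,a,b)$ and $(b,b,a)$ guaranteed by $b \in I_\Gg(a)$.
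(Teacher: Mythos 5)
Your proposal is correct and follows essentially the same route as the paper: your $\ffF_\sigma$ is the paper's $\ffF_0$, the verification of \AP\ by forcing the completing color into $\{\mu(u,c),\mu(v,c)\}=\sigma$ via \ITP\ is identical, and bi-connectedness is likewise extracted from \LEM{1} using the two triangle types supplied by $a\prec b\prec a$. The only blemish is a harmless labeling slip in the last step: if the $a$-part of $\Pp_\sigma$ were disconnected, the decomposition from \LEM{1} would place the $b$-monochromatic factor first and the $a$-monochromatic one second (not the order you state), but since such a product omits exactly one of the two patterns $(a,a,b)$, $(b,b,a)$ and both lie in $\ffF_\sigma$, the contradiction goes through regardless.
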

\begin{proof}
Uniqueness of \(\Pp_{\omega}\) and its non-degeneracy follow, e.g., from
(respectively) \THM{embed} and the fact that \(a\) belongs to the palette of
\(\ffF\). Further, if \(\card(\sigma) = 1\) (that is, if \(\sigma = \{a\}\)),
the conclusion trivially follows from \MBP\ and in that case we skip the proof.
Below we assume that \(\card(\sigma) \neq 1\). This means that \(\sigma =
\{a,b\}\) for a unique \(b \in I_{\Gg}(a)\) (thanks to \LEM{2}).\par
Let \(\ffF_0\) consist of all GEC's from \(\ffF\) whose palettes are contained
in \(\sigma\). We claim that \(\ffF_0\) is a skeleton. Indeed, axioms
(SK1)--(SK3) simply follow from these properties of \(\ffF\). It is also clear
that \(\ffF_0\) has \ITP. Hence, we conclude from \PRO{skelet-ITP} that
\(\ffF_0\) is a skeleton if it has \AP. To check the last mentioned property,
fix a non-symmetric incomplete GEC \(\Hh' = (W,p,q,\mu)\) affiliated with
\(\ffF_0\) and take a vertex \(v \in W \setminus \{p,q\}\) such that \(\mu(p,v)
\neq \mu(q,v)\). Then \(\Hh'\) is also affiliated with \(\ffF\) which means that
it can be completed to a GEC \(\Hh\) from this class. But \ITP\ (of \(\ffF\))
implies that the color (in \(\Hh\)) of the unpainted edge belongs to the set
\(\{\mu(p,v),\mu(q,v)\}\) which is contained in \(\sigma\) and, consequently,
\(\Hh \in \ffF_0\).\par
Now let \(\Pp_{\omega}\) denote the AB-HO GEC generated by \(\ffF_0\). Since
\(a \prec b \prec a\), we infer that \(\ffF_0\) contains two triangles shown
in Figure~\ref{fig:tri}. In particular, \(\COL{\Pp_{\sigma}} = \sigma\) and
moreover \(\Pp_{\sigma}\) is finite. It follows from \LEM{1} that
\(\Pp_{\sigma}\) is bi-connected. So, it remains to check \((\clubsuit\)), which
is immediate.
\begin{figure}
\DIAG{0.25}{\Vertices{circle}{X,Y,Z}\EDGE{a}{X}{Y}\EDGE{a}{X}{Z}\EDGE{b}{Y}{Z}%
\VERTCAP[,Ldist=0mm]{X}{\UP{a. }a \prec b}}
\DIAG{0.25}{\Vertices{circle}{X,Y,Z}\EDGE{b}{X}{Y}\EDGE{b}{X}{Z}\EDGE{a}{Y}{Z}%
\VERTCAP[,Ldist=0mm]{X}{\UP{b. }b \prec a}}
\caption{Triangles witnessing that \(a \prec b \prec a\) for \(a \neq b\) (see
\LEM{4} and the proof of \LEM{3})}\label{fig:tri}
\end{figure}
\end{proof}

\begin{lem}{4}
Let \(\Omega = \Omega(\Gg)\) be a collection of all sets of the form
\(\sigma(a)\) (specified in \LEM{3}). For each \(\sigma \in \Omega\) let
\(\Pp_{\sigma} = \Pp_{\sigma}(\Gg)\) be a unique AB-HO GEC described in \LEM{3}.
Then \(\Omega\) is a partition of \(Y\) and for any finite (non-empty) GEC \(\Hh
= (W,\mu)\) \tfcae
\begin{enumerate}[\upshape(i)]
\item \(\Hh \in \ffF\);
\item \(\COL{\Hh} \subset Y\) and \(\Hh\) has \ITP\ and the following two
 properties:
 \begin{itemize}
 \item if \(a\) and \(b\) belong to two distinct members of \(\Omega\) and
  the triangle shown in Figure~\ref{fig:tri}a belongs to \(\FIN{\Hh}\), then
  \(a \prec b\);
 \item for any set \(\sigma \in \Omega\), each non-empty sub-GEC \(\Hh'\) of
  \(\Hh\) with \(\COL{\Hh'} \subset \sigma\) belongs to \(\FIN{\Pp_{\sigma}}\).
 \end{itemize}
\end{enumerate}
\end{lem}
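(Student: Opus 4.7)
The plan is to prove, in order: that $\Omega$ is a partition of $Y$; the easy direction (i)$\Rightarrow$(ii); and the hard direction (ii)$\Rightarrow$(i), via a double induction. For the partition, I would introduce on $Y$ the relation $a \sim b$ defined by $a = b$ or ($a \prec_{\Gg} b$ and $b \prec_{\Gg} a$). Reflexivity and symmetry are immediate; transitivity reduces to a short case analysis using (\(\prec\)2) of \LEM{2}. The equivalence class of $a$ coincides with $\sigma(a) = I_{\Gg}(a) \cup \{a\}$, so $\Omega$ is the partition of $Y$ into $\sim$-classes, and (\(\prec\)3) forces each class to have at most two elements. The implication (i)$\Rightarrow$(ii) follows immediately from heredity of $\ffF$, \ITP\ of $\Gg$ (hence of $\ffF$), the definition of $\prec_{\Gg}$, and property ($\clubsuit$) of \LEM{3}.

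For the nontrivial direction (ii)$\Rightarrow$(i), I would first equip $\Omega$ with a strict total order $\prec$ by setting $\sigma \prec \tau$ iff $a \prec_{\Gg} b$ for some (equivalently any, by (\(\prec\)2)) $a \in \sigma$ and $b \in \tau$; trichotomy follows from (\(\prec\)1) and transitivity from (\(\prec\)2). I would then proceed by double induction: outer on the number $k$ of classes from $\Omega$ used by $\Hh$, inner on $|W|$. The outer base $k=1$ is handled by the second bullet of (ii) applied to $\Hh$ itself. For $k\geq 2$, fix the $\prec$-minimal class $\sigma_0$ used by $\Hh$ and introduce the relation $u \equiv_0 v$ on $W$ given by $u = v$ or $\mu(u,v) \notin \sigma_0$; transitivity follows from \ITP\ together with the first bullet of (ii) and minimality of $\sigma_0$, since in the only nontrivial triangle configuration (two equal edges of color $p \notin \sigma_0$, third edge $q$) the first bullet would otherwise force $\sigma_0 \prec \sigma_0$. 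A further triangle argument shows the inter-class colouring is constant and takes values in $\sigma_0$, yielding a quotient GEC $\bar{\Hh}$ which lies in $\FIN{\Pp_{\sigma_0}} \subset \ffF$ via the second bullet applied to any section of $\Hh$. Each $\equiv_0$-class inherits (ii) while using $\leq k-1$ classes from $\Omega$, hence lies in $\ffF$ by the outer induction hypothesis.

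The reassembly step would fix an embedding $\bar{\phi}\colon \bar{\Hh} \to \Gg$, write $g_C = \bar{\phi}(C)$, and for each $\equiv_0$-class $C$ embed $C$ into the multi-sphere $A_C \df \bigcap_{C'\neq C}\{x \in V\colon \kappa(g_{C'},x) = \kappa(g_{C'},g_C)\}$, which is AB-HO by \PRO{sphere} and contains $g_C$. Via \THM{embed}, the existence of such an embedding reduces to showing that for each proper finite sub-GEC $F \subsetneq C$ the augmented structure $F \cup \{g_{C'}\colon C' \neq C\}$ (with the prescribed $\sigma_0$-coloured cross-edges) satisfies (ii)---a triangle-by-triangle check using $\sigma_0$-minimality, the first bullet, and the fact that $\bar{\Hh}\setminus\{g_C\}\in\FIN{\Pp_{\sigma_0}}$---and has size strictly less than $|W|$, so that the inner IH places it in $\ffF$. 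Adjusting each $\phi_C$ by an automorphism of $A_C$ so that a distinguished vertex of $C$ maps to $g_C$, one then verifies that the combined map is a morphism: cross-edges between distinct classes automatically carry the prescribed $\sigma_0$-colour by construction, while edges within a single class are handled by $\phi_C$.

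The main obstacle is the borderline case where the $\equiv_0$-decomposition produces a single `large' class $C$ with $|C|=|W|-1$ and a singleton complementary class: for $F = C$ the augmented GEC coincides with $\Hh$ itself, so the inner IH does not apply. I expect this to require a separate one-vertex extension argument, reducing---via the inner IH applied to every strictly smaller sub-configuration of $\Hh$---to non-emptiness of the corresponding sphere intersection in $\Gg$, which itself is established by iterating \PRO{sphere} and exploiting the structural constraints imposed by both bullets of (ii).
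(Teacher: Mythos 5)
Your decomposition strategy (splitting \(\Hh\) along the \(\prec\)-minimal class \(\sigma_0\) it uses, passing to the quotient, and reassembling inside \(\Gg\) via the sphere intersections of \PRO{sphere}) is coherent, and the preliminary steps---the partition of \(Y\), the implication (i)\(\Rightarrow\)(ii), transitivity of \(\equiv_0\), constancy of the inter-class colours, membership of the quotient in \(\FIN{\Pp_{\sigma_0}}\), and the (unstated but valid) triangle argument showing the glued map gets the right cross-colours---are all correct or straightforwardly completable. But the borderline case you flag at the end is not a deferrable technicality: it is the entire content of the lemma, and your proposed resolution does not close it. When every class other than \(C\) is a singleton, the augmented configuration with \(F=C\) is \(\Hh\) itself, so what you must prove there is precisely: if every proper sub-GEC of \(\Hh\) lies in \(\ffF\), then \(\Hh\in\ffF\). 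That is a genuine one-point amalgamation statement, and ``non-emptiness of the corresponding sphere intersection in \(\Gg\)'' is exactly \emph{equivalent} to it, not a reduction of it. Crucially, \PRO{sphere} cannot supply that non-emptiness: it asserts only that \(S(W,r)\) is a \emph{possibly degenerate}---in particular possibly empty---AB-HO GEC, so no amount of iterating it produces a point in the intersection.

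The missing ingredient is the amalgamation property of the skeleton \(\ffF=\FIN{\Gg}\) (established in the first half of the proof of \THM{skeleton}), combined with \ITP\ and the first bullet of (ii) to pin down the forced colour. This is in fact how the paper proves the whole implication, by a single induction on \(|W|\): if \(\Hh\) contains a triangle with colours \(a,a,b\) for which \(b\prec a\) fails, erase one \(a\)-coloured edge; the resulting incomplete GEC is non-symmetric and affiliated with \(\ffF\) by the inductive hypothesis, \AP\ completes it inside \(\ffF\), and \ITP\ together with the failure of \(b\prec a\) leaves \(a\) as the only admissible colour, so the completion must be \(\Hh\). If no such triangle exists, a colour-chasing argument shows \(\COL{\Hh}\) lies in a single member of \(\Omega\) and the second bullet of (ii) finishes. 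Your product-style decomposition is an attractive alternative route that makes the eventual primary decomposition visible already at the level of finite configurations, but in its present form it still needs this \AP-based one-point extension (with the erased edge chosen so that the completion colour is forced) to handle the borderline case; without that, or some equivalent appeal to amalgamation, the argument does not go through.
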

\begin{proof}
It is immediate that \(\Omega\) is a partition of \(Y\) (that is, it consists of
non-empty pairwise disjoint sets that cover \(Y\)) and that (ii) follows from
(i). Below we will proof that the reverse implication also holds.\par
Let \(\Hh\) be a finite non-empty GEC with all properties desribed in (ii).
First observe that:
\begin{itemize}
\item[\((\dag)\)] If \(a\) and \(b\) are two distinct colors such that
 the triangle shown in Figure~\ref{fig:tri}a belongs to \(\FIN{\Hh}\), then
 \(a \prec b\).
\end{itemize}
(Because either \(a\) and \(b\) are in a common member of \(\Omega\)---and then
the last relation is automatic---or else this relation is assumed in (ii).) Note
also that if \(\card(\COL{\Hh}) \leq 1\), then (i) holds (thanks to
\((\clubsuit)\) and the last assumption in (ii)). So, we assume further that
\(\card(\COL{\Hh}) > 1\). We prove (i) by induction on the number \(n\) of
vertices of \(\Hh\). The cases \(n=1,2\) are covered by the last observation.
Thus, we assume that \(n > 2\) and that each proper non-empty sub-GEC of \(\Hh\)
belongs to \(\ffF\) (since all the properties listed in (ii) are
hereditary).\par
First assume that \(\Hh\) contains a triangle \(\Tt\) shown in
Figure~\ref{fig:tri}a with distinct colors \(a\) and \(b\) such that
the relation \(b \prec a\) does not hold. Denote by \(u\) and \(v\) the two
vertices of any of the edges of \(\Tt\) that is painted \(a\). It then follows
from the inductive hypothesis that an incomplete GEC \(\Hh'\) obtained from
\(\Hh\) by declaring the edge \(\{u,v\}\) unpainted is affiliated with \(\ffF\).
It is also non-symmetric, which the third vertex of \(\Tt\) witnesses to. So, we
may complete \(\Hh'\) to a member of \(\ffF\). But \ITP\ combined with the form
of \(\Tt\) implies that we can do it only with the usage of \(a\) or \(b\) (as
a color for the unpainted edge). However, \(b\) is not allowed since it is not
true that \(b \prec a\). So, the only possible completion of \(\Hh'\) in
\(\ffF\) is \(\Hh\) and hence \(\Hh \in \ffF\).\par
Finally, we assume that \(\Hh\) has at least two colors and satisfies
the following condition:
\begin{itemize}
\item[\((\ddag)\)] If a triangle shown in Figure~\ref{fig:tri}a with distinct
 \(a\) and \(b\) belongs to \(\FIN{\Hh}\), then \(b \prec a\).
\end{itemize}
Observe that if in (any) GEC all the triangles are monochromatic, then this GEC
itself is monochromatic. So, we conclude that \(\Hh\) contains a triangle
\(\Tt\) that is not monochromatic. Since \(\Hh\) has \ITP, this triangle is
two-color. We denote the colors of its edges by \(a\) and \(b\) in a way such
that two edges are painted \(a\). We infer from \((\dag)\) and \((\ddag)\) that
\(a \prec b\) and \(b \prec a\). Hence \(\sigma \df \{a,b\}\) is a member of
\(\Omega\) (see the last part of \LEM{2}). Thanks to the last assumption in
(ii), to conclude that \(\Hh \in \ffF\), it is sufficient to show that
\(\COL{\Hh} \subset \sigma\), which we now turn to.\par
For simplicity, we denote the vertices of \(\Tt\) by \(u\), \(v\) and \(w\) in
a way such that \(\mu(u,w) = \mu(v,w) = a\) and \(\mu(u,v) = b\). Let \(x\) be
an arbitrary vertex of \(\Hh\) distinct from \(u\). If \(x \in \{v,w\}\), then
\(\mu(x,u) \in \sigma\). And if \(x\) differs from both \(v\) and \(w\), then
thanks to \ITP\ of \(\Hh\) there are two possibilities:
\begin{itemize}
\item \(\mu(x,u) = \mu(u,w)\), then automatically \(\mu(x,u) \in \sigma\); or
 else
\item \(c \df \mu(x,u)\) differs from \(\mu(u,w) = a\) and then \(\mu(x,v) \in
 \{a,c\}\), which implies that \(a \prec c\) as well as \(c \prec a\) (by
 \((\dag)\) and \((\ddag)\)); consequently, \(c \in \sigma(a) = \sigma\).
\end{itemize}
In this way we have shown that \(\mu(x,u) \in \sigma\) for any \(x \in W
\setminus \{u\}\). Now assume \(p\) and \(q\) are arbitrary two distinct points
of \(W\). If \(u \in \{p,q\}\), then the property just established implies that
\(\mu(p,q) \in \sigma\). And if \(u \notin \{p,q\}\), there are two
possibilities:
\begin{itemize}
\item \(\mu(p,q) = \mu(p,u)\), then \(\mu(p,q) \in \sigma\) (by the previous
 case); or else
\item \(c \df \mu(p,q)\) differs from \(d \df \mu(p,u) \in \sigma\) and then
 \(\mu(q,u) \in \{c,d\}\), which implies (similarly as before) that \(d \prec
 c\) as well as \(c \prec d\); consequently, \(c \in \sigma(d) = \sigma\),
\end{itemize}
which finishes the proof.
\end{proof}

Now we will apply the ideas presented in preceding lemmas to a more specific
AB-HO GEC's:

\begin{lem}{5}
Let \(\Ww = (W,\mu)\) denote the product \(\PROd{s \in S}{\preceq}{*} \Hh_s\) of
a productable collection \(\{\Hh_s = (V_s,\rho_s)\}\) of non-degenerate AB-HO
GEC's each of which is either bi-connected or monochromatic. For any color \(c
\in \COL{\Ww}\) let \(t(c)\) stand for a unique index \(s \in S\) such that \(c
\in \COL{\Hh_s}\). Then:
\begin{enumerate}[\upshape(a)]
\item for any two distinct colors \(a\) and \(b\) from \(\COL{\Ww}\) one has:
 \[a \prec_{\Ww} b \iff t(a) \preceq t(b);\]
\item \(\Omega(\Ww) = \{\COL{\Hh_s}\dd\ s \in S\}\);
\item \(\Pp_{\sigma}(\Ww) \equiv \Hh_s\) for any \(s \in S\) and \(\sigma =
 \COL{\Hh_s}\).
\end{enumerate}
\end{lem}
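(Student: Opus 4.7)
My plan is to deduce all three statements from \LEM{0} (which identifies exactly which $a,a,b$--triangles occur in a GEC product) together with the formula for $\mu$ in \DEF{product}. Throughout, I will use that a bi-connected factor $\Hh_s$ has a two-element palette, so that both color-patterns $a,a,b$ and $b,b,a$ occur in $\Hh_s$ (by the last paragraph of the proof of \LEM{1}), while a monochromatic $\Hh_s$ admits no such pattern with $a \neq b$.

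For part (a), fix distinct colors $a,b \in \COL{\Ww}$ and set $s \df t(a)$, $t \df t(b)$. By \LEM{0}, $a \prec_{\Ww} b$ holds iff either some $\Hh_r$ contains a triangle colored $a,a,b$ (which forces $a,b \in \COL{\Hh_r}$, hence $s = t = r$ and $\Hh_r$ is bi-connected) or else $a \in \COL{\Hh_{s'}}$ and $b \in \COL{\Hh_{t'}}$ for some distinct $s' \preceq t'$; by disjointness of palettes (\DEF{disjoint}) the latter forces $s' = s$ and $t' = t$, so $s \preceq t$. In the case $s = t$, bi-connectedness of $\Hh_s$ is automatic from $\card(\COL{\Hh_s}) \geq 2$, and both $a \prec_{\Hh_s} b$ and $b \prec_{\Hh_s} a$ hold. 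In either case we conclude $a \prec_{\Ww} b \iff s \preceq t$.

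For part (b), recall from \DEF{ord} that $I_{\Ww}(c) = \{x \in \COL{\Ww}\dd x \prec_{\Ww} c \prec_{\Ww} x\}$, and by (a) this equals $\{x \neq c\dd t(x) = t(c)\}$. Hence $\sigma(c) \df I_{\Ww}(c) \cup \{c\} = \COL{\Hh_{t(c)}}$. As $c$ ranges over $\COL{\Ww}$ and $t(c)$ ranges over $\SUPP{\FFf}$ (the set of $s$ with $\COL{\Hh_s} \neq \varempty$, which is all of $S$ by non-degeneracy together with the proper-finitary setup—actually, any $s$ with $\Hh_s$ non-degenerate contributes a non-empty palette, and \LEM{3} only forms $\sigma(a)$ for colors $a$ that already appear), we obtain $\Omega(\Ww) = \{\COL{\Hh_s}\dd s \in S\}$.

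For part (c), I will verify that $\Hh_s$ itself satisfies the characterizing properties of $\Pp_{\sigma}(\Ww)$ in \LEM{3}; uniqueness then forces $\Pp_{\sigma}(\Ww) \equiv \Hh_s$. Since $\Hh_s$ is AB-HO, finite, non-degenerate with palette $\sigma$, and has \ITP\ (because $\card(\sigma) \leq 2$), I only need to check $(\clubsuit)$: for a finite non-empty GEC $\Ff$, $\Ff \in \FIN{\Hh_s}$ iff $\Ff \in \FIN{\Ww}$ and $\COL{\Ff} \subset \sigma$. The forward implication uses the canonical morphism $j_s\dd V_s \to W$ to embed $\Hh_s$ into $\Ww$. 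The reverse is the main point; this is where I expect the only real work. Take a finite $A \subset W$ with $\COL{(A,\mu)} \subset \sigma = \COL{\Hh_s}$. For any distinct $\uU, \vV \in A$ one has $\mu(\uU,\vV) = \rho_{\ell(\uU,\vV)}(\pi_{\ell(\uU,\vV)}(\uU),\pi_{\ell(\uU,\vV)}(\vV)) \in \COL{\Hh_s}$, and disjointness of palettes forces $\ell(\uU,\vV) = s$. By definition of $\ell$ this means $\pi_q(\uU) = \pi_q(\vV)$ for every $q \precneqq s$ (so all elements of $A$ share the same initial segment) and $\pi_s(\uU) \neq \pi_s(\vV)$. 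Hence $\pi_s\restriction{A}\dd A \to V_s$ is injective and the formula displayed above shows it preserves edge colors; thus $(A,\mu)$ is isomorphic (via $\pi_s$) to the sub-GEC $(\pi_s(A),\rho_s)$ of $\Hh_s$, giving $(A,\mu) \in \FIN{\Hh_s}$. This completes the verification of $(\clubsuit)$ and, by uniqueness in \LEM{3}, the proof.
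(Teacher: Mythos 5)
Your proposal is correct and follows essentially the same route as the paper: part (a) via \LEM{0} combined with the bi-connectedness triangle fact from \LEM{1}, part (b) as a direct consequence of (a), and part (c) via the observation that \(\ell(\uU,\vV)=s\) forces the projection \(\pi_s\) to be a color-preserving injection. The only cosmetic difference is that you package (c) through condition \((\clubsuit)\) and the uniqueness in \LEM{3}, whereas the paper argues by mutual embedding of the two finite GEC's, but the underlying computation is identical.
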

\begin{proof}
We start from a remark that \(\Ww\) has \ITP\ (by \LEM{0}; notice that each of
\(\Hh_s\) also has \ITP, having at most two colors in its palette) and therefore
we can speak about \(\prec_{\Ww}\) and other notions appearing in the statement
of the lemma. Now part (a) follows from the equivalence formulated in \LEM{0}
(involved here when \(t(a) \neq t(b)\)) combined with the equivalence of
conditions (i) and (iv) from \LEM{1} (involved in the other case). Item (b) is
a swift consequence of (a) and therefore here we focus only on part (c). To this
end, fix \(q \in S\), set \(\sigma \df \COL{\Hh_q}\) and notice that simply
\(\Hh_q \in \FIN{\Pp_{\sigma}(\Ww)}\). To show the reverse relation, that is,
that \(\Tt \df \Pp_{\sigma}(\Ww) \in \FIN{\Hh_q}\), consider \(\Tt\) as
a sub-GEC of \(\Ww\) and observe that for any two distinct vertices \(\xX =
(x_s)_{s \in S}\) and \(\yY = (y_s)_{s \in S}\) of \(\Tt\), \(\mu(\xX,\yY) \in
\sigma\), which implies that \(\LAB[\preceq]{\xX}{\yY} = q\) (as the palettes of
the factors of the product of GEC's are pairwise disjoint) and, consequently,
\(\mu(\xX,\yY) = \rho_q(x_q,y_q)\). So, the assignment \(\xX = (x_s)_{s \in S}
\mapsto x_q\) correctly defines a morphism of \(\Tt\) into \(\Hh_q\). Since both
\(\Tt\) and \(\Hh_q\) are finite GEC's, the conclusion follows.
\end{proof}

\begin{lem}{6}
Maintaining the notation introduced in \LEM{4}, let \(\ORD_{\Gg}\) be a binary
relation on \(\Omega = \Omega(\Gg)\) defined as follows:
\begin{equation}\label{eqn:ord0}
\sigma_1 \ORD_{\Gg} \sigma_2 \stackrel{\UP{def}}{\iff} \sigma_1 = \sigma_2\
\lor\ \forall a \in \sigma_1,\ b \in \sigma_2\dd\ a \prec_{\Gg} b \qquad
(\sigma_1, \sigma_2 \in \Omega).
\end{equation}
Then \(\ORD_{\Gg}\) is a total order on \(\Omega\), the collection
\(\{\Pp_{\sigma}(\Gg)\}_{\sigma\in\Omega}\) is productable and the GEC \(\Ww =
(W,\mu) \df \PROd{\sigma\in\Omega}{\ORD_{\Gg}}{*} \Pp_{\sigma}(\Gg)\) is AB-HO,
has \ITP\ and all the following properties:
\begin{enumerate}[\upshape(pr1)]\addtocounter{enumi}{-1}
\item \(\COL{\Ww} = \COL{\Gg}\) and the pseudocolors of \(\Ww\) and \(\Gg\)
 coincide;
\item \(\prec_{\Ww} = \prec_{\Gg}\) (that is, for any \(a, b \in Y\), \(a
 \prec_{\Ww} b\) is equivalent to \(a \prec_{\Gg} b\)); in particular,
 \(\Omega(\Ww) = \Omega(\Gg)\);
\item \(\Pp_{\sigma}(\Ww) \equiv \Pp_{\sigma}(\Gg)\) for all \(\sigma \in
 \Omega(\Gg)\).
\end{enumerate}
\end{lem}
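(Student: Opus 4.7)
The plan is to derive everything from Lemmas \ref{lem:2}--\ref{lem:5}, doing the real work only for the order $\ORD_{\Gg}$ and then invoking the machinery already built.

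First I would verify that $\ORD_{\Gg}$ is a total order on $\Omega$. Reflexivity is built into \eqref{eqn:ord0}. For antisymmetry, suppose $\sigma_1 \neq \sigma_2$ while $\sigma_1 \ORD_{\Gg} \sigma_2$ and $\sigma_2 \ORD_{\Gg} \sigma_1$; picking $a \in \sigma_1$, $b \in \sigma_2$ yields $a \prec b$ and $b \prec a$, so $b \in I_{\Gg}(a) \subset \sigma(a) = \sigma_1$, contradicting $\sigma_1 \cap \sigma_2 = \varempty$. Transitivity across three pairwise distinct $\sigma_1,\sigma_2,\sigma_3$ is a direct application of $(\prec2)$, using that any triple $(a,b,c) \in \sigma_1 \times \sigma_2 \times \sigma_3$ has $a \neq c$. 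Totality is the only slightly delicate point: given $\sigma_1 \neq \sigma_2$, fix $a_0 \in \sigma_1$, $b_0 \in \sigma_2$; by $(\prec1)$ we may assume $a_0 \prec b_0$; then for any $a \in \sigma_1$ distinct from $a_0$ we have $a \prec_{\Gg} a_0$ (since $a \in I_{\Gg}(a_0)$), and similarly $b_0 \prec_{\Gg} b$ for $b \in \sigma_2 \setminus \{b_0\}$, so two successive applications of $(\prec2)$ (using $\sigma_1 \cap \sigma_2 = \varempty$ to secure $a \neq b$) give $a \prec_{\Gg} b$. Hence $\sigma_1 \ORD_{\Gg} \sigma_2$.

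Next I would collect productability. By \LEM{3} each $\Pp_{\sigma}(\Gg)$ is a non-degenerate GEC whose palette equals $\sigma$, and its pseudocolor can be taken to coincide with that of $\Gg$ (all $\Pp_{\sigma}$'s are realised inside the AB-HO GEC generated by the subclass of $\ffF$ with palette in $\sigma$, which shares the pseudocolor of $\Gg$). Since $\Omega$ partitions $Y = \COL{\Gg}$, the palettes of the factors are pairwise disjoint, so $\{\Pp_{\sigma}(\Gg)\}_{\sigma \in \Omega}$ is productable. Then \THM{prod-AB-HO} says $\Ww$ is AB-HO, and \LEM{0} (each factor has $\leq 2$ colors, so trivially has \ITP) gives that $\Ww$ has \ITP.

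Finally I would prove (pr0)--(pr2). For (pr0), $\COL{\Ww} = \bigcup_{\sigma} \COL{\Pp_{\sigma}(\Gg)} = \bigcup_{\sigma} \sigma = Y$ and the pseudocolor agrees by construction. For (pr1), take distinct $a, b \in Y$ and let $\sigma_1, \sigma_2 \in \Omega$ be the unique members containing them. By \LEM{5}(a), $a \prec_{\Ww} b \iff \sigma_1 \ORD_{\Gg} \sigma_2$. If $\sigma_1 = \sigma_2$, then $\sigma_1 = \{a,b\}$, whence $a \prec_{\Gg} b$ (and $b \prec_{\Gg} a$) and also $\sigma_1 \ORD_{\Gg} \sigma_2$, so both sides hold. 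If $\sigma_1 \neq \sigma_2$, totality/antisymmetry give exactly one of $\sigma_1 \ORD_{\Gg} \sigma_2$, $\sigma_2 \ORD_{\Gg} \sigma_1$; in the first case $a \prec_{\Gg} b$ by the very definition of $\ORD_{\Gg}$, in the second $b \prec_{\Gg} a$ and $a \prec_{\Gg} b$ would fail (else $a \in I_{\Gg}(b) \subset \sigma_2$, contradicting disjointness). Thus $a \prec_{\Ww} b \iff a \prec_{\Gg} b$, and consequently $I_{\Ww}(a) = I_{\Gg}(a)$ for every color $a$, i.e.\ $\Omega(\Ww) = \Omega(\Gg)$. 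Property (pr2) is then immediate from \LEM{5}(c) applied to the productable collection $\{\Pp_{\sigma}(\Gg)\}_{\sigma \in \Omega}$.

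The main obstacle is the totality argument for $\ORD_{\Gg}$: a single application of $(\prec1)$ yields only one pair, and one must use $(\prec2)$ together with the defining property $I_{\Gg}(a) \subset \sigma(a)$ of the blocks of $\Omega$ to propagate the relation uniformly across $\sigma_1 \times \sigma_2$; the rest of the proof is essentially bookkeeping on top of Lemmas \ref{lem:2}--\ref{lem:5}.
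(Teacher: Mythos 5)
Your proposal is correct and follows essentially the same route as the paper: total order from \LEM{2} plus the partition property of \(\Omega\), productability from disjointness of the blocks, absolute homogeneity and \ITP\ from \THM{prod-AB-HO} and \LEM{0}, and (pr1)--(pr2) from \LEM{5}. The only (harmless) variation is that you run both directions of (pr1) through \LEM{5}(a) after establishing \(\sigma_1 \ORD_{\Gg} \sigma_2 \iff a \prec_{\Gg} b\), whereas the paper verifies the forward implication by exhibiting the relevant triangles directly; your expanded verification that \(\ORD_{\Gg}\) is total (which the paper leaves implicit) is accurate.
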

\begin{proof}
It follows from \LEM{2} and the definition of \(\Omega\) that \(\ORD\) is
a total order, whereas from \THM{prod-AB-HO} and \LEM{0} that \(\Ww\) is
an AB-HO GEC with \ITP\ (it is clear that the respective collection is
productable). Since (pr0) is trivial, we pass to (pr1). First assume \(a
\prec_{\Gg} b\). Then:
\begin{itemize}
\item if \(b \prec_{\Gg} a\) as well, then \(\sigma \df \{a,b\}\) belongs to
 \(\Omega\) and \(\Pp_{\sigma}(\Gg)\) contains a triangle whose edges are
 painted \(a,a,b\), which implies that \(a \prec_{\Ww} b\);
\item otherwise, \(a \in \sigma_a\) and \(b \in \sigma_b\) for two distinct
 members of \(\Omega\) such that \(\sigma_a \ORD \sigma_b\), and \(x \in
 \COL{\Pp_{\sigma_x}(\Gg)}\) for \(x \in \{a,b\}\)---in that case the relation
 \(a \prec_{\Ww} b\) follows from \LEM{0}.
\end{itemize}
The reverse implication (that is, that \(a \prec_{\Gg} b\) follows from \(a
\prec_{\Ww} b\)) is an immediate consequence of part (a) of \LEM{5}, whereas
(pr2) follows from part (c) therein.
\end{proof}

We can now formulate the main two parts of \THM{classification} in separate
results.

\begin{thm}[Primary decomposition]{model}
Every non-degenerate AB-HO GEC \(\Gg\) with \ITP\ is isomorphic to \(\Ww \df
\PROd{\sigma\in\Omega(\Gg)}{\ORD_{\Gg}}{*} \Pp_{\sigma}(\Gg)\).
\end{thm}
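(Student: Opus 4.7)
The plan is to reduce the theorem directly to the characterisation of the skeleton $\FIN{\Gg}$ established in \LEM{4}, invoking \COR{iso-AB-HO} to upgrade agreement of skeletons to an isomorphism.

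First I would observe that $\Ww$ is itself a non-degenerate AB-HO GEC with \ITP\ by \LEM{6} (via \THM{prod-AB-HO} and \LEM{0}), so that the whole machinery of \LEM[s]{2}, \ref{lem:3}, \ref{lem:4} applies not only to $\Gg$ but also to $\Ww$. In particular, the characterisation of membership in $\FIN{\Ww}$ provided by \LEM{4} is expressed in terms of four pieces of data: the palette of $\Ww$, the pseudocolor of $\Ww$, the partition $\Omega(\Ww)$, the relation $\prec_{\Ww}$, and the skeletons $\FIN{\Pp_{\sigma}(\Ww)}$ for $\sigma \in \Omega(\Ww)$.

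Next I would use properties (pr0)--(pr2) of \LEM{6} to match each piece of data for $\Ww$ with the corresponding piece of data for $\Gg$: the palettes and pseudocolors agree by (pr0); the relations $\prec_{\Ww}$ and $\prec_{\Gg}$ coincide by (pr1), whence so do the partitions $\Omega(\Ww) = \Omega(\Gg)$; and the factors satisfy $\Pp_{\sigma}(\Ww) \equiv \Pp_{\sigma}(\Gg)$ by (pr2), so their finite sub-GEC's form the same class. Substituting these equalities into the characterisation of \LEM{4} applied to $\Ww$ yields precisely the characterisation of \LEM{4} applied to $\Gg$. In other words, for every finite non-empty GEC $\Hh$, $\Hh \in \FIN{\Ww} \iff \Hh \in \FIN{\Gg}$, i.e.\ $\FIN{\Gg} = \FIN{\Ww}$.

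Finally, \COR{iso-AB-HO} gives $\Gg \equiv \Ww$ at once. The only delicate point in this plan is verifying that \LEM{6} really supplies matching skeletons for the primary factors; since $\Pp_{\sigma}(\Gg)$ is a \emph{finite} GEC, the isomorphism $\Pp_{\sigma}(\Ww) \equiv \Pp_{\sigma}(\Gg)$ from (pr2) immediately implies $\FIN{\Pp_{\sigma}(\Ww)} = \FIN{\Pp_{\sigma}(\Gg)}$, so no further obstacle arises and the proof reduces to the syntactic matching outlined above.
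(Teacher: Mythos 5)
Your proposal is correct and follows essentially the same route as the paper's own proof: the paper likewise combines \LEM{4} with \LEM{6} to show that \(\FIN{\Gg} = \FIN{\Ww}\) and then concludes via \COR{iso-AB-HO}. Your write-up merely spells out the data-matching step (palette, \(\prec\), \(\Omega\), and the factors \(\Pp_{\sigma}\)) that the paper leaves implicit.
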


The above product is called \emph{primary decomposition} of \(\Gg\).

\begin{proof}[Proof of \THM{model}]
Since both the GEC's \(\Gg\) and \(\Ww\) are AB-HO and have \ITP\ (by
\THM{prod-AB-HO} and \LEM{0}), \COR{iso-AB-HO} and \LEM{4} apply. But the latter
result combined with \LEM{6} show that skeletons of these two GEC's coincide and
the proof is finished by an application of the corollary just cited.
\end{proof}

\begin{thm}{model-iso}
Let \(\{\Tt_s\}_{s \in S}\) and \(\{\Ss_t\}_{t \in T}\) be two productable
collections consisting of non-degenerate AB-HO GEC's each of which has \ITP\ and
is either bi-connected or monochromatic. Let, in addition, \(\ORD\) and
\(\preceq\) be two total orders on \(S\) and \(T\), respectively. Then \tfcae
\begin{enumerate}[\upshape(i)]
\item the GEC's \(\PROD{s \in S} \Tt_s\) and \(\PROd{t \in T}{\preceq}{*}
 \Ss_t\) are isomorphic;
\item there exists an order-isomorphism \(\phi\dd (S,\ORD) \to (T,\preceq)\)
 such that \(\Ss_{\phi(s)} \equiv \Tt_s\) for all \(s \in S\).
\end{enumerate}
\end{thm}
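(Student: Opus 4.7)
The plan is to leverage \LEM{5}, which shows that the ordered family of factors in a product of non-degenerate AB-HO GEC's (each of which is monochromatic or bi-connected) can be intrinsically recovered from the product itself: the partition $\Omega$ of its palette coincides with the partition induced by the colour-palettes of the factors, the associated order $\ORD_{\Ww}$ from \LEM{6} coincides with the given order on the indexing set, and the ``slice'' $\Pp_{\sigma}(\Ww)$ is isomorphic to the corresponding factor. Once these intrinsic descriptions are at hand, the theorem becomes a clean transport-of-structure argument.

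First I would dispatch the direction (ii)$\Rightarrow$(i): given an order-isomorphism $\phi\dd(S,\ORD)\to(T,\preceq)$ and isomorphisms $\psi_s\dd\Tt_s\to\Ss_{\phi(s)}$, the coordinate-wise assignment $(x_s)_{s\in S}\mapsto(\psi_{\phi^{-1}(t)}(x_{\phi^{-1}(t)}))_{t\in T}$ is well defined from the vertex-set of $\PROD{s\in S}\Tt_s$ to that of $\PROd{t\in T}{\preceq}{*}\Ss_t$, because $\phi$ transports well-ordered difference-sets to well-ordered difference-sets and preserves the minimum, hence preserves the label $\LAB{\cdot}{\cdot}$; since each $\psi_s$ preserves edge-colouring, the map is an isomorphism of GEC's.

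For (i)$\Rightarrow$(ii), set $\Ww_1\df\PROD{s\in S}\Tt_s$ and $\Ww_2\df\PROd{t\in T}{\preceq}{*}\Ss_t$ and fix an isomorphism $\Phi\dd\Ww_1\to\Ww_2$. Then $\Phi$ induces a bijection $\Phi_*\dd\COL{\Ww_1}\to\COL{\Ww_2}$ that preserves the relation `$\prec$' (which depends only on the GEC structure), hence also preserves $\Omega(\cdot)$ and the order $\ORD_{(\cdot)}$ of \LEM{6}. By part (b) of \LEM{5}, $\Omega(\Ww_1)=\{\COL{\Tt_s}\}_{s\in S}$ and $\Omega(\Ww_2)=\{\COL{\Ss_t}\}_{t\in T}$, so $\Phi_*$ induces a bijection $\phi\dd S\to T$ defined by $\Phi_*(\COL{\Tt_s})=\COL{\Ss_{\phi(s)}}$; by part (a) of \LEM{5}, the orders $\ORD$ on $S$ and $\preceq$ on $T$ coincide (under the identification $s\leftrightarrow\COL{\Tt_s}$ etc.) with the intrinsic orders $\ORD_{\Ww_i}$, so $\phi$ is an order-isomorphism. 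Finally, $\Phi$ restricts to an isomorphism $\Pp_{\COL{\Tt_s}}(\Ww_1)\equiv\Pp_{\COL{\Ss_{\phi(s)}}}(\Ww_2)$ because these AB-HO sub-GEC's are intrinsically determined by the palettes (cf. \LEM{3}); applying part (c) of \LEM{5} on each side identifies them with $\Tt_s$ and $\Ss_{\phi(s)}$ respectively, yielding $\Tt_s\equiv\Ss_{\phi(s)}$ for every $s\in S$.

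The principal obstacle, already resolved by the preceding lemmas, is the fact that the vertex-set of a product is in general only a subset of the Cartesian product and depends on the chosen basepoint and order; consequently the indexed family of factors is not visibly present in the product. What saves the argument is that all of the ingredients needed---the partition $\Omega$, the order $\ORD_{\Ww}$ and the slices $\Pp_{\sigma}(\Ww)$---are intrinsic GEC invariants, so any isomorphism between two products must transport them to each other. Beyond this conceptual point, the verification is essentially bookkeeping.
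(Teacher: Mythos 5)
Your proposal is correct and takes essentially the same route as the paper: both hinge on recovering the intrinsic invariants $\Omega(\cdot)$, the order $\ORD_{(\cdot)}$ and the slices $\Pp_{\sigma}(\cdot)$ via \LEM{5}, and your $\phi$ is exactly the paper's composition $\nu_{\Ww}^{-1}\circ\nu_{\Hh}$ of the two order-isomorphisms onto $(\Omega,\ORD_{\Hh})$, with \LEM{5}(c) supplying the factorwise isomorphisms. The only cosmetic differences are that the paper leaves (ii)$\Rightarrow$(i) to the reader while you spell out the coordinatewise map, and that your induced palette bijection $\Phi_*$ is in fact the identity (GEC isomorphisms preserve colours on the nose), which only simplifies your argument.
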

\begin{proof}
The implication ``(ii)\(\implies\)(i)'' (which holds for totally arbitrary
productable collections of GEC's) is left to the reader. Here we focus only on
the reverse implication. To prove it, we denote by \(\Hh\) and \(\Ww\),
respectively, the products \(\PROD{s \in S} \Tt_s\) and
\(\PROd{t \in T}{\preceq}{*} \Ss_t\) and take any isomorphism \(\Phi\dd \Hh \to
\Ww\). Since \(\Phi\) transforms any sub-GEC \(\Hh'\) of \(\Hh\) onto a sub-GEC
of \(\Ww\) isomorphic to \(\Hh'\), it follows that (in order) \(X \df \COL{\Hh}
= \COL{\Ww}\), \(\prec_{\Hh} = \prec_{\Ww}\), \(\Sigma \df \Omega(\Hh) =
\Omega(\Ww)\), \(\ORD_{\Hh} = \ORD_{\Ww}\) (cf. \eqref{eqn:ord0}) and
\(\Pp_{\sigma}(\Hh) \equiv \Pp_{\sigma}(\Ww)\) for any \(\sigma \in \Sigma\).
We set \(\nu_{\Hh}\dd S \ni s \mapsto \COL{\Tt_s} \in \Sigma\) and
\(\nu_{\Ww}\dd T \ni t \mapsto \COL{\Ss_t} \in \Sigma\). We infer from part (a)
of \LEM{5} and \eqref{eqn:ord0} that these two functions are order-isomorphisms
(from, respectively, \((S,\ORD)\) and \((T,\preceq)\) onto
\((\Sigma,\ORD_{\Hh})\)). So, \(\phi \df \nu_{\Ww}^{-1} \circ \nu_{\Hh}\dd
(S,\ORD) \to (T,\preceq)\) is a well defined order-isomorphism as well. Now for
any \(s \in S\) and \(\sigma \df \nu_{\Hh}(s)\), we get \(\nu_{\Ww}(\phi(s)) =
\sigma\) and hence an application of part (c) of \LEM{5} yields \(\Tt_{\phi(s)}
\equiv \Pp_{\sigma}(\Ww) \equiv \Pp_{\sigma}(\Hh) \equiv \Ss_s\), which finishes
the proof.
\end{proof}

Finally, we are ready to give a proof of the main result of this section.

\begin{proof}[Proof of \THM{classification}]
All that is eseentially missing is a construction of a modelling quadruple for
the product of the form specified, e.g., in \LEM{5}. To describe it, we begin
with a crucial observation:
\begin{quote}\itshape
If \(\Ss\) is a non-degenerate AB-HO GEC that is either bi-connected or
monochromatic, then there exists a \underline{unique} graph \(\Gamma = (V,E)\)
from \(\PPp\), to be denoted by \(\Gamma_{\Ss} = (V_{\Ss},E_{\Ss})\), such that
\(\Ss\) is structurally equivalent to the GEC \((V,\eta_{\Gamma})\).
\end{quote}
(Recall that structural equivalence consists of repainting GEC's.) The above
property follows from axioms (PC3)--(PC4) defining a primary collection.
(Indeed, observe that for any graph \(G = (V,E)\), the GEC's \((V,\eta_G)\) and
\((V,\eta_{G^c})\) are structurally equivalent where \(G^c\) is the complement
graph of \(G\).)\par
Now we fix a productable collection \(\{\Tt_s\}_{s \in S}\) of non-degenerate
AB-HO GEC's each of which has \ITP\ and is either bi-connected or monochromatic
as well as any total order \(\ORD\) on \(S\). Let \(e\) stand for a common
pseudocolor of these GEC's. We define \(\tau\dd S \to \PPp\) by \(\tau(s) \df
\Gamma_{\Tt_s}\) and respective sets \(S^{\tau}_1\) and \(S^{\tau}_2\),
introduced in \THM{classification}. Next, for any \(s \in S^{\tau}_2\) we fix
a structural isotopy \(\beta_s\dd \{-1,0,1\} \to \COL{\Tt_s} \cup \{e\}\)
between \((V_{\Tt_s},\eta_{\Gamma_{\Tt_s}})\) and \(\Tt_s\). Finally, we define
the last ingredient of the modelling quadruple---the function \(\omega\)---by
the rules:
\begin{itemize}
\item \(\omega(0) = e\);
\item for \(s \in S^{\tau}_1\), \(\omega(s)\) is the unique color in the palette
 of \(\Tt_s\);
\item for \(s \in S^{\tau}_2\) and \(\epsi \in \{-1,1\}\), \(\omega(s,\epsi) \df
 \beta_s(\epsi)\).
\end{itemize}
We leave this as an exercise that \((S,\ORD,\tau,\omega)\) is a modelling
quadruple such that \(\Hh(S,\ORD,\tau,\omega)\) is isomorphic to
\(\PROD{s \in S} \Tt_s\). Now we discuss the proofs of all parts of
the theorem.\par
The first conclusion of (A) is readily seen, whereas the second one follows from
\LEM{0} (cf. axioms (PC1)--(PC4) defining primary collections). Part (B) is
a consequence of \THM{model-iso}. Item (C) is implied by \THM{model} and
previous paragraphs of the present proof. Finally, part (D) follows from
a combination of (C), \PRO{prod-prod} and \LEM{1}. The details are left to
the reader.
\end{proof}

For the purposes of the next result, let us set \(I_n \df \{1,\ldots,n\}\) for
\(n > 0\) and denote by \(\delta_n\) the discrete metric on \(I_n\). Below
\(\geq\) denotes the reverse order to the natural one on \((0,\infty)\).

\begin{cor}[Classification of AB-HO ultrametric spaces]{ultrametric}
There is a `canonical' one-to-one correspondence between functions from
\((0,\infty)\) to the set of all positive integers \(\NNN_1 \df \NNN \setminus
\{0\}\) and all non-empty AB-HO ultrametric spaces (considered up to isometry).
Precisely, each such a space \((Z,\rho)\) is isometric to a metric space of
the form
\begin{equation}\label{eqn:ultra-prod}
\PROd{t>0}{\geq}{*} (I_{f(t)},t \delta_{f(t)})
\end{equation}
where \(f\dd (0,\infty) \to \NNN_1\) is an arbitrary function, uniquely
determined by \((Z,\varrho)\). Conversely, each space of the form
\eqref{eqn:ultra-prod} is non-empty, ultrametric and AB-HO.
\end{cor}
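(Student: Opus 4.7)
The plan is to derive the classification from \THM{model} (the primary decomposition) specialized to the ultrametric setting. First I would observe that every ultrametric space $(Z,\rho)$ has \ITP: for three pairwise distinct points the ultrametric inequality forces at least two of the three distances to coincide. Hence the apparatus of primary decomposition applies, and $Z$ is isomorphic to $\PROd{\sigma\in\Omega(Z)}{\ORD_Z}{*}\Pp_\sigma(Z)$ by \THM{model}; the whole task reduces to reading off each of the ingredients $\Omega(Z)$, $\ORD_Z$ and $\Pp_\sigma(Z)$ explicitly.

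For distinct positive distances $a, b$ realised in $Z$, the relation $a \prec_Z b$ means that $Z$ contains a triangle with two $a$-edges and one $b$-edge, but the ultrametric inequality then forces $b \leq a$, whence $b < a$. So $a \prec_Z b$ and $b \prec_Z a$ are mutually exclusive, $I_Z(a) = \varempty$ for every color $a$, and every element of $\Omega(Z)$ is a singleton $\{a\}$. Consequently each $\Pp_{\{a\}}(Z)$ is a monochromatic AB-HO GEC in color $a$; by \MBP\ it is finite, of some size $n(a) \geq 2$, and hence isometric as a metric space to $(I_{n(a)}, a\delta_{n(a)})$. The order $\ORD_Z$ corresponds via $\{a\} \leftrightarrow a$ to $\geq$ on the set of positive distances: indeed $\{a\} \ORD_Z \{b\} \iff a = b \text{ or } a \prec_Z b \iff a \geq b$.

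Setting $f(t) := n(t)$ on the set of positive distances realised in $Z$ and $f(t) := 1$ elsewhere extends this assignment over all of $(0,\infty)$; the trivial one-point factors at indices outside the support do not alter the product, since its underlying GEC is determined (up to isomorphism) by the support of the factor collection. This yields the representation \eqref{eqn:ultra-prod}. For the converse direction, given an arbitrary $f\dd (0,\infty) \to \NNN_1$, the family $\{(I_{f(t)}, t\delta_{f(t)})\}_{t>0}$ is productable (the palettes $\{t\}$ at supported indices are pairwise disjoint), \THM{prod-AB-HO} makes the product AB-HO, and part (A) of \PRO{prod-metric} makes it a metric space---the required inequality $t_2 \leq 2 t_1$ for $t_1 \geq t_2$ is trivial. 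A direct inspection of the product's metric (whose value on a pair $\xX \neq \yY$ equals the largest coordinate in natural order at which $\xX$ and $\yY$ differ, evaluated via the appropriate $\delta$) then shows the ultrametric inequality at once.

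Uniqueness of $f$ finally follows from \THM{model-iso}: any isomorphism between two such products is induced by an order-isomorphism of the indexing set carrying factor to factor, and since each supported factor carries its own distance value as its unique color, this order-isomorphism must fix every supported index, forcing $f_1(t) = f_2(t)$ whenever either exceeds $1$; outside the supports both equal $1$ by convention. I do not expect any single step of the plan to be a substantial obstacle: the one potentially delicate point is the identification $I_Z(a) = \varempty$ together with the resulting reversal of order on the positive distances, but both are immediate consequences of the ultrametric inequality, so everything else is routine bookkeeping against the general machinery already established.
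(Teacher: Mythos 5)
Your proposal is correct and follows essentially the same route as the paper: observe \ITP, note that the ultrametric inequality forces \(a \prec b\) only for \(b < a\) so that \(\Omega\) consists of singletons and \(\ORD_{\Gg}\) is the reverse of the natural order, read off the primary decomposition of \THM{model}, extend by \(1\) outside the set of attained distances, and get the converse and uniqueness from \THM{prod-AB-HO}, \PRO{prod-metric} and \THM{model-iso}. The only cosmetic difference is that you verify ultrametricity of the product by direct inspection of \(\LAB[\geq]{\xX}{\yY}\) where the paper cites \LEM{0}.
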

\begin{proof}
Of course, each ultrametric space has \ITP\ as a GEC. Using the notation
introduced in \DEF{ord}, it is clear that in any ultametric space \(\Gg =
(X,d)\) the relation \(a \prec b\) (between positive reals as colors) may hold
only when \(b < a\). In particular, in such spaces we never have \(a \prec b
\prec a\), which implies that in case the above \(\Gg\) is non-degenerate and
AB-HO:
\begin{itemize}
\item the set \(\Omega(\Gg)\) consists of one-point sets;
\item the order \(\ORD_{\Gg}\) on \(\Omega(\Gg)\) coincides with the reverse
 order to the natural one (more precisely: \(\{a\} \ORD_{\Gg} \{b\} \iff b \leq
 a\));
\item the primary decomposition of \(\Gg\) has the form
 \(\PROd{t \in R}{\geq}{*} (I_{u(t)},t \delta_{u(t)})\) where \(R \df d(X \times
 X) \setminus \{0\}\) and \(u(t) > 1\) (for \(t \in R\)) denotes the size of
 a maximal \(t\)-monochromatic sub-GEC of \(\Gg\);
\item the above function \(u\dd R \to \NNN_1 \setminus \{1\}\) is uniquely
 determined by \(\Gg\).
\end{itemize}
So, to finish the proof, it suffices to extend the above \(u\) to a function
\(g\dd (0,\infty) \to \NNN_1\) so that \(g(t) \df 1\) for \(t \notin R\).
Observe that then for \(\ffF \df \{(I_{g(t)},t \delta_{g(t)})\}_{t>0}\) we have
\(\SUPP{\ffF} = R\) (for the definition of \(\SUPP{\ffF}\) see the paragraph
preceding \PRO{prod-metric}). On the other hand, \LEM{0} guarantees that for any
function \(f\dd (0,\infty) \to \NNN_1\) the GEC of the form
\eqref{eqn:ultra-prod} is an ultrametric space. All these remarks are sufficient
to conclude the whole assertion of this corollary. We skip the details.
\end{proof}

\begin{cor}{ultrametric-size}
Every AB-HO ultrametric space is complete and either finite or countable, or has
size \(2^{\aleph_0}\). In particular, countable AB-HO ultrametric spaces are
topologically discrete.
\end{cor}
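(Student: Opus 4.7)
The plan is to route everything through the classification given by \COR{ultrametric}. The empty space being trivially complete and finite, we may restrict attention to non-empty AB-HO ultrametric spaces $(Z,\rho)$, for which there is a unique $f \dd (0,\infty) \to \NNN_1$ such that $(Z,\rho)$ is isometric to $\Hh = \PROd{t>0}{\geq}{*}(I_{f(t)},\,t\delta_{f(t)})$. Write $W$ for the vertex set of $\Hh$, $\aA = (a_t)_{t>0}$ for a fixed base point, and put $S \df \{t>0 \dd f(t) > 1\}$, which equals the support of the family of factors. Completeness is then immediate: each factor $(I_{f(t)},t\delta_{f(t)})$ is finite and hence complete, so the concluding sentence of part (B) of \PRO{prod-metric} delivers completeness of $\Hh$ (the degenerate cases $|S|\leq 1$ verified by inspection).

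For the cardinality trichotomy I would work directly from \DEF{product}: each $\xX \in W$ is encoded by the set $D = \DIF{\xX}{\aA} \subset S$, which is required to be well-ordered by $\geq$ (equivalently, reverse-well-ordered in the natural order of $(0,\infty)$), together with a choice $x_t \in \{1,\ldots,f(t)\} \setminus \{a_t\}$ for each $t \in D$. The classical fact that any subset of $\RRR$ well-ordered in either direction is at most countable (inject into $\mathbb{Q}$ by choosing a rational between each element and its successor) gives the uniform upper bound
\[|W| \;\leq\; |S|^{\aleph_0} \cdot \aleph_0^{\aleph_0} \;\leq\; 2^{\aleph_0}.\]
The trichotomy splits on whether $S$ admits an infinite strictly decreasing sequence. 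If it does not, then $S$ itself is well-ordered in the natural order (hence at most countable), every reverse-well-ordered subset of $S$ is finite, and $W$ decomposes as a disjoint union indexed by the (countably many) finite subsets $D \subset S$ of the finite sets $\prod_{t \in D}(\{1,\ldots,f(t)\}\setminus\{a_t\})$, so $|W|$ is finite or $\aleph_0$. If, on the other hand, $S$ contains a strictly decreasing sequence $t_1 > t_2 > \cdots$, then $D_0 = \{t_n \dd n \geq 1\}$ is reverse-well-ordered, and every free choice $(x_{t_n})_n$ with $x_{t_n} \neq a_{t_n}$ supported on $D_0$ produces a distinct element of $W$, yielding $|W| \geq \prod_n (f(t_n)-1) \geq 2^{\aleph_0}$ and hence $|W| = 2^{\aleph_0}$.

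For the discreteness assertion, assume $W$ is countable. Then by the previous paragraph $S$ admits no infinite strictly decreasing sequence, so in particular no sequence in $S$ converges to $0$. Part (g) inside item (B) of \PRO{prod-metric}, read with $\ORD = \geq$, states that non-discreteness of $\Hh$ forces a strictly $\ORD$-increasing (hence natural-order decreasing) sequence $(t_n) \subset S$ along which the attainable positive distances $t_n$ come arbitrarily close to $0$, i.e.\ $t_n \to 0$; no such sequence exists, so $\Hh$ is topologically discrete. The chief bookkeeping difficulty throughout is the order reversal: all hypotheses of \PRO{prod-metric}(B) are phrased in $\ORD$-terms, so ``$\ORD$-greatest element'' must consistently become ``natural-order minimum of $S$'', ``$\ORD$-upper bound'' becomes ``natural-order lower bound in $S$'', and ``strictly $\ORD$-increasing'' becomes ``strictly decreasing in $(0,\infty)$''; once this dictionary is fixed, the whole argument reduces to a straightforward counting exercise on reverse-well-ordered subsets of a subset of $(0,\infty)$.
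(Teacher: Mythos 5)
Your route is the paper's own: reduce to the normal form of \COR{ultrametric}, get completeness from part (B) of \PRO{prod-metric}, split on whether the support admits an infinite strictly decreasing sequence, and count reverse-well-ordered subsets of a subset of \((0,\infty)\) for the upper bound \(2^{\aleph_0}\). The dictionary issue you flag (translating \(\ORD\)-statements of \PRO{prod-metric} back through \(\geq\)) is real but handled correctly, and the upper bound and the countable case are fine.

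There is, however, one concrete error in the lower bound. You count only the tuples supported \emph{exactly} on \(D_0=\{t_n\dd n\geq 1\}\), i.e.\ those with \(x_{t_n}\neq a_{t_n}\) for every \(n\), and claim \(|W|\geq\prod_n(f(t_n)-1)\geq 2^{\aleph_0}\). The second inequality is false whenever \(f(t_n)=2\) for all \(n\): then \(\prod_n(f(t_n)-1)=1\). This is not a vacuous case --- it is precisely the Cantor-set example \(\Gg_1\times^*\Gg_2\times^*\ldots\) of two-point factors from the proof of \THM{exm-AB-HO}, whose size is indeed \(2^{\aleph_0}\) but which your count does not detect. The fix is to drop the restriction \(x_{t_n}\neq a_{t_n}\): \emph{every} element of \(\prod_n I_{f(t_n)}\) (extended by \(a_s\) off \(D_0\)) lies in \(W\), because its difference set with \(\aA\) is a subset of the \(\geq\)-well-ordered set \(D_0\) and subsets of well-ordered sets are well-ordered; this gives \(|W|\geq\prod_n f(t_n)\geq 2^{\aleph_0}\), which is exactly the ``natural copy of \(\prod_{r\in J}I_{f(r)}\)'' used in the paper's proof. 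With that correction (and the harmless observation that when the support has a \(\geq\)-greatest element one should quote part (e) rather than part (g) of \PRO{prod-metric} for discreteness), the argument is complete.
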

\begin{proof}
Assume \(X\) is an infinite AB-HO ultrametric space. Consider the primary
decomposition \(\PROd{r \in R}{\geq}{*} (I_{f(r)},r \delta_{f(r)})\) (see
\COR{ultrametric}) of \(X\) into finite non-de\-gen\-erate factors \(I_{f(r)}\)
where \(R\) is the set of all positive distances attainable in \(X\) and \(f\dd
R \to \NNN \setminus \{0,1\}\) is a function uniquely determined by \(X\). In
what follows, we will identify the space \(X\) with the underlying space of
the aforementioned product. That \(X\) is complete follows from part (B) of
\PRO{prod-metric}.\par
If \((R,\leq)\) is well-ordered, then it is countable (as a well-ordered subset
of \(\RRR\)) and then each well-ordered subset of \((R,\geq)\) is finite and,
consequently, \(X\) is countable (note also that in that case \(X\) is
discrete). Otherwise (that is, if \((R,\leq)\) is not well-ordered) \((R,\geq)\)
contains a countably infinite well-ordered subset \(J\). In that case \(X\)
contains a ``natural'' copy of the set \(\prod_{r \in J} I_{f(r)}\) whose size
is \(2^{\aleph_0}\). On the other hand, each well-ordered subset of \((R,\geq)\)
is countable and the number of such subsets does not exceed \(2^{\aleph_0}\),
which implies that the size of \(X\) is exactly \(2^{\aleph_0}\).
\end{proof}

\begin{rem}{isosceles-metric}
\THM{classification} combined with part (A) of \PRO{prod-metric} enables us to
characterise all AB-HO metric spaces in which each triangle is isosceles. These
are precisely the `GEC' products of finite AB-HO metric spaces each of which
attains at most two positive distances (and when two such distances are
attained, say \(a < b\), they satisfy the inequality \(b \leq 2a\) and
the factor needs to contain both configurations of triangles: \(a,a,b\) and
\(b,a,a\)). However, a precise description is much more complicated than
\eqref{eqn:ultra-prod} and hence we skip here the details.
\end{rem}

\section{Cardinal characteristics of skeletoids}\label{sec:card}

This short part is motivated by the following result due to Menger
\cite{mg1,mg2} (consult also \cite{b-b} for a modern exposition of this topic
and an interesting discussion).

\begin{thm}[Menger]{Menger}
For a metric space \((X,d)\) and a positive integer \(n\), \tfcae
\begin{enumerate}[\upshape(i)]
\item \((X,d)\) is isometrically embeddable into \((\RRR^n,d_e)\);
\item every subspace of \(X\) of size at most \(n+3\) isometrically embeds into
 \((\RRR^n,d_e)\).
\end{enumerate}
\end{thm}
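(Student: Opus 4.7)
The implication (i)$\Rightarrow$(ii) is trivial. For the converse, my plan is to invoke \THM{embed} with \(\Hh = (\RRR^n,d_e)\) (which is AB-HO, as noted earlier) in order to reduce the question to showing that every \emph{finite} subspace of \(X\) embeds into \((\RRR^n,d_e)\); from there I would argue by induction on the cardinality of such a subspace.

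More precisely, let \((Y,d)\) be a finite subspace of \(X\). The base case \(|Y|\leq n+3\) is exactly hypothesis (ii). For the inductive step with \(|Y|\geq n+4\), I would fix \(y\in Y\) and, by the inductive hypothesis, obtain an isometric embedding \(\phi\dd Y\setminus\{y\}\to\RRR^n\). Choose a maximal affinely independent subset \(\{\phi(y_0),\ldots,\phi(y_k)\}\) of \(\phi(Y\setminus\{y\})\); then \(k\leq n\) and the entire image \(\phi(Y\setminus\{y\})\) lies inside the \(k\)-dimensional affine subspace \(V\subset\RRR^n\) that this subset spans. Since \(\{y_0,\ldots,y_k,y\}\) has \(k+2\leq n+2\) elements, hypothesis (ii) furnishes an isometric embedding of it into \(\RRR^n\), which---after composing with a suitable rigid motion---may be assumed to agree with \(\phi\) on \(\{y_0,\ldots,y_k\}\); let \(z\in\RRR^n\) denote the resulting image of \(y\). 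The task is then to show that setting \(\phi(y)\df z\) still defines an isometric map, that is, \(d_e(z,\phi(y'))=d(y,y')\) for every \(y'\in Y\setminus\{y,y_0,\ldots,y_k\}\).

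To establish this last equality I would lean on two observations. First, because \(\phi(y_0),\ldots,\phi(y_k)\) form an affine basis of \(V\) and \(\phi(y')\in V\), the point \(\phi(y')\) is the \emph{unique} element of \(V\) having the prescribed distances to these \(k+1\) points. Second, the Pythagorean identity in \(\RRR^n\) gives
\[d_e(w,\phi(y'))^2 = d_e(\pi_V(w),\phi(y'))^2 + \textup{dist}(w,V)^2 \qquad (w\in\RRR^n),\]
where \(\pi_V\) denotes orthogonal projection onto \(V\); consequently the quantity \(d_e(w,\phi(y'))\) is determined by the distances of \(w\) to \(\phi(y_0),\ldots,\phi(y_k)\). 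Applying hypothesis (ii) to the \((k+3)\)-point set \(\{y_0,\ldots,y_k,y',y\}\) (of size at most \(n+3\)) and rigidifying as before, one obtains a point \(z'\in\RRR^n\) with the same distances to \(\phi(y_0),\ldots,\phi(y_k)\) as \(z\) and satisfying \(d_e(z',\phi(y'))=d(y,y')\); the two observations then force \(d_e(z,\phi(y'))=d_e(z',\phi(y'))\), closing the induction.

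The main obstacle I anticipate is the subtlety in the case \(k<n\): the image \(\phi(Y\setminus\{y\})\) has collapsed into a proper affine subspace \(V\), yet a legitimate location for \(y\) may lie off \(V\) and is only determined up to reflection through \(V\); the Pythagorean identity is precisely what reconciles the two candidate positions and confirms that the choice does not affect distances to points of \(V\). This uniqueness-up-to-reflection argument is what makes \(n+3\) (rather than \(n+2\)) the correct threshold: one must simultaneously pin down \(\phi(y')\) inside \(V\) using \(k+1\) anchors, and test consistency of the placement of \(y\) using one additional point, which brings the count up to \(k+3\leq n+3\).
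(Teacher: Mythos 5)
Your proof is correct, but it takes a genuinely different route from the one the paper relies on. The paper does not prove \THM{Menger} from scratch: it observes that the proof of \THM{ht-hyp-sph} works verbatim for Euclidean spaces, and that proof runs by induction on the \emph{dimension} \(n\) --- either every subspace of size at most \(n+2\) already embeds into \(\RRR^{n-1}\) (and one drops one dimension down), or some set \(A\) with \(\card(A)\leq n+2\) embeds only ``fully'', so that its image contains an affine basis \(A_0\) of \(\RRR^n\); anchoring the embedding on \(A_0\), every isometric extension \(\phi_{p,q}\) to \(A_0\cup\{p,q\}\) (a set of size at most \(n+3\)) is \emph{unique}, because an isometry of \(\RRR^n\) fixing an affine basis is the identity, and the compatible family \(x\mapsto\phi_{x,x}(x)\) is the global embedding in one stroke, with no induction on cardinality and no appeal to \THM{embed}. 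You instead reduce to finite subspaces via \THM{embed} and then add points one at a time, anchoring an affine basis of the (possibly proper) affine hull \(V\) of the already-placed points and using the Pythagorean decomposition to verify consistency. Both arguments hinge on the same geometric fact --- distances to an affine basis of \(V\) determine the orthogonal projection onto \(V\) and the distance to \(V\), hence determine distances to all points of \(V\) and pin a point down up to reflection through \(V\) --- but yours is tailored to Euclidean geometry, whereas the paper's transfers unchanged to spheres and hyperbolic spaces. One step you should make explicit: after rigidifying the embedding \(\chi\) of \(\{y_0,\dots,y_k,y',y\}\) to agree with \(\phi\) on \(\{y_0,\dots,y_k\}\), the point \(\chi(y')\) is not a priori in \(V\), so your first observation (uniqueness \emph{within} \(V\)) does not by itself yield \(\chi(y')=\phi(y')\); you need your second observation applied to \(w=\chi(y')\), whose distances to the affine basis coincide with those of \(\phi(y')\), forcing \(d_e(\chi(y'),\phi(y'))=0\). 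With that spelled out the induction closes, and the count \(k+3\leq n+3\) shows the threshold being used sharply, exactly as in the paper's argument.
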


To make our work as complete as possible, below we present a counterpart of
the above result for spheres and hyperbolic spaces. Its proof equally works for
the case of Euclidean spaces. (Our proof is a little bit different that the one
presented in \cite{b-b}.) We do not know whether this result is already known
(but we believe it is so).

\begin{thm}{ht-hyp-sph}
Let \(n > 0\) and \((M_n,\rho)\) denote one of \((H^n(\RRR),d_h)\) or
\((\SSS^n,d_s)\). For a metric space \((X,d)\) \tfcae
\begin{enumerate}[\upshape(i)]
\item \((X,d)\) is isometrically embeddable into \((M_n,\rho)\);
\item every subspace of \(X\) of size at most \(n+3\) embeds isometrically into
 \((M_n,\rho)\).
\end{enumerate}
\end{thm}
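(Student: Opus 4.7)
The implication (i)$\Rightarrow$(ii) is immediate, since any restriction of an isometric embedding is still isometric. For (ii)$\Rightarrow$(i), recall that both $(\SSS^n,d_s)$ and $(H^n(\RRR),d_h)$ are absolutely homogeneous (this is part of \THM{Freu}); hence by \THM{embed} it suffices to show that every finite, non-empty subspace of $X$ embeds isometrically into $(M_n,\rho)$. I will prove this by induction on $k:=\card(F)$. The base case $k\leq n+3$ is exactly~(ii), so the content lies in the inductive step.

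Assume $k\geq n+4$, fix $F=\{p_0,p_1,\ldots,p_{k-1}\}$, and let $\phi\colon\{p_1,\ldots,p_{k-1}\}\to M_n$ be an embedding produced by the inductive hypothesis. I wish to locate $q\in M_n$ with $\rho(q,\phi(p_j))=d(p_0,p_j)$ for all $j\geq 1$. Let $N$ be the smallest totally geodesic submanifold of $M_n$ containing $\phi(\{p_1,\ldots,p_{k-1}\})$, write $d:=\dim N$, and pick indices $i_0,\ldots,i_d$ so that $\phi(p_{i_0}),\ldots,\phi(p_{i_d})$ affinely span $N$. The subset $\{p_0,p_{i_0},\ldots,p_{i_d}\}$ has $d+2\leq n+2<k$ elements, so by induction it embeds via some $\psi$; absolute homogeneity of $M_n$ lets me postcompose with an isometry so that $\psi(p_{i_l})=\phi(p_{i_l})$ for every $l$, and I set $q:=\psi(p_0)$.

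To verify $\rho(q,\phi(p_j))=d(p_0,p_j)$ for each remaining $j$, apply~(ii) to the $(d+3)$-element (hence $\leq(n+3)$-element) subset $\{p_0,p_j,p_{i_0},\ldots,p_{i_d}\}$, obtaining an embedding $\psi'$, again aligned so that $\psi'(p_{i_l})=\phi(p_{i_l})$. The key geometric input, the \emph{spanning lemma}, asserts that any isometry of $M_n$ fixing $d+1$ affinely independent points of the $d$-dimensional totally geodesic submanifold $N$ fixes $N$ pointwise; in each of the three classical models this reduces to the linear-algebraic fact that a linear (resp.\ Lorentz-linear) map fixing $d+1$ linearly independent vectors of a $(d+1)$-dimensional subspace is the identity on that subspace. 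Granting the lemma, absolute homogeneity of $M_n$ supplies isometries $\tau,\tau'$ fixing every $\phi(p_{i_l})$ with $\tau(q)=\psi'(p_0)$ and $\tau'(\phi(p_j))=\psi'(p_j)$; by the lemma both $\tau$ and $\tau'$ fix $N$, and in particular fix $\phi(p_j)\in N$, so $\psi'(p_j)=\phi(p_j)$ and
\[\rho(q,\phi(p_j))=\rho(\tau q,\tau\phi(p_j))=\rho(\psi'(p_0),\phi(p_j))=\rho(\psi'(p_0),\psi'(p_j))=d(p_0,p_j),\]
which closes the induction.

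The main obstacle is the spanning lemma: although uniform in formulation, its justification depends on the explicit description of the isometry group of $M_n$ recorded in Section~\ref{sec:metr}, and is what forces the bound ``$n+3$'' rather than ``$n+2$'' in~(ii), because the $(d+3)$-element test subsets are precisely what enables the final equation $\psi'(p_j)=\phi(p_j)$. The Euclidean case $(\RRR^n,d_e)$---Menger's original \THM{Menger}---is recovered by the identical argument.
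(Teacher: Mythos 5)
Your proof is correct, but it is organized differently from the paper's. The paper inducts on the dimension \(n\) rather than on the number of points: by the inductive hypothesis for \(n-1\), either every subset of \(X\) with at most \(n+2\) points embeds into \(M_{n-1}\) (and one is done), or some \(A\) with \(\card(A)\leq n+2\) fails to embed into \(M_{n-1}\); in the latter case the image of \(A\) under any embedding \(\phi\) into \(M_n\) must contain a linear basis of the ambient vector space, which yields a fixed set \(A_0\) with \(\card(A_0)=n+1\) whose image is globally rigid. Consequently every pair \(p,q\in X\) admits a \emph{unique} isometric extension \(\phi_{p,q}\) of \(\phi\restriction{A_0}\) to \(A_0\cup\{p,q\}\) (existence uses absolute homogeneity together with \(\card(A_0\cup\{p,q\})\leq n+3\), which is where the constant comes from), and the assignment \(x\mapsto\phi_{x,x}(x)\) is an isometric embedding of all of \(X\) in one stroke, with no appeal to \THM{embed}. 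Your version trades the paper's dichotomy for the relative form of the rigidity statement (your spanning lemma, applied to the possibly lower-dimensional span \(N\) of the already-embedded points), and pays for it with the reduction to finite subsets via \THM{embed} and a point-by-point induction; what you gain is that you never have to descend to \(M_{n-1}\). The essential geometric input is the same in both arguments---an isometry fixing sufficiently many linearly independent points is the identity on their span, which is exactly the paper's uniqueness step in the special case \(d=n\)---and your spanning lemma is correct as you justify it, provided ``affinely independent'' is read as ``linearly independent in the ambient linear model'' in the spherical case, which is what your greedy choice of \(p_{i_0},\dots,p_{i_d}\) actually delivers.
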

\begin{proof}
Of course, only the implication ``(ii)\(\implies\)(i)'' needs proving. Denoting
\(\SSS^0 \df \{-1,1\}\) (the unit sphere in \(\RRR\), equipped with \(d_s\)) and
\(H^0(\RRR) \df \{0\}\), we proceed by induction on \(n \geq 0\). Since for \(n
= 0\) the conclusion trivially holds, we now assume that \(n > 0\) and
the assertion holds for \(n-1\). In what follows, we will consider \(M_n\) as
a subspace of a real vector space \(V_n\) where \(V_n = \RRR^{n+1}\) for \(M_n =
\SSS^n\) and \(V_n = H^n(\RRR)\) otherwise. We will consider \(V_n\) with
a standard inner product (cf. \eqref{eqn:inner}). Let \(H\) stand for
the orthogonal group of \(V_n\). Recall also that \(\Iso(\SSS^n) = O_{n+1}\) and
\(\OPN{stab}_G(0) = O_n\) for \(G = \Iso(H^n(\RRR))\). Thanks to the induction
hypothesis, we may and do assume that not every at most \(n+2\) subspace of
\(X\) embeds isometrically into \(M_{n-1}\). So, there is a set \(A \subset X\)
such that \(\card(A) \leq n+2\) and \(A\) does not embed into \(M_{n-1}\).
However, it follows from our assumption in (ii) that \(A\) does embed into
\(M_n\). So, let \(\phi\dd A \to M_n\) be an isometric map. Fixing \(b \in A\)
and using the homogeneity of \(M_n\), we may (and do) additionally assume that
\(\phi(b) = 0\), provided \(M_n = H^n(\RRR)\). A key point now is that the set
\(\phi(A)\) must contain a linear basis of \(V_n\). For if not, then the linear
span \(W\) of \(\phi(A)\) is a proper linear subspace of \(V_n\) and hence there
exists \(u \in H\) such that \(u(W) \subset V_{n-1}\). But \(u\restriction{M_n}
\in \Iso(M_n)\) and hence \(u \circ \phi\dd A \to M_{n-1}\) is a correctly
defined isometric map, which contradicts our assumption about \(A\). So, there
are \(a_1,\ldots,a_d \in A\) such that \(\phi(a_1),\ldots,\phi(a_d)\) is
a linear basis of \(V_n\). Set
\[A_0 \df \begin{cases}\{a_1,\ldots,a_d\} & M_n = \SSS^n\\\{b,a_1,\ldots,a_d\} &
M_n = H^n(\RRR)\end{cases},\]
\(m \df \card(A_0)\ (=n+1)\) and \(\phi_0 \df \phi\restriction{A_0}\dd A_0 \to
M_n\). Note that each at most \((m+2)\)-point subset of \(X\) embeds
isometrically into \(M_n\). What is more,
\begin{itemize}
\item[\((*)\)] for any \(p,q \in X\), there exists a \textbf{unique} isometric
 map \(\phi_{p,q}\dd A_0 \cup \{p,q\} \to M_n\) that extends \(\phi_0\).
\end{itemize}
Indeed, the existence follows from the absolute homogeneity of \(M_n\) (and
the last statement preceding \((*)\)), and to show the uniqueness, assume
\(\psi\) and \(\xi\) are two (suitable) extensions of \(\phi_0\). Then
\(\xi \circ \psi^{-1}\) is a well defined partial isometry and, as such, it
extends to a global isometry \(u\dd M_n \to M_n\). Then \(\xi = u \circ \psi\)
and, in particular, \(u\) fixes all points of \(\phi_0(A_0)\). It follows from
the previous remarks that either \(u\) extends to an element of \(H\) (if \(M_n
= \SSS^n\)), which we will still denote by \(u\), or \(u \in H\) (if \(M_n =
H^n(\RRR)\)---because then \(u(0) = 0\)). So, \(u\) is a linear mapping. But
\(\phi_0(A_0)\) contains a linear basis of \(V_n\), and therefore \(u\)
coincides with the identity map. Consequently, \(\xi = \psi\) and the proof of
\((*)\) is finished.\par
It follows from the uniqueness in \((*)\) that \(\phi_{x,y}\) extends both
\(\phi_{x,x}\) and \(\phi_{y,y}\) for all \(x, y \in X\). So, the assignment
\(x \mapsto \phi_{x,x}(x)\) defines an isometric map from \(X\) into \(M_n\) and
we are done.
\end{proof}

The above two results inspired us to introduce

\begin{dfn}{ht}
Let \(\ffF\) be a skeletoid. The \emph{height} of \(\ffF\), denoted by
\(\OPN{ht(\ffF)}\), is:
\begin{itemize}
\item a minimal integer \(n \geq 0\) such that a finite (non-empty) GEC \(\Gg\)
 belongs to \(\ffF\) iff every its (non-empty) sub-GEC on at most \(n+3\)
 vertices belongs to \(\ffF\)---if at least one such \(n\) exists;
\item \(\infty\), if no such \(n\) as specified above exists.
\end{itemize}
For any \(n > 0\), we denote by \(\ffF_{[n]}\) the class of all GEC's \(\Gg \in
\ffF\) which have exactly \(n\) vertices. So, up to isomorphism, \(\ffF_{[1]}\)
contains at most one GEC, members of \(\ffF_{[2]}\) are in a natural one-to-one
correspondece with colors from the palette of \(\ffF\) and \(\ffF_{[3]}\)
contains all triangles in \(\ffF\).\par
The \emph{rank} of \(\ffF\), denoted by \(\OPN{rk}(\ffF)\), is defined as
the supremum of the sizes decreased by 1 of all monochromatic GEC's in \(\ffF\),
unless \(\ffF = \varempty\):
\[\OPN{rk}(\ffF) \df \sup\{|\Gg|-1\dd\ \Gg \in \ffF\UP{ monochromatic}\}\ \in
\NNN \cup \{\infty\}.\]
Additionally, we set \(\OPN{rk}(\varempty) \df 0\).
\end{dfn}

\begin{pro}{ht-rk}
For any skeletoid \(\ffF\), one has \(\OPN{rk}(\ffF) \leq \OPN{ht}(\ffF)+1\).
\end{pro}
\begin{proof}
The conclusion is significant only when \(h \df \OPN{ht}(\ffF)\) is finite. In
such a case if \(\ffF\) contained a \(c\)-monochromatic graph (for some color
\(c\)) of size \(h+3\), then each finite (non-empty) \(c\)-monochromatic graph
would belong to \(\ffF\) (by the definition of the height), which would
contradict \MBP\ for \(\ffF\). So, any monochromatic graph from \(\ffF\) has
size at most \(h+2\), and the conclusion follows.
\end{proof}

\begin{exm}{ht}
\begin{enumerate}[(A)]
\item Let \(\ffF\) be a skeletoid. Observe that (below \(\Gg\) stands for
 a non-empty finite GEC):
 \begin{itemize}
 \item if the statement ``\(\Gg \in \ffF\)'' is equivalent to ``any sub-GEC of
  \(\Gg\) of size 1 belongs to \(\ffF\)'', then \(\ffF = \varempty\);
 \item if the statement ``\(\Gg \in \ffF\)'' is equivalent to ``any sub-GEC of
  \(\Gg\) of size 1 or 2 belongs to \(\ffF\)'', then, \(\ffF = \ffF_{[1]}\) (so,
  \(\ffF\) is a skeleton generating a degenerate AB-HO GEC).
 \end{itemize}
 The above remarks explain why it is reasonable to define the height of
 a skeletoid in a way such that height zero corresponds to sub-GEC's of size 3
 (or less).
\item It follows from \THM[s]{Menger} and \ref{thm:ht-hyp-sph} that
 \(\OPN{ht}(\FIN{\Gg}) \leq n\) where \(\Gg\) is any of \((\RRR^n,d_e)\),
 \((\SSS^n,d_s)\) and \((H^n(\RRR),d_h)\). Menger himself proved that actually
 \(\OPN{ht}(\FIN{\Gg}) = n\) (he used different terminology). It is quite
 an easy exercise to verify that \(\OPN{rk}(\RRR^n,d_e) = n\) and
 \(\OPN{rk}(\SSS^n,d_s) = n+1\). The latter equality, combined with \PRO{ht-rk},
 yields \(\OPN{ht}(\SSS^n,d_s) = n\). At this moment we are unable to give
 a proof that the same equation holds for the hyperbolic spaces. We plan to
 study it in a close future.\par
 The example of spheres witnesses that the inequality in \PRO{ht-rk} is optimal.
\item The previous example may suggest that the height of the skeleton of
 an AB-HO metric space is closely related to the dimension of this space. To
 convince oneself that this is not the case, it suffices to consider, e.g.,
 the GEC product \(K \df \PROd{n\in\NNN_1}{\geq}{*} (I_n,2^{-n} \delta_n)\) (cf.
 \COR{ultrametric}). It is a compact AB-HO ultrametric space (and hence its
 topological dimension is zero), whereas \(\OPN{rk}(\FIN{K}) = \infty\) and,
 consequently, \(\OPN{ht}(\FIN{K}) = \infty\) as well (by \PRO{ht-rk}).
\item It follows from the very definition of the rank that the equality
 \(\OPN{rk}(\ffF) = 1\) is equivalent to the statement that \(\COL(\ffF) \neq
 \varempty\) and that \(\ffF\) contains no monochromatic triangles. When we
 restrict our attention only to skeletons of the AB-HO GEC's with \ITP, we can
 conclude from \THM{classification} that such a skeleton has rank 1 iff it is
 isomorphic to the product of ultrahomogeneous finite GEC's each of which is
 either monochromatic of size 2 or bi-connected without monochromatic triangles.
 It is an interesting combinatorial exercise that, up to structural equivalence,
 there is exactly one such bi-connected GEC. It is shown in Figure~\ref{fig:1}.
 (As an ordinary graph, it is isomorphic to its complement graph). Since this
 example is only a digression, we skip the proof of the above statement about
 uniqueness.
\end{enumerate}
\begin{figure}
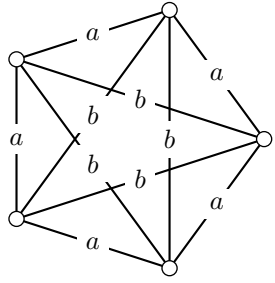

\DIAGQ\caption{A unique bi-connected ultrahomogenoues graph without
monochromatic triangles}\label{fig:1}
\end{figure}
\end{exm}

\section{Directed graphs}\label{sec:directed}

The aim of this section is to generalise the concepts and results from previous
parts to the context of directed graphs. In order not to bore the reader, we
will focus exclusively on those aspects that contribute something significantly
new to the theory.\par
Recall that a \underline{directed} graph is a pair \((V,E)\) where \(V\) is
a set (of vertices) and \(E\) is a subset of \(V \times V\) disjoint from
the diagonal \(\Delta_V\) of \(V\) (again, \(E\) is the set of edges). Since
\(E\) consists of \underline{ordered} pairs, each edge has its beginning and
end and between any two distinct vertices a directed graph may have zero, one or
two edges (in different directions). In directed graphs we speak about
\emph{outcoming} (from a vertex) and \emph{incoming} (to a vertex) edges;
namely: an edge of the form \((x,y)\) is outcoming from \(x\) (and \(x\) is
the beginning of this edge) and incoming to \(y\) (and \(y\) is its end).
Undirected graphs (discussed in the previous parts) are considered as directed
ones usually as graphs that have zero or two edges between any two distinct
vertices (and these two edges are treated as equally valuable).\par
In a similar manner as we have done it for undirected graphs, we will now define
(full) directed GEC's.

\begin{dfn}{dGEC}
A \emph{directed GEC} (briefly: a \emph{dGEC}) is a pair \((V,\kappa)\) where
\(V\) is a set and \(\kappa\) is a function defined on \(V \times V\) which
satisfies (GEC1) from \DEF{GEC}.\par
In a similar manner as for GEC's, one defines: \emph{degenerate} (and
\emph{non-degenerate}) and \emph{monochromatic} dGEC's;  the \emph{palette},
the \emph{pseudocolor}, the \emph{size} and a \emph{sub-dGEC} of a dGEC as well
as \emph{morphisms}, \emph{partial morphisms} and \emph{isomorphisms} between
two dGEC's (and an \emph{automorphism} of a dGEC). Finally, we verbatim
\DEF{absGEC} to define \emph{AB-HO} dGEC's. Next come the definitions of
a \emph{skeleton}, \emph{productable collections} and their \emph{products}.\par
Similarly as for GEC's, an \emph{incomplete} dGEC is a quadruple \(\Gg = (V,a,b,
\kappa)\) where \(\kappa\) is defined everywhere on \(V \times V\) apart from
two edges: \((a,b)\) and \((b,a)\), and satisfies (GEC1). To \emph{complete}
the above incomplete dGEC means to assign values (that is, colors) to the two
unpainted edges. The above \(\Gg\) is \emph{non-symmetric} if there exists \(c
\in V \setminus \{a,b\}\) such that
\[\kappa(a,c) \neq \kappa(b,c) \qquad \UP{or} \qquad \kappa(c,a) \neq
\kappa(b,a).\]
Next one defines \emph{finitary structures} of dGEC's, incomplete dGEC's
\emph{affiliated with classes of dGEC's}; and \MBP, \AP\ and its variants.\par
For a non-symmetric incomplete dGEC \(\Gg = (V,a,b,\kappa)\) affiliated with
a proper finitary structure \(\ffF\) one can also define \(\Delta_{\ffF}(\Gg)\)
as the set of all possible pairs \((c,d)\) such that setting \(\kappa(a,b) \df
c\) and \(\kappa(b,a) \df d\) we obtain a completion of \(\Gg\) that belongs to
\(\ffF\).
\end{dfn}

Directed GEC's may be visualised as undirected full graphs in which every edge
is two-color or monochromatic: visualising an (undirected!) edge \(\{x,y\}\) as
a line segment joining two its ends (\(x\) and \(y\)), we paint the half of this
segment between \(x\) and the middle point of the segment color of the directed
edge \((x,y)\) of the given dGEC and the other half color of \((y,x)\). In this
way one obtains a one-to-one correspondence between dGEC's and GEC's where each
edge has two colors (or one).\par
As in dGEC's a new phenomenon occurs (related to \MBP), we will define
skeletoids after introducing a new notion and establishing an important property
related to this phenomenon.

\begin{dfn}{OBP}
Let \(\Gg = (V,\kappa)\) be a dGEC and \(c\) and \(d\) be two distinct colors
from the palette of \(\Gg\). A sub-dGEC \(\Hh = (W,\kappa)\) of \(\Gg\) and its
set \(W\) of vertices are said to be \emph{\((c,d)\)-ordered} if there exists
a total order \(\preceq\) on \(W\) such that for any two distinct vertices \(u,
v \in W\):
\begin{equation}\label{eqn:OBP}
u \preceq v \iff \kappa(u,v) = c \iff \kappa(v,u) = d.
\end{equation}
Notice that the order \(\preceq\) is uniquely determined by \eqref{eqn:OBP} and
thus we call it the \emph{\((c,d)\)-order of \(W\)} or \emph{of \(\Hh\)}.
\(\Hh\) is said to be \emph{bi-ordered} if it is \((a,b)\)-ordered for some pair
\((a,b)\) of distinct colors. (If this is so, then \(\COL{\Hh} = \{a,b\}\).)\par
We say that a finitary structure \(\ffF\) satisfies \emph{order boundedness
principle} (briefly: \OBP) if for any two distinct colors \(c\) and \(d\) from
the palette of \(\ffF\) there exists a positive integer \(\mu = \mu(c,d)\) such
that any \((c,d)\)-ordered dGEC belonging to \(\ffF\) has size bounded by
\(\mu\).
\end{dfn}

And now the result just announced.

\begin{thm}{OBP}
\begin{enumerate}[\upshape(A)]
\item If a proper finitary structure \(\ffF\) of dGEC's satisfies both
 the conditions \MBP\ and \OBP, then there exists a cardinal \(\mM\) such that
 each dGEC finitely represented in \(\ffF\) has size not greater than \(\mM\).
\item All skeletons satisfy \MBP\ and \OBP. The size of an AB-HO dGEC whose
 palette has size \(\alpha \geq \aleph_0\) does not exceed \(2^{\alpha}\).
 An AB-HO dGEC is of finite size iff so is its palette.
\end{enumerate}
\end{thm}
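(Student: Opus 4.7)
Part (A) will be handled by a symmetrisation of the Erd\H{o}s--Rado argument used in \THM{card}(b). Given a dGEC \(\Gg = (V,\kappa)\) finitely represented in \(\ffF\) with palette \(Y\) of cardinality \(\alpha\), introduce the symmetric auxiliary coloring
\[
g\colon V \times V \to Z,\qquad g(u,v) \df \{\kappa(u,v),\kappa(v,u)\},
\]
where \(Z\) consists of the singleton and two-element subsets of \(Y\); one has \(\card(Z) \leq \max(\alpha,\aleph_0)\) when \(\alpha\) is infinite and \(\card(Z)\) finite when \(\alpha\) is. Applying \THM{r-e-r} to \(g\)---with \(\mM := 2^\alpha\) in the infinite case and an appropriate finite Ramsey bound in the finite case---one extracts, whenever \(\card(V) > \mM\), a sufficiently large subset \(A \subset V\) on which \(g\) is constant with some value \(c\). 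If \(c = \{a\}\) is a singleton, then \(A\) is \(a\)-monochromatic and sub-dGEC's of \(A\) of size \(> \mu(a)\) lie in \(\ffF\), violating \MBP. Otherwise \(c = \{a,b\}\) with \(a \neq b\), and the relation \(u \vartriangleright v :\iff \kappa(u,v) = a\) equips \(A\) with a tournament structure; the classical tournament--Ramsey theorem---every infinite tournament contains an infinite transitive sub-tournament, and every \(n\)-vertex tournament one of size \(\lceil \log_2 n \rceil + 1\)---then extracts an \((a,b)\)-ordered sub-dGEC exceeding \(\mu(a,b)\), violating \OBP. This proves (A).

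Part (B) splits into verifying \MBP\ and \OBP\ separately for a skeleton \(\ffF = \FIN{\Gg}\), after which (A) supplies the quantitative bounds. \MBP\ is proved verbatim as in \THM{card}(a): any bijection of a \(c\)-monochromatic subset of a dGEC is a partial morphism, so from hypothetical arbitrarily large \(c\)-monochromatic sub-dGEC's one builds, via AB-HO pullbacks, an infinite \(c\)-monochromatic \(C \subset V\); Zorn then produces a maximal \(c\)-monochromatic \(D \supset C\), and the extension to \(\mu \in \AUT{\Gg}\) of any non-surjective self-bijection of \(D\) yields \(\mu^{-1}(D) \supsetneq D\) still \(c\)-monochromatic, contradicting maximality. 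The strategy for \OBP\ runs in parallel, and works because any order-preserving injection between \((c,d)\)-ordered subsets is automatically a partial morphism of \(\Gg\) (it preserves both colors \(c\) and \(d\)). Supposing \OBP\ fails for a pair \((c,d)\), the AB-HO pullback construction yields an infinite \((c,d)\)-ordered \(C \subset V\); Zorn extends to a maximal \((c,d)\)-ordered \(D\); invoking the order-theoretic lemma that every infinite linear order admits a non-surjective order-preserving self-embedding \(\psi\), the extension of \(\psi\) to \(\mu \in \AUT{\Gg}\) satisfies \(\mu(D) = \psi(D) \subsetneq D\), so \(\mu^{-1}(D) \supsetneq D\) is \((c,d)\)-ordered, contradicting maximality. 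The size bounds in (B) then drop out: \(\card(V) \leq 2^\alpha\) is (A) with \(\mM := 2^\alpha\), and the equivalence of finite size with finite palette is (A) with a finite Ramsey bound (the converse implication being immediate).

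\textbf{The main obstacle} will be the order-theoretic lemma invoked in the \OBP\ step---that every infinite linearly ordered set admits a non-surjective order-preserving self-embedding. Its proof begins with the standard dichotomy (a consequence of Ramsey's theorem) that every infinite linear order contains a subset of order type \(\omega\) or \(\omega^*\), and then proceeds by case analysis. In the well-ordered case (\(D\) order-isomorphic to an infinite ordinal \(\alpha\)), the shift \(\beta \mapsto \beta + 1\) delivers the embedding, with a trivial adjustment at a maximum if one is present. In the case where \(D\) contains an ascending or descending \(\omega\)- or \(\omega^*\)-chain whose consecutive terms have no element of \(D\) strictly between them, shifting along the chain and fixing the complement is order-preserving and non-surjective. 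In the remaining "densely populated" case, a contraction such as \(q \mapsto q/2\) on a suitably chosen bounded convex subrange of \(D\) supplies the embedding. Together these cases exhaust the possibilities and furnish the required \(\psi\), completing the chain of reductions.
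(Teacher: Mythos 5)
Your part (A) is correct and close in spirit to the paper's argument, though you take a small detour: the paper fixes a total order \(\ORD\) on \(V\) in advance and colors the unordered edge \(\{x,y\}\) (with \(x \ORD y\)) by the \emph{ordered} pair \((\kappa(x,y),\kappa(y,x))\), so that the Erd\H{o}s--Rado homogeneous set is already \((a,b)\)-ordered by \(\ORD\) itself; your unordered-pair coloring discards that information and you must recover the order via tournament Ramsey. Both routes work. The \MBP\ half of part (B) is, as you say, verbatim the proof of part (a) of \THM{card}.

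The \OBP\ half of part (B), however, has a genuine gap. Your reduction is sound up to the point where you invoke the lemma that every infinite linearly ordered set admits a non-surjective order-preserving self-embedding---but that lemma is false. Dushnik and Miller (1940) constructed a dense suborder of \(\RRR\) of cardinality \(2^{\aleph_0}\) whose only order-preserving self-embedding is the identity; your maximal \((c,d)\)-ordered set \(D\) is an arbitrary subset of \(V\), and nothing prevents its \((c,d)\)-order from being of such a rigid type (you cannot even bound its cardinality at this stage, since the size bounds are exactly what is being proved). Your sketched proof of the lemma is also not a proof: the three cases do not exhaust the possibilities, and the ``densely populated'' case, handled by \(q \mapsto q/2\), tacitly assumes a self-similar suborder of the reals, which is precisely what the Dushnik--Miller examples destroy. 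The paper avoids the issue by never taking a self-embedding of the whole maximal set: it first extracts a \((c,d)\)-ordered set \(A\) order-isomorphic to \(\NNN\), with least element \(a\) and shift \(\psi\) mapping \(A\) onto \(A \setminus \{a\}\), and then applies Zorn's lemma not to all \((c,d)\)-ordered supersets of \(A\) but only to those \(B\) in which \(A\) is exactly the set of elements \(\geq a\) (so that \(B \setminus A\) lies entirely below \(A\)). For such a \(B\), the map that shifts \(A\) by \(\psi\) and fixes \(B \setminus A\) pointwise is a strictly increasing partial morphism omitting exactly \(a\) from its image, regardless of the order type of \(B \setminus A\); extending it by one new vertex sent to \(a\) then contradicts maximality. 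If you want to keep your architecture, you must replace the false lemma by this (or an equivalent) device.
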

\begin{proof}
Even though part (A) follows from \THM{r-e-r} (similarly as \LEM{bdd}), we give
the proofs of both the parts, as (A) requires new ideas.\par
Let \(\ffF\) be as specified in (A) and \(\Gg = (V,\kappa)\) be a non-degenerate
dGEC finitely represented in \(\ffF\). Let \(\alpha\) stand for the cardinality
of the palette \(Y\) of \(\ffF\). We claim that:
\begin{enumerate}[(c{a}rd1)]
\item if \(\alpha\) is finite, then so is the set \(V\);
\item if \(\alpha \geq \aleph_0\), then \(\card(V) \leq 2^{\alpha}\).
\end{enumerate}
To this end, denote by \(e\) the pseudocolor of \(\ffF\) and fix any total order
\(\ORD\) on \(V\). We define a new edge-coloring \(\mu\dd V \times V \to Y
\times Y\) as follows: \(\mu(x,x) \df e\) and for distinct \(x, y \in V\) such
that \(x \ORD y\), \(\mu(x,y) = \mu(y,x) \df (\kappa(x,y),\kappa(y,x))\). In
this way we obtain a GEC \((V,\mu)\). Since \(\mu\) is a symmetric function, we
can make use of \THM{r-e-r}.\par
First we assume \(\alpha < \aleph_0\). If (card1) is false (that is, if the set
\(V\) is infinite), it follows from the Ramsey's theorem that there exists
an infinite set \(Z \subset V\) that is monochromatic with respect to \(\mu\).
Denote a common color (w.r.t.\ \(\mu\)) of all edges of \(Z\) by \((a,b)\). If
\(a = b\), \(Z\) is \(a\)-monochromatic w.r.t.\ \(\kappa\). Since \(Z\) is
infinite and finitely represented in \(\ffF\), we get a contradiction with \MBP.
On the other hand, if \(b \neq a\), then \(Z\) is \((a,b)\)-ordered (w.r.t.\
\(\kappa\)). Indeed, observe that for two distinct vertices \(p, q \in Z\) we
either have \(p \ORD q\) (and then \(\kappa(p,q) = a\) and \(\kappa(q,p) = b\))
or \(q \ORD p\) (and then \(\kappa(p,q) = b\) and \(\kappa(q,p) = a\)), which
shows that \(\ORD\) on \(Z\) satisfies \eqref{eqn:OBP}. So, \OBP\ does not hold
in \(\ffF\), which means that (card1) is true.\par
In the remaining case (to obtain (card2)) we argue similarly, but we would apply
the Erd\H{o}s-Rado theorem instead of Ramsey's if (card2) were false.\par
We pass to (B). Observe that the second and third statements of (B) follow from
the combination of (card1)--(card2) and the first statement (of (B)) and thus we
focus only on showing that skeletons satisfy \MBP\ and \OBP. As the proof of
part (a) of \THM{card} works also for AB-HO dGEC's, we only need to check \OBP.
To this end, we fix an AB-HO dGEC \(\Gg = (V,\kappa)\) and two different colors
\(c\) and \(d\) from its palette. \OBP\ for the skeleton of \(\Gg\) is
equivalent to the statement that all \((c,d)\)-ordered sub-dGEC's of \(\Gg\)
have sizes uniformly bounded by a finite number. To show the latter propery, we
argue by a contradiction. So, we assume that there are finite \((c,d)\)-ordered
sub-dGEC's \(\Ff_1,\Ff_2,\ldots\) of \(\Gg\) such that \(|\Ff_n| < |\Ff_{n+1}|\)
for all \(n\). Similarly as in the proof of part (a) of \THM{card}, we infer
from the absolute homogeneity of \(\Gg\) that then there exists
a \((c,d)\)-ordered set \(A \subset V\) such that \(A\) equipped with its
\((c,d)\)-order is order-isomorphic to \(\NNN\). So, \(A\) has the least element
(w.r.t.\ this order), which we denote by \(a\). There also exists a strictly
increasing (w.r.t.\ the \((c,d)\)-order of \(A\)) function \(\psi\dd A \to A\)
such that
\begin{equation}\label{eqn:notin}
\psi(A) = A \setminus \{a\}.
\end{equation}
It follows from Zorn's lemma that in the family \(\LLl\) of all
\((c,d)\)-ordered sets \(B \subset V\) such that
\begin{itemize}
\item \(A \subset B\); and
\item if \(x \in B\) and \(\kappa(a,x) = c\), then \(x \in A\) (that is, \(A\)
 must coincide with the set of all elements from \(B\) that are greater than or
 equal to \(a\) w.r.t.\ the \((c,d)\)-order of \(B\))
\end{itemize}
there exists a maximal set, say \(C\). Denote by \(\ORD\) the \((c,d)\)-order of
\(C\). The latter property of \(C\) implies that the function \(\phi\dd C \to
C\) given by
\[\phi(x) \df \begin{cases}\psi(x) & x \in A\\x & x \notin A\end{cases}\]
is strictly increasing w.r.t.\ \(\ORD\). Consequently, \(\phi\) is a partial
morphism in \(\Gg\). Observe that \(\phi(C) = C \setminus \{a\}\) (thanks to
\eqref{eqn:notin}). So, we can find a vertex \(b \in V \setminus C\) such that
\(\phi\) can be extended to a partial morphism by setting \(\phi(b) \df a\).
Then \(\phi(D) = C\) where \(D \df C \cup \{b\}\). Hence \(D \equiv C\), which
implies that \(D\) is \((c,d)\)-ordered. Moreover, \(\phi\dd D \to C\) (as
an isomorphism of two \((c,d)\)-ordered dGEC's) is an order-isomorphism w.r.t.
the \((c,d)\)-orders. Since \(C \subset D\), these two orders are compatible and
hence we will use (again) \(\ORD\) to denote the \((c,d)\)-order on \(D\). Then
\(\phi(b) = a \ORD \psi(a) = \phi(a)\), which implies that \(b \ORD a\). Since
\(b \neq a\), we see that \(D \in \LLl\) and thus \(C\) is not a maximal set in
\(\LLl\). This contradiction finishes the proof of the theorem.
\end{proof}

The above result shows that in the realm of dGEC's a counterpart of \MBP\ (from
the realm of GEC's) is the tandem of \MBP\ and \OBP. Thus, we put the following

\begin{dfn}{skeletoid}
A \emph{skeletoid} is any proper finitary structure of dGEC's that satisfies
both \MBP\ and \OBP, and has \AP.\par
The \emph{rank} of a non-empty skeletoid \(\ffF\) of dGEC's, denoted by
\(\OPN{rk}(\ffF)\), is defined as:
\[\OPN{rk}(\ffF) \df \sup\{|\Gg|-1\dd\ \Gg \in \ffF\UP{ monochromatic or
bi-ordered}\}\ \in \NNN \cup \{\infty\}.\]
\end{dfn}

The \emph{height} of a skeletoid is defined in the very same way as for
GEC's.\par
Now we list results that can be proven in almost the same way as for GEC's. That
is why we omit their proofs and leave them to interested readers.

\begin{thm}{embedd}
Let \(\Gg\) be an AB-HO dGEC. A dGEC admits a morphism into \(\Gg\) iff every
its finite (non-empty) sub-dGEC admits a morphism into \(\Gg\).\par
Two AB-HO dGEC's \(\Gg_1\) and \(\Gg_2\) are isomorphic iff \(\FIN{\Gg_1} =
\FIN{\Gg_2}\).
\end{thm}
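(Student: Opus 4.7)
The plan is to transfer the proofs of \THM{embed} and \COR{iso-AB-HO} essentially verbatim, since the definitions of (partial) morphism and of absolute homogeneity for dGEC's are structurally identical to those for GEC's; the symmetry (or rather, lack thereof) of the edge-coloring function plays no role in the arguments. I also plan to use (without proof) the obvious dGEC analogue of \PRO{1}, namely that any morphism of an AB-HO dGEC into itself is automatically an automorphism, which follows by the same Zorn-plus-extension argument given there.

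For the first statement, only the implication (ii)\(\Rightarrow\)(i) requires work. I would proceed by transfinite induction on cardinals \(\mM\), proving that every sub-dGEC of the given dGEC of size at most \(\mM\) admits a morphism into \(\Gg\). The finite case is the assumption, so fix infinite \(\mM\), assume the statement for all smaller cardinals, and take a sub-dGEC \((A,\kappa)\) of cardinality \(\mM\). Choose an initial well-order \(\ORD\) on \(A\) (every proper initial segment \(I_a \df \{x\in A\dd x\ORD a\}\) has size \(<\mM\)) and use the induction hypothesis to get, for each \(a\in A\), a morphism \(\phi_a\dd I_a \to V(\Gg)\).

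The main step is then a second transfinite induction along \((A,\ORD)\) that produces a coherent family \(\xi_a\dd I_a \to V(\Gg)\) with \(\xi_b\restriction I_c = \xi_c\) whenever \(c\in I_b\); once such a family exists, \(\theta(a)\df \xi_a(a)\) is the morphism we seek. At the successor step, define \(\mu\dd I_a\setminus\{a\} \to V(\Gg)\) by \(\mu(b)\df \xi_b(b)\), which is a partial morphism by coherence; let \(Z\df \mu(I_a\setminus\{a\})\). Then \(\nu \df \phi_a\circ\mu^{-1}\dd Z \to V(\Gg)\) is a partial morphism of \(\Gg\), so absolute homogeneity yields an automorphism \(\eta\in\AUT{\Gg}\) extending \(\nu\); setting \(\xi_a\df \eta^{-1}\circ\phi_a\) gives an extension coherent with all previous \(\xi_b\). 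The only cosmetic difference from the undirected case is that each verification of the morphism property involves both \(\kappa(x,y)\) and \(\kappa(y,x)\), but this changes nothing in the construction, which manipulates \(\kappa\) as a single function on ordered pairs.

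For the second statement, the skeletons coinciding supplies (by the first part) morphisms \(\phi\dd \Gg_1\to \Gg_2\) and \(\psi\dd \Gg_2\to \Gg_1\); then \(\psi\circ\phi\) and \(\phi\circ\psi\) are morphisms of AB-HO dGEC's into themselves, hence automorphisms by the dGEC analogue of \PRO{1}(B), forcing both \(\phi\) and \(\psi\) to be bijective and therefore isomorphisms. The main potential obstacle is ensuring that \PRO{1}(B) and the Zorn-based extension used in its proof really do go through for dGEC's without modification, but this is a routine check—the argument uses only that partial morphisms of an AB-HO object extend maximally to an automorphism, and that the inverse of a morphism defined on the full vertex set is again a partial morphism, both of which are purely formal and independent of whether the edge-coloring function is symmetric.
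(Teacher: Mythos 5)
Your proposal is correct and is exactly what the paper intends: the text introducing \THM{embedd} states that these results ``can be proven in almost the same way as for GEC's'' and omits the proofs, and your verbatim transfer of the arguments for \THM{embed}, \PRO{1}(B) and \COR{iso-AB-HO} — with the observation that symmetry of the edge-coloring plays no role — is precisely that intended proof.
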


\begin{thm}{dskeleton}
A class is a skeleton iff it is a skeletoid with \AP[\(*\)-].
\end{thm}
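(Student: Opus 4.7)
The plan is to mirror the proof of \THM{skeleton}, with two adjustments: each ``unpainted edge'' in an incomplete dGEC now consists of an ordered pair of directed edges whose colors must be supplied simultaneously, and the boundedness lemma \LEM{bdd} is replaced by \THM{OBP}(A).

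For the easy direction, assume \(\ffF = \FIN{\Hh}\) for an AB-HO dGEC \(\Hh = (W,\lambda)\). Given a non-symmetric incomplete dGEC \(\Gg = (V,a,b,\kappa)\) affiliated with \(\grp{\ffF}\), \THM{embedd} yields morphisms \(\phi_a\dd \Gg_a \to \Hh\) and \(\phi_b\dd \Gg_b \to \Hh\). Setting \(\alpha = \phi_b\), choose an automorphism \(\eta \in \AUT{\Hh}\) extending the isomorphism \(\phi_b \circ (\phi_a^{-1}\restriction \phi_a(V \setminus \{a,b\}))\); then \(\beta \df \eta \circ \phi_a\) agrees with \(\alpha\) on \(V \setminus \{a,b\}\). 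If \(\alpha(a) = \beta(b)\), then for every \(c \in V \setminus \{a,b\}\),
\[\kappa(a,c) = \lambda(\alpha(a),\alpha(c)) = \lambda(\beta(b),\beta(c)) = \kappa(b,c),\]
and symmetrically \(\kappa(c,a) = \kappa(c,b)\), forcing \(\Gg\) to be symmetric. Hence \(\alpha(a) \neq \beta(b)\), so painting \((a,b)\) with color \(\lambda(\alpha(a),\beta(b))\) and \((b,a)\) with color \(\lambda(\beta(b),\alpha(a))\) produces a completion of \(\Gg\) that admits a morphism into \(\Hh\) and hence belongs to \(\grp{\ffF}\).

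For the hard direction, fix a skeletoid \(\ffF\) of dGEC's with \AP[\(*\)-]. By \THM{OBP}(A), one obtains a cardinal \(\mM\) strictly bounding the size of every dGEC finitely represented in \(\ffF\); fix a set \(Z\) with \(\card(Z) = \mM\). The first main step is to prove the amalgamation statement \((\star)\): for any \(\Gg_1, \Gg_2 \in \grp{\ffF}\) whose colorings agree on \((V_1 \cap V_2) \times (V_1 \cap V_2)\), there exist \(\Gg_0 \in \grp{\ffF}\) and morphisms \(\phi_j\dd \Gg_j \to \Gg_0\) \((j=1,2)\) coinciding on \(V_1 \cap V_2\). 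I will follow the nested Zorn argument of \THM{skeleton} line-for-line, with the single modification that each application of \AP\ now assigns two colors at once (one to each orientation), and the auxiliary condition \((\diamondsuit)\) reads: for every \(s \in S \setminus T\) there exists \(t \in T\) such that \(\nu(z,t) \neq \mu(s,t)\) or \(\nu(t,z) \neq \mu(t,s)\). From \((\star)\) derive its category-theoretic counterpart \((\star')\), apply Zorn once more inside \(Z\) to produce a maximal \(\Hh = (W,\lambda) \in \grp{\ffF}\), and then prove the properties \((*)\) (every morphism from \(\Hh\) into a member of \(\grp{\ffF}\) is an isomorphism) and \((**)\) (every partial morphism of a \(\Gg \in \grp{\ffF}\) into \(\Hh\) extends to a morphism) exactly as before. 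Together these yield that \(\Hh\) is AB-HO with \(\FIN{\Hh} = \ffF\).

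The main obstacle will be the inner Zorn step inside the proof of \((\star)\), where a partial coloring on \(P \times P\) is extended to \((P \cup \{t_0\}) \times (P \cup \{t_0\})\): here one must invoke \AP\ to fill \textbf{both} unpainted entries \((z,t_0)\) and \((t_0,z)\) in a single stroke, and verify that the resulting incomplete dGEC is non-symmetric in the dGEC sense of \DEF{dGEC}---which may now be witnessed by a discrepancy in either orientation. Once this point is handled via the two-sided \((\diamondsuit)\) above, every other ingredient is a routine transcription of the undirected proof with ``edge'' replaced by ``ordered pair of edges'' and with \THM{OBP} in place of \LEM{bdd}.
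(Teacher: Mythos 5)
Your proposal is correct and follows exactly the route the paper intends: the paper omits the proof of \THM{dskeleton} precisely because it is the proof of \THM{skeleton} transcribed to dGEC's, and you have correctly identified the only two points requiring care, namely replacing \LEM{bdd} by part (A) of \THM{OBP} (since skeletoids of dGEC's are required to satisfy both \MBP\ and \OBP) and painting both orientations of the unpainted edge at once, with the non-symmetry witness \((\diamondsuit)\) accordingly made two-sided. Nothing further is needed.
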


\begin{pro}{d*AP}
A skeletoid with infinite palette has \AP[\(*\)-] iff it has \AP[\(\mM\)-] where
\(\mM\) is the size of its palette. In particular, a skeletoid with countable
palette is a skeleton iff it has \AP[\(\omega\)-].
\end{pro}

\begin{pro}{dsphere}
Let \(\Gg = (V,\kappa)\) be an AB-HO dGEC, \(S\) be a non-empty subset of \(V\),
and let \(r_{in},r_{out}\dd S \to \COL{\Gg}\) be two arbitrary functions. Let
\(W\) consist of all \(x \in V\) such that \(\kappa(x,y) = r_{in}(y)\) and
\(\kappa(y,x) = r_{out}(y)\) for any \(y \in S\). Then \((W,\kappa)\) is
a (possibly degenerate) AB-HO dGEC.
\end{pro}

\begin{thm}{dproduct}
The product of a productable collection of AB-HO dGEC's is an AB-HO dGEC as
well.
\end{thm}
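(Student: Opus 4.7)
The plan is to adapt the proof of \THM{prod-AB-HO} verbatim, with one additional bookkeeping obligation at every step: every color-preservation identity that appeared in the undirected proof must now be checked in \emph{both} directions. First I would invoke the directed analogue of part (A) of \PRO{1} (which holds with an identical argument) to reduce to the following one-vertex extension: given $\Hh = (W,\lambda) \df \PROd{s\in S}{\preceq}{\aA} \Gg_s$, a partial morphism $\phi\dd X \to W$ of $\Hh$ with $X \subsetneq W$, and a vertex $\bB \in W \setminus X$, I must produce $\cC \in W \setminus \phi(X)$ satisfying
\[
\lambda(\phi(\xX),\cC) = \lambda(\xX,\bB) \qquad \text{and} \qquad \lambda(\cC,\phi(\xX)) = \lambda(\bB,\xX)
\]
for all $\xX \in X$.

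Second, since the palettes of the $\Gg_s$ are pairwise disjoint, a color of $\lambda$ determines its factor, so $\phi$ preserves the label: $\ell(\phi(\xX),\phi(\yY)) = \ell(\xX,\yY)$ for distinct $\xX,\yY \in X$ (write $\ell$ for $\LAB[\preceq]{\cdot}{\cdot}$). I would then introduce the same partition of the indices into $I_0$, $I_1$, $I_2$ as in the undirected proof (depending on the behavior of $\ell(\xX,\bB)$ for $\xX\in X$), establish property (aux1) on the arrangement of three labels, and property (aux2) that $\pi_s$ is constant on $Z_s \df \{\xX \in X\dd s\precneqq \ell(\xX,\bB)\}$ and on $\phi(Z_s)$. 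These two auxiliary facts are purely about the label structure and require no modification.

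Third, the only point where the directedness matters is (aux3). When $I_1 = \{t\}$ and $Y \df \{\vV\in X\dd \ell(\vV,\bB)=t\}$, for distinct $\xX,\yY\in Y$ with $\pi_t(\xX)\neq \pi_t(\yY)$ one deduces from (aux1) and the definition of $\ell$ that $\ell(\xX,\yY)=t$; consequently
\[
\kappa_t(\pi_t(\phi(\xX)),\pi_t(\phi(\yY))) = \kappa_t(\pi_t(\xX),\pi_t(\yY))
\]
and---this is the new content---the same identity holds with the two arguments swapped, because $\phi$ is a partial morphism of the dGEC $\Hh$ and preserves both $\lambda(\xX,\yY)$ and $\lambda(\yY,\xX)$. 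Therefore $\psi\dd \pi_t(Y) \to V_t$ given by $\psi(\pi_t(\vV)) = \pi_t(\phi(\vV))$ is a bona fide partial morphism of the \emph{directed} GEC $\Gg_t$, and since $\Gg_t$ is AB-HO as a dGEC, it extends to an automorphism $\tau \in \AUT{\Gg_t}$.

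Finally, I would define $\cC = (c_s)_{s\in S}$ by $c_s \df \pi_s(\phi(\xX))$ for $s\in I_0$ (any $\xX\in Z_s$), $c_t \df \tau(\pi_t(\bB))$ when $I_1=\{t\}$, and $c_s \df \pi_s(\aA)$ for $s\in I_2$. The verification that $\cC\in W$, that $\cC\notin \phi(X)$, and that both required color equalities hold is identical to the undirected case, split into the cases $\ell(\xX,\bB)\in I_0$ (using an auxiliary $\zZ\in X$ with $\ell(\zZ,\bB)\succneqq \ell(\xX,\bB)$ so that $\pi_s(\cC)=\pi_s(\phi(\zZ))$ for $s\precneqq\ell(\zZ,\bB)$) and $\ell(\xX,\bB)=t\in I_1$ (using $\pi_t(\cC) = \tau(\pi_t(\bB))$ directly). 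In each case the reasoning that yielded $\lambda(\cC,\phi(\xX))=\lambda(\bB,\xX)$ in the undirected proof now yields this identity; the swapped identity $\lambda(\phi(\xX),\cC)=\lambda(\xX,\bB)$ follows by rerunning the exact same chain with the arguments of each $\kappa_s$ transposed, which is legitimate because both $\phi$ (by assumption) and $\tau$ (by construction as a directed morphism) preserve colors in both directions. The main obstacle is therefore not any isolated step but this twofold bookkeeping: one must confirm at each stage that every invocation of color-preservation is available in the sense needed, and in particular that the input $\psi$ fed to the AB-HO property of $\Gg_t$ is a partial morphism of the directed, not merely the undirected, structure.
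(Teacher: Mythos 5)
Your proposal is correct and is exactly the adaptation the paper intends: the paper omits the proof of \THM{dproduct}, stating that it goes "in almost the same way" as \THM{prod-AB-HO}, and you have carried out that adaptation faithfully. You also correctly isolate the one genuinely new obligation---that the map $\psi$ fed to the absolute homogeneity of $\Gg_t$ in (aux3) must preserve $\kappa_t$ in both argument orders, which it does because $\phi$ preserves $\lambda$ in both directions and the label $\ell$ is symmetric in its arguments.
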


\begin{pro}{dht-rk}
For any skeletoid \(\ffF\), one has \(\OPN{rk}(\ffF) \leq \OPN{ht}(\ffF)+1\).
\end{pro}

As we announced in Section~3, all groups can naturally be considered as dGEC's.
We will now discuss this idea.

\begin{dfn}{group-dGEC}
Let \((G,\cdot)\) be any group. Each of the following four functions \(L_G,
L^*_G,R_G,R^*_G\dd G \times G \to G\) may serve as an edge-coloring of \(G\)
(with the neutral element of \(G\) as the pseudocolor):
\[L_G(a,b) \df a^{-1}b, \quad L^*_G(a,b) \df b^{-1}a, \quad R_G(a,b) \df
ab^{-1}, \quad R^*_G(a,b) \df ba^{-1}.\]
We call them (respectively) \emph{left}, \emph{left\(*\)}, \emph{right} and
\emph{right\(*\) dGEC-structures of \(G\)}.
\end{dfn}

\begin{pro}{dgroup}
Let \((G,\cdot)\) be a group and let \(\Xi\dd G \to G\) be given by \(\Xi(x) \df
x^{-1}\).
\begin{enumerate}[\upshape(a)]
\item \(\Xi\) is an isomorphism from the left dGEC-structure onto the right one
 of \(G\) (and vice versa) as well as from the left\(*\) dGEC-structure onto
 the right\(*\) one (and vice versa).
\item \(\Xi\) is a structural isotopy from the left dGEC-structure onto
 the left\(*\) one (and vice versa) as well as from the right dGEC-structure
 onto the right\(*\) one (and vice versa).
\item All these four dGEC-structures of \(G\) coincide iff \(G\) is Boolean.
\item All these four dGEC-structures of \(G\) are isomorphic iff \(G\) is
 Abelian.
\item In all these four dGEC-structures of \(G\), all edges outcoming from
 a single vertex (as well as all incoming to it) have different colors.
\item In each of these four dGEC-structures, \(G\) is an AB-HO dGEC with
 the property that each morphism of a non-empty sub-dGEC of \(G\) into \(G\)
 extends to a \textbf{unique} automorphism of the dGEC \(G\). Automorphisms of
 both the left and left\(*\) (resp. of both the right and right\(*\))
 dGEC-structures are precisely left (resp. right) translations of \(G\).
\item Fixing a type of dGEC-structures of groups among the above four and
 denoting by \(S\) the respective edge-coloring among \(L,L^*,R,R^*\), two
 groups \(G\) and \(H\) are isomorphic as groups iff the dGEC's \((G,S_G)\) and
 \((H,S_H)\) are structurally equivalent. Moreover, structural isotopies between
 these two dGEC's are precisely group isomorphisms.
\end{enumerate}
\end{pro}
\begin{proof}
As all parts of the proposition other than (d) and (g) are trivial (or may
simply be deduced from the proofs of respective parts of \THM{skton-grp}), we
will give proofs of (d) and (g) only.\par
We start from (d). If \(G\) is Abelian, then \(L_G = R^*_G\) and the conclusion
follows from (a). Conversely, assume \(\phi\dd (G,L_G) \to (G,L^*_G)\) is
an isomorphism of dGEC's. This means that
\begin{equation}\label{eqn:aux9}
\forall x, y \in G\dd\quad \phi(y)^{-1}\phi(x) = x^{-1}y.
\end{equation}
Setting \(c \df \phi(e_G)\) (where \(e_G\) is the neutral element of \(G\))
and substituting \(e_G\) for \(y\) in \eqref{eqn:aux9}, we get that \(\phi(x) =
c x^{-1}\). Now substituting the last formula into \eqref{eqn:aux9} yields
\(yx^{-1} = x^{-1}y\), which means that \(G\) is Abelian.\par
Now we turn to (g). If \(\Phi\dd G \to H\) is a group isomorphism, then it is
also an isomorphism between dGEC's \((G,\Phi \circ S_G)\) and \((H,S_H)\), which
implies that \(\Phi\) is a structural isotopy. Conversely, assume \(\Phi\dd G
\to H\) is a structural isotopy and take an isomorphism \(\phi\dd (G,\Phi \circ
S_G) \to (H,S_H)\) of dGEC's. For clarity, below we assume \(S = L\) (arguments
for the remaining three cases closely mirror the following). Then we obtain:
\[\phi(x)^{-1} \phi(y) = \Phi(x^{-1}y)\]
for all \(x, y \in G\). Setting \(c \df \phi(e_G)\) and substituting in
the above equation \(e_G\) for \(x\) we get \(\phi(y) = c \Phi(y)\).
Substitution of the last formula to the equation above yields \(\Phi(x^{-1}y) =
\Phi(x)^{-1}\Phi(y)\), which is equivalent to the property that \(\Phi\) is
a group homomorphism.
\end{proof}

Parts (b) and (d) of the above result motivate us to introduce

\begin{dfn}{dual}
The \emph{dual} dGEC of a dGEC \(\Gg = (V,\kappa)\) is a dGEC \(\Gg^* =
(V,\kappa^*)\) where \(\kappa^*(x,y) \df \kappa(y,x)\). The dGEC \(\Gg\) is said
to be \emph{group-like} if \(\Gg\) and \(\Gg^*\) are structurally equivalent.
It is called \emph{Abelian-like} if these dGEC's are isomorphic.
\end{dfn}

Observe that the equation \(\Gg = \Gg^*\) characterises GEC's \(\Gg\) among all
dGEC's.\par
As for GEC's (cf. \THM{skton-grp}), property (e) formulated in \PRO{dgroup}
characterise groups among AB-HO dGEC's, as shown by

\begin{thm}{dskton-grp}
For a non-empty dGEC \(\Gg = (V,\kappa)\) \tfcae
\begin{enumerate}[\upshape(i)]
\item \(\Gg\) is an AB-HO dGEC in which all edges outcoming from a single vertex
 have different colors;
\item \(\Gg\) is an AB-HO dGEC in which all edges incoming to a single vertex
 have different colors;
\item \(\Gg\) is isomorphic (as a dGEC) to a group equipped with the left
 dGEC-structure.
\end{enumerate}
\end{thm}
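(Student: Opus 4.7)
The implications (iii)$\implies$(i) and (iii)$\implies$(ii) are immediate from parts (e) and (f) of \PRO{dgroup}. The plan is to prove (i)$\implies$(iii) by closely adapting the argument behind part (C) of \THM{skton-grp} to the directed setting, and then to deduce (ii)$\implies$(iii) by passing to the dual dGEC $\Gg^* = (V, \kappa^*)$ (with $\kappa^*(x,y) \df \kappa(y,x)$), which shares its automorphism group with $\Gg$ and is therefore AB-HO iff $\Gg$ is; moreover, property (ii) for $\Gg$ is exactly property (i) for $\Gg^*$.

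The key observation for (i)$\implies$(iii) is that property (i) upgrades the homogeneity already present in AB-HO (compare \EXM{homo}) to the following rigidity statement: each automorphism is determined by its value at a single vertex. Indeed, if $\eta \in \AUT{\Gg}$ fixes some $e \in V$, then for every $y \in V$ we have $\kappa(e, \eta(y)) = \kappa(\eta(e), \eta(y)) = \kappa(e, y)$, so (i) forces $\eta(y) = y$. Fix $e \in V$. By homogeneity and this rigidity, for each $x \in V$ there is a unique $\phi_x \in \AUT{\Gg}$ with $\phi_x(e) = x$; set $x * y \df \phi_x(y)$. The same rigidity yields $\phi_{x * y} = \phi_x \circ \phi_y$ (both map $e$ to $x * y$), whence $(V,*)$ is a group with neutral element $e$, the inverse of $x$ being $\phi_x^{-1}(e)$.

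Next set $G \df \COL{\Gg} \cup \{\kappa(e, e)\}$ and $\theta \colon V \to G$, $\theta(x) \df \kappa(e, x)$. Surjectivity of $\theta$ follows from homogeneity and injectivity from (i), so $\theta$ is a bijection; transport the group structure from $(V, *)$ to $G$ via $\theta$, turning $G$ into a group with neutral element $\kappa(e, e)$. It then suffices to check that $\theta$ is an isomorphism of dGEC's $\Gg \to (G, L_G)$, i.e., that $\kappa(x, y) = \theta(x)^{-1} \theta(y)$ for all $x, y \in V$. On one side, applying $\phi_x^{-1}$ to the edge $(x, y)$ gives $\kappa(x, y) = \kappa(e, \phi_x^{-1}(y)) = \theta(\phi_x^{-1}(y))$; on the other, the definition of the transported operation gives $\theta(x)^{-1} \theta(y) = \theta(x^{-1} * y) = \theta(\phi_{x^{-1}}(y))$, where $x^{-1}$ denotes the inverse in $(V, *)$. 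The two agree because $\phi_{x^{-1}}$ and $\phi_x^{-1}$ both send $e$ to $x^{-1}$ (the latter since $\phi_x(x^{-1}) = x * x^{-1} = e$), and hence are equal by the rigidity above.

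To close (ii)$\implies$(iii), the previous step applied to $\Gg^*$ yields a group $G$ with $\Gg^* \equiv (G, L_G)$, hence $\Gg \equiv (G, L_G^*)$. A direct verification (using \PRO{dgroup}(a) inside $G^{\mathrm{op}}$, or computing that $\Xi(a) = a^{-1}$ intertwines the two) shows that the inverse map delivers an isomorphism of dGEC's $(G, L_G^*) \to (G^{\mathrm{op}}, L_{G^{\mathrm{op}}})$, so $\Gg$ is isomorphic to $G^{\mathrm{op}}$ equipped with its left dGEC-structure. The main obstacle in this plan is the identification $\phi_{x^{-1}} = \phi_x^{-1}$ in the previous paragraph --- the subtle point being that the group-theoretic inverse in the transported structure really does coincide with the automorphism-theoretic one; once this is nailed down, the identification of $\kappa$ with $L_G$ is forced, and the remainder is bookkeeping transferred from the proof of \THM{skton-grp}(C).
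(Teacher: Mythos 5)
Your proposal is correct and takes essentially the same route as the paper: for (i)\(\implies\)(iii) you transport the group structure of \(\AUT{\Gg}\) to the palette via evaluation at a fixed vertex and identify \(\kappa\) with \(L_G\), exactly as in the paper's proof, and for (ii) you likewise reduce to (i) through the dual dGEC. The only cosmetic difference is at the last step: the paper uses \PRO{dgroup} to conclude that \(\Gg^*\) also has distinct incoming colors, hence \(\Gg\) itself satisfies (i), whereas you exhibit the group directly as the opposite group \(G^{\mathrm{op}}\) via the inversion map --- both are valid.
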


It is worth noticing that the proof presented below works for all homogeneous
dGEC's that satisfy (i) or (ii). In particular, all such dGEC's are
automatically AB-HO (by (iii)).

\begin{proof}[Proof of \THM{dskton-grp}]
We have already established (in \PRO{dgroup}) that both (i) and (ii) are implied
by (iii).\par
Assume (i) holds. We will show (iii). To this end, we denote by \(e\)
the pseudocolor of \(\Gg\), set \(G \df \COL{\Gg} \cup \{e\}\), \(H \df
\AUT{\Gg}\) and fix a vertex \(a\) from \(V\). It follows from (i) that
the function \[\Psi\dd V \ni x \mapsto \kappa(a,x) \in G\] is a bijection.
Moreover, it may easily be verified that in that case also the function
\(\Theta\dd H \ni \phi \mapsto \phi(a) \in V\) is bijective. For simplicity, for
\(x \in V\) we denote by \(\phi_x\) the automorphism \(\Theta^{-1}(x)\), and set
\(\Phi \df \Psi \circ \Theta\). For any \(x,y \in V\) we have
\begin{equation}\label{eqn:aux8}
\kappa(x,y) = \kappa(\phi_x(a),\phi_y(a)) =
\kappa(a,(\phi_x^{-1} \circ \phi_y)(a)) = \Phi(L_H(\phi_x,\phi_y)).
\end{equation}
So, it suffices to transfer the binary operation from \(H\) into \(G\) by
\(\Phi\) to get a group \((G,*)\) such that \(\Phi\dd (H,\circ) \to (G,*)\) is
a group isomorphism (precisely, for \(g,h \in G\) we set \(g * h \df
\Phi(\Phi^{-1}(g) \circ \Phi^{-1}(h))\)) and to conclude that \(\Psi\dd
(V,\kappa) \to (G,L_G)\) is an isomorphism of dGEC's. Indeed, \eqref{eqn:aux8}
yields:
\[\kappa(x,y) = \Phi(\phi_x^{-1} \circ \phi_y) = \Phi(\phi_x)^{-1} *
\Phi(\psi_y) = L_G(\Phi(\phi_x),\Phi(\phi_y)) = L_G(\Psi(x),\Psi(y)).\]
So, \(\Gg\) is isomorphic to a group with the left dGEC-structure, as we
wished.\par
In order to show that also (ii) is followed by (iii), it is enough to observe
that if (i) holds, then the dual dGEC \(\Gg^*\) satisfies (i) and therefore
\(\Gg^*\) is isomorphic to a group with the left dGEC-structure. So, we infer
from \PRO{dgroup} that all edges incoming to a single vertex have different
colors in \(\Gg^*\). Equivalently, \(\Gg\) satisfies (i) and we are done.
\end{proof}

Comparing the above result with its counterpart for GEC's---\THM{skton-grp}---we
see two things:
\begin{itemize}
\item as all Boolean groups are fully classified, unlike, e.g., all finite
 groups, the classification of AB-HO dGEC's is apparently much harder than that
 of AB-HO GEC's;
\item the property formulated in part (a) of \THM{dskton-grp} in the realm of
 AB-HO GEC's is equivalent to a unique way of extending partial morphisms
 defined on non-empty sub-GEC's to global automorphisms (see item (C) in
 \THM{skton-grp})---and this equivalence is false for AB-HO dGEC's (see
 \EXM{unique-non} below); however, the former property is stronger than
 the latter. 
\end{itemize}

\begin{figure}
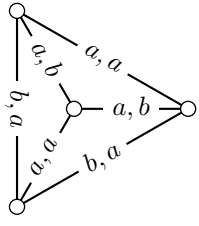

\DIAG{0.3}{\Vertices{circle}{Z,W,Y}\VERT[,Lpos=30,Ldist=-1mm]\WE(Z){X}%
\EDGE[labelstyle=sloped,]{a,a}{Y}{X}\EDGE[labelstyle=sloped,]{a,b}{X}{Z}%
\EDGE[labelstyle=sloped,]{a,b}{W}{X}\EDGE[labelstyle=sloped,]{b,a}{Y}{Z}%
\EDGE[labelstyle=sloped,]{b,a}{W}{Y}\EDGE[labelstyle=sloped,]{a,a}{W}{Z}}
\caption{A uniquely homogeneous AB-HO dGEC without the property of different
colors of outcoming edges}\label{fig:uniq-non}
\end{figure}

\begin{exm}{unique-non}
Figure~\ref{fig:uniq-non} shows a \emph{uniquely homogeneous} AB-HO dGEC (i.e.,
an AB-HO dGEC with the property that each partial morphism defined on
a non-empty sub-dGEC extends to a unique global automorphism), but some two
edges outcoming from a common vertex do have the same color. The color of
an outcoming edge from a given vertex is represented by a letter \(a\) or \(b\)
that is written on this edge on a position closer to that vertex. A verification
of the aformentioned property is left to the reader.
\end{exm}

The above example makes it interesting to find a characterisation of uniquely
homogeneous AB-HO dGEC's. To this end, we introduce

\begin{dfn}{reverse}
A dGEC \(\Gg = (V,\kappa)\) is said to be \emph{reversive} if there exists
a function \(u\dd \COL{\Gg} \to \COL{\Gg}\) such that for any two distinct
vertices \(x\) and \(y\) from \(V\):
\begin{equation}\label{eqn:rev}
\kappa(x,y) = u(\kappa(x,y)).
\end{equation}
In other words, a dGEC is reversive if the color of each edge is determined by
the color of the reverse edge.\par
The \emph{reversification} of \(\Gg\) is a dGEC \(\Gg_{rev} = (V,\kappa_{rev})\)
with the same pseudocolor as \(\Gg\) and where \(\kappa_{rev}(x,y) \df
(\kappa(x,y),\kappa(y,x))\) for distinct \(x\) and \(y\). Note that
\(\Gg_{rev}\) is reversive, since \eqref{eqn:rev} is satisfied for
\(\kappa_{rev}\) and \(u(p,q) \df (q,p)\).
\end{dfn}

Reversification has a very important property, formulated in a proposition
below. Since its proof is straightforward, we omit it.

\begin{pro}{rev}
For a dGEC \(\Gg = (V,\kappa)\) and a function \(\phi\dd A \to V\) with \(A
\subset V\) \tfcae
\begin{enumerate}[\upshape(i)]
\item \(\phi\) is a partial morphism of \(\Gg\);
\item \(\phi\) is a partial morphism of \(\Gg_{rev}\).
\end{enumerate}
In particular, \(\Gg\) is AB-HO (resp. uniquely homogeneous and AB-HO) iff so is
\(\Gg_{rev}\).
\end{pro}

The above result reduces, in a sense, study of AB-HO dGEC's to AB-HO reversive
dGEC's. With the aid of this new notion, we can now characterise uniquely
homogeneous AB-HO dGEC's as follows:

\begin{pro}{uniq-homo}
A dGEC is both uniquely homogeneous and AB-HO iff its reversification is
isomorphic to a group with left dGEC-structure.
\end{pro}
\begin{proof}
The ``if'' part follows from \PRO{dgroup} combined with \PRO{rev}. To see
the ``only if'' part, consider a uniquely homogeneous AB-HO dGEC \(\Gg =
(V,\kappa)\) and note that then \(\Gg_{rev}\) has these two properties as well
(by \PRO{rev}). Moreover, since \(\Gg\) has both these properties, we infer
that for any three distinct vertices \(x,y,z \in V\) one has:
\[\kappa(x,y) \neq \kappa(x,z) \qquad \UP{or} \qquad \kappa(y,x) \neq
\kappa(z,x)\]
(because otherwise we will be able to find an automorphism of \(\Gg\) that fixes
\(x\) and sends \(y\) onto \(z\) and hence \(\Gg\) would not be uniquely
homogeneous). But the above condition means that \(\Gg_{rev}\) satisfies
condition (a) of \THM{dskton-grp}, from which the asertion follows.
\end{proof}

The above result enables us to give a more intrinsic characterisation of
uniquely homogeneous AB-HO dGEC's:

\begin{thm}{uniq-homo}
For a non-empty dGEC \(\Gg = (V,\kappa)\) \tfcae
\begin{enumerate}[\upshape(i)]
\item \(\Gg\) is a uniquely homogeneous AB-HO dGEC;
\item there exist a binary operation \(*\) on \(V\) and a function \(\mu\dd V
 \to Z\) (where \(Z\) is some set) such that:
 \begin{itemize}
 \item \((V,*)\) is a group;
 \item the function \(V \ni v \mapsto (\mu(v),\mu(v^{-1})) \in Z \times Z\) is
  one-to-one;
 \item \(\kappa(u,v) = \mu(u^{-1}*v)\) for any \(u, v \in V\).
 \end{itemize}
\end{enumerate}
Moreover,
\begin{enumerate}[\upshape(gr1)]
\item if \UP{(ii)} holds, then \(\AUT{\Gg}\) consists precisely of all left
 translations of the group \((V,*)\); in particular, \((V,*)\) is isomorphic to
 \(\AUT{\Gg}\);
\item if \UP{(i)} holds, then for any vertex \(u \in V\) there exists a unique
 pair \((*,\mu)\) such that \UP{(ii)} holds and \(u\) is the neutral element of
 \((V,*)\).
\end{enumerate}
\end{thm}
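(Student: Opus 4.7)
The plan is to establish the two implications separately and then the two ``moreover'' statements, working directly rather than routing through \PRO{uniq-homo}.

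For (ii)\(\Rightarrow\)(i) and (gr1), I would first check that every left translation \(L_a\dd v \mapsto a*v\) is an automorphism of \(\Gg\), since \(\kappa(L_a u, L_a v) = \mu((a*u)^{-1}*(a*v)) = \mu(u^{-1}*v) = \kappa(u,v)\). Given a partial morphism \(\phi\dd A \to V\) with \(A \neq \varempty\), pick any \(a \in A\) and set \(c \df \phi(a)*a^{-1}\); the claim is that \(L_c\) extends \(\phi\). For each \(x \in A\) the partial-morphism conditions give \(\mu(a^{-1}*x) = \mu(\phi(a)^{-1}*\phi(x))\) and \(\mu(x^{-1}*a) = \mu(\phi(x)^{-1}*\phi(a))\); writing \(y \df a^{-1}*x\) and \(z \df \phi(a)^{-1}*\phi(x)\), these become \(\mu(y) = \mu(z)\) and \(\mu(y^{-1}) = \mu(z^{-1})\), so the injectivity of \(v \mapsto (\mu(v),\mu(v^{-1}))\) forces \(y = z\), giving \(\phi(x) = c*x = L_c(x)\). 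Uniqueness of the extension is automatic because two left translations agreeing at a single point must coincide. Assertion (gr1) then follows: any automorphism, viewed as a total partial morphism, must equal its unique left-translation extension.

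For (i)\(\Rightarrow\)(ii), I would fix a basepoint \(u \in V\) and observe that \(\Phi\dd \AUT{\Gg} \ni \phi \mapsto \phi(u) \in V\) is a bijection (surjective by homogeneity at \(u\); injective because any automorphism fixing \(u\) extends the partial morphism \(u \mapsto u\), whose unique extension is the identity). Transport the group structure of \(\AUT{\Gg}\) to \(V\) via \(\Phi\), making \(u\) the neutral element and, writing \(\phi_v \df \Phi^{-1}(v)\), giving \(v^{-1} = \phi_v^{-1}(u)\). Let \(p\) denote the pseudocolor of \(\Gg\) and define \(\mu\dd V \to \COL{\Gg} \cup \{p\}\) by \(\mu(v) \df \kappa(u,v)\). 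For any \(v, w \in V\), applying the automorphism \(\phi_v^{-1}\) to both arguments yields
\[\kappa(v,w) = \kappa(\phi_v^{-1}(v),\phi_v^{-1}(w)) = \kappa(u, (\phi_v^{-1}\phi_w)(u)) = \mu(v^{-1}*w),\]
as required. For the injectivity of \(v \mapsto (\mu(v),\mu(v^{-1}))\), I would note that \(\mu(v^{-1}) = \kappa(u,\phi_v^{-1}(u)) = \kappa(\phi_v(u),u) = \kappa(v,u)\), so the pair equals \((\kappa(u,v),\kappa(v,u))\); if \(v \neq w\) yet the two pairs agreed, the assignment \(u \mapsto u\), \(v \mapsto w\) would be a partial morphism, extending by absolute homogeneity to an automorphism fixing \(u\), which unique homogeneity would force to be the identity---contradicting \(v \neq w\).

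For (gr2), any pair \((*,\mu)\) satisfying (ii) with \(u\) as the neutral element must satisfy \(\mu(v) = \mu(u^{-1}*v) = \kappa(u,v)\), so \(\mu\) is forced; and by (gr1) the left translation \(L_v\) is the unique automorphism sending \(u\) to \(v\), so \(v*w = L_v(w)\) is forced as well. The main obstacle I expect is the identification of the abstract condition ``\(v \mapsto (\mu(v),\mu(v^{-1}))\) is one-to-one'' as the intrinsic avatar of unique homogeneity: in (ii)\(\Rightarrow\)(i) the extension step requires both \(\mu\)-equalities simultaneously to collapse \(y=z\), while in (i)\(\Rightarrow\)(ii) one must recognise the pair \((\kappa(u,v),\kappa(v,u))\) as exactly the obstruction to extending a basepoint-fixing partial morphism, which in turn rests on the small algebraic identity \(\mu(v^{-1}) = \kappa(v,u)\) obtained by the automorphism swap.
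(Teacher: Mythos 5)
Your proof is correct, and the two halves relate to the paper differently. The implication (ii)\(\implies\)(i) together with (gr1) is essentially the paper's argument verbatim: left translations are automorphisms, and the two partial-morphism identities \(\mu(a^{-1}*x)=\mu(\phi(a)^{-1}*\phi(x))\) and \(\mu((a^{-1}*x)^{-1})=\mu((\phi(a)^{-1}*\phi(x))^{-1})\) combined with the injectivity of \(v\mapsto(\mu(v),\mu(v^{-1}))\) force \(\phi\) to be a left translation. For (i)\(\implies\)(ii), however, you take a genuinely different route: the paper passes through the reversification \(\Gg_{rev}\) and \PRO{uniq-homo}, which in turn rests on \THM{dskton-grp} and \PRO{rev}, extracting \(\mu\) as the first-coordinate projection of the \(\Gg_{rev}\)-coloring; you instead redo the group transport directly on \(\Gg\) via the evaluation bijection \(\AUT{\Gg}\ni\phi\mapsto\phi(u)\in V\) and identify \((\mu(v),\mu(v^{-1}))\) explicitly as \((\kappa(u,v),\kappa(v,u))\), with injectivity coming straight from unique homogeneity applied to the partial morphism \(u\mapsto u\), \(v\mapsto w\). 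The paper's route buys economy by reusing already-proved structural results (and situates the theorem within the reversification formalism it has just introduced); your route buys self-containedness, makes the meaning of the injectivity condition transparent, and gives the existence half of (gr2) for free since the basepoint \(u\) is arbitrary from the outset, whereas the paper must conjugate an already-built group structure by \(x*y\df x\cdot u^{-1}\cdot y\). Both are complete; no gaps.
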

\begin{proof}
First assume (i) holds. By \PRO{uniq-homo}, there exists a group \((G,\cdot)\)
and an isomorphism \(\phi\dd V \to G\) of \(\Gg_{rev}\) onto \((G,L_G)\). Recall
that the palette of \(\Gg_{rev}\) consists of (certain) pairs of colors from
the palette of \(\Gg\). We denote by \(\pi\dd \COL{\Gg} \times \COL{\Gg} \to
\COL{\Gg}\) the projection onto the first coordinate; that is, \(\pi(c,d) \df
c\). Additionally, we set \(\pi(e) \df e\) where \(e\) is the pseudocolor of
both \(\Gg\) and \(\Gg_{rev}\). Further, since \(\phi\) is a bijection, we may
and do transfer the group structure of \(G\) into \(V\) to obtain a binary
operation \(*\) on \(V\) such that \((V,*)\) is a group and \(\phi\dd (V,*) \to
(G,\cdot)\) is a group isomorphism.\par
Note that \(G\) is contained (as a set) in the domain of \(\pi\) (because
\(\COL{G,L_G} = G \setminus \{e_G\}\) coincides with the palette of
\(\Gg_{rev}\) and the pseudocolor \(e_G\) of \(G\) is the same as that of
\(\Gg_{rev}\)). Thus, it makes sense the formula \(\mu \df \pi \circ \phi\dd V
\to Z\) (where \(Z \df \COL{\Gg} \cup \{e\}\)). Finally, observe that:
\begin{itemize}
\item for any \(x, y \in V\), 
 \begin{align*}
 \kappa(x,y) &= \pi(\kappa_{rev}(x,y)) = \pi(L_G(\phi(x),\phi(y))) \\&=
 \pi(\phi(x)^{-1} \phi(y)) = \pi(\phi(x^{-1} * y)) = \mu(x^{-1}*y);
 \end{align*}
\item for a fixed \(a \in V\), we already know that all outcoming edges from
 \(a\) have different colors in \(\Gg_{rev}\), which implies that the function
 \(V \ni v \mapsto \kappa_{rev}(a,v) \in \COL{\Gg_{rev}}\) is one-to-one, but
 \(\kappa_{rev}(a,v) = (\kappa(a,v),\kappa(v,a)) = (\mu(a^{-1}*v),
 \mu(v^{-1}*a)),\)
\end{itemize}
and hence it is sufficient to substitute the neutral element of \((V,*)\) for
the above \(a\) to get the whole assertion of (ii).\par
We pass to the reverse implication. Assume (ii) is fulfilled. Let \(\phi\dd A
\to V\) be a partial morphism of \(\Gg\) defined on a non-empty set \(A \subset
V\). We fix \(a \in A\) and set \(b \df \phi(a)\). Then, for all \(x \in A\):
\begin{align*}
\mu(a^{-1}*x) &= \kappa(a,x) = \kappa(b,\phi(x)) = \mu(b^{-1}*\phi(x)),\\
\mu((a^{-1}*x)^{-1}) &= \mu(x^{-1}*a) = \kappa(x,a) = \kappa(\phi(x),b) \\&=
\mu(\phi(x)^{-1}*b) = \mu((b^{-1}*\phi(x))^{-1}).
\end{align*}
So, it follows from the property of \(\mu\) postulated in (ii) that \(a^{-1}*x =
b^{-1}*\phi(x)\), which yields \(\phi(x) = c*x\) for all \(x \in A\) and \(c \df
b*a^{-1}\). On the other hand, a straightforward calculation shows that all left
translations of \((V,*)\) are automorphisms of \(\Gg\), and therefore
\(\AUT{\Gg}\) consists precisely of all left translations and the above \(\phi\)
extends to a unique automorphism of \(\Gg\), which finishes the proof of this
implication and, incidentally, of (gr1).\par
The uniqueness in (gr2) follows from (gr1): if \((V,*)\) is a group such that
(for some \(\mu\)) condition (ii) is fulfilled and \(u\) is the neutral element
of this group, then (thanks to (gr1)) the function \(\AUT{\Gg} \ni \phi \mapsto
\phi(u) \in V\) is a group isomorphism that allows us to determine the binary
operation \(*\) on \(V\). And knowing \(*\), we know \(\mu\) as \(\mu(v) =
\kappa(u,v)\) (by the last property in (ii)). Finally, to prove the existence in
(gr2), we use previous parts of the proof---we already know that there exist
a certain group structure \((V,\cdot)\) (on \(V\)) and a function \(\mu_0\) such
that (ii) holds. Then we define \(*\) and \(\mu\) for fixed \(u\) as follows:
\(x*y \df x \cdot u^{-1} \cdot y\) and \(\mu(x) \df \mu_0(u^{-1} \cdot x)\). We
leave it as an exercise that this pair has all properties postulated in (gr2).
\end{proof}

We end the section with the following

\begin{exm}{smallest}
We now give a simple example of an AB-HO dGEC that is not a GEC. To this end,
let \(V \df \{0,1,2\}\) be the set of vertices, \(0\) stand for the pseudocolor
and let \(-1\) and \(1\) serve as colors. We define the edge-coloring \(\kappa\)
as follows: \(\kappa(0,1) = \kappa(1,2) = \kappa(2,0) \df 1\) and \(\kappa(1,0)
= \kappa(2,1) = \kappa(0,2) \df -1\). It may easily be verified that:
\begin{itemize}
\item \(\Gg_{\min} \df (V,\kappa)\) is an AB-HO dGEC (that is not a GEC);
\item each AB-HO dGEC that is not a GEC is either structurally equivalent to
 \(\Gg_{\min}\) or else has size greater than three;
\item \(\Gg_{\min}\) is structurally equivalent to the group \(\ZZZ/3\ZZZ\)
 considered as a dGEC---in particular, \(\Gg_{\min}\) is Abelian-like.
\end{itemize}
We leave the details as an exercise.
\end{exm}

\section{Remarks on structural equivalence}\label{sec:ges}

When studying structures of AB-HO GEC's (or dGEC's), the palette does not really
matter---that is, specific colors of edges are irrelevant; the only thing that
matters is which edges share the same color. From this perspective isomorphisms
between dGEC's lose significance in favor of structural isotopies. In this
section we discuss most important aspects of AB-HO dGEC's when we consider them
as structures without colors. As such, their structure acquires a more inward
nature. What exactly we have in mind is explained in the following

\begin{dfn}{similar}
By an \emph{edge similarity} on a set \(V\) (which serves, as usual, as a set of
vertices) we mean any equivalence relation \(\sim\) on the set \(V \times V\)
(that is, between pairs from \(V \times V\)) with the following property:
\[\forall x, y, z \in V\dd\qquad (x,y) \sim (z,z) \iff x = y.\]
An edge similarity \(\sim\) on \(V\) is said to be \emph{reflective} iff:
\[\forall x, y \in V\dd\qquad (x,y) \sim (y,x).\]
A \emph{dGES} (a \emph{directed graph with edge similarity}) is a set (of
vertices) equipped with an edge similarity. If this relation is reflective, we
speak about a \emph{GES} (a \emph{graph with edge similarity}).
\end{dfn}

\begin{exm}{dGEC=dGES}
Each dGEC (resp. GEC) \((V,\kappa)\) possesses a canonical structure of a dGES
(resp. a GES) consisting of the edge similarity \(\sim_{\kappa}\) given by:
\[(x,y) \sim_{\kappa} (z,w) \iff \kappa(x,y) = \kappa(z,w).\]
Conversely, each dGES (resp. GES) \((V,\sim)\) possesses a canonical structure
of a dGEC (resp. a GEC): it is enough to declare the diagonal of \(V\) (which
coincides with a single equivalence class w.r.t. \(\sim\)) as the pseudocolor,
the quotient set \(Y \df (V \times V) / \sim\) reduced by the pseudocolor as
the palette of colors and the quotient map \(V \times V \ni (x,y) \mapsto
[(x,y)]_{\sim} \in Y\) as an edge-coloring.
\end{exm}

In order to speak about absolute homogeneity, we need to introduce partial
morphisms of dGES'es, which we now do.

\begin{dfn}{part-morph-dGES}
Let \(\Ee \df (V,\sim)\) be a dGES. A \emph{partial morphism} of \(\Ee\) is any
function \(f\dd A \to V\) defined on a subset \(A\) of \(V\) such that:
\[\forall x, y \in A\dd\qquad (f(x),f(y)) \sim (x,y).\]
(Notice that the above condition forces \(f\) to be one-to-one.) If, in
addition, \(A = V\), the above \(f\) is said to be a \emph{morphism} of \(\Ee\).
A bijective morphism is called an \emph{automorphism}.\par
A dGES is said to be \emph{absolutely homogeneous} (briefly: \emph{AB-HO}) if
every its partial morphism extends to an automorphism. Homogeneity and other its
variants (like two-point homogeneity or ultrahomogeneity) one defines likewise.
\end{dfn}

We wish to underline that in this approach there is no way of defining morphisms
between two different dGES'es (at least in a way such that this new definition
will be compatible with the old one introduced for GEC's and dGEC's). However,
we are able to speak about isotopies (in this approach we skip the adjective
`structural' as we do not need to make any changes in the structures of dGES'es,
in contrast to the context of dGEC's where we allow repainting):

\begin{dfn}{isotopy}
An \emph{isotopy} between dGES'es \((V,\sim_V)\) and \((W,\sim_W)\) is any
bijection \(f\dd V \to W\) such that:
\[\forall x,y,z,w \in V\dd\qquad (x,y) \sim_V (z,w) \iff (f(x),f(y)) \sim_W
(f(z),f(w)).\]
Two dGES'es are said to be \emph{isotopic} if there exists an isotopy between
them.
\end{dfn}

Finally, with no difficulty we can introduce products of (totally arbitrary
collections of non-empty) dGES'es:

\begin{dfn}{prod-dGES}
Let \(\eeE = \{\Ee_s = (V_s,\sim_s)\}_{s \in S}\) be an arbitrary collection of
non-empty dGES'es, \(\aA = (a_s)_{s \in S}\) be an arbitrary element of
\(\prod_{s \in S} V_s\) and let \(\ORD\) be a total order on \(S\). We define
a set \(W\) in the very same way as in \DEF{product}, that is, \(W\) consists of
all elements \(\xX = (x_s)_{s \in S} \in \prod_{s \in S} V_s\) such that the set
\(\DIF{\xX}{\aA}\) is well-ordered w.r.t. \(\ORD\). Finally, the \emph{product
of \(\eeE\) w.r.t. \(\ORD\)}, to be denoted by \(\PROd{s \in S}{\ORD}{\aA}
\Ee_s\), is a dGES \((W,\sim_W)\) where \(\sim_W\) is defined as follows:
\begin{multline*}
(\xX,\yY) \sim_W (\zZ,\wW) \iff (\xX = \yY\ \land\ \zZ = \wW)\ \lor\\
(\xX \neq \yY\ \land\ \zZ \neq \wW\ \land\ \ell \df \LAB{\xX}{\yY} =
\LAB{\zZ}{\wW}\ \land\ (x_{\ell},y_{\ell}) \sim_{\ell} (z_{\ell},w_{\ell}))
\end{multline*}
where \(x_{\ell},y_{\ell},z_{\ell}\) and \(w_{\ell}\) are the \(\ell\)-th
coordinates of \(\xX,\yY,\zZ\) and \(\wW\), respectively.
\end{dfn}

Similarly as in case of dGEC's, one easily proves that if the collection
\(\{\Ee_s\}_{s \in S}\) consists only of homogeneous dGES'es, then all
the products of the form \(\PROd{s \in S}{\ORD}{\aA}\) where \(\ORD\) is a fixed
total order on \(S\) (and \(\aA\) varies) are pairwise isotopic. (Even more, if
\(\ORD\) is a well-order on \(S\), all they coincide, even for collections of
non-homogeneous dGES'es.) Thus, in such cases we may write \(\PROD{s \in S}
\Ee_s\) to denote an isotopic type of this product. If, in addition to
the above, all dGES'es coincide, say \(\Ee_s = \Ee\), instead of
\(\PROD{s \in S} \Ee_s\) we write \(\Ee^{(S,\ORD)}\).

Passing to canonical dGEC's structures of dGES's (introduced in
\EXM{dGEC=dGES}), one concludes from \THM{prod-AB-HO} the following result,
whose proof is omitted.

\begin{thm}{prod-dGES}
The product of AB-HO dGES'es is AB-HO as well.
\end{thm}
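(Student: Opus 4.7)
The plan is to reduce the theorem to \THM{dproduct} (the directed version of \THM{prod-AB-HO}) via the canonical dGEC/dGES correspondence of \EXM{dGEC=dGES}. The first step is the bridge observation: for any dGES $\Ee = (V,\sim)$, a function $f\dd A \to V$ is a partial morphism of $\Ee$ \iaoi it is a partial morphism of the canonical dGEC $(V,\kappa_{\sim})$ obtained by letting the pseudocolor be the diagonal class and the palette be the remaining equivalence classes of $\sim$, with $\kappa_{\sim}(x,y) \df [(x,y)]_{\sim}$. This is immediate from unwinding definitions: $(f(x),f(y)) \sim (x,y)$ is the same as $\kappa_{\sim}(f(x),f(y)) = \kappa_{\sim}(x,y)$. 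Consequently, $\Ee$ is AB-HO as a dGES \iaoi $(V,\kappa_{\sim})$ is AB-HO as a dGEC.

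Next, given a collection $\{\Ee_s = (V_s,\sim_s)\}_{s \in S}$ of AB-HO dGES'es and a total order $\ORD$ on $S$, I would convert each $\Ee_s$ to its canonical dGEC $\Gg_s = (V_s,\kappa_s)$, after making two harmless adjustments to ensure productability in the sense of \DEF{disjoint}: fix one common ``pseudocolor'' symbol $e$ used by every $\Gg_s$ for the diagonal class, and tag each non-diagonal equivalence class in $\Ee_s$ by the index $s$ (i.e., replace $[(x,y)]_{\sim_s}$ by $(s,[(x,y)]_{\sim_s})$) so that the resulting palettes $\COL{\Gg_s}$ become pairwise disjoint. These relabellings do not affect the relations $\sim_s$ nor the property of being AB-HO, so $\{\Gg_s\}_{s \in S}$ is a productable collection of AB-HO dGEC's.

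Now I would apply \THM{dproduct} to conclude that $\Hh \df \PROD{s \in S} \Gg_s$ is an AB-HO dGEC. The final step is to identify the dGES structure induced on $\Hh$ (via \EXM{dGEC=dGES}) with the product $\PROD{s \in S} \Ee_s$ in the sense of \DEF{prod-dGES}. But this identification is forced by the definitions: two pairs $(\xX,\yY)$ and $(\zZ,\wW)$ of vertices of $\Hh$ receive the same color in $\Hh$ \iaoi either both are diagonal pairs, or else $\ell \df \LAB{\xX}{\yY} = \LAB{\zZ}{\wW}$ and $\kappa_{\ell}(x_{\ell},y_{\ell}) = \kappa_{\ell}(z_{\ell},w_{\ell})$, which (by our construction of $\kappa_{\ell}$) amounts to $(x_{\ell},y_{\ell}) \sim_{\ell} (z_{\ell},w_{\ell})$; this is precisely the defining condition of $\sim_W$ in \DEF{prod-dGES}. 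Combining this identification with the first-step equivalence between AB-HO as a dGES and AB-HO as the canonical dGEC yields the conclusion.

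The only step requiring any care is the productability adjustment in the middle paragraph (tagging palettes to make them disjoint), since \DEF{prod-dGES} imposes no analogous disjointness requirement on the dGES side; everything else is a direct translation through \EXM{dGEC=dGES}. I do not expect any genuine obstacle, since the dGEC machinery already absorbs the combinatorial heart of the argument.
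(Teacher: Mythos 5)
Your proposal is correct and follows exactly the route the paper intends (the paper explicitly says the result is obtained by ``passing to canonical dGEC structures'' and invoking the product theorem, omitting the details); your use of \THM{dproduct} rather than \THM{prod-AB-HO} is the right choice for the directed setting. The one place where you add genuine value over the paper's one-line sketch is the tagging/relabelling step that forces productability (common pseudocolor, pairwise disjoint palettes) and the verification that disjointness is precisely what makes the induced edge similarity of the dGEC product coincide with $\sim_W$ from \DEF{prod-dGES} --- both of these are handled correctly.
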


We leave as an exercise to define \ITP\ for dGES'es and formulate classification
of all dGES'es with \ITP\ (based on \THM{classification}).\par
We want to pay special attention to compact topological dGES'es which we call
\emph{cdGES'es} and define as follows:

\begin{dfn}{cdGES}
A \emph{cdGES} (a \emph{compact} dGES) is a compact topological Hausdorff space
\(X\) equipped with an edge similarity that is a closed subset of \(X \times
X\). A \emph{cGES} is a cdGES that is a GES (that is, whose edge similarity is
reflective).
\end{dfn}

The above requirement that the edge similarity must be closed in the Cartesian
square of the space is typical in a topological context, especially when working
with compact Hausdorff spaces as in such spaces the aforementioned closedness
guarantees (and is equivalent to) that the quotient space (that is, \((X \times
X)/\sim\) where \(\sim\) is the edge similarity) is a (compact) Hausdorff space
as well.\par
Below we present a little bit surprising result on cdGES'es. It may be
considered as an abstract generalisation of well-known facts about partial
isometries of compact metric spaces.

\begin{thm}{cdGES}
Let \(\Xx = (X,\sim)\) be a cdGES.
\begin{enumerate}[\upshape(a)]
\item Each partial morphism of \(X\) is a continuous function and extends to
 a partial morphism defined on a closed set.
\item For any closed set \(Z \subset X\) the set \(\OPN{Mor}(Z,X)\) of all
 partial morphisms \(f\dd Z \to X\) of \(\Xx\) is compact in the compact-open
 topology of the space \(C(Z,X)\) of all continuous functions from \(Z\) into
 \(X\).
\item Each morphism of \(\Xx\) is an automorphism.
\item \(\AUT{\Xx}\) is a compact topological group in the compact-open topology.
\item \(\Xx\) is AB-HO iff every partial morphism defined on a finite subset
 \(A\) of \(X\) extends to a partial morphism defined on \(A \cup \{a\}\) for
 each \(a\) from a fixed dense subset \(D\) of \(X\).
\end{enumerate}
\end{thm}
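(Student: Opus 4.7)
The plan is to leverage closedness of $\sim$ in $X \times X$ at every step. For part (a), continuity of a partial morphism $f\dd A \to X$ follows from a one-line argument: if $x_\alpha \to x$ in $A$ and a subnet satisfies $f(x_{\alpha_\beta}) \to y$, then $(f(x_{\alpha_\beta}), f(x)) \sim (x_{\alpha_\beta}, x)$ for all $\beta$, so passing to the limit (using closedness of $\sim$) yields $(y, f(x)) \sim (x,x)$, which forces $y = f(x)$ by the defining axiom of edge similarity. The same device extends $f$ to its closure: for $x \in \bar A$ set $\bar f(x) \df \lim_\alpha f(x_\alpha)$ for any net $x_\alpha \to x$ in $A$, with well-definedness obtained by applying the same limit trick to two nets $x_\alpha, y_\beta \to x$ via $(f(x_\alpha), f(y_\beta)) \sim (x_\alpha, y_\beta)$.

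For part (b), $\OPN{Mor}(Z,X)$ is closed in $X^Z$ with the product topology by closedness of $\sim$. The key technical point is equicontinuity: given an open entourage $U$ of $\Delta_X$ in $X \times X$, I would invoke the classical fact that for the closed equivalence relation $\sim$ on the compact Hausdorff space $X \times X$ (for which the quotient map is therefore closed) any open neighbourhood of the single equivalence class $\Delta_X$ contains a saturated open neighbourhood $V$. Then $(x,y) \in V$ together with $(f(x),f(y)) \sim (x,y)$ forces $(f(x),f(y)) \in V \subset U$, uniformly in $f$, and Ascoli-Arzel\`a upgrades product compactness to compact-open compactness. Part (c) I would prove by an Ellis-idempotent argument: $\overline{\{f^n \dd n \geq 1\}}$ is a closed subsemigroup of the compact topological semigroup $\OPN{Mor}(X,X)$ and therefore contains an idempotent $e$; since $e$ is injective (like any morphism) and $e \circ e = e$, we get $e = \mathrm{id}_X$, hence $f^{n_\alpha} \to \mathrm{id}_X$ along some subnet, and from $f^{n_\alpha}(a) \to a$ combined with $f^{n_\alpha}(X) \subset f(X)$ it follows that $a \in \overline{f(X)} = f(X)$. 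Part (d) is then bookkeeping: $\AUT{\Xx} = \OPN{Mor}(X,X)$ by (c), composition is jointly continuous in the compact-open topology on a compact Hausdorff space, and continuity of inversion comes from the usual compactness trick (any subnet limit $g$ of $f_\alpha^{-1}$ satisfies $g \circ f = \mathrm{id}_X$).

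For part (e), only the nontrivial direction requires work. Extend the given partial morphism $f \dd A \to X$ by Zorn's lemma to a maximal partial morphism $\tilde f \dd B \to X$; by (a) we may assume $B$ is closed. If $B \neq X$, density of $D$ combined with closedness of $B$ yields some $d \in D \setminus B$, and for each finite $F \subset B$ the hypothesis supplies $y_F \in X$ making $\tilde f|_F \cup \{(d, y_F)\}$ a partial morphism. Hence the closed set $K_F \df \{y \in X \dd \tilde f|_F \cup \{(d, y)\} \text{ is a partial morphism}\}$ is non-empty, and the family $\{K_F\}$ is centred (clearly $K_{F_1 \cup F_2} \subset K_{F_1} \cap K_{F_2}$), so compactness of $X$ supplies $y \in \bigcap_F K_F$. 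Setting $\tilde f(d) \df y$ properly extends $\tilde f$ to $B \cup \{d\}$, contradicting maximality. Thus $B = X$, and by (c) this morphism is an automorphism.

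The main technical obstacle I anticipate is the saturated-neighbourhood fact underlying the equicontinuity argument in part (b): it is the cleanest way to ensure that the compact-open and product topologies agree on $\OPN{Mor}(Z,X)$, but it rests on the non-trivial (though classical) observation that the quotient by a closed equivalence relation on a compact Hausdorff space is again compact Hausdorff. The Ellis-idempotent trick in part (c) is the other spot where a non-standard input is needed, since surjectivity of $f$ is not manifest from the morphism axioms alone.
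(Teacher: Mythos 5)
Your proposal is correct, and parts (a), (d) and (e) are essentially the paper's arguments in different clothing: the paper proves (a) by observing that the closure of the graph of a partial morphism is again the graph of a partial morphism (your net computation is the pointwise version of this), proves (d) exactly as you do modulo presentation, and proves (e) by directly intersecting the compact sets of functions \(E(S,F)\dd g\restriction{S}=f\restriction{S}\), \(g\restriction{S\cup F}\) a morphism, over finite \(S\subset A\), \(F\subset D\) --- the same centred-family compactness you package inside a Zorn maximality argument. The genuine divergences are in (b) and (c). For (b) the paper verifies \emph{even continuity} of \(\OPN{Mor}(Z,X)\) by a contradiction argument with nets and then cites the abstract Arzel\`a--Ascoli theorem; you instead extract uniform equicontinuity from the fact that the quotient of the compact Hausdorff space \(X\times X\) by the closed relation \(\sim\) is a closed map, so every neighbourhood of \(\Delta_X\) (which is a single \(\sim\)-class by the edge-similarity axiom) contains a saturated open one. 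This is a cleaner, more conceptual route, at the price of importing the closed-quotient lemma; both are sound. For (c) the paper takes a pointwise limit \(g\) of a subnet of the iterates \(f^n\), forms \(K\df\bigcap_n f^n(X)\), and uses the already-established compact group structure of \(\AUT{K,\sim}\) to force \(K=X\); your Ellis-idempotent argument (an idempotent in the compact topological semigroup \(\overline{\{f^n\}}\) is injective, hence the identity, hence \(\mathrm{id}_X\) is a limit of iterates and \(f(X)\) is dense and closed) reaches the same conclusion without invoking (d), which slightly simplifies the logical dependencies among the five parts. All the auxiliary facts you rely on (joint continuity of composition on \(C(X,X)\) for compact Hausdorff \(X\), existence of idempotents in compact topological semigroups, closedness of \(f(X)\)) are standard and apply here.
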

\begin{proof}
Before passing to specific parts of the theorem, we make a preliminary
preparations. Denote by \(R\) the set \(\{(x,y,z,w) \in X^4\dd\ (x,z) \sim
(y,w)\}\) where \(X^4 \df X \times X \times X \times X\). It follows from
the definition of a cdGES that \(R\) is a closed set in \(X^4\).\par
(a): Let \(f\dd A \to X\) (where \(A \subset X\)) be a partial morphism. Since
\(f\) is a partial morphism, we infer that its graph \(\Gamma \df \{(a,f(a))\dd\
a \in A\}\) satisfies: \(\Gamma \times \Gamma \subset R\). Consequently, also
\begin{equation}\label{eqn:aux10}
\bar{\Gamma} \times \bar{\Gamma} \subset R
\end{equation}
where \(\bar{\Gamma}\) is the closure of \(\Gamma\) in \(X \times X\). Inclusion
\eqref{eqn:aux10} implies that \(\bar{\Gamma}\) coincides with the graph of some
function. To convince onelself about it, take any two pairs \((a,b), (c,d) \in
\bar{\Gamma}\) such that \(c = a\). It then follows from \eqref{eqn:aux10} that
\((a,c) \sim (b,d)\) and hence \(d = b\), which is equivalent to the statement
that \(\bar{\Gamma}\) is the graph of some \(F\dd B \to X\). Finally, since
\(F\) has closed graph and \(X\) is compact, it follows that \(B\) is closed and
\(F\) is continuous, which finishes the proof of (a).\par
(b): We already know (from (a)) that \(K \df \OPN{Mor}(Z,X)\) consists of
continuous functions. Note that \(K\) is closed in the topology of pointwise
convergence in \(C(Z,X)\), thanks to the closedness of \(R\) in \(X^4\). So,
since both \(Z\) and \(X\) are compact, we can make use of an abstract version
of the Arzela-Ascoli type theorem (consult, e.g., \cite[Theorem~3.4.20]{eng})
which states that \(K\) (being closed) is compact (in the compact-open topology)
iff it is an \emph{evenly continuous} family of functions. In other words, we
only need to show that:
\begin{itemize}
\item[\((*)\)] For any \(x \in Z\), \(y \in X\) and an open neighbourhood \(V\)
 of \(y\) in \(X\) there are open neighbourhoods \(U \subset Z\) and \(W \subset
 X\) of \(x\) and \(y\), respectively, with the following property: if \(u \in
 K\) and \(u(x) \in W\), then \(u(U) \subset V\).
\end{itemize}
To prove \((*)\), we argue by a contradiction: we fix \(x\), \(y\) and \(V\) as
specified in \((*)\), choose any neighbourhood \(W\) of \(y\) whose closure is
contained in \(V\), and suppose that for any open (in \(Z\)) neighbourhood \(U
\subset Z\) of \(x\) there exist a function \(f_U \in K\) and a point \(x_U \in
U\) such that for \(\sigma \df U\):
\begin{equation}\label{eqn:aux11}
f_{\sigma}(x) \in W \qquad \UP{and} \qquad f_{\sigma}(x_{\sigma}) \notin V.
\end{equation}
Since \(K\) is compact in the pointwise convergence topology (by the Tychonoff
theorem on products of compact spaces), we may and do assume (after passing to
a subnet) that the functions \(f_{\sigma}\ (\sigma\in\Sigma)\) converge
pointwise on \(Z\) to a function \(g \in K\) (and still satisfy
\eqref{eqn:aux11}) and the points \(f_{\sigma}(x_{\sigma})\ (\sigma\in\Sigma)\)
converge to \(z \in X\). Then it follows that \(g(x) = \lim_{\sigma\in\Sigma}
f_{\sigma}(x)\) belongs to the closure of \(W\) and hence \(g(x) \in V\). On
the other hand, we infer from \eqref{eqn:aux11} that \(z \notin V\). Finally,
observe that \(\lim_{\sigma\in\Sigma} x_{\sigma} = x\) and \((x_{\sigma},x) \sim
(f_{\sigma}(x_{\sigma}),f_{\sigma}(x)) \stackrel[\sigma\in\Sigma]{}{\to}
(z,g(x))\). So, it follows from the closedness of the relation \(\sim\) that
\((x,x) \sim (z,g(x))\) as well. But the latter relation implies that \(g(x) =
z \notin V\), which contradicts the property established earlier.\par
Since we need (d) in the proof of (c), now we will briefly show (d) (which
simply follows from (b)). To this end, just observe that the set
\[P \df \{(u,v) \in \OPN{Mor}(X,X) \times \OPN{Mor}(X,X)\dd\ u \circ v = v \circ
u = \OPN{id}_X\}\] (where \(\OPN{id}_X\) is the identity map on \(X\)) is
a closed set in \(\OPN{Mor}(X,X)\) (as the operation of composing functions is
continuous on \(C(X,X) \times C(X,X)\) for compact \(X\)) and \(\AUT{\Xx}\)
coincides with the image of \(P\) by a projection onto the first coordinate. So,
\(\AUT{\Xx}\) is compact, thanks to (b). And as a subgroup of the homeomorphism
group of a compact Hausdorff space, it is a topological group.\par
Now we pass to (c). Let \(f\dd X \to X\) be a morphism of \(\Xx\). It follows
from (b) that the sequence \((f^n)_{n=1}^{\infty}\) of all iterated powers of
\(f\) has a convergent subnet to some morphism \(g\dd X \to X\). For simplicity,
we set \(K \df \bigcap_{n=1}^{\infty} f^n(X)\). Observe that \(f(K) = K\) and
\begin{equation}\label{eqn:aux12}
g(X) \subset K.
\end{equation}
In particular, \(f\restriction{K}\) is an automorphism of the sub-cdGES \(\Kk
\df (K,\sim)\) of \(\Xx\). So, it follows from (d) that \(g\restriction{K}\) is
an automorphism of \(\Kk\) and, consequently, \(g(K) = K\). Since \(g\) is
one-to-one, the last equality, combined with \eqref{eqn:aux12} yields that \(K =
X\), which finishes the proof of (c).\par
(e): We only need to check the sufficiency of the condition formulated in (b).
To this end, we assume that \(X\) has the property of extending partial
morphisms on finite sets by one point (as specified in (b)) and fix a partial
morphism \(f\dd A \to X\) (with \(A \subset X\)). It follows from our assumption
that for any two finite sets \(S \subset A\) and \(F \subset D\) the set
\(E(S,F)\) consisting of all (possibly discontinuous) functions \(g\dd X \to X\)
such that:
\begin{itemize}
\item \(g\restriction{S} = f\restriction{S}\);
\item \(g\restriction{S \cup F}\dd S \cup F \to X\) is a morphism
\end{itemize}
is non-empty. What is more, \(E(S,F)\) is closed in the pointwise convergence
topology of \(X\) (because \(R\) is closed in \(X^4\)). So, we infer from
the Tychonoff theorem (on products of compact spaces) that \(E(S,F)\) is
compact. Observe that all such sets form a centered family, as \(\bigcap_{k=1}^n
E(S_k,F_k) \supset E(\bigcup_{k=1}^n S_k,\bigcup_{k=1}^n F_k) \neq \varempty\)
for any finite sets \(S_1,\ldots,S_n \subset A\) and \(F_1,\ldots,F_n \subset
D\) (and each \(n > 0\)). So, the compactness of these sets implies that their
intersection is non-empty. Now any function \(g_0\) that belongs to that
intersection is a densely defined partial morphism that extends \(f\). We infer
from (a) that \(g_0\) extends to a (unique) morphism \(g\) and from (c) that
\(g\) is an automorphism, and we are done.
\end{proof}

Part (e) of the above result contains, in particular, a probably well-known
property of compact metric spaces which says that in such spaces
ultrahomogeneity is equivalent to absolute homogeneity.

\begin{exm}{comp-grp}
Let \(G\) be a compact topological Hausdorff group. Then \(G\) becomes an AB-HO
cdGES when considered with its canonical dGEC structure induced by (e.g.) its
left dGEC-structure, that is, when considered with the relation \(\sim\) given
by: \[(a,b) \sim (c,d) \stackrel{\UP{def}}{\iff} a^{-1}b = c^{-1}d.\]
It follows from \THM{skton-grp} that the above \(G\) is a cGES iff it is
a Boolean group. It is worth noticing here that all compact Boolean groups are
fully classified: each such a group is topologically isomorphic to
the multiplicative group \(\{-1,1\}^{\alpha}\) (equipped with the product
topology) where \(\alpha\) is an arbitrary cardinal and is uniquely determined
by the group under consideration.
\end{exm}

The above example shows, among other things, that the class of cdGES contains
infinite-dimensional metrizable spaces which are simultaneously arcwise
connected, locally arcwise connencted and simply connected (take, e.g.,
\(SU(2)^{\omega}\)). We are also able to give similar examples (but not simply
connected) of AB-HO cGES'es---indeed, each compact Abelian group becomes a cGES
w.r.t. certain quite natural structure, as shown by

\begin{pro}{CA}
Let \((K,+)\) be a compact Abelian group equipped with the following edge
similarity:
\[(x,y) \sim_{Ab} (z,w) \iff x-y = z-w\ \lor\ x-y = w-z.\]
Then \((K,\sim_{Ab})\) is an AB-HO symmetric compact GES.
\end{pro}
\begin{proof}
Instead of giving a detailed proof, we restrict here only to giving the reader
the main hint: show that each partial morphism \(\phi\dd A \to K\) has one of
the two possible forms: \(\phi(x) = g+x\ (x \in A)\) or \(\phi(x) = g-x\ (x \in
A)\) where \(g\) is an (arbitrarily) fixed element of \(K\).
\end{proof}

\begin{exm}{never}
In order to clarify any uncertainties, let us show that if two non-degenerate
cdGES'es \(\Xx = (X,\sim_X)\) and \(\Zz = (Z,\sim_Z)\) are such that their
product \(\Ww = (W,\sim_W) \df \Xx \times^* \Zz\) is a cdGES as well (in
the product topology), then the space \(X\) is finite (that is, otherwise
the relation \(\sim_W\) will not be closed in \(W \times W\)). To this end,
suppose \(X\) is infinite and fix a net \(\{x_{\sigma}\}_{\sigma\in\Sigma}\) in
\(X\) that converges to some \(x \in X\) and consists of points distinct from
\(x\). Choose arbitrarily two distinct points \(y\) and \(z\) of \(Z\) and
observe that for points \(w_{\sigma} \df (x_{\sigma},y)
\stackrel[\sigma\in\Sigma]{}{\to} w \df (x,y)\) and \(v \df (x,z)\) from \(W\)
one obtains (from the very definition of \(\sim_W\)) that \((w_{\sigma},w)
\sim_W (w_{\sigma},v)\) and simultaneously \((w,w) \not\sim_W (w,v)\) and hence
\(\sim_W\) is not closed in \(W^4\).
\end{exm}

\subsection{From dGES'es to \textit{abstrans}}\label{sec:abstr}
Assume \(\Xx = (X,\sim)\) is a two-point homogeneous dGES and set \(G \df
\AUT{\Xx}\). Then, for any \(x,y,z,w \in X\):
\begin{equation}\label{eqn:sim}
(x,y) \sim (z,w) \iff \exists \phi \in G\dd\ (z,w) = (\phi(x),\phi(y)).
\end{equation}
So, the dGES structure is reproducible from the automorphism group \(G\).\par
Conversely, having any group \(G\) of permutations of a set \(X\) that acts
transitively on \(X\), we can define \(\sim\) by \eqref{eqn:sim} to obtain
a dGES \(\Xx \df (X,\sim)\) that is automatically two-point homogeneous. Indeed,
in that case \(G\) is a subgroup of \(\AUT{\Xx}\). So, when studying two-point
homogeneous dGES'es, instead of pairs of the form \((X,\sim)\) one may start
from \((X,G)\) where \(G\) is a group of permutations of \(X\) acting
transitively on \(X\) (ultimately, we want \(G\) to coincide with the full
automorphism group of the dGES it induces by \eqref{eqn:sim}). With such
an approach, absolute homogeneity translates as follows:
\begin{itemize}\itshape
\item[\UP{(at)}] If \(u\dd A \to X\) (where \(A \subset X\)) is a function such
 that
 \begin{equation}\label{eqn:at}
 \forall a, b \in A\ \exists \phi \in G\dd\ (u(a),u(b)) = (\phi(a),\phi(b)),
 \end{equation}
 then there exists \(v \in G\) for which \(u = v\restriction{A}\).
\end{itemize}
The above condition is actually a property of \(G\). By an analogy to absolute
homogeneity, we call the above condition (at) \emph{absolute transitivity} of
\(G\) (on \(X\)) and say that \(G\) acts \emph{absolutely transitively} on \(X\)
if (at) holds (but firsly we require the transitivity of the action of
\(G\)).\par
Going further, as \(G\) acts transitively on \(X\), \(X\) may naturally be
identified with the set of all left cosets of the isotropy group \(H \df
\OPN{stab}_G(a)\) where \(a\) is an arbitrarily chosen point from \(a\). Then
\(H\) contains no non-trivial normal subgroups of \(G\) and under such
an identification (of points from \(X\) with left cosets of \(H\)) (at) takes
the form formulated in the following

\begin{lem}{at}
Under the above settings, \UP{(at)} is equivalent to:
\begin{itemize}\itshape
\item[\UP{(AT)}] If \(f\dd Z \to G\) (where \(Z \subset G\)) is a function such
 that
 \begin{equation}\label{eqn:AT}
 \forall x,y \in Z\ \exists g \in G\dd\ x^{-1}g^{-1}f(x), y^{-1}g^{-1}f(y) \in
 H,
 \end{equation}
 then there exists \(g \in G\) for which \(z^{-1}g^{-1} f(z) \in H\) for all \(z
 \in Z\).
\end{itemize}
\end{lem}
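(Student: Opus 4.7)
The plan is to verify that (at) and (AT) say exactly the same thing, modulo the standard identification of the transitive $G$-set $X$ with the coset space $G/H$. Because the context fixes $H = \OPN{stab}_G(a)$ for some $a \in X$, I would use the orbit map $G \ni g \mapsto g(a) \in X$ as the dictionary: every point $p \in X$ corresponds to the (non-empty) coset $\{g \in G\dd g(a)=p\}$, and for each $p$ I would choose, once and for all, a representative $g_p \in G$ with $g_p(a)=p$. Under this dictionary, the condition ``there exists $\phi \in G$ with $\phi(p)=q$ and $\phi(p')=q'$'' translates into ``there exists $g \in G$ with $g_p^{-1}g^{-1}g_q \in H$ and $g_{p'}^{-1}g^{-1}g_{q'} \in H$''.

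For the implication \UP{(at)} $\Longrightarrow$ \UP{(AT)}, I would start with a function $f\dd Z \to G$ satisfying \eqref{eqn:AT} and construct $A \df \{z(a)\dd z \in Z\}$ together with $u\dd A \to X$ by the rule $u(z(a)) \df f(z)(a)$. The first step is to check well-definedness: if $z_1(a) = z_2(a)$, then applying \eqref{eqn:AT} with $x=z_1, y=z_2$ yields a $g$ with $g^{-1}f(z_1)(a)=z_1(a)=z_2(a)=g^{-1}f(z_2)(a)$, so $f(z_1)(a)=f(z_2)(a)$. Then \eqref{eqn:at} for $u$ follows by taking $\phi \df g$, \UP{(at)} produces a global extension $v \in G$, and $g \df v$ witnesses \UP{(AT)}.

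For the converse, I would take $u\dd A \to X$ satisfying \eqref{eqn:at} and put $Z \df \{g_p\dd p \in A\}$ together with $f(g_p) \df g_{u(p)}$ (here $Z$ is in bijection with $A$ since $g_p$ determines $p$ via evaluation at $a$). The translation of \eqref{eqn:at} via the dictionary gives \eqref{eqn:AT} directly (using the same $g \df \phi$), and the element $g \in G$ produced by \UP{(AT)} is the required extension: the relations $g_p^{-1}g^{-1}g_{u(p)} \in H$ for all $p \in A$ are equivalent to $g(p)=u(p)$ for all $p \in A$, so $v \df g$ works.

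The only real obstacle is bookkeeping: one has to be careful that the choices of coset representatives $g_p$ do not matter (they do not, since replacing $g_p$ by $g_ph$ for $h \in H$ changes $g_p^{-1}g^{-1}g_{u(p)}$ by a left factor in $H$, which preserves membership in $H$) and to track the inverses correctly when translating between the left action on $X$ and the multiplicative condition in $G$. Once the dictionary is written down with care, both implications reduce to direct substitution.
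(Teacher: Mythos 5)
Your proof is correct and follows essentially the same route as the paper: both directions are handled by translating between $u$ and $f$ via the identification of $X$ with the left cosets of $H$, building $A=\{z(a)\dd z\in Z\}$ with $u(zH)=f(z)H$ in one direction and choosing coset representatives to define $f$ in the other. The paper even remarks that one may take for $Z$ any set meeting each coset of $A$, which is exactly your choice of one representative $g_p$ per point.
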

\begin{proof}
(at)\(\implies\)(AT): Let \(f\dd Z \to G\) satisfy \eqref{eqn:AT}. We set \(A
\df \{zH\dd\ z \in Z\} \subset X\). Observe that:
\[\forall x, y \in Z\dd\qquad (xH = yH \implies f(x)H = f(y)H),\]
which simply follows from \eqref{eqn:AT}. The above property enables us to
correctly define a function \(u\dd A \to X\) by \(u(zH) \df f(z)H\). Moreover,
\(u\) satisfies \eqref{eqn:at} (again thanks to \eqref{eqn:AT}). So,
an application of (at) gives us \(g \in G\) such that \(u(zH) = gzH\) for any
\(z \in Z\), which is equivalent to the conlusion of (AT).\par
(AT)\(\implies\)(at): Now take any \(u\dd A \to X\) (with \(A \subset X\)) that
satisfies \eqref{eqn:at}. (Recall that \(X\) consists of all left cosets of
\(H\).) Let \(Z\) be the union of all cosets that are elements of \(A\)
(actually we could take as \(Z\) any set that is contained in this union and
intersects each coset from \(A\)). For any \(z \in Z\) we choose an arbitrary
element of \(u(zH)\) and denote it by \(f(z)\). In this way we obtain a function
\(f\dd Z \to G\). It is easy to check that \(f\) satisfies \eqref{eqn:AT}, since
\(u\) fulfills \eqref{eqn:at}. So, we infer from (AT) that there is \(g \in G\)
such that \(f(z)H = gzH\) for any \(z \in Z\). Since \(f(z)H = u(zH)\),
the former equation yields (at).
\end{proof}

Observe that (AT) is purely algebraic (that is, it makes sense also for abstract
groups). In this way we have arrived at the following

\begin{dfn}{abstran}
An \emph{abstran} is a pair \((G,H)\) such that:
\begin{itemize}
\item \(G\) is a group; and
\item \(H\) is a subgroup of \(G\) containing no non-trivial normal subgroup of
 \(G\); and
\item (AT) is fulfilled (see \LEM{at}).
\end{itemize}
(This notion is named after `absolute transitivity'.)
\end{dfn}

Abstrans are outcomes of AB-HO dGES'es. (Actually, both these object are in
a one-to-one correspondence.) So, a natural question arises of which abstrans
correspond to AB-HO GES'es. Similarly as presented in the proof of \LEM{at}, we
can easily check that these are those abstrans \((G,H)\) that satisfy:
\[\forall a, b \in G\ \exists g \in G\dd\qquad b^{-1}ga, a^{-1}gb \in H.\]
Equivalently:
\[\forall a, b \in G\dd\qquad bHa^{-1} \cap aHb^{-1} \neq \varempty.\]
Substituting in the above formula \(g\) for \(a^{-1}b\), we obtain another
equivalent condition:
\begin{equation*}\label{eqn:sym}\tag{sym}
\forall g \in G\dd\qquad gHg \cap H \neq \varempty.
\end{equation*}
Abstrans satisfying \eqref{eqn:sym} we call \emph{symmetric}.\par
Now if we start the whole story of this subsection from an AB-HO cdGES \(\Xx\),
then \(G = \AUT{\Xx}\) becomes a compact topological group (by \THM{cdGES}) and
\(H\), as the isotropy group, becomes a closed subgroup of \(G\). So, it is
quite reasonable to call such abstrans \emph{compact}. More generally,
an abstran \((G,H)\) is said to be \emph{compact} if both \(G\) and \(H\) are
compact groups.\par
A (weaker) counterpart of condition (e) of \THM{cdGES} for pairs of groups is
formulated below. Its proof is left as an easy exercise.

\begin{pro}{fin-abstran}
For a compact group \(G\) and its closed subgroup \(H\) containing no
non-trivial normal subgroup of \(G\) \tfcae
\begin{enumerate}[\upshape(i)]
\item \((G,H)\) is an abstran;
\item condition \UP{(AT)} holds whenever \(Z\) is a finite set.
\end{enumerate}
\end{pro}

As Euclidean spheres are AB-HO, we conclude that for all \(n > 1\) the pairs
\((O_n,O_{n-1})\) (where each matrix from \(O_{n-1}\) is supplemented by adding
one row and one column whose all entries are zero apart from the diagonal cell
where the entry is 1) are compact symmetric abstrans corresponding to connected
spaces. As we have seen (in \PRO{CA}), also pairs of the form \((K \rtimes
\FFF_2,\{e_K\} \times \FFF_2)\) where \(K\) is a connected compact Abelian group
and \(\FFF_2\) is the two-point group, are compact symmetric abstrans that
correspond to connected spaces. We know no other examples of such abstrans. To
be correctly understood, we explain that a compact abstran \((G,H)\) corresponds
to a connected space if the homogeneous quotient space \(G/H\) (that is,
the AB-HO cdGES the abstran corresponds to) is connected. It is well-know that
in this situation (that is, if both the groups are compact) this condition is
equivalent to \(G = G_o H\) where \(G_o\) is the connected component of \(G\) at
the neutral element.

\section{Absolute homogeneity: categorical approach}\label{sec:cat}

It is a natural temptation to try to generalise the results that have been
already established in a narrow context to a wider variety. In this section we
fulfill this desire and present a categorical approach to absolute homogeneity.
To this end, we begin with fixing the notation. For any category \(\ccC\),
\(\OB\) and \(\ARR{X}{Y}\) will stand for, respectively, the class of all
objects of \(\ccC\) and the set of all arrows from \(X\) to \(Y\) in this
category where \(X,Y \in \OB\). Additionally, we use \(\AUT[\ccC]{X}\) to denote
the automorphism group of the object \(X\).\par
We can now introduce a central notion of this section.

\begin{dfn}{cat-ab-ho}
Let \(X \in \OB\). \(X\) is said to be \emph{absolutely homogeneous} (again
abbreviated as \emph{AB-HO}) if the following condition is fulfilled:
\begin{quote}\itshape
For any \(A \in \OB\) and any two arrows \(u, v \in \ARR{A}{X}\) there exists
\(\phi \in \AUT[\ccC]{X}\) such that \(v = \phi \circ u\).
\end{quote}
The above condition says that each object essentially admits at most one arrow
into \(X\).
\end{dfn}

A similar concept (of \(\kkK\)-homogeneous objects where \(\kkK\) is
a subcategory of the given one) was earlier studied by Kubi\'{s} \cite{kub}.\par
To obtain results that will resemble main theorems of Section~\ref{sec:gec}, we
need to impose certain conditions on the category under considerations, which we
will now do. Recall that an arrow \(u \in \ARR{X}{Y}\) is said to be
a (categorical) \emph{monomorphism} if it satisfies the following condition:
whenever \(Z \in \OB\) and \(v, w \in \ARR{Z}{X}\) are such that \(u \circ v = u
\circ w\), then \(v = w\). For two objects \(X, Y \in \OB\) we will write:
\begin{itemize}
\item \(X \equiv_{\ccC} Y\) if there exists an isomorphism (in \(\ccC\)) between
 them;
\item \(X \prec_{\ccC} Y\) to denote that \(\ARR{X}{Y} \neq \varempty\).
\end{itemize}
When all arrows in \(\ccC\) are monomorphisms (as we will be assuming in
a while), \(X \prec_{\ccC} Y\) can be interpreted as that \(X\) \emph{embeds}
into \(Y\). Observe that \(\prec_{\ccC}\) is reflexive and transitive.\par
Note that if an object \(X\) of a category \(\ccC\) is AB-HO, then \(\ARR{X}{X}
= \AUT[\ccC]{X}\). Thus, when studying absolute homogeneity, it is natural to
consider only such categories in which arrows are monomorphisms.

\begin{dfn}{axioms}
Let \(\ccC\) be a category. A class \(\ffF\) of certain objects from \(\ccC\) is
said to be:
\begin{itemize}
\item \emph{closed under isomorphic copies} if the following condition is
 fulfilled:
\item[(Iso)] whenever \(X, Y \in \OB\) are such that \(X \equiv_{\ccC} Y\) and
 \(Y \in \ffF\), then \(X \in \ffF\);
\item \emph{hereditary} if it has the following property:
\item[(Sub)] whenever \(X, Y \in \OB\) are such that \(X \prec_{\ccC} Y\) and
 \(Y \in \ffF\), then \(X \in \ffF\);
\item \emph{proper} if
\item[(Set)] there exists a \underline{set} \(W\) of objects from  \(\OB\) such
 that any object from \(\ffF\) is isomorphic to some object from \(W\).
\end{itemize}
We call a category \(\ccC\) \emph{\CARD} if all the following conditions are
satisfied:
\begin{itemize}
\item[(Arr)] All arrows in \(\ccC\) are monomorphisms and \(\ARR{X}{Y}\) is
 a \underline{set} for any two objects \(X, Y \in \OB\).
\item[(Sys)] All directs systems in \(\ccC\) have (direct) limits.
\item[(Card)] To any object \(X \in \OB\) it is assigned a cardinal number, to
 be called \emph{size} of \(X\) and denoted by \(|X|\), in a way that isomorphic
 objects have the same size.
\item[(Lim)] Any object from \(\ccC\) with infinite size \(\alpha\) is
 isomorphic to the object of the direct limit of some direct system consisting
 of at most \(\alpha\) objects each of which has finite size.
\item[(Size)] If a direct system in \(\ccC\) consists of \(\alpha \geq
 \aleph_0\) objects each of which has finite size, then the object of its direct
 limit has size not greater than \(\alpha\).
\item[(Dir)] If \(X, Y \in \OB\) have finite size and \(Z \in \OB\), \(u \in
 \ARR{X}{Z}\) and \(v \in \ARR{Y}{Z}\) are arbitrary, then there exist an object
 \(W \in \OB\) with finite size and three morphisms \(\alpha \in \ARR{X}{W}\),
 \(\beta \in \ARR{Y}{W}\) and \(\gamma \in \ARR{W}{Z}\) such that \(u = \gamma
 \circ \alpha\) and \(v = \gamma \circ \beta\).
\item[(Fin)] For any object \(X \in \OB\) the class \(\FIN[\ccC]{X}\) consisting
 of all objects \(A \in \OB\) of finite size such that \(A \prec_{\ccC} X\) is
 proper (that is, axiom (Set) holds for \(\ffF = \FIN[\ccC]{X}\)).
\item[(Upp)] For any object \(X \in \OB\) there exists a cardinal number \(\mM\)
 with the propery that if \(Y \in \OB\) satisfies \(Y \prec_{\ccC} X\), then
 \(|Y| \leq \mM\).
\end{itemize}
and any object \(X \in \OB\) of finite size has the following property:
\begin{itemize}
\item[\upshape(Emb)] Whenever
 \SYS{Z_{\alpha}}{p_{\alpha,\beta}}{\alpha\leq\beta} is a direct system in
 \(\ccC\) such that the object \(W\) of its direct limit satisfies \(X
 \prec_{\ccC} W\), then \(X \prec_{\ccC} Z_{\alpha}\) for some index \(\alpha\).
\end{itemize}
A \CARD\ category \(\ccC\) is said to be \emph{regular} if for any two its
objects \(X\) and \(Y\) the following condition is satisfied:
\begin{itemize}
\item[(Reg)] If \(X \prec_{\ccC} Y\) and \(|Y| < \aleph_0\), then \(|X| <
 \aleph_0\) as well.
\end{itemize}
\end{dfn}

\begin{exm}{fin-reg}
Below we give a few examples illustrating the above notions and axioms.
In the categories indicated in examples (B)--(D) below arrows are defined as
one-to-one homomorphisms between respective groups.
\begin{enumerate}[(A)]
\item The category of all dGEC's is \CARD\ and regular.
\item The category of all locally finite groups is \CARD\ and regular when
 the size is defined as the cardinality of a given group. (Recall that a group
 is \emph{locally finite} if all its finite subsets generate finite groups.)
\item The category of all Abelian groups is \CARD\ and regular when the size
 is defined as the least cardinality among cardinalities of all subsets
 generating a given group.
\item The category of all groups is \CARD\ \textbf{but not regular} (because
 the free group on two generators contains the free group on infinitely many
 generators as a subgroup) when the size is defined as the least cardinality
 among cardinalities of all subsets generating a given group.
\item Now let \(\ccC\) be a category where:
 \begin{itemize}
 \item \(\OB\) is a class of all compact groups;
 \item for \(G, H \in \OB\), \(\ARR{G}{H}\) consists of all surjective
  continuous homomorphisms from \(H\) onto \(G\);
 \item the composition of two arrows is defined as the composition of respective
  homomorphisms but in a reverse order, that is: the composition of \(u \in
  \ARR{G}{H}\) with \(v \in \ARR{H}{K}\) equals \(u \circ v \in \ARR{G}{K}\)
  (note that then each arrow is a categorical monomorphism and direct limits
  of direct systems in this category coincide with inverse limits of inverse
  systems in a natural direction of arrows as maps);
 \item the size of an object \(G \in \OB\) is defined as follows: \(|G|\) is
  the topological weight of \(G\) if \(G\) is not a Lie group, or else \(|G|\)
  is the topological dimension of a Lie group \(G\).
 \end{itemize}
 One proves that the above category is \CARD\ and regular.
\item Let us give a negative example. If the size is defined as the topological
 weight of a topological space (or the density character of the space), usually
 one of axioms (Lim) or (Emb) will fail to hold. Let us discuss this issue on
 an example of complete separable metric spaces. Their weight is countable, so
 (Lim) forces that any such a space \(M\) has to be (isometric to) the direct
 limit of a countable direct system of finite spaces, which in turn means that
 \(M\) will be identified with some/all of its own dense subsets as they
 coincide with the set-theoretic direct limits of countable direct systems
 mentioned above. But then (Emb) may fail---if for \(X\) (therein) one takes
 a finite subset of \(M\) that is disjoint from the above dense subset \(D\) and
 does not embed isometrically into \(D\), we will lose (Emb).
\end{enumerate}
\end{exm}

Similarly as for GEC's, we define:

\begin{dfn}{fin-str}
Let \(\ccC\) be a \CARD\ category. A class \(\ffF\) of objects from \(\ccC\)
is said to be:
\begin{itemize}
\item \emph{hereditary w.r.t. finite size} if it has the following property:
\item[(Sub\({}_\omega\))] whenever \(X, Y \in \OB\) are such that
 \(X \prec_{\ccC} Y\), \(Y \in \ffF\) and \(X\) has finite size, then \(X \in
 \ffF\) as well;
\item a \emph{\textbf{finitary structure}} if \(\ffF\) consists of objects of
 finite size, is closed under isomorphic copies and hereditary w.r.t. finite
 size.
\end{itemize}
Let \(\ffF\) be a finitary structure. An object \(X \in \OB\) is said to be
\emph{finitely represented in \(\ffF\)} if every object \(A \in \OB\) of finite
size such that \(A \prec_{\ccC} X\) belongs to \(\ffF\). We denote by
\(\grp{\ffF}\) the class of all objects that are finitely represented in
\(\ffF\) and by \(\grp{\ffF}_{\alpha}\) (where \(\alpha\) is an infinite
cardinal) the class of all objects \(X \in \grp{\ffF}\) with \(|X| \leq
\alpha\). Observe that \(\grp{\ffF}\) is always closed under isomorphic copies
and hereditary.\par
For any object \(X \in \OB\), \(\FIN[\ccC]{X}\) (defined in axiom (Fin)) is
always a \underline{proper} finitary structure (thanks to axiom (Fin)) and we
call it \emph{finitary structure induced by \(X\)}. Note that \(X\) is finitely
represented in \(\ffF\) iff \(\FIN[\ccC]{X} \subset \ffF\).
\end{dfn}

To familiarize the reader more with \CARD\ categories, below we present
a surprising (but quite simple) result which says that in a regular \CARD\
category the class of all objects of finite size as well as the size of all
other objects are uniquely determined by the axioms of such categories. Namely:

\begin{pro}{card-cat}
Let \(X\) be an object in a regular \CARD\ category \(\ccC\).
\begin{itemize}
\item[\upshape(a)] \(X\) has finite size iff it satisfies axiom \UP{(Emb)}.
\item[\upshape(b)] If \(X\) does not satisfy \UP{(Emb)}, then the size of \(X\)
 is the least infinite cardinal number \(\alpha\) such that \(X\) is isomorphic
 to the object of the direct limit of a direct system consisting of at most
 \(\alpha\) objects of finite size.
\end{itemize}
\end{pro}
\begin{proof}
Instead of giving a detailed proof, we will give the reader all necessary hints
to prove both the parts. The `if' part of (a) follows immediately from axioms
(Lim), (Emb) and (Reg) (which is here assumed), whereas (b) is an immediate
consequence of the tandem (Lim) and (Size).
\end{proof}

\begin{rem}{reg-card}
The above result gives us a method of verification whether a specified category
that fulfills axioms (Arr) and (Sys) is a regular \CARD\ category with respect
to some \emph{size}: first we define the class \(\ffF\) of (all) objects of
finite size as consisting of those which satisfy (Emb). Next we verify whether
axioms (Dir), (Fin) and (Reg) are satisfied. Finally, we need to check whether
for any object \(X \notin \ffF\) there exists an infinite cardinal number
\(\alpha\) such that \(X\) is the direct limit consisting of at most \(\alpha\)
objects from \(\ffF\). And if so, we define \(|X|\) as the least such
\(\alpha\). In this way we `force' (Lim) and (Size) to hold. Finally, we have to
verify if (Upp) holds.
\end{rem}

Before passing to main results of the section, we establish certain auxiliary
results that will be used later. To simplify statements, we fix the terminology:
\begin{itemize}
\item if a direct system is indexed by all ordinals less than an infinite
 ordinal \(\xi\) (and the order between indices is compatible with the order of
 ordinals), we call it a \emph{\(\xi\)-transfinite direct sequence};
\item a \(\xi\)-transfinite direct sequence \(\Zz =
 \SYS{Z_{\alpha}}{p_{\alpha,\beta}}{\alpha\leq\beta}\) is said to be
 \emph{continuous} if for each limit ordinal \(\alpha < \xi\), the pair
 \((Z_{\alpha},\{p_{\alpha,\beta}\}_{\beta<\alpha})\) is the direct limit of
 the (direct) subsystem of \(\Zz\) consisting of all objects whose indices
 precede \(\alpha\);
\item for simplicity, each arrow \(p_{\alpha,\beta}\) with \(\alpha < \beta\)
 in a \(\xi\)-transfinite direct sequence we call \emph{essential} and all other
 are called \emph{inessential}.
\end{itemize}

\begin{lem}{transfin}
Let \(X\) be an object of infinite size \(\mM\) in a \CARD\ category \(\ccC\).
Let \(\xi\) be an initial ordinal of size \(\mM\). Then \(X\) is isomorphic to
the object of the direct limit of a continuous \(\xi\)-transfinite direct
sequence consisting of objects each of which has size less than \(\mM\).
\end{lem}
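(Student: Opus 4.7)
The plan is to first invoke axiom (Lim) to express \(X\) as the object of the direct limit of a directed system \(\mathcal{Y}=(Y_i,q_{i,j})_{i\leq j,\ i,j\in I}\) consisting of at most \(\mM\) objects of finite size. By (Size) the cardinality of \(I\) must equal \(\mM\) (otherwise \(|X|<\mM\)), so I would fix a bijective well-ordering \(I=\{i_\alpha:\alpha<\xi\}\). The idea is then to build a continuous chain \(J_0\subset J_1\subset\ldots\) of \emph{directed} subsets of \(I\), each of size strictly less than \(\mM\), whose union is all of \(I\), and to define \(Z_\alpha\) as the object of the direct limit of the restricted subsystem \(\mathcal{Y}\restriction{J_\alpha}\).

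I would construct the \(J_\alpha\)'s by transfinite recursion. Put \(J_0=\{i_0\}\). At a successor stage \(\alpha+1\), let \(J_{\alpha+1}\) be the \emph{directed closure} of \(J_\alpha\cup\{i_\alpha\}\) in \(I\): iterate countably many times the operation of adjoining, for every pair \(a,b\) already present, a chosen upper bound in \(I\) (which exists because \(I\) is directed). At a limit stage \(\alpha\), take \(J_\alpha=\bigcup_{\beta<\alpha}J_\beta\), which is automatically directed as an increasing union of directed sets. A routine transfinite induction yields the uniform bound \(|J_\alpha|\leq\max(|\alpha|,\aleph_0)\), which is strictly less than \(\mM\) because \(\xi\) is initial (hence \(|\alpha|<\mM\) for every \(\alpha<\xi\)). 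Since \(i_\alpha\in J_{\alpha+1}\) for each \(\alpha\), the union \(\bigcup_{\alpha<\xi}J_\alpha\) exhausts \(I\).

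Next I would set \(Z_\alpha\) to be the object of the direct limit of \(\mathcal{Y}\restriction{J_\alpha}\), which exists by (Sys). If \(J_\alpha\) is finite, then \(Z_\alpha\) is isomorphic to the maximal \(Y_i\) appearing in the finite directed subsystem, hence of finite size; otherwise (Size) gives \(|Z_\alpha|\leq|J_\alpha|<\mM\). The inclusions \(J_\alpha\subset J_\beta\) induce canonical arrows \(p_{\alpha,\beta}\dd Z_\alpha\to Z_\beta\) via the universal property, and these are automatically monomorphisms thanks to (Arr); compatibility \(p_{\beta,\gamma}\circ p_{\alpha,\beta}=p_{\alpha,\gamma}\) is formal, so we obtain a \(\xi\)-transfinite direct sequence of objects, each of size less than \(\mM\).

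Finally, continuity and the identification with \(X\) both reduce to the formal fact that directed colimits commute with directed colimits in any category admitting them (as is the case here by (Sys)). Concretely, at a limit ordinal \(\alpha<\xi\) the equality \(J_\alpha=\bigcup_{\beta<\alpha}J_\beta\) gives \(Z_\alpha=\varinjlim \mathcal{Y}\restriction{J_\alpha}\cong\varinjlim_{\beta<\alpha}\varinjlim \mathcal{Y}\restriction{J_\beta}=\varinjlim_{\beta<\alpha}Z_\beta\), which is exactly continuity; and similarly \(\varinjlim_{\alpha<\xi}Z_\alpha\cong\varinjlim\mathcal{Y}\cong X\) since \(\bigcup_\alpha J_\alpha=I\). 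The only real technical subtlety—not a conceptual one—is the cardinality bookkeeping for the \(J_\alpha\)'s when \(\mM\) happens to be singular, which is handled uniformly by the inductive bound \(|J_\alpha|\leq\max(|\alpha|,\aleph_0)\) and therefore requires no regularity assumption on \(\mM\).
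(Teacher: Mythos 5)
Your overall strategy is the same as the paper's: start from the direct system of finite-size objects supplied by (Lim), well-order the index set by \(\xi\), exhaust it by a continuous increasing chain of \emph{directed} subsets of size less than \(\mM\), and take the direct limits of the restricted subsystems. The continuity and the identification of the final colimit with \(X\) via iterated colimits are also exactly how the paper argues.

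There is, however, a genuine gap in the case \(\mM = \aleph_0\). Your successor step forms the directed closure of \(J_\alpha \cup \{i_\alpha\}\) by countably many rounds of adjoining upper bounds, and your own bound for it is \(|J_{\alpha+1}| \leq \max(|\alpha|,\aleph_0)\). You then assert this is strictly less than \(\mM\) ``because \(\xi\) is initial, hence \(|\alpha| < \mM\)'' --- but \(\max(|\alpha|,\aleph_0) < \mM\) also requires \(\aleph_0 < \mM\), which fails precisely when \(\mM = \aleph_0\). In that case your \(J_\alpha\) may be countably infinite already at finite stages, so the \(Z_\alpha\) need not have size \emph{less than} \(\aleph_0\), i.e.\ need not be of finite size, and the lemma's conclusion is not obtained. (This case matters: it is the first step of the transfinite induction in the proof of the main embedding theorem.) The fix is cheap and is what the paper does --- treat \(\xi = \omega\) separately by choosing a cofinal increasing chain \(\tau_0 \ORD \tau_1 \ORD \ldots\) in the countable directed index set and using the original finite-size objects \(Z_{\tau_n}\) themselves. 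Alternatively, replace your closure operation by the observation that any finite subset \(F\) of a directed set becomes directed after adjoining a \emph{single} common upper bound of \(F\), so the closure of a finite set can be kept finite; with that modification your bound improves to \(|J_\alpha| \leq \max(|\alpha|,1)\) for \(\alpha<\omega\) and the uniform argument goes through. Your worry about singular \(\mM\) is, by contrast, harmless: the bound \(\max(|\alpha|,\aleph_0)\) handles limit stages without any regularity assumption, exactly as you say.
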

\begin{proof}
In what follows, we denote by \(\omega\) the first infinite ordinal and for any
ordinal \(\alpha\), \(|\alpha|\) will stand for its cardinality.\par
It follows from axioms (Lim) and (Size) that there exists a direct system
\[\SYS{Z_{\sigma}}{p_{\sigma,\tau}}{\sigma\ORD\tau}\] constisting of objects of
finite size where indices run over a directed set \((\Sigma,\ORD)\) of size
\(\mM\). We distinguish two cases.\par
First assume \(\xi = \omega\). Write \(\Sigma = \{\sigma_0,\sigma_1,\ldots\}\)
and inductively define a sequence \(\tau_0,\tau_1,\ldots \in \Sigma\) such that:
\begin{itemize}
\item \(\tau_0 = \sigma_0\); and
\item \(\tau_{n-1} \ORD \tau_n\) as well as \(\sigma_n \ORD \tau_n\) for each
 \(n > 0\).
\end{itemize}
Then \(\Zz = \SYS{Z_{\tau_n}}{p_{\tau_n,\tau_m}}{n\leq m}\) is the direct
sequence we searched for.\par
Now assume \(\omega < \xi\). To avoid confusion, set \(I \df \{\alpha\dd\ \alpha
< \xi\}\) and \(J \df \{\alpha \in I\dd\ \alpha \geq \omega\}\). Since the set
\(\Sigma\) has size \(\mM\), there exists a bijection \(\kappa\dd I \to
\Sigma\). Using transfinite induction, it is easy to construct a collection
\(\{S_{\alpha}\dd\ \alpha \in J\}\) of subsets of \(\Sigma\) with the following
five properties (below \(\alpha\) and \(\beta\) are arbitrary ordinals from
\(J\)):
\begin{itemize}
\item \((S_{\alpha},\ORD)\) is a directed set;
\item \(S_{\beta} \subset S_{\alpha}\) whenever \(\beta \leq \alpha\);
\item \(\kappa(\{\eta \in I\dd\ \eta < \alpha\}) \subset S_{\alpha}\);
\item \(\card(S_{\alpha}) \leq |\alpha|\);
\item \(S_{\alpha} = \bigcup_{\beta<\alpha} S_{\beta}\) for all limit ordinals
 \(\alpha \neq \omega\).
\end{itemize}
For any \(\alpha \in J\) denote by \(\Zz_{\alpha}\) a direct system obtained
from \(\Zz\) by restricting indices only to the set \(S_{\alpha}\). Now, again
using transfinite induction (as well as axiom (Sys)) we define
a \(\xi\)-transfinite direct sequence \(\Ww =
\SYS{W_{\alpha}}{q_{\alpha,\beta}}{\alpha\leq\beta}\) as follows:
\begin{itemize}
\item \(W_{\alpha}\) is the object of the direct limit of
 \(\Zz_{\omega+\alpha}\),
\item \(q_{\alpha,\alpha}\) is the identity of \(W_{\alpha}\)
\end{itemize}
and for \(\beta < \alpha\) the arrow \(q_{\beta,\alpha}\) is constructed as
follows. For any \(\mu \in I\) and \(\sigma \in S_{\omega+\mu}\) we have
an arrow \(r_{\sigma,\mu} \in \ARR{Z_{\sigma}}{W_{\mu}}\) that is an ingredient
of the direct limit. And now it follows directly from the condition defining
direct limit (for \(\Zz_{\omega+\beta}\)) that whenever \(\beta < \alpha\),
there exists a \underline{unique} arrow \(q_{\beta,\alpha} \in
\ARR{W_{\beta}}{W_{\alpha}}\) such that \(r_{\sigma,\alpha} = q_{\beta,\alpha}
\circ r_{\sigma,\beta}\) for any \(\sigma \in S_{\omega+\beta}\). This
uniqueness implies that all the arrows \(q_{\beta,\alpha}\) satisfy
compatibility condition. Moreover, one checks that the direct system \(\Ww\) is
continuous, which follows from the last property of the sets \(S_{\alpha}\). In
the very same way one shows that the object of the direct limit of \(\Zz\)
(which is isomorphic to \(X\)) is isomorphic to the object of the direct limit
of \(\Ww\). So, it remains to check that \(|W_{\alpha}| < \mM\), which simply
follows from the construction and axiom (Size).
\end{proof}

\begin{lem}{closure}
Let \(\ffF\) be a finitary structure in a \CARD\ category \(\ccC\).
\begin{enumerate}[\upshape(a)]
\item \(\grp{\ffF}\) consists precisely of those objects that are isomorphic to
 objects of direct limits of direct systems consisting of objects from \(\ffF\).
\item Objects of (direct) limits of direct systems consisting of objects from
 \(\grp{\ffF}\) belong to \(\grp{\ffF}\) as well.
\item If \(\ffF\) is proper, then for any cardinal \(\alpha \geq \aleph_0\),
 the class \(\grp{\ffF}_{\alpha}\) is proper as well.
\end{enumerate}
\end{lem}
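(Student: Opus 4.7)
The plan is to prove the three parts in order, using axioms (Lim), (Size), (Emb), (Sub$_\omega$) and (Arr) together with the characterisation of $\grp{\ffF}$.

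For part (a), the forward direction splits into the trivial case $|X|<\aleph_0$ (where $X\in\ffF$ since $X\prec_{\ccC} X$ via the identity, so $X$ is the limit of the constant one-object system) and the infinite case. In the latter, (Lim) supplies a direct system $\SYS{Z_\sigma}{p_{\sigma,\tau}}{\sigma\ORD\tau}$ consisting of at most $|X|$ objects of finite size whose direct limit is isomorphic to $X$; since every arrow is a monomorphism by (Arr), the canonical arrow $Z_\sigma\to X$ witnesses $Z_\sigma\prec_{\ccC} X$, and then $X\in\grp{\ffF}$ forces $Z_\sigma\in\ffF$. For the converse, if $X$ is isomorphic to the object of the direct limit of a system in $\ffF$ and $A\in\OB$ has finite size with $A\prec_{\ccC} X$, then (Emb) for $A$ yields some index $\sigma$ with $A\prec_{\ccC} Z_\sigma$; hereditarity with respect to finite size (Sub$_\omega$) applied to $Z_\sigma\in\ffF$ gives $A\in\ffF$.

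Part (b) is then a one-line corollary of (Emb) and (Sub$_\omega$): if $W$ is the object of the direct limit of a direct system $\{Y_\sigma\}$ with each $Y_\sigma\in\grp{\ffF}$, and $A$ is a finite-size object with $A\prec_{\ccC} W$, then (Emb) gives $\sigma$ with $A\prec_{\ccC} Y_\sigma$, and because $Y_\sigma\in\grp{\ffF}$ we get $A\in\ffF$; this establishes $W\in\grp{\ffF}$.

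Part (c) is the main obstacle, as it requires a cardinality-bookkeeping argument to force a proper class into a set. The plan is as follows. Fix $\alpha\geq\aleph_0$ and, using properness of $\ffF$, fix a \emph{set} $W$ of representatives of the isomorphism classes in $\ffF$. By (Arr) the union $\CcC\df\bigcup_{A,B\in W}\ARR{A}{B}$ is a set. Now let $X\in\grp{\ffF}_\alpha$. If $|X|<\aleph_0$, then (as in (a)) $X\in\ffF$, so $X$ is isomorphic to some member of $W$. If $|X|\geq\aleph_0$, apply (Lim) to express $X$ as the object of the direct limit of a direct system $\SYS{Z_\sigma}{p_{\sigma,\tau}}{\sigma\ORD\tau}$ of finite-size objects indexed by a directed set of size at most $|X|\leq\alpha$; by (a) every $Z_\sigma$ lies in $\ffF$, so after replacement by isomorphic copies we may assume each $Z_\sigma\in W$ and each $p_{\sigma,\tau}\in\CcC$. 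Finally, fix a set $U$ of size $\alpha$ once and for all; then every such direct system is, up to relabelling of indices, a triple consisting of a directed order on a subset of $U$, a function from that subset to $W$, and a function assigning to each comparable pair an element of $\CcC$. The collection of such triples is a set (its cardinality being bounded by $2^{\alpha\cdot\alpha}\cdot|W|^\alpha\cdot|\CcC|^{\alpha\cdot\alpha}$), and the direct limit of each triple—which exists by (Sys) and is unique up to isomorphism by the universal property—yields only a set of isomorphism classes of candidate objects $X$. This gives the required set $W'$ witnessing that $\grp{\ffF}_\alpha$ is proper.
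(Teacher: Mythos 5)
Your parts (a) and (b) are correct and essentially identical to the paper's argument: (Lim) for the forward direction of (a) (with the trivial one-object system handling finite size, via \(\ffF\subset\grp{\ffF}\)), and (Emb) plus heredity w.r.t.\ finite size for the converse of (a) and for (b). Part (c) is where you genuinely diverge. The paper proves (c) by transfinite induction on \(\alpha\): it invokes \LEM{transfin} to write each member of \(\grp{\ffF}_{\alpha}\) as the limit of a continuous \(\xi\)-transfinite sequence of objects of size \emph{less than} \(\alpha\), uses the induction hypothesis to know that \(\bigcup_{\beta<\alpha}\grp{\ffF}_{\beta}\) is proper, and then counts such transfinite sequences via (Arr). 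You instead go directly to (Lim): every infinite-size member of \(\grp{\ffF}_{\alpha}\) is the limit of a direct system of at most \(\alpha\) \emph{finite}-size objects, each of which lies in \(\ffF\) because the canonical limit arrows are monomorphisms; after replacing objects by representatives from a fixed set \(W\) (properness of \(\ffF\)) and relabelling indices inside a fixed set \(U\) of size \(\alpha\), the possible systems form a set by (Arr), and uniqueness of direct limits up to isomorphism finishes the argument. Both proofs are valid; yours is more self-contained (it needs neither \LEM{transfin} nor the transfinite induction, since properness of the class of \emph{finite}-size building blocks is already an assumption), while the paper's structure reuses \LEM{transfin}, which it needs anyway for \THM{AB-HO-cat}. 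One small point worth making explicit in your write-up: transporting the arrows along the chosen isomorphisms \(Z_{\sigma}\to W_{\sigma}\) produces a genuinely new direct system with the same limit up to isomorphism; this is routine but is the step that justifies ``after replacement by isomorphic copies.''
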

\begin{proof}
If \(X \in \grp{\ffF}\), it follows from (Lim) that \(X\) satisfies
the condition specified in (a). Conversely, if \(X \in \OB\) is isomorphic to
the object of the direct limit of a certain direct system consisting of objects
from \(\grp{\ffF}\) and \(A \in \FIN[\ccC]{X}\), then it follows from (Emb) that
\(A \prec_{\ccC} B\) where \(B \in \grp{\ffF}\) is an object from this direct
system. This yields \(B \in \ffF\) and, consequently, \(X \in \grp{\ffF}\).
Since \(\ffF\) is hereditary w.r.t. finite size, we get \(\ffF \subset
\grp{\ffF}\), which finishes the proof of both (a) and (b).\par
(c): We involve transfinite induction. We fix infinite cardinal \(\alpha\) and
assume that for all infinite \(\beta < \alpha\) the class \(\grp{\ffF}_{\beta}\)
is proper. It follows from \LEM{transfin} that each member of
\(\grp{\ffF}_{\alpha}\) is isomorphic to the object of the direct limit of
a \(\xi\)-transfinite direct sequence consisting of objects of size less than
\(\alpha\) where \(\xi\) is an initial ordinal of size \(\alpha\). But, since
\(\ffF\) is proper (if \(\alpha = \aleph_0\)) as well as
\(\bigcup_{\beta<\alpha} \grp{\ffF}_{\beta}\) (if \(\alpha > \aleph_0\)), we see
that, up to isomorphism, all such transfinite sequences form a set (thanks to
(Arr)) and hence \(\ffF_{\alpha}\) is proper.
\end{proof}

And now we will see another application of the Erd\H{o}s-Rado theorem.

\begin{lem}{long}
Let \(\ccC\) be a category that satisfies axioms \UP{(Arr)} and \UP{(Sys)} and
such that the class \(\OB\) is proper. Then there exists an ordinal \(\eta\)
with the following property:
\begin{quote}
If \SYS{Z_{\alpha}}{p_{\alpha,\beta}}{\alpha\leq\beta} is a \(\xi\)-transfinite
direct sequence in \(\ccC\) with non-isomorphic essential arrows, then \(\xi <
\eta\).
\end{quote}
\end{lem}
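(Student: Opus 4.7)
The plan is to combine a pigeonhole in the class of isomorphism types of objects with Erd\H{o}s-Rado applied to the isomorphism types of essential arrows, and then to exploit the fact that every arrow in $\ccC$ is a monomorphism to force any idempotent endomorphism to be the identity. Since $\OB$ is proper, fix a set $W$ of representatives for the isomorphism classes in $\ccC$, put $\kappa \df \card(W)$, and (using that $\ARR{A}{B}$ is a set for any $A,B \in W$ by axiom (Arr)) put $\lambda \df \aleph_0 + \sum_{Z \in W} \card(\ARR{Z}{Z})$. I will take $\eta$ to be any initial ordinal of cardinality strictly greater than $\max(\kappa,2^{\lambda})$.

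Toward a contradiction, I would suppose $\SYS{Z_{\alpha}}{p_{\alpha,\beta}}{\alpha\leq\beta}$ is a $\xi$-transfinite direct sequence with $\xi \geq \eta$ whose essential arrows are non-isomorphisms. Since $|\xi| > \kappa$, a pigeonhole applied to $\alpha \mapsto [Z_\alpha] \in W$ produces $Z \in W$ and a set $A \subset \{\alpha\dd\alpha<\xi\}$ with $\card(A) = |\xi|$ such that $Z_\alpha \equiv_{\ccC} Z$ for all $\alpha \in A$. Choose isomorphisms $\sigma_\alpha\dd Z_\alpha \to Z$ for every $\alpha \in A$ and define a symmetric function $f\dd A \times A \to \ARR{Z}{Z}$ by putting, for $\alpha \neq \beta$ in $A$, $f(\alpha,\beta) \df \sigma_{\max(\alpha,\beta)} \circ p_{\min(\alpha,\beta),\max(\alpha,\beta)} \circ \sigma_{\min(\alpha,\beta)}^{-1}$ (the diagonal values being irrelevant). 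Because $\card(\ARR{Z}{Z}) \leq \lambda$ and $\card(A) > 2^{\lambda}$, \THM{r-e-r} (the Erd\H{o}s-Rado case when $\lambda$ is infinite, the Ramsey case when $\lambda$ is finite) supplies a subset $B \subset A$ with at least three elements on which $f$ is constant, with some value $h \in \ARR{Z}{Z}$.

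For any $\alpha < \beta < \gamma$ in $B$, the three equations $p_{\alpha,\beta} = \sigma_\beta^{-1} \circ h \circ \sigma_\alpha$, $p_{\beta,\gamma} = \sigma_\gamma^{-1} \circ h \circ \sigma_\beta$ and $p_{\alpha,\gamma} = \sigma_\gamma^{-1} \circ h \circ \sigma_\alpha$, together with the compatibility identity $p_{\alpha,\gamma} = p_{\beta,\gamma} \circ p_{\alpha,\beta}$, reduce (after cancelling the isomorphisms $\sigma_\alpha$ and $\sigma_\gamma$) to $h \circ h = h$. By (Arr), $h$ is a monomorphism, so left-cancelling $h$ in the equation $h \circ h = h \circ \OPN{id}_Z$ forces $h = \OPN{id}_Z$; but then $p_{\alpha,\beta} = \sigma_\beta^{-1} \circ \sigma_\alpha$ is an isomorphism, contradicting the hypothesis on essential arrows. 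The heart of the argument is the observation that a monic idempotent must be the identity; the rest is cardinal bookkeeping and the symmetrisation in the definition of $f$ needed to match the hypotheses of \THM{r-e-r}.
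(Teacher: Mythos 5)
Your proof is correct and follows essentially the same route as the paper's: pigeonhole on isomorphism classes, Erd\H{o}s-Rado (via \THM{r-e-r}) applied to the induced arrows, and the observation that a monic idempotent must be the identity. Your explicit conjugation by the isomorphisms \(\sigma_{\alpha}\) is in fact slightly more careful than the paper's ``without loss of generality \(Z_{\alpha} = Z\)'' step; the only nitpick is that your pigeonhole need not yield \(\card(A) = |\xi|\) when \(|\xi|\) is singular, but the bound \(\card(A) > 2^{\lambda}\) that the argument actually uses does follow.
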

\begin{proof}
Let \(\Ww\) be a set of objects from \(\ccC\) that contains an isomorphic copy
of any object from \(\ccC\). Let \(\mM\) be an infinite cardinal number such
that \(\card(\Ww) \leq \mM\) and \(\card(\ARR{X}{X}) \leq \mM\) for all \(X \in
\Ww\). Denote by \(\eta\) the least ordinal whose size is greater that
\(2^{\mM}\). We will now show that \(\eta\) is the ordinal we search for. To
this end, we fix a \(\xi\)-transfinite direct sequence
\SYS{Z_{\alpha}}{p_{\alpha,\beta}}{\alpha\leq\beta} with all essential arrows
non-isomorphic. With no loss on generality, we may and do assume that
\(Z_{\alpha} \in \Ww\) for any \(\alpha < \xi\). We argue by a contradiction.
Reducing \(\xi\) if necessary, we assume \(\xi = \eta\). Since the cardinality
of \(\eta\) is a successor cardinal (and hence a regular cardinal), passing to
a transfinite subsequence (in our direct system) of the same size, we may (and
do) further assume that \(Z_{\alpha} = Z\) for all \(\alpha < \eta\) and certain
\(Z \in \Ww\). Now it follows from \THM{r-e-r} that there exists an infinite
set, say \(A\), of indices such that \(p_{\alpha,\beta} = q\) whenever \(\alpha
< \beta\) and both \(\alpha\) and \(\beta\) are in \(A\) (here \(q\) is
a certain fixed arrow from \ARR{Z}{Z}). Finally, choose any \(\alpha,\beta,
\gamma \in A\) such that \(\alpha < \beta < \gamma\). Then \(q =
p_{\alpha,\gamma} = p_{\beta,\gamma} \circ p_{\alpha,\beta} = q \circ q\). Since
all arrows in \(\ccC\) are monomorphisms (by (Arr)), we conclude that \(q\) is
the identity on \(Z\), which contradicts the assumption about the direct system
and finishes the proof.
\end{proof}

The last thing needed for our further purposes is amalgamation. Although it is
a classical categorical notion, below we formulate it in a way convenient for
us.

\begin{dfn}{amalg-cat}
Let \(\ffF\) be a class of certain objects from a category \(\ccC\). We say
\(\ffF\) has the \emph{amalgamation property} in \(\ccC\) (briefly: has \AP) if
the following two conditions are fulfilled:
\begin{itemize}\itshape
\item Whenever \(X, Y, Z\) are objects from \(\ffF\) and \(u\dd X \to Y\) and
 \(v\dd X \to Z\) are two arrows from \(\ccC\), then there exists an object
 \(W\) \textbf{from} \(\pmb{\ffF}\) and two arrows \(p\dd Y \to W\) and \(q\dd Z
 \to W\) from \(\ccC\) such that \(p \circ u = q \circ v\).
\item For any two objects \(X, Y\) from \(\ffF\) there exists a third object
 \(Z \in \ffF\) such that \(X \prec_{\ccC} Z\) and \(Y \prec_{\ccC} Z\).
\end{itemize}
We say a finitary structure \(\ffF\) in a \CARD\ category \(\ccC\) \emph{has
\AP[\(*\)-]} if \(\grp{\ffF}\) has \AP.
\end{dfn}

We are now ready to present main results of this section. The first of them
is a counterpart of \THM{embed}, whereas the second of \THM{skeleton}.

\begin{thm}{AB-HO-cat}
Let \(X \in \OB\) be AB-HO in a \CARD\ category \(\ccC\) and let \(\ffF\) denote
\(\FIN[\ccC]{X}\). Then:
\begin{enumerate}[\upshape(A)]
\item For any object \(Y \in \OB\), \(Y \prec_{\ccC} X\) iff \(Y \in
 \grp{\ffF}\).
\item The class \(\ffF\) is a proper finitary structure with \AP.
\item The class \(\grp{\ffF}\) is proper and has \AP.
\end{enumerate}
\end{thm}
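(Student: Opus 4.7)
The plan is to handle the three items in turn, with (A) doing the heavy lifting and (B)--(C) following by essentially categorical bookkeeping and a single use of (A).

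For (A), the easy direction is transitivity of $\prec_{\ccC}$: if $Y \prec_{\ccC} X$ then any finite-size $A$ with $A \prec_{\ccC} Y$ satisfies $A \prec_{\ccC} X$, so $A \in \ffF$, giving $Y \in \grp{\ffF}$. For the converse, I would mimic the transfinite induction in the proof of \THM{embed}. Fix $Y \in \grp{\ffF}$. If $|Y|$ is finite, there is nothing to prove. Otherwise, by \LEM{transfin}, write $Y$ as the object of the direct limit of a continuous $\xi$-transfinite direct sequence $\SYS{Z_{\alpha}}{p_{\alpha,\beta}}{\alpha\leq\beta}$ with $|Z_{\alpha}| < |Y|$ for all $\alpha < \xi$. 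Each $Z_{\alpha}$ is in $\grp{\ffF}$ (since $Z_{\alpha} \prec_{\ccC} Y$ by the direct-limit arrows and $\grp{\ffF}$ is hereditary by \LEM{closure}). By transfinite induction on $|Y|$, for each $\alpha$ there is an arrow $\phi_{\alpha}\dd Z_{\alpha} \to X$. Now, again by transfinite induction on $\alpha < \xi$, I will produce a compatible family $\xi_{\alpha}\dd Z_{\alpha} \to X$ (that is, $\xi_{\beta} \circ p_{\alpha,\beta} = \xi_{\alpha}$ for $\alpha \leq \beta$): set $\xi_0 \df \phi_0$; at successor step, given $\xi_{\alpha}$, use AB-HO of $X$ applied to the two arrows $\phi_{\alpha+1} \circ p_{\alpha,\alpha+1}$ and $\xi_{\alpha}$ from $Z_{\alpha}$ into $X$ to find an automorphism $\eta \in \AUT[\ccC]{X}$ with $\xi_{\alpha} = \eta \circ \phi_{\alpha+1} \circ p_{\alpha,\alpha+1}$, and define $\xi_{\alpha+1} \df \eta \circ \phi_{\alpha+1}$; at a limit stage, use the continuity of the sequence (i.e.\ the universal property of the direct limit) to produce a unique $\xi_{\alpha}$ compatible with all earlier $\xi_{\beta}$. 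The universal property of the limit then yields the desired arrow $Y \to X$. The main obstacle is making sure the successor step produces automorphisms of $X$ that really do glue to the previous morphisms; this is precisely where AB-HO is used in its full strength.

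For (B), membership in $\ffF$ is preserved under isomorphism and composition of embeddings, so $\ffF$ is closed under isomorphic copies and hereditary w.r.t.\ finite size; it consists of objects of finite size by definition; and it is proper by axiom (Fin). For amalgamation, take $A, B, C \in \ffF$ and arrows $u \in \ARR{A}{B}$, $v \in \ARR{A}{C}$. Since $B, C \prec_{\ccC} X$, pick $p' \in \ARR{B}{X}$ and $q' \in \ARR{C}{X}$. The two arrows $p' \circ u, q' \circ v \in \ARR{A}{X}$ differ by an automorphism of $X$ (AB-HO), so after composing $p'$ with a suitable $\phi \in \AUT[\ccC]{X}$ we may assume $p' \circ u = q' \circ v$. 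Now apply axiom (Dir) to $B, C$ (of finite size) and these two arrows into $X$: it produces an object $W$ of finite size, $\alpha\dd B \to W$, $\beta\dd C \to W$, $\gamma\dd W \to X$ with $p' = \gamma \circ \alpha$, $q' = \gamma \circ \beta$. Then $W \prec_{\ccC} X$ via $\gamma$, so $W \in \ffF$, and $\gamma \circ (\alpha \circ u) = \gamma \circ (\beta \circ v)$, so since $\gamma$ is a monomorphism (axiom Arr), $\alpha \circ u = \beta \circ v$. The joint-embedding clause is the special case $A$ absent and is handled by the same application of (Dir) with (say) the zero-index situation, giving $W \in \ffF$ with $B, C \prec_{\ccC} W$.

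For (C), properness is immediate from axiom (Upp) combined with part (A): (Upp) gives a cardinal $\mM$ with $|Y| \leq \mM$ for any $Y \prec_{\ccC} X$, so $\grp{\ffF} = \grp{\ffF}_{\mM}$, which is proper by \LEM{closure}(c). Amalgamation in $\grp{\ffF}$ is now even cleaner than in (B), since the codomain need not be of finite size: given $A, B, C \in \grp{\ffF}$ and $u \in \ARR{A}{B}$, $v \in \ARR{A}{C}$, use (A) to pick $p' \in \ARR{B}{X}$ and $q' \in \ARR{C}{X}$, and use AB-HO of $X$ to find $\phi \in \AUT[\ccC]{X}$ with $(\phi \circ p') \circ u = q' \circ v$; the object $X$ itself is in $\grp{\ffF}$ and serves as the amalgam. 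Joint embedding in $\grp{\ffF}$ is trivial: any two members of $\grp{\ffF}$ embed into $X$ by (A).
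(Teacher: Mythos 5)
Your proposal is correct and follows essentially the same route as the paper: the `if' direction of (A) by transfinite induction on \(|Y|\) via \LEM{transfin} and a compatible family of arrows built as in \THM{embed}, (C) by (Upp), \LEM{closure} and absolute homogeneity with \(X\) itself as the amalgam, and (B) via axiom (Dir). The only difference is that you write out in full the derivation of (B) from (Dir) (including the use of (Arr) to cancel the monomorphism \(\gamma\)), which the paper merely asserts and leaves to the reader.
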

\begin{proof}
We start from (A). We only need to show the `if' part. To this end, we will use
transfinite induction on \(\mM \df |Y|\). If \(\mM\) is finite, we have nothing
to do. Thus, we assume \(\mM\) is infinite and (A) holds for objects \(Z \in
\OB\) with \(|Z| < \mM\). It follows from \LEM{transfin} that \(Y\) is
isomorphic to the object of the direct limit \(Y'\) of a continuous
\(\xi\)-transfinite sequence \SYS{Z_{\alpha}}{p_{\alpha,\beta}}{\alpha\leq\beta}
consisting of objects of size less than \(\mM\) (for some ordinal \(\xi\)). We
may and do assume that \(Y = Y'\). We infer from the transfinite induction
hypothesis that \(Z_{\alpha} \prec_{\ccC} X\) (because \(\FIN[\ccC]{Z_{\alpha}}
\subset \FIN[\ccC]{Y}\)). Now we can repeat the idea presented in the proof of
\THM{embed} to construct a family of arrows \(u_{\alpha} \in
\ARR{Z_{\alpha}}{X}\) such that \(u_{\beta} = u_{\alpha} \circ
p_{\alpha,\beta}\) whenever \(\beta < \alpha < \xi\) (here it is important that
the transfinite direct sequence is continuous---for constructing the arrow for
limit \(\alpha\)). In this way, after passing to the direct limit, we will get
also an arrow from \(Y\) into \(X\). We skip technical details.\par
Since (B) follows from (C) (and axiom (Dir)), first we give a proof of (C). It
follows from axioms (Fin) and (Upp) that \(\ffF\) is proper and there exists
a cardinal \(\mM\) such that \(\grp{\ffF} = \grp{\ffF}_{\mM}\). So, we conclude
from \LEM{closure} that \(\grp{\ffF}\) is proper as well. To show \AP\ for that
class, we only need to check the first condition of \AP\ (as the second
immediately follows from (A)). So, assume \(u\dd C \to A\) and \(v\dd C \to B\)
are two arrows from \(\ccC\) between objects from \(\grp{\ffF}\). It follows
from (A) that there are arrows \(\alpha\dd A \to X\) and \(\beta\dd B \to X\)
from \(\ccC\). Then it follows from the fact that \(X\) is AB-HO that there
exists \(\phi \in \AUT[\ccC]{X}\) for which \(\beta \circ v = \phi \circ (\alpha
\circ u)\). As \(X \in \grp{\ffF}\), the arrows \(\beta\dd B \to X\) and \(\phi
\circ \alpha\dd A \to X\) are those searched in the amalgamation condition.\par
Finally, we pass to (B). That \(\ffF\) is a finitary structure we have already
commented in \DEF{fin-str}. As we announced it earlier, \AP\ is a consequence
of (C) and (Dir). We leave the details to the reader.
\end{proof}

Our next aim is to formulate a theorem that characterises finitary structures
induced by AB-HO objects. To this end, we will now introduce skeletons and
skeletoids (as for GEC's).

\begin{dfn}{skton-cat}
A finitary structure \(\ffF\) of certain objects of a \CARD\ category \(\ccC\)
satisfies \emph{size boundedness principle} (abbreviated: \SBP) if there is
a cardinal number \(\mM\) such that \(|X| \leq \mM\) for any \(X \in
\grp{\ffF}\).\par
A \emph{skeletoid} in a \CARD\ category \(\ccC\) is a proper finitary structure
\(\ffF\) that satisfies \SBP\ and has \AP. A \emph{skeleton} in \(\ccC\) is
a class of the form \(\FIN[\ccC]{X}\) for some AB-HO object \(X \in \OB\).
\end{dfn}

The following result is a counterpart of \THM{skeleton}.

\begin{thm}{skton-cat}
A class of objects of a \CARD\ category is a skeleton iff it is a skeletoid with
\AP[\(*\)-].
\end{thm}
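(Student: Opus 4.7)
The forward direction is essentially a repackaging of \THM{AB-HO-cat}. If $X \in \OB$ is AB-HO and $\ffF \df \FIN[\ccC]{X}$, then part (B) of that theorem says $\ffF$ is a proper finitary structure with \AP, and part (C) gives \AP\ for $\grp{\ffF}$, which is exactly \AP[\(*\)-]. For \SBP, axiom (Upp) supplies a cardinal $\mM$ bounding $|Y|$ for all $Y\prec_{\ccC} X$, and by \THM{AB-HO-cat}(A) the class $\grp{\ffF}$ coincides with $\{Y\in\OB\dd Y\prec_{\ccC} X\}$, so $|Y| \leq \mM$ for every $Y \in \grp{\ffF}$.

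The substantive content lies in the backward direction. Suppose $\ffF$ is a skeletoid with \AP[\(*\)-] and let $\mM$ witness \SBP, so that $\grp{\ffF} = \grp{\ffF}_\mM$; by \LEM{closure}(c), $\grp{\ffF}$ is proper. The full subcategory of $\ccC$ on $\grp{\ffF}$ inherits axiom (Arr) and, by \LEM{closure}(b), is closed under direct limits, so \LEM{long} applied within it yields an ordinal $\eta$ bounding the length of any transfinite direct sequence in $\grp{\ffF}$ whose essential arrows are all non-isomorphisms. The plan is to construct by transfinite induction an object $X \in \grp{\ffF}$ enjoying the property
\begin{itemize}
\item[$(\ast)$] every arrow $X \to Y$ with $Y \in \grp{\ffF}$ is an isomorphism.
\end{itemize}
Starting from an arbitrary $X_0 \in \grp{\ffF}$, at each successor stage I select (if it exists) a non-isomorphism $p_{\alpha,\alpha+1} \in \ARR{X_\alpha}{Y}$ with $Y \in \grp{\ffF}$ and set $X_{\alpha+1} \df Y$; if no such arrow exists, the construction halts and $X \df X_\alpha$ already satisfies $(\ast)$. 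At each limit $\lambda$, I let $X_\lambda$ be the continuous direct limit, which belongs to $\grp{\ffF}$ by \LEM{closure}(b). The key categorical observation that enables \LEM{long} to bite is: whenever $h$ and $f$ are arrows with $h \circ f$ an isomorphism, $h$ is split epi with right inverse $f \circ (h \circ f)^{-1}$, and being also a monomorphism by axiom (Arr) it is in fact an isomorphism, whence $f = h^{-1} \circ (h \circ f)$ is too. A short induction on $\beta$ — using the factorisation $p_{\alpha,\beta} = p_{\beta-1,\beta} \circ p_{\alpha,\beta-1}$ at successor $\beta$ and $p_{\alpha,\beta} = p_{\alpha+1,\beta} \circ p_{\alpha,\alpha+1}$ at limit $\beta$ — then propagates the non-isomorphism choice made at each successor step to all essential arrows $p_{\alpha,\beta}$. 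Consequently, \LEM{long} forces the construction to terminate at some ordinal $<\eta$, producing $X$ with $(\ast)$.

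With $(\ast)$ in hand, the remaining verifications follow the amalgamation-plus-maximality argument of \THM{skeleton} translated into categorical language. For AB-HO: given $A \in \OB$ and $u,v \in \ARR{A}{X}$, the class $\grp{\ffF}$ is hereditary (if $Y \prec_{\ccC} Z \in \grp{\ffF}$ then any finite-size arrow into $Y$ composes into $Z$, hence into $\ffF$), so $A \in \grp{\ffF}$; applying \AP[\(*\)-] to the pair $u, v$ yields $W \in \grp{\ffF}$ and $p, q \in \ARR{X}{W}$ with $p \circ u = q \circ v$; by $(\ast)$ both $p$ and $q$ are isomorphisms and $\phi \df q^{-1} \circ p \in \AUT[\ccC]{X}$ satisfies $\phi \circ u = v$. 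For the equality $\FIN[\ccC]{X} = \ffF$: the inclusion $\subset$ is immediate from $X \in \grp{\ffF}$ together with heredity of $\ffF$ w.r.t.\ finite size; conversely, for $A \in \ffF$ the joint-embedding clause of \AP\ on $\grp{\ffF}$ supplies $W \in \grp{\ffF}$ with $A \prec_{\ccC} W$ and $X \prec_{\ccC} W$, and $(\ast)$ makes the arrow $X \to W$ an isomorphism, so composing with its inverse gives the required arrow $A \to X$.

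The main obstacle — and the point at which the proof genuinely departs from the set-theoretic template of \THM{skeleton}, whose Zorn-style argument was anchored in a fixed ambient universe $Z$ of cardinality $\mM$ — is the termination of the transfinite chain of non-isomorphism extensions. Its role is played here by the size-boundedness of $\grp{\ffF}$ (via \SBP\ and \LEM{closure}(c)) together with \LEM{long}, and what makes the latter applicable in our context is the split-epi-plus-monomorphism observation, which is exactly what keeps the non-isomorphism invariant intact across composites and direct limits. Everything else — amalgamation, heredity of $\grp{\ffF}$, AB-HO, and matching skeletons — follows by direct translation.
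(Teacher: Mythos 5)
Your proof is correct and follows essentially the same route as the paper's: the paper obtains the terminal object \(X\) by applying Zorn's lemma to the set of \(\xi\)-transfinite direct sequences (with objects drawn from a representative set for \(\grp{\ffF}\)) whose essential arrows are non-isomorphisms, with length bounded via \LEM{long} --- which is just a repackaging of your transfinite recursion --- and the derivation of absolute homogeneity and of \(\FIN[\ccC]{X}=\ffF\) from property \((\ast)\) is the same. The only cosmetic caveat is that your recursion should make its choices from the representative set witnessing properness of \(\grp{\ffF}\) so that the recursion is legitimately set-indexed, exactly as the paper arranges.
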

\begin{proof}
Thanks to \THM{AB-HO-cat}, we only need to prove the `if' part. To this end, we
fix a skeletoid \(\ffF\) with \AP[\(*\)-] in a \CARD\ category \(\ccC\). It
follows from \SBP\ and \LEM{closure} that \(\eeE \df \grp{\ffF}\) is proper and
is closed under passing to direct limits. So, the class \(\eeE\) considered as
a full subcategory of \(\ccC\) (i.e., with all arrows from \(\ccC\) between
objects from \(\eeE\)) satisfies all the assumptions of \LEM{long} and therefore
there exists an infinite ordinal \(\eta\) with a property specified there. Now
we will mimic the proof of \THM{skeleton}. We fix a \underline{set} \(\Ww\) of
objects from \(\eeE\) that contains an isomorphic copy of each object from
\(\eeE\). It follows from Zorn's lemma that in the set of all
\(\xi\)-transfinite direct sequences consisting of objects from \(\Ww\) and with
non-isomorphic essential arrows where \(\xi \leq \eta\) there exists a maximal
one (w.r.t. a partial order consisting of extending a system by adding new
objects at its end), say \(\Zz =
\SYS{Z_{\alpha}}{p_{\alpha,\beta}}{\alpha\leq\beta}\) with indices less than
some \(\xi \leq \eta\). We infer from \LEM{long} that \(\xi < \eta\). From now
on, we again consider \(\eeE\) as a class of objects from \(\ccC\). Observe that
\(\xi\) is not limit---otherwise we could extend \(\Zz\) by adding at its end
its direct limit (with its object from \(\Ww\); the limit arrows will still not
be isomorphic as the composition of two monomorphism can be an isomorphism only
if both these arrows are isomorphisms). So, \(\Zz\) has its last object, say
\(X\). We will now show that \(X\) is AB-HO and \(\FIN[\ccC]{X} = \ffF\) (which
will finish the proof). First of all, observe that for any \(V \in \eeE\), each
arrow from \(X\) into \(V\) is an isomorphism (because otherwise we could extend
\(\Zz\) by a suitable copy of \(V\) in \(\Ww\)). In particular, \(\ARR{X}{X} =
\AUT[\ccC]{X}\). Hence we conclude that:
\begin{itemize}
\item[(\(*\))] If \(V, W\) are objects from \(\eeE\) and \(u\dd V \to X\) and
 \(v\dd V \to W\) are two arrows from \(\ccC\), then there exists an arrow
 \(w\dd W \to X\) from \(\ccC\) such that \(u = w \circ v\).
\end{itemize}
One shows the above property similarly as in the proof of \THM{skeleton} and
thus we skip the proof of (\(*\)) (this is a moment where \AP[\(*\)-] is
involved). Substituting \(X\) for \(W\) in (\(*\)) we get that \(X\) is AB-HO.
Indeed, if \(V \in \OB\) and there exists an arrow in \(\ccC\) from \(V\) into
\(X\), then \(V \in \eeE\) (as \(\eeE\) is hereditary and contains \(X\)) and we
may apply (\(*\)). So, it remains to show that \(\FIN[\ccC]{X} = \ffF\).
The inclusion ``\(\subset\)'' is immediate, since \(X \in \eeE = \grp{\ffF}\).
To verify the reverse one, take any object \(A \in \ffF\). It follows from
the second condition defining \AP\ that there is an object \(B \in \eeE\) such
that \(A \prec_{\ccC} B\) and \(X \prec_{\ccC} B\). But then \(X \equiv_{\ccC}
B\) and, consequently, \(A \prec_{\ccC} X\), which finishes the proof.
\end{proof}

Although we have obtained a result that resembles \THM{skeleton}, one can feel
unsatisfied due to the form of \SBP\ (which is a counterpart of both \MBP\ and
\OBP) as it is not formulated in terms of intrinsic properties of the skeletoid
under considerations, whereas both \MBP\ are \OBP\ are so. We consider this
defect as a major challenge for our future research in this topic. As both \MBP\
and \OBP\ are direct consequences of the Erd\H{o}s-Rado theorem, any criterion
(that is, a sufficient condition) for \SBP\ may be considered as
a \emph{categorical Erd\H{o}s-Rado phenomenon}. Thus, we end this section with
the following

\begin{prb}{E-R}
Find convenient/applicable conditions for a proper finitary structure with
 \AP[\(*\)-] in a \CARD\ category to satisfy \SBP.
\end{prb}

\section{Concluding remarks and open problems (revisited)}\label{sec:fin}

We believe our work will open new research areas and give a solid background for
further research. We hope to find another new examples of absolutely homogeneous
structures in a close future. For the reader's convenience, below we list all
open problems that were posed in previous parts and add a few new.
\begin{enumerate}[(Prb1)]
\item Does there exist an absolutely homogeneous non-locally compact complete
 metric space that is both connected and locally connected?
\item Is it true in ZFC that for any two infinite cardinals \(\alpha\) and
 \(\beta\) such that \(\alpha < \beta < 2^{\alpha}\) there exists an absolutely
 homogeneous graph with edge-coloring whose palette and set of vertices have
 size, respectively, \(\alpha\) and \(\beta\)?
\item Is it possible to find a formula for the size of an infinite absolutely
 homogeneous graph with edge-coloring based purely on its skeleton?
\item How to reconstruct the absolutely homogeneous graph with edge-coloring
 generated by a skeleton with countable palette from the Fra\"{\i}ss\'{e} limit
 of this skeleton?
\item How to `encode' (or `decode') topological properties of an absolutely
 homogeneoue metric space in terms of its skeleton?
\item Find a class of graphs with edge-coloring larger than consisting of those
 with isosceles triangles for which a classification of graphs from that class
 will be possible (in the spirit of \THM{classification}).
\item Does there exist a connected compact Hausdorff space equipped with
 reflective edge similarity that is absolutely homogeneous (as such a structure)
 and homeomorphic to no Euclidean sphere, but its isotropy group is not totally
 disconnected?
\item Find convenient/applicable conditions for a proper finitary structure with
 \AP[\(*\)-] in a \CARD\ category to satisfy \SBP.
\item Resolve whether each of the axioms of \CARD\ categories (listed in
 \DEF{axioms}) is independent of all the others at once.
\item In reference to part (C) of \PRO{*AP}, is it true that a proper finitary
 structure \(\ffF\) in a \CARD\ category \(\ccC\) has \AP[\(*\)-] iff
 the amalgamation property holds for objects from \(\grp{\ffF}\) of size not
 greater than \(\mM\) where \(\mM\) is the cardinality of \(\ffF\) after
 identifying isomorphic objects?
\end{enumerate}

\end{document}